\newcommand{\mathsout}[1]
{\bgroup\mathchoice
	{\sbox0{$\displaystyle{#1}$}%
		\usebox0\hspace{-\wd0}%
		\rule[0.5\ht0-0.5\dp0-.5pt]{\wd0}{1pt}}%
	{\sbox0{$\textstyle{#1}$}%
		\usebox0\hspace{-\wd0}%
		\rule[0.5\ht0-0.5\dp0-.5pt]{\wd0}{1pt}}%
	{\sbox0{$\scriptstyle{#1}$}%
		\usebox0\hspace{-\wd0}%
		\rule[0.5\ht0-0.5\dp0-.5pt]{\wd0}{1pt}}%
	{\sbox0{$\scriptscriptstyle{#1}$}%
		\usebox0\hspace{-\wd0}%
		\rule[0.5\ht0-0.5\dp0-.5pt]{\wd0}{1pt}}%
	\egroup}
\newcommand*{\Gref}[1]{\hyperref[SPDEfunc]{Eq($#1$)}}
\newcommand*{\Gtag}[1]{\tag{Eq($#1$)}}
\DeclareMathOperator{\1}{\mathbbm{1}}
\DeclareMathOperator{\Law}{Law}%
\DeclareMathOperator*{\esssup}{ess\,sup}
\DeclareMathOperator*{\supp}{Supp}
\def\E{\hskip.15ex\mathsf{E}\hskip.10ex}
\def\P{\mathsf{P}}
\def\Var{\mathop{\mbox{\rm Var}}}
\def\eps{\varepsilon}
\def\phi{\varphi}
\newcommand{\tand}{\quad\textrm{and}\quad}
\newcommand{\tprob}{\quad\textrm{in probability}}
\newtheorem{Theorem}{Theorem}[section]
\newcommand{\settheoremtag}[1]{
	\let\oldthetheorem\thetheorem
	\renewcommand{\thetheorem}{#1}
	\g@addto@macro\endtheorem{
		\addtocounter{theorem}{-1}
		\global\let\thetheorem\oldthetheorem}
}
\newtheorem{Lemma}[Theorem]{Lemma}
\newtheorem{lemma}[Theorem]{Lemma}
\newtheorem{proposition}[Theorem]{Proposition}
\newtheorem{Corollary}[Theorem]{Corollary}
\newtheorem{corollary}[Theorem]{Corollary}
\newtheorem{Convention}[Theorem]{Assumption}
\newtheorem{Assumption}[Theorem]{Assumption}
\theoremstyle{definition}
\theoremstyle{definition}\newtheorem{Remark}[Theorem]{Remark}
\theoremstyle{definition}\newtheorem{remark}[Theorem]{Remark}
\theoremstyle{definition}\newtheorem{Definition}[Theorem]{Definition}
\theoremstyle{definition}\newtheorem{definition}[Theorem]{Definition}
\crefname{Lemma}{Lemma}{Lemmas}
\crefname{Theorem}{Theorem}{Theorems}
\numberwithin{equation}{section}
\renewcommand{\ge}{\geqslant}
\renewcommand{\le}{\leqslant}
\newcommand{\nn}{\nonumber}
\newcommand{\wt}{\widetilde}
\newcommand{\wh}{\widehat}
\newcommand{\0}{{D}}
\newcommand{\I}{{[0,1]}}
\newcommand{\per}{{per}}
\newcommand{\neu}{{Neu}}
\newcommand{\T}{T_0}
\newcommand{\ducc}{{d_{\textrm{uc}}}}
\newcommand{\rucc}{{\rho_{\textrm{uc}}}}
\newcommand{\A}{\mathcal{A}}
\newcommand{\C}{\mathcal{C}}
\newcommand{\cnn}{\mathcal{N}}
\newcommand{\cbb}{\mathcal{B}}
\newcommand{\F}{\mathcal{F}}
\newcommand{\cff}{\mathcal{F}}
\newcommand{\G}{\mathcal{G}}
\newcommand{\R}{\mathbb{R}}
\newcommand{\V}{\mathcal{V}}
\newcommand{\Z}{\mathbb{Z}}
\def\({\lt(}
\def\){\rt)}
\newcommand{\lt}{\left}
\newcommand{\rt}{\right}
\renewcommand{\L}[1]{{L_{#1}}}
\newcommand{\Lm}{\L{m}}
\newcommand{\Bes}{\mathcal{B}} 
\newcommand{\bes}{\Bes} 
\newcommand{\cuc}{\C_{\textrm{uc}}}
\newcommand{\Bor}{\mathscr{B}} 
\newcommand{\Bsp}{{\mathbf{B}}}
\newcommand{\Cspace}[1]{{\mathbf{B}L_{#1}}}
\newcommand{\Cspacem}{\Cspace{m}}
\newcommand{\blm}{\Cspacem}
\newcommand{\Ctimespace}[3]{{\C^{#1,0}L_{#2}({#3})}}
\newcommand{\CL}[3]{ {\C^{#1,0}L_{#2,#3}} }
\newcommand{\CLtmn}[1]{ \CL{\tau}{m}{n}(#1) }
\newcommand{\Ctimespacetaum}[1]{\Ctimespace{\tau}{m}{#1}}
\newcommand{\Ctimespacezerom}[1]{\Ctimespace{0}{m}{#1}}
\newcommand{\varz}{\rho}
\newcommand{\Di}[1]{|#1|}
\definecolor{Brown}{rgb}{.75,.5,.25}
\definecolor{DGreen}{rgb}{0,0.55,0}
\definecolor{Olive}{rgb}{0.41,0.55,0.13}
\definecolor{Red}{rgb}{1,0,0}
\begin{document}

\title{Well-posedness of stochastic heat equation with distributional drift and skew stochastic heat equation}

\author{Siva Athreya%
  \thanks{Indian Statistical Institute,  Statmath Unit, 8th Mile Mysore Road,
     Bangalore 560059, India.
      Email: \texttt{athreya@isibang.ac.in}}
\setcounter{footnote}{3}
  \and
 Oleg Butkovsky%
 \thanks{Weierstrass Institute, Mohrenstrasse 39, 10117 Berlin, FRG. Email: \texttt{oleg.butkovskiy@gmail.com}}
 \and
 Khoa  L\^e
 \thanks{Institut f\"ur Mathematik, Technische Universit\"at Berlin, Berlin, Germany. Email: \texttt{khoa.le.n@gmail.com}}
  \and
Leonid Mytnik%
 \thanks{Technion --- Israel Institute of Technology,
Faculty of Industrial Engineering and Management
 Haifa, 3200003, Israel.  Email: \texttt{leonid@ie.technion.ac.il}
}
}

\maketitle

\begin{abstract}
We study  stochastic reaction--diffusion equation
$$
\partial_tu_t(x)=\frac12 \partial^2_{xx}u_t(x)+b(u_t(x))+\dot{W}_{t}(x),    \quad t>0,\, x\in \0
$$
where $b$ is a generalized function in the Besov space $\Bes^\beta_{q,\infty}(\R)$,  $\0\subset\R$ and
$\dot W$ is a space-time white noise on $\R_+\times\0$. 
We introduce a notion of a solution to this equation and obtain existence and uniqueness of a strong solution whenever $\beta-1/q\ge-1$, $\beta>-1$ and $q\in[1,\infty]$. This class includes equations with $b$ being measures, in particular, $b=\delta_0$ which corresponds to the skewed stochastic heat equation. 
For $\beta-1/q > -3/2$, we obtain existence of a weak solution. Our results extend the work of Bass and Chen (2001)  to the framework of stochastic partial differential equations and generalize the results of Gy\"ongy and Pardoux (1993) to distributional drifts. To establish these results, we exploit the regularization effect of the white noise through a new strategy based on the stochastic sewing lemma introduced in L{\^e}~(2020).
\end{abstract}

\tableofcontents

\section{Introduction }
 While regularization by noise for ordinary differential equations (ODEs) is quite well understood by now, much less is known about regularization by noise for partial differential equations (PDEs).  The goal of this article is to analyze regularization by noise for parabolic PDEs and to build new robust techniques for studying this phenomenon. We consider stochastic heat equation with a drift (stochastic reaction--diffusion equation) 
\begin{equation}\label{SPDE}
	\left\{
	\begin{aligned}
		&\partial_t u_t(x)=\frac12 \partial^2_{xx}u_t(x) +b(u_t(x))+\dot{W}_{t}(x),\quad t\in(0,\T],\, x\in\0,\\
		&u(0,x)=u_0(x),
	\end{aligned}
	\right.
\end{equation}
where $b$ is a generalized function in the Besov space $\Bes^\beta_{q,\infty}(\R,\R)$,
$\beta \in\R$, $q\in[1,\infty]$, the domain  $\0$ is either $\I$ or $\R$, $\T>0$, 
$\dot W$ is space-time white noise on $[0,\T]\times\0$, and
$u_0: \0\to\R$ is a bounded measurable function. 
Note that for $\beta<0$ this equation is not well-posed in the standard sense: indeed in this case $b$ is not a function but only a distribution and thus the composition $b(u_t(x))$ is a priori not well-defined. We introduce a natural notion of a solution to this equation in the spirit of  \cite[Definition~2.1]{BC}. We show that equation~\eqref{SPDE} has a unique strong solution if $\beta-\frac1q\ge-1$, $\beta>-1$ and $q\in[1,\infty]$, see \Cref{T:mainresult}. This  includes equations where $b$ is measure, in particular, the skewed stochastic heat equation, which corresponds to the case $b=\kappa\delta_0$, $\kappa\in\R$. The latter equation is important for the stochastic interface models and appeared in \cite{BZ14} where its well-posedness was left open. We  resolve this problem in our paper, see \Cref{thm.radon,cor.skewspde}. 

Our results extend \cite{BC} to the framework of stochastic partial differential equations (SPDEs) and generalize  \cite{GP93a,GP93b} to singular drifts. We exploit the regularization effect of the white noise and develop a new proof strategy based on stochastic sewing \cite{MR4089788}.  
Furthermore, we give several extensions of the stochastic sewing lemma which allow singularities, critical exponents and usage of random controls. In particular, we extend to the stochastic setting deterministic sewing with controls, see, e.g., \cite{MR1654527,MR2091358,MR2261056,FZ18}. We would like to stress that in contrast to vast majority of regularization-by-noise papers for ODEs \cite{ver80,HS81, davie, CG16, bib:zz17} our method uses neither Girsanov transform nor Zvonkin transformation. These two popular techniques are not useful in our setting.

\medskip

It has been known since 1970s that ill-posed deterministic systems can become well-posed if a random noise is injected into the system. Consider the following simple example. The ODE $dX_t=b(X_t)dt$ with a bounded measurable vector field $b:\R^d\to\R^d$ is ill-posed. It might have infinitely many or no solutions in some specific cases. Yet, if this deterministic system is perturbed by a Brownian noise $B$, then the corresponding stochastic differential equation (SDE)
\begin{equation}\label{sdebr}
	dX_t = b(X_t)\,dt +  dB_t,\,\,X_0=x_0
\end{equation}
is well-posed and has a unique strong solution \cite{zvonkin74,ver80}.  This phenomenon is called \textit{regularization by noise}, see \cite{F11}.

Regularization by noise for ODEs has been studied extensively since the pioneering works of Zvonkin and Veretennikov mentioned above and many interesting results are available by now. Strong existence and uniqueness of solutions to \eqref{sdebr} for the case of possibly unbounded drifts satisfying only a certain  integrability condition was proved by Krylov and R\"ockner in \cite{kr_rock05}, see also related works \cite{Z2005,FGP,FF11}. Well-posedness of
\eqref{sdebr} in a stronger sense (path--by--path uniqueness) for the case of bounded $b$  was obtained   in the celebrated paper \cite{davie} of Davie.

Furthermore, it turned out that equation \eqref{sdebr} makes sense even if $b$ is a distribution. In this case, the term $b(X_t)$ is not defined and one has to define a notion of a solution to this equation. These can be done in a number of ways. Zhang and  Zhao \cite{bib:zz17} proved weak existence and uniqueness of solutions to \eqref{sdebr} for $b$ belonging to the Besov-H\"older space $\bes_{\infty,\infty}^\alpha$ with $\alpha>-1/2$. Bass and Chen \cite{BC} established strong existence and uniqueness of solutions to \eqref{sdebr} under the same assumptions in dimension one. A critical case when $b$ is the spatial white noise in one dimension is treated by Hu et al. \cite{MR3652414}. The specific case of $b=\kappa \delta_0$, $|\kappa|\le1$, which corresponds to the skew Brownian motion, was treated by Harisson and Shepp in \cite{HS81} and extended by Le Gall in \cite{LG84}.  

Let us mention that there is nothing special here about $B$ being a Gaussian or a Markov process:  indeed, regularization for ODEs driven by various other  noise processes (L\'evy noise, fractional Brownian noise) also holds, see, e.g., \cite{Pr12,CG16,KP20,HP20,MR4089788}.  

Unfortunately, most of the methods which are used in the ODE/SDE setting are not transferable to the PDE setting. Indeed, the most popular technique, the Zvonkin-Veretennikov transform \cite{zvonkin74,ver80,FF11,Pr12}, allows to pass from the analysis of the original SDE with irregular drift, to the analysis of a new SDE (called sometimes ``virtual equation'' \cite{bib:fir17}) whose drift and diffusion are easier to handle. Then well-posedness for the original SDE can be derived from the well-posedness of the new SDE.
However, to implement this technique, it is absolutely essential that the stochastic system has a good It\^o formula.  While the It\^o formula is also available for SPDEs (\cite{MR2227240,MR4053902}), it involves additional renormalized non-linear terms. This makes its application very difficult.

An alternative strategy was suggested in \cite{CG16,GG20}, in which the authors work directly with the original SDE, fixing a trajectory of the noise and viewing the equation as a non-linear Young equation.  The Girsanov theorem is pivotal to the whole approach: if the noise does not allow for a good version of Girsanov theorem, the obtained results are not optimal, see \cite[Lemma 10 and Remark 18]{GG20}. 
It seems challenging to extend this method to SPDEs with distributional drifts. One particular problem is that the the domain of the non-linear vector field is no longer of finite dimension but is a certain function space.

Thus, it is clear that the analysis of regularization-by-noise for PDEs requires a very different approach. One of the first results in this research area belongs to 
Gy\"ongy and Pardoux  \cite{GP93a,GP93b}. The authors used comparison theorems to establish existence and uniqueness of  (analytically) weak solutions to \eqref{SPDE}  for the case where the drift $b$ is the sum of a bounded function and an $L_{q}$-integrable function with $q>2$. Path--by--path uniqueness of solutions to \eqref{SPDE} for bounded $b$ was obtained recently in \cite{BM19}.

Bounebache and Zambotti \cite{BZ14} considered stochastic partial differential equations with measure valued drift. In particular, motivated by problems arising in the study of random interface models, see, e.g., \cite{F05}, they studied the \textit{skew stochastic heat equation}, i.e., \eqref{SPDE} with $b$
being a Dirac delta function.   Using 
Dirichlet form techniques, they obtained existence of a weak
solution. However existence and uniqueness of strong solution remained open.
Resolving this problem was one of the  motivations for our work.

From the above discussion, we note that there is a  gap between
the ODE and PDE settings. For ODEs numerous results treating
distributional drifts are available
\cite{HS81,BC,CG16,bib:zz17,PZ20}. On the other hand, almost no such
results were known for PDEs, note though the paper \cite{GPerk} and the discussion there. 
Our goal in this article is to construct a robust general method for proving strong
existence and uniqueness to \eqref{SPDE} in the case where the drift
$b$ is a Schwarz distribution. In particular, we treat the skew stochastic heat
equation.

Inspired by the finite dimensional setting \cite{BC,CG16} we define a
natural notion of a solution to \eqref{SPDE} in \Cref{Def:sol} and
show that \eqref{SPDE} has a unique strong solution when $b$ belongs
to the Besov space $\Bes^\beta_{q,\infty}(\R,\R)$,
$\beta-\frac1q\ge-1$, $\beta>-1$ and $q\in[1,\infty]$, see
\Cref{T:mainresult}, and when $b$ is a finite Radon measure,
\Cref{thm.radon}. We also prove strong convergence of smooth
approximations to \eqref{SPDE} in \Cref{T:approximations}. We establish strong existence and uniqueness of the skew stochastic heat equation in \Cref{cor.skewspde} and 
show that this equation appears naturally as a certain scaling limit of
``standard'' SPDEs where the drifts are continuous integrable functions, see
\Cref{cor.slimit}.

To obtain these results we develop a new strategy based on certain regularization
estimates for SPDEs, see \cref{L:main}. These estimates can be viewed as infinite-dimensional analogues of the corresponding Davie's bounds for SDEs  \cite[Proposition 2.1]{davie}, see also \cite[Lemma 5.8]{bib:zz17}. Note though that Davie's method involves exact moment computations and is not easily extended to the SPDE setting. Therefore to  obtain these regularization estimates, we extend and employ the stochastic sewing technique introduced originally in \cite{MR4089788}. We believe that these new stochastic sewing lemmas (\Cref{lem:B.Sew1,thm.critssl,lem:B.rcontrol}) form a very useful toolkit which might be of independent interest. The usage of regularization estimates are explained briefly in \cref{S:overview}.  

We conclude the introduction by commenting on the optimality of our results. It is known \cite{Lei} that for each fixed space point, the free stochastic heat equation (that is, equation \eqref{SPDE} with $b\equiv0$)  behaves ``qualitatively'' like a fractional Brownian motion (fBM) with the Hurst parameter $1/4$, denoted further by $B^{1/4}$. Therefore, one can expect that strong existence and uniqueness for equation \eqref{SPDE} would hold under the same conditions on $b$ as in the equation 
\begin{equation*}
dX_t=b(X_t)\,dt+\,dB^{1/4}_t,
\end{equation*}
that is $b\in\bes^{\beta}_q$, where $\beta-1/q>-1$, \cite[Theorem 1.13]{CG16}. This indeed turned out to be the case, see \Cref{T:mainresult}, even though the method of \cite{CG16} could not be transferred to the PDE setting. 
Note that this class of functions does not include the Dirac delta function, which lies in $\bes^{-1+1/q}_q$, $q\in[1,\infty]$. Therefore we had to come up with an additional argument to cover the case $\beta-1/q=-1$ as well (see \cref{L:condun}).


The rest of the paper is organized as follows. We present our main results and a brief overview of the proof strategy in \Cref{S:2}. Since the proofs are quite technical, for the convenience of the reader we split them in several steps. In
\cref{S:3} we prove the main results. The proofs are based on key propositions, which are stated also in \cref{S:3} and proved in \cref{S:5}.  Extensions of the stochastic sewing lemma are stated and proved in \Cref{sec.ssl}. The proofs of crucial regularity results are given in \cref{sec.proof_of_auxiliary_results}.
\cref{app.bes,app.aux,app.proof} contains auxiliary   technical results which we will use freely throughout the paper.

\medskip
\noindent\textbf{Convention on constants.}
Throughout the paper $C$ denotes a positive constant whose value may
change from line to line. All other constants will be denoted by $C_1,
C_2,\ldots$. They are all positive and their precise values are not
important. The dependence of constants on parameters if needed will be indicated, e.g, $C_\beta$ or $C(\beta)$.

\medskip
\noindent\textbf{Acknowledgements.}
The authors are very grateful to  Peter Friz for drawing their attention to deterministic sewing with controls and for pointing out reference \cite{FZ18}. The authors also would like to thank Samy Tindel and Lukas Wresch for useful discussions. OB would like to specially thank Nikolas Tapia, whose very useful and helpful technical advice eased and significantly sped up  the process of typing the article. Part of the work on the project has been done during the visits of the authors to Hausdorff Research Institute for Mathematics (HIM), Indian Statistical Institute, Imperial College London, and Technion. We thank them all for providing excellent working conditions, support and hospitality.
SA supported in part by MATRICS Grant MTR/2017/000769 and CPDA.
 OB has received funding from the European Research Council (ERC) under
the European Union's Horizon 2020 research and innovation program (grant agreement No. 683164) and from the DFG Research Unit FOR 2402. KL is supported in part by Martin Hairer's Leverhulme Trust leadership award and the Alexander von Humboldt Foundation during the preparing of the manuscript. LM is supported in part by ISF grant No. ISF 1704/18.

\section{Main results}\label{S:2}

\subsection{Well-posedness of the stochastic  heat equation with a distributional drift}
\label{sub.mainresults}
We begin by introducing the necessary notation. Let 
$\Bsp(\0)$ be the space of all real bounded measurable functions on $\0$.
 Let
$\C_b^\infty=\C^\infty_b(\0,\R)$ be the space of infinitely
differentiable real functions on $\0$ which are bounded and have bounded
derivatives. We denote by  $\C_{c}^\infty=\C^\infty_{c}(\0,\R)$ the set of functions in $\C^\infty_b(\0,\R)$ with compact supports. For $\beta\in(0,1]$, let $\C^\beta$ be
the space  of bounded H\"older continuous functions with exponent $\beta$.
For each $\beta\in\R$ and $q\in[1,\infty]$, let $\Bes^\beta_q$ denote the (nonhomogeneous) Besov space $\Bes^\beta_{q,\infty}(\R)$ of regularity $\beta$ and integrability $q$, see \cref{def.besov}.  We recall that 
for $\beta\in(0,1)$, the space  $\Bes^\beta_\infty$
coincides with the space $\C^\beta$ (see \cite[page 99]{bahouri}).
For $\beta\in(-1, 0)$ the space $\bes^\beta_\infty$ includes all derivatives (in the distributional
sense) of functions in $\C^{\beta + 1}$.

Let $g_t$, $p_t^{\per}$, $p_t^{\neu}$ be the free--space heat kernel, the heat kernel on $\I$ with periodic boundary conditions,  and  the heat kernel on $\I$ with the Neumann boundary conditions, respectively. That is,
\begin{align}
&g_t(x):=\frac{1}{\sqrt{2\pi t}}e^{-\frac{x^2}{2t}},\quad t>0,\,x\in\R;\label{heatsemigroupspace}\\
&p_t^\per(x,y):=\sum_{n\in\Z} g_t(x-y+n),\quad t>0,\,x,y\in\I;\label{def.pper}\\
&p_t^\neu(x,y):=\sum_{n\in\Z} (g_t(x-y+2n)+g_t(x+y+2n)),\quad t>0,\,x,y\in\I.\label{def.pneu}
\end{align}

Our main results are valid in  three different setups: when equation \eqref{SPDE} is considered on the domain $D=\R$; when \eqref{SPDE} is considered on the domain $D=\I$ with the  periodic boundary conditions; and  when \eqref{SPDE} is considered on $D=\I$ with the  Neumann boundary conditions. To simplify the notation and to ease the stating of the results we will use the notation $p$ for $g$, $p^\per$ or $p^\neu$ and $D$ will denote the corresponding domain.

\begin{Convention}\label{Conv}
From now on if not stated otherwise the pair  $(D,p)$ will stand for one of the three options: $(\R,g)$, $(\I,p^\per)$, or $(\I,p^\neu)$.
\end{Convention}

The specific choice of the domain  and of the boundary condition (out of the above three options) will not affect the results and arguments in most places of the paper. In very few places of the paper where the choice of the domain is important we will highlight it.

For bounded measurable functions $\phi\colon\0\to\R$, $t>0$ we put
\begin{align*}
&P_t\phi(x):=\int_{\0}p_t(x,y) \phi(y)\,dy, \quad x\in\0.
\end{align*}
It will  be convenient to denote the heat semigroup on $\R$ by
\begin{align*}
  &G_t\psi(x):=\int_{\R}g_t(x-y) \psi(y)\,dy, \quad x\in\R,  t >0,
\end{align*}
for all bounded measurable functions  $\psi\colon\R\to\R$.

Let $\T>0$ and let $(\Omega,\F,(\F_t)_{t\in[0,\T]},\P)$ be a filtered probability space. 
For each $m\in[1,\infty]$, the norm of a random variable $\xi$ in $L_m(\Omega)$ is denoted by
$\|\xi\|_{\Lm}$. Here, as usual, we use the convention $\|\xi\|_{L_\infty}:=\esssup_{\omega\in \Omega}|\xi(\omega)|$ when $m=\infty$. 
We recall that a random process $W\colon L_2(\0,dx)\times[0,\T]\times\Omega\to\R$ is called \textit{$(\F_t)$-white noise} if for any $\phi\in L_2(\0,dx)$ the process $(W_t(\phi))_{t\in[0,\T]}$ is an $(\F_t)$--Brownian motion with $\E W_t(\phi)^2=t\|\phi\|^2_{ L_2(\0,dx)}$
and $W_t(\phi)$ and $W_t(\psi)$ are independent whenever $\phi,\psi\in\L{2}(\0,dx)$ with $\int_\0 \phi(x)\psi(x)\,dx = 0$.

Set now 
\begin{equation}\label{def.V}
V_t(x):=\int_0^t\int_{\0}p_{t-r}(x,y)W(dr,dy),\quad t\ge0,\,x\in\0,
\end{equation}
where the integration in \eqref{def.V} is a stochastic
integration understood in the sense of Walsh (\cite[Chapter~2]{W86}).
	It is known that $V$ is a Gaussian random field adapted to $(\cff_t)$ and has a continuous version on $[0,\T]\times\0$ which we will use throughout the paper.
	It follows from \eqref{def.V} that $V$ has the local nondeterminism property
	\begin{equation}\label{est.V.lnd}
		\|V_t(x)-\E (V_t(x)|\cff_s)\|_{L_2}\ge \pi^{-\frac14} |t-s|^{\frac14} \quad\textrm{for every}\quad s\le t\tand x\in\0.
	\end{equation}

Let us note that we do not analyze in our article equation \eqref{SPDE} equipped with  the Dirichlet boundary conditions.
In this case, the right-hand side of \eqref{est.V.lnd} goes to $0$ as $x$ tends to the boundary of the domain.

The uniformity in $x$ in \eqref{est.V.lnd} plays a key role in our arguments. While it is possible to adapt our proofs to treat Dirichlet boundary condition as well (and we are convinced that our results hold in the setting),  we have deliberately decided not to focus on this case in order to emphasize how our approach works and to avoid additional technical difficulties.

Now let us give a notion of a solution to \eqref{SPDE}. It is inspired by the definition in finite dimensional setting in \cite[Definition 2.1]{BC}.
\begin{Definition}\label{Def:conv}
Let $f$ be a distribution in $\Bes^\beta_q$ with $\beta\in\R$ and $q\in[1,\infty]$. We say that a sequence of functions $(f_n)_{ n\in\Z_+}$ converges to $f$  in $\Bes^{\beta-}_q$ as $n\rightarrow \infty$  if $\sup_{n\in\Z_+}\|f_n\|_{\Bes^\beta_q}<\infty$ and 
\begin{equation*}
\lim_{n\rightarrow \infty} \| f_n-f\|_{\Bes^{\beta'}_q}=0,\quad \text{for any $\beta'<\beta$}.
\end{equation*}
\end{Definition}
It is clear that for any $f\in \Bes^\beta_q$, there is a sequence of functions $(f_n)_{ n\in\Z_+} \subset   \C^\infty_b$ which converges to $f$ in $\Bes^{\beta-}_q$ as $n\to\infty$. For example, one can take $f_n:=G_{1/n}f$, see \cref{lem.Gf}.

\begin{Definition} \label{Def:sol}
Let $\beta\in\R$, $q\in[1,\infty]$, $b\in\Bes_q^\beta$ and $\T>0$. Let $u_0\in\Bsp(\0)$. A measurable adapted process $u\colon(0,\T]\times\0\times\Omega\to\R$ is called a \textit{solution of  \eqref{SPDE}  with initial condition $u_0$}
  if there exists a process $K\colon[0,\T]\times\0\times\Omega\to\R$ such that
\begin{enumerate}[(1)]
	 \item $u_t(x)=P_tu_0(x)+K_t(x)+V_t(x)$ a.s., where $x\in\0$, $t\in(0,\T]$;
	 \item for any sequence of functions $(b^n)_{n\in\Z_+}$ in $\C_b^\infty$ converging to $b$ in $\Bes_q^{\beta-}$
 we have for any $N>0$
	$$
	\sup_{t\in[0,\T]}\sup_{\substack{x\in\0\\|x|\le N}}\lt|\int_0^t\int_\0 p_{t-r}(x,y) b^n(u_r(y))\,dy\,dr-K_t(x)\rt|\to0\quad \text{in probability as $n\to\infty$};
	$$
    \item a.s. the function $u$ is continuous on $(0,\T]\times\0$.
	\end{enumerate}
\end{Definition}

We note that \Cref{Def:sol} defines a solution to  equation \eqref{SPDE} in three different settings, see \Cref{Conv}. When $b\in\C^\beta$ with $\beta>0$, we can choose a sequence $(b^n)$ which converges to $b$ in uniformly.  Then it is immediate that \Cref{Def:sol} is equivalent to the usual notion of a mild solution of \eqref{SPDE}, that is, $\P$-almost surely
\begin{equation}\label{SPDEfunc}
	\Gtag{u_0;b}
	u_t(x)=P_tu_0(x)+\int_0^t\int_\0 p_{t-r}(x,y) b(u_r(y))dydr + V_t(x) \quad\forall (t,x)\in[0,\T]\times\0.
\end{equation}


We say that a solution $u \equiv \{u_t(x) : t \in (0,T_0], x \in \0\}$ is a strong solution to \eqref{SPDE} if 
it is adapted to the filtration $(\cff^W_t)$.  A weak solution of
\eqref{SPDE} is a couple $(u,W)$ on a complete filtered probability
space $(\Omega,{\cal G}, (\G_t)_{t \ge 0}, \P)$ such that $u$ is adapted to $({\cal G}_t)$, $W$ is $({\cal G}_t)$-white noise,  and $u$ is a solution
to (\ref{SPDE}). 
We say that strong uniqueness holds
for \eqref{SPDE} if whenever $u$ and $\wt{u}$ are two strong solutions
of \eqref{SPDE} defined on the same probability space with the same initial condition
$u_0$, then $$\P(u_t(x) = \wt{u}_t(x) \mbox{ for all } t \in (0,T_0],\,\, x
\in \0) = 1.$$

Consider the following class of solutions.
\begin{Definition}\label{D:Def15}
Let $\kappa\in[0, 1]$. We say that a solution $u$ to SPDE \eqref{SPDE} belongs to the \textit{class $\V(\kappa)$} if for any $m\ge2$, $\sup_{(t,x)\in(0,\T]\times\0}\|u_t(x)\|_{L_m}<\infty$ and 
$$
\sup_{0< s\le t\le \T}\sup_{x\in\0}\frac{\|u_t(x)-V_t(x)-(P_{t-s}[u_s-V_s](x))\|_\Lm}{|t-s|^\kappa}<\infty.
$$
\end{Definition}
\begin{remark}
Recalling that $u_t=P_tu_0+K_t+V_t$, $t>0$, we see that the numerator in \Cref{D:Def15} is just  $\|K_t(x)-P_{t-s}K_s(x)\|_\Lm$. Thus, class $\V(\kappa)$ contains solutions of \eqref{SPDE} such that the moments of their drifts satisfy certain regularity conditions.
\end{remark}

We are now ready to present our main result. Fix $\T>0$ and recall \Cref{Conv}.

\begin{Theorem}\label{T:mainresult} Let $\beta\in\R$, $q\in[1,\infty]$, $b \in \Bes_q^\beta$ and $u_0\in\Bsp(D)$.
\begin{enumerate}[{\rm(i)}]
 \item If $\beta-\frac1q> -\frac32$, then there exists a weak solution to equation \eqref{SPDE} and this solution is in the class $\V(\kappa)$ for $\kappa\in(0,1+\frac{\beta\wedge0}4-\frac1{4q}]\setminus\{1\}$.
 \item If $\beta-\frac1q\ge -1$ and $\beta>-1$, then in the class $\V(3/4)$ there exists a unique strong solution to equation \eqref{SPDE}.
\end{enumerate}
\end{Theorem}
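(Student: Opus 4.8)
The plan is to reduce both existence and uniqueness to quantitative regularization estimates for the drift functional, obtained via the stochastic sewing lemma. For a smooth drift $b^n$, the mild formulation means the key object to control is the ``drift increment''
\[
A_{s,t}(x):=\int_s^t\int_\0 p_{t-r}(x,y)\,b^n(u_r(y))\,dy\,dr,
\]
together with its conditional version $\E[A_{s,t}(x)\mid\F_s]$. The heuristic is that, since $r\mapsto V_r(y)$ behaves like an fBM of Hurst index $1/4$ uniformly in $y$ (this is the content of the local-nondeterminism bound \eqref{est.V.lnd}), the expected drift over a short interval $[s,t]$ gains regularity: $\|\E[A_{s,t}(x)\mid\F_s]\|_{L_m}\lesssim \|b^n\|_{\Bes^\beta_q}\,|t-s|^{1+(\beta\wedge 0)/4-1/(4q)}$, while the martingale part is of size $|t-s|^{1/2}$ after summation. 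Feeding these two germ estimates into the stochastic sewing lemma (Lemma~\ref{lem:B.Sew1} / Theorem~\ref{thm.critssl}) yields, \emph{uniformly in $n$ and in $x$}, a bound
\[
\Bigl\|\int_s^t\int_\0 p_{t-r}(x,y)\,b^n(u_r(y))\,dy\,dr\Bigr\|_{L_m}\le C\,\|b^n\|_{\Bes^\beta_q}\,|t-s|^{\kappa},
\]
with $\kappa=1+(\beta\wedge 0)/4-1/(4q)$ (this is Lemma~\ref{L:main}, stated later). Crucially the same sewing machinery applied to the \emph{difference} $b^n(u_r(y))-b^m(\tilde u_r(y))$ for two solutions (or two approximation levels) gives a contractive-type estimate controlling $\|K_t(x)-\tilde K_t(x)\|_{L_m}$ by a quantity involving $\sup_{r,y}\|u_r(y)-\tilde u_r(y)\|_{L_m}$ with a small time prefactor.

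For part~(i), existence of a weak solution: I would first mollify, solving \eqref{SPDEfunc} with $b^n\in\C^\infty_b$ (classical theory, e.g.\ along the lines of \cite{GP93a}), obtaining solutions $u^n$. The a~priori bound just described shows $\sup_n\sup_{t,x}\|u^n_t(x)\|_{L_m}<\infty$ and $\sup_n\sup_{s\le t,x}\|K^n_t(x)-P_{t-s}K^n_s(x)\|_{L_m}\le C|t-s|^\kappa$, i.e.\ the laws of $u^n$ are tight in an appropriate space of continuous functions (Kolmogorov criterion in the time--space variables, using $\kappa$ together with the regularity of $V$). Passing to a subsequential limit in distribution and using a Skorokhod representation, I would identify the limit: the uniform-in-$n$ drift bound, plus convergence $b^n\to b$ in $\Bes^{\beta-}_q$ and a continuity estimate of the map $(f,\text{path})\mapsto \int p\,f(u)$ in the Besov norm of $f$ (this is where $\beta-1/q>-3/2$ is exactly what is needed for the sewing estimate to close with a positive power of $|t-s|$ in the \emph{difference} term), shows the limit $(u,W)$ satisfies Definition~\ref{Def:sol}, and by construction it lies in $\V(\kappa)$.

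For part~(ii), strong existence and uniqueness in $\V(3/4)$: when $\beta-1/q\ge -1$ and $\beta>-1$ one has $\kappa\ge 3/4$, so any weak solution constructed above lies in $\V(3/4)$; it remains to prove pathwise uniqueness in that class, after which a Yamada--Watanabe-type argument upgrades weak existence + pathwise uniqueness to strong existence and uniqueness. Pathwise uniqueness follows from the difference estimate: for two solutions $u,\tilde u$ in $\V(3/4)$ on the same noise, writing $\psi_t:=\sup_{x}\|u_t(x)-\tilde u_t(x)\|_{L_m}$ one applies the stochastic-sewing difference bound on a short interval to get $\sup_{t\le \tau}\psi_t\le C\tau^{\gamma}\sup_{t\le\tau}\psi_t$ for some $\gamma>0$, forcing $\psi\equiv 0$ on $[0,\tau]$ and then, by iteration, on $[0,\T]$; a buckling/Gronwall argument handles the global statement. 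The regularity exponent $3/4$ is used precisely to ensure the relevant Besov-duality pairing of $b$ against the (random) occupation density of $u$ is well-defined with the required Hölder regularity in time.

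The main obstacle is the borderline case $\beta-1/q=-1$ (which includes $b=\kappa\delta_0$): here $b\notin\Bes^\beta_q$ with $\beta-1/q>-1$, so the generic sewing estimate is critical and the difference bound no longer comes with a strictly positive power of $|t-s|$, so the contraction argument does not close directly. This is resolved by the separate conditional-uniqueness argument (Lemma~\ref{L:condun} later in the paper), which I would invoke: one conditions on the occupation measure / local time of the solution and exploits its a~priori Hölder regularity to recover just enough room to run the fixed-point argument. The other technical point requiring care is establishing the uniform-in-$x$ version of all estimates on the whole line $D=\R$ (as opposed to $\I$), which is where \eqref{est.V.lnd} with its $x$-uniform constant is indispensable — the argument would break for Dirichlet boundary conditions, as the authors note.
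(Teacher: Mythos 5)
Your overall architecture — regularization estimates via stochastic sewing, mollification, tightness, Skorokhod representation, and a Yamada--Watanabe-type upgrade — matches the paper's strategy, and your existence argument for part (i) is essentially the paper's (\cref{lem.apriori,lem.uun,lem.limsol}). The genuine gap is in part (ii), in two places.

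First, the contraction you propose, $\sup_{t\le\tau}\psi_t\le C\tau^{\gamma}\sup_{t\le\tau}\psi_t$ with $\gamma>0$, is exactly what fails at the endpoint $\beta-\tfrac1q=-1$ (which the theorem includes): the difference estimate there carries no positive power of $|t-s|$ beyond the linear one. You defer to \cref{L:condun}, but your description of how it works (conditioning on the occupation measure/local time) is not what happens. The actual mechanism is the critical stochastic sewing lemma (\cref{thm.critssl}), which trades the lost power of $|t-s|$ for a logarithmic factor, yielding a bound of the form $\|z\|\,|\log\|z\|\,|\,(t-s)$ plus subcritical terms (see \eqref{BoundT02}--\eqref{Bound2}); uniqueness then follows not from a contraction but from an Osgood/Davie-type iteration over the level sets $\{t:\|z_t\|_{\Cspacem}=2^{-k}\}$, showing the waiting times satisfy $t_k-t_{k+1}\gtrsim(1+k)^{-1}$ and hence cannot sum to a finite horizon. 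Moreover, to even close that argument one must control a second, rough-path-type remainder $R_{s,t}$ alongside $z$, which your sketch does not anticipate.

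Second, the uniqueness in \cref{L:condun} is \emph{asymmetric}: it compares a generic solution in $\V(3/4)$ against one satisfying the much stronger conditional essential-supremum bound \eqref{condassump} (equivalently, finiteness of \eqref{so-nhan}). For an arbitrary solution in $\V(3/4)$ this finiteness is not known, so ``pathwise uniqueness in the class $\V(3/4)$'' for arbitrary pairs is not available directly. This forces the paper's order of operations: first build, via the Gy\"ongy--Krylov lemma (\cref{L:GP}) and the stability result, one distinguished strong solution inheriting \eqref{condassump} from the uniform a priori bound of \cref{lem.apriori}; only then compare every other $\V(3/4)$ solution to it. Your sketch treats strong existence as a consequence of pathwise uniqueness, whereas in the paper the two are established jointly and in the reverse logical order. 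Fixing your write-up requires making the conditional norm \eqref{condassump} explicit and propagating it through the limiting procedure.
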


\begin{remark}\label{R:Leonid}

It follows from the proof of \Cref{T:mainresult} (see \cref{L:condun}) that in the case $\beta-\frac1q\ge -1$, $\beta>-1$, condition (2) of \Cref{Def:sol} can be relaxed. Namely, if a  measurable adapted process $u\colon(0,\T]\times\0\times\Omega\to\R$ satisfies conditions (1) and (3) of \Cref{Def:sol}, belongs to $\V(3/4)$, and satisfies the following weaker condition (2$'$) 
\begin{enumerate}[label=(2$'$)]
\item\label{Lcond} there exists a sequence of functions $b^n\in\C_b^\infty$ converging to $b$ in $\Bes_q^{\beta-}$ such that for any $t\in[0,\T]$, 
$x\in\0$, we have
$$
\int_0^t\int_\0 p_{t-r}(x,y) b^n(u_r(y))\,dy\,dr\to K_t(x)\,\,\,\,\text{in probability as $n\to\infty$},
$$
\end{enumerate}
\noindent then it satisfies a stronger condition (2)  of \Cref{Def:sol}
for \textit{any} sequence of smooth approximations 
$b^n\to b$ in $\Bes_q^{\beta-}$.
\end{remark}

Note that the additional assumption in \Cref{T:mainresult}(ii) that the solution lies in $\V(3/4)$ is a natural extension to the SPDE setting of a very similar  condition arising in the analysis of SDEs with the distributional drift. It appears in \cite[Definition~2.1]{BC}, \cite[Definition~3.1 and Corollary~5.3]{bib:zz17}, \cite[Theorem~2.3]{ABM2020}, \cite[Lemma~31]{HP20}.

Since for any $q\in[1,\infty]$, $L_q(\R)$ is continuously embedded in $\Bes^0_q(\R)$ (\cite[Proposition 2.39]{bahouri}), \Cref{T:mainresult} complements the corresponding results in \cite{GP93a,GP93b}. Namely, \Cref{T:mainresult}(ii) allows $L_1(\R)$-integrable drifts, while the aforementioned papers requires the drift to be $L_{q}(\R)$-integrable for some $q>2$. Note that the drift $b$ in  \cite{GP93a,GP93b} can also depend on $(t,x)$. It is clear  that our method can be adapted to this setting; however, for clarity and to highlight the main ideas, we only consider equations of the type \eqref{SPDE} herein. 
	
Since signed measures belong to $\bes^0_1$ (\cite[Prop. 2.39]{bahouri}), \cref{T:mainresult}(ii) is also applicable for this class.
Other specific cases of drift $b$ for which \eqref{SPDE} has a  unique strong solution include $b(u)=|u|^{-\sigma}$, $\sigma\in(-1,0)$ and $b(u)=\zeta^{-1}(u)$, where $\zeta^{-1}$ is the Cauchy principal value of $1/u$, defined in \eqref{def.zeta1} below. This is due to the fact that $|\cdot|^{-\sigma}$ belongs to $\Bes^{0}_{1/\sigma}$ while $\zeta^{-1}$ belongs to $\Bes^{-1/2}_2$ (see \Cref{lem.principalvalue}).
In the case when $b$ is a finite non-negative measure on $\R$, we have the following improved result. 
\begin{Theorem}\label{thm.radon} Let $b$ be a finite non-negative Radon measure. 
 Then for any bounded initial condition $u_0$ equation  \eqref{SPDE} has  a unique strong solution.
\end{Theorem}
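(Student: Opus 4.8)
\textbf{Proof proposal for Theorem~\ref{thm.radon}.}

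The plan is to deduce this from \Cref{T:mainresult}(ii) together with an additional density/approximation argument that exploits the positivity of $b$. A finite non-negative Radon measure $\mu$ on $\R$ lies in $\Bes^0_1$ (by \cite[Prop.~2.39]{bahouri}), so it sits exactly on the critical line $\beta-1/q=-1$ with $\beta=0>-1$, $q=1$. Hence \Cref{T:mainresult}(ii) already gives a unique strong solution in the class $\V(3/4)$ provided we work with $b=\mu$ directly; the only thing that is genuinely new here, compared to the general critical case, is that we want to \emph{drop} the a priori restriction to $\V(3/4)$ and obtain uniqueness among \emph{all} solutions in the sense of \Cref{Def:sol}. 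The mechanism that makes this possible is that for a \emph{non-negative} measure the Feynman--Kac / comparison structure forces every solution to have a drift term $K$ that is monotone and Lipschitz-in-time in the right sense, i.e. automatically in $\V(3/4)$.

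First I would record the regularization estimate. By \cref{L:main} (the SPDE analogue of Davie's bound), for smooth approximations $b^n=G_{1/n}\mu$ — which converge to $\mu$ in $\Bes^{0-}_1$ by \cref{lem.Gf} and satisfy $\sup_n\|b^n\|_{\Bes^0_1}\le\|\mu\|_{TV}$ — one controls increments of $\int_s^t\!\!\int p_{t-r}(x,y)b^n(u_r(y))\,dy\,dr$ in $L_m$ by $C\|b^n\|_{\Bes^0_1}\,|t-s|^{3/4}$, uniformly in $n$, $x$, and along any solution $u$. Passing to the limit, any process $K$ arising in \Cref{Def:sol} with $b=\mu$ automatically satisfies the $\V(3/4)$ bound for the increments $\|K_t(x)-P_{t-s}K_s(x)\|_{\Lm}$; the moment bound $\sup_{t,x}\|u_t(x)\|_{L_m}<\infty$ is obtained the same way, starting from $u=P_tu_0+K_t+V_t$ and the uniform bound on $\|K\|$. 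Thus every solution in the sense of \Cref{Def:sol} lies in $\V(3/4)$, and uniqueness for all solutions then follows from the uniqueness statement in \Cref{T:mainresult}(ii). Existence is likewise immediate: \Cref{T:mainresult}(ii) (whose hypotheses $\beta-1/q=-1$, $\beta=0>-1$ hold) produces a strong solution, and by the previous sentence it lies in $\V(3/4)$, so it is the unique one.

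The one point requiring care — and the step I expect to be the main obstacle — is verifying that the a priori $\V(3/4)$-type bound on $K$ really is available for \emph{every} solution, not just for the particular solution constructed in the proof of \Cref{T:mainresult}. The regularization estimate \cref{L:main} is a statement that applies to any adapted field of the form $u=P_tu_0+K_t+V_t$ with $K$ arising as the (probability) limit of the mollified drift integrals, using only that $V$ has the local nondeterminism property \eqref{est.V.lnd} and that $b^n$ is bounded in $\Bes^0_1$; so in principle it applies here. One must check that the stochastic-sewing input in \cref{L:main} does not secretly presuppose membership in some $\V(\kappa)$ — i.e. that the bound is genuinely self-improving / bootstrapping from the mere a.s. continuity in condition~(3) of \Cref{Def:sol}. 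If it does presuppose such regularity, the fallback is the classical route: approximate the non-negative measure $\mu$ by bounded continuous non-negative functions $b^n\uparrow$ (or by $G_{1/n}\mu$), note that for these the SPDE \Gref{u_0;b^n} has a classical mild solution $u^n$ by \cite{GP93a,GP93b}, use the comparison theorem for the stochastic heat equation (monotonicity in the drift, valid because $b^n\ge0$) to get monotone convergence $u^n\uparrow u$, identify the limit $K=\lim K^n$ as non-decreasing in the obvious partial order and Hölder-$3/4$ in time via \cref{L:main} applied uniformly along the sequence, and conclude that the limit is a solution lying in $\V(3/4)$; uniqueness again comes from \Cref{T:mainresult}(ii). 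Either way the non-negativity of the measure is what is used — in the first route to get the uniform $\Bes^0_1$ bound, in the second additionally to run the comparison argument — so the hypothesis in \Cref{thm.radon} is exactly what the proof needs.
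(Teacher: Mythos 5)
Your overall skeleton is the same as the paper's: get existence and uniqueness within $\V(3/4)$ from \Cref{T:mainresult}(ii), then argue that for a non-negative measure \emph{every} solution in the sense of \Cref{Def:sol} automatically lies in $\V(3/4)$. The problem is that the second step — which is the entire content of \Cref{thm.radon} beyond \Cref{T:mainresult}(ii) — is not actually established by either of your two routes. Your first route is circular: the estimate \eqref{Bound1} of \cref{L:main} carries the seminorm $[\psi]_{\Ctimespace{\tau}{m,n}{[S,T]}}$ on its right-hand side, and condition \eqref{gammaptaucond} with $\gamma-1/p=-1$ (the regularity of a measure viewed in $\Bes^{-1}_\infty$) forces $\tau>1/2$; so the bound is vacuous unless one already knows the solution belongs to some $\V(\tau)$ with $\tau>1/2$, which is precisely the unknown. (For bounded drifts this seminorm is trivially finite since $|\psi_t-P_{t-s}\psi_s|\le\|f\|_\infty|t-s|$, which is why \cref{lem.apriori} can bootstrap; for an arbitrary solution with measure drift there is no such a priori input.) Your fallback via comparison and monotone approximation only constructs \emph{one particular} solution lying in $\V(3/4)$ — something \Cref{T:mainresult}(ii) already provides — and says nothing about an arbitrary competitor solution satisfying \Cref{Def:sol}.

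The paper closes this gap with \cref{lem.1var}, whose mechanism is different from anything in your proposal. Non-negativity of $b$ gives the monotonicity $P_{T-t}K_t(x)\ge P_{T-s}K_s(x)$ for $s\le t\le T$, so $\lambda^T_{s,t}(x):=P_{T-t}K_t(x)-P_{T-s}K_s(x)$ is a \emph{random control} in the sense of \cref{def.control}. Applying the stochastic sewing lemma with random controls (\cref{lem:B.rcontrol}) to the germ $A^{T,n}_{s,t}(x)=\int_s^t\int_\0 p_{T-r}(x,y)b^n(V_r(y)+P_{r-s}\psi_s(y))\,dy\,dr$, with the one-sided bound $|\E^u\delta A^{T,n}_{s,u,t}|\le C\lambda^T_{s,u}(x)(t-u)^{1/2}$, yields the pathwise inequality $K^n_t(x)-P_{t-s}K^n_s(x)\le C\,[K_t(x)-P_{t-s}K_s(x)]\,(t-s)^{1/2}+L^n_{s,t}(x)$ with $\|L^n_{s,t}(x)\|_{\Lm}\le C|t-s|^{3/4}$ uniformly in $n$. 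Passing to the limit and absorbing the first term for $t-s$ small gives $K_t(x)-P_{t-s}K_s(x)\le 2L_{s,t}(x)$, hence the $\V(3/4)$ bound for the arbitrary solution. If you want to repair your proof you need an argument of this self-improving type; neither \cref{L:main} applied directly nor a comparison-based construction will do it.
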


\begin{Corollary}\label{cor.skewspde}  The skew stochastic heat equation, that is equation \eqref{SPDE} with $b=\kappa\delta_0$, $\kappa\in\R$,  has a unique strong  solution for every bounded measurable initial condition $u_0$.
\end{Corollary}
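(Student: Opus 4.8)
The plan is to derive \Cref{cor.skewspde} directly from \Cref{thm.radon} by a decomposition of the signed measure $\kappa\delta_0$ into its positive and negative parts, rather than proving it from scratch. Write $\kappa\delta_0 = \kappa^+\delta_0 - \kappa^-\delta_0$ where $\kappa^+,\kappa^-\ge 0$; only one of the two summands is nonzero, so without loss of generality we may treat the cases $\kappa\ge 0$ and $\kappa<0$ separately. Since $\delta_0$ is a finite non-negative Radon measure, the case $\kappa\ge 0$ is an immediate instance of \Cref{thm.radon}, which gives existence and uniqueness of a strong solution for any bounded measurable $u_0$. So the only content is the case $\kappa<0$, i.e. $b = -|\kappa|\delta_0$, which is a finite non-positive measure.

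For the negative case I would exploit a symmetry of the equation under spatial reflection of the solution. Observe that if $u$ solves \eqref{SPDE} with drift $b$ and noise $\dot W$, then $v:=-u$ formally solves the same equation with drift $\tilde b(\cdot):=-b(-\cdot)$ and driving noise $-\dot W$, which is again a space-time white noise (the law of $W$ is invariant under $W\mapsto -W$). For $b=\kappa\delta_0$ with $\kappa<0$, the reflected drift is $\tilde b(\cdot) = -\kappa\delta_0(-\cdot) = -\kappa\,\delta_0 = |\kappa|\delta_0$, a finite non-negative measure. Thus one transfers existence and uniqueness for $b=\kappa\delta_0$, $\kappa<0$, to existence and uniqueness for $b=|\kappa|\delta_0$, which is covered by \Cref{thm.radon}. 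The key technical point to verify is that this reflection map is compatible with the notion of solution in \Cref{Def:sol}: given a smooth approximating sequence $b^n\to b$ in $\Bes_q^{\beta-}$, the reflected sequence $\tilde b^n(\cdot):=-b^n(-\cdot)$ is smooth, converges to $\tilde b$ in $\Bes_q^{\beta-}$ (the Besov norms are reflection-invariant), and the convolution integral in condition (2) is mapped correctly, using that the heat kernel $p_{t-r}(x,y)$ is even in $(x,y)$ in all three settings of \Cref{Conv} (for $D=\R$ this is clear; for $D=\I$ with periodic or Neumann conditions the kernels \eqref{def.pper}, \eqref{def.pneu} are symmetric under $x\mapsto -x$, $y\mapsto -y$). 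One also checks the correspondence of the class $\V(\kappa')$ under $u\mapsto -u$, which is immediate since $\|{-\xi}\|_{L_m}=\|\xi\|_{L_m}$ and $P_t$ commutes with negation, and that strong adaptedness is preserved since $\cff^W_t = \cff^{-W}_t$.

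The main obstacle I anticipate is purely bookkeeping: making precise that the reflection $u\mapsto -u$, $W\mapsto -W$ sets up a genuine bijection between solutions (in the sense of \Cref{Def:sol}) of \eqref{SPDE} with drift $\kappa\delta_0$ and solutions with drift $|\kappa|\delta_0$, carrying strong solutions to strong solutions and preserving membership in $\V(3/4)$, so that uniqueness is genuinely transferred and not just existence. Once this dictionary is established, the corollary follows: existence and strong uniqueness for $|\kappa|\delta_0$ from \Cref{thm.radon} yield the same for $\kappa\delta_0$. An alternative, if one prefers to avoid the reflection argument, is to note that $-|\kappa|\delta_0$ is handled by the general machinery behind \Cref{thm.radon} with only sign changes in the relevant estimates; but the reflection route is cleaner and I would present that.
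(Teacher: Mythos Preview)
Your approach is correct and fills a point the paper leaves implicit: the paper provides no separate proof of \Cref{cor.skewspde}, treating it as immediate from \Cref{thm.radon}, but strictly speaking \Cref{thm.radon} only covers $\kappa\ge 0$. Your reflection $u\mapsto -u$, $W\mapsto -W$, $b(\cdot)\mapsto -b(-\cdot)$ cleanly reduces $\kappa<0$ to $\kappa>0$, and your checks on adaptedness, the class $\V(3/4)$, and the approximation condition in \Cref{Def:sol} are the right ones. The alternative you mention at the end --- redoing \Cref{lem.1var} for a non-positive measure with reversed monotonicity, so that $\lambda^T_{s,t}(x)=P_{T-s}K_s(x)-P_{T-t}K_t(x)$ becomes the random control --- also works and is presumably what the paper has in mind; either route is fine.

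One correction: your remark that the heat kernel $p_{t-r}(x,y)$ is ``even in $(x,y)$'' is both unnecessary and, for $D=\I$, not meaningful (the map $x\mapsto -x$ does not preserve $\I$). The reflection you are using is in the \emph{target} space of $u$, not the spatial domain. The identity
\[
-\int_0^t\!\!\int_\0 p_{t-r}(x,y)\,b^n(u_r(y))\,dy\,dr
=\int_0^t\!\!\int_\0 p_{t-r}(x,y)\,\tilde b^n(v_r(y))\,dy\,dr,
\quad \tilde b^n(z):=-b^n(-z),\ v:=-u,
\]
uses no symmetry of $p$ whatsoever; the kernel is untouched. Drop that sentence and the argument is clean.
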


Note that in \cref{thm.radon,cor.skewspde}, the assumption $u\in\V(3/4)$ is not required.

Our next result is a  stability theorem. 
Let $(b^n)_{n\in\Z_+}$ be a smooth approximation of~$b$. \Cref{T:approximations} shows that a  solution to SPDE \eqref{SPDE} with smooth drift $b^n$ converges as $n\to\infty$  and that the limit does not depend on the particular choice of the approximating sequence. 
Such solutions are sometimes called ``constructable solutions'' \cite[Definition~2.2]{GP93b}. We show that in our setting ``constructable solutions'' coincide with the standard solutions defined above.

\begin{Theorem}\label{T:approximations} Let $\beta\in\R$, $q\in[1,\infty]$. Suppose that 
$\beta-\frac1q\ge -1$ and $\beta>-1$. Let $b \in \Bes_q^\beta$, $u_0\in\Bsp(\0)$. Let $(b^n)_{n\in\Z_+}$ be a sequence of bounded continuous functions converging in $\Bes^{\beta-}_q$ to $b$. Let $(u_0^n)_{n\in\Z_+}$ be a sequence of functions from $\Bsp(\0)$ converging to $u_0$ uniformly on $\0$. 

Let $u^n$ be a strong solution to \Gref{u_0^n;b^n}. Then there exists a measurable function  $u\colon[0,\T]\times\0\times\Omega\to\R$ such that
\begin{enumerate}[{\rm(1)}] 
\item for any $N>0$ we have 
\begin{equation}\label{conprobt12}
	\sup_{t\in(0,\T]}\sup_{\substack{x\in\0\\|x|\le N}}|u^n_t(x)-u_t(x)|\to0\quad \text{in probability as $n\to\infty$};
\end{equation}
\item $u$ is a strong solution to \eqref{SPDE} with the initial condition $u_0$;
\item   $u$ satisfies
\begin{align}\label{est.uLmi}
	\sup_{0<s<t\le T}\sup_{x\in\0}\esssup_{\omega\in \Omega}\frac{ \E\left(|u_t(x)-V_t(x)-P_{t-s}(u_s-V_s)(x)|^m|\cff_s\right)^{1/m}}{|t-s|^{3/4}}<\infty
\end{align}
for every $m\ge1$. In particular, $u$ belongs to $\V(3/4)$.
\end{enumerate}
\end{Theorem}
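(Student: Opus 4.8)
The plan is to prove \Cref{T:approximations} by establishing that the sequence $(u^n)$ of strong solutions is Cauchy in probability (in the appropriate topology), identifying the limit, and then checking that the limit satisfies \Cref{Def:sol}. The heart of the matter is the \emph{a priori} regularization estimate \cref{L:main} (and its companion propositions in \cref{S:3}): these furnish uniform-in-$n$ bounds of the form
\[
\sup_{0<s<t\le\T}\sup_{x\in\0}\Bigl\|\E\bigl(|K^n_t(x)-P_{t-s}K^n_s(x)|^m\mid\cff_s\bigr)^{1/m}\Bigr\|_{L_\infty}\le C|t-s|^{3/4},
\]
where $K^n$ is the drift part of $u^n$, the constant $C$ depending only on $\|b^n\|_{\Bes^\beta_q}\le\sup_n\|b^n\|_{\Bes^\beta_q}<\infty$, on $\|u^n_0\|_\infty\le\sup_n\|u^n_0\|_\infty<\infty$, and on $\T$. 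Such bounds are exactly what \Cref{D:Def15} demands, so they immediately give (3) for each $u^n$ with a uniform constant, and by a Kolmogorov-type argument they give uniform moment bounds and uniform spatial/temporal modulus-of-continuity estimates for the family $\{K^n\}_{n}$ on compact sets $[0,\T]\times\{|x|\le N\}$. Combined with the fact that $V$ is a fixed continuous field and $P_tu_0^n\to P_tu_0$ uniformly, this yields tightness of $\{u^n\}$ in $\C((0,\T]\times\0)$ (with the topology of uniform convergence on compacts).

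Next I would upgrade tightness to convergence in probability. The natural route is to run the stochastic-sewing/regularization machinery on the \emph{difference} $u^n-u^m$: writing $u^n_t(x)-u^m_t(x)=P_t(u^n_0-u^m_0)(x)+(K^n_t(x)-K^m_t(x))$, one estimates the drift difference using the same sewing lemmas (\Cref{lem:B.Sew1,thm.critssl,lem:B.rcontrol}), exploiting that $b^n-b^m$ is small in $\Bes^{\beta'}_q$ for every $\beta'<\beta$ and that both $u^n,u^m$ live in $\V(3/4)$ with uniform constants. This should produce a bound
\[
\sup_{t\in(0,\T]}\sup_{|x|\le N}\|u^n_t(x)-u^m_t(x)\|_{L_m}\le C_N\Bigl(\|u^n_0-u^m_0\|_\infty+\sup_{k\ge\min(n,m)}\|b^k-b\|_{\Bes^{\beta'}_q}+\text{(remainder)}\Bigr),
\]
or at least a contraction-type inequality allowing a Gronwall/fixed-point closure (cf.\ the uniqueness argument behind \Cref{T:mainresult}(ii)). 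This makes $(u^n)$ Cauchy in probability uniformly on compacts, hence \eqref{conprobt12} holds for a limit $u$, which is measurable and adapted to $(\cff^W_t)$ since each $u^n$ is. Estimate \eqref{est.uLmi} for $u$ then follows by passing to the limit in the uniform conditional bound for $u^n$ (using Fatou for conditional expectations together with the a.s.\ convergence along a subsequence), which in particular places $u$ in $\V(3/4)$.

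Finally I would verify that $u$ is a strong solution in the sense of \Cref{Def:sol}. Part (1) of the definition holds with $K_t(x):=u_t(x)-P_tu_0(x)-V_t(x)=\lim_n K^n_t(x)$; part (3) (continuity on $(0,\T]\times\0$) follows from the locally uniform convergence together with the equicontinuity established above. The delicate point is part (2): for an \emph{arbitrary} smooth approximating sequence $(\tilde b^j)\to b$ in $\Bes^{\beta-}_q$ one must show $\int_0^t\!\int_\0 p_{t-r}(x,y)\tilde b^j(u_r(y))\,dy\,dr\to K_t(x)$ in probability, uniformly on compacts. Here I would argue in two stages: first, the regularization estimates show that the map "drift $f\mapsto$ its sewn integral against the solution" is continuous from $\Bes^{\beta-}_q$ into the relevant space, so $\int p_{t-\cdot}\tilde b^j(u_\cdot)\to\int p_{t-\cdot}\tilde b^{j'}(u_\cdot)$ as both tend to $b$; second, along the specific sequence $b^n$ one has $\int p_{t-\cdot}b^n(u^n_\cdot)=K^n_t\to K_t$, and one interchanges the two limits (in $n$ and in the approximation index) using the uniform-in-$n$ estimates — this is precisely the content of \Cref{R:Leonid}, invoking \cref{L:condun} to pass from the weaker condition (2$'$) to the full condition (2).

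The main obstacle I anticipate is this interchange-of-limits step underpinning part (2) of \Cref{Def:sol}: one is comparing $b^n(u^n_r(y))$ (evaluated along the approximating solution) with $\tilde b^j(u_r(y))$ (evaluated along the limit), and neither the drifts nor the arguments are close in any classical sense — closeness lives only at the level of the \emph{sewn space-time integrals}. Controlling this requires the full strength of the critical stochastic sewing lemma with random controls (\Cref{thm.critssl,lem:B.rcontrol}), in particular handling the endpoint case $\beta-1/q=-1$ via the extra argument in \cref{L:condun}, and carefully tracking that all constants depend on $b^n$ only through $\sup_n\|b^n\|_{\Bes^\beta_q}$ so that the bounds survive the limit. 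Everything else — tightness, extraction of the limit, and verification of (1) and (3) — is comparatively routine given the a priori estimates already assembled in \cref{S:3}.
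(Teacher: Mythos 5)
Your overall architecture (uniform a priori bounds from \cref{lem.apriori,L:main}, tightness, identification of the limit, verification of \Cref{Def:sol} via \cref{lem.limsol}-type arguments) matches the paper's, but you diverge at the decisive step of upgrading tightness to convergence in probability, and that is where your proposal has a genuine gap. You propose to show $(u^n)$ is Cauchy by running the sewing machinery on $u^n-u^m$ and asserting that ``this should produce a bound \dots or at least a contraction-type inequality allowing a Gronwall/fixed-point closure.'' No such quantitative stability estimate is available from the tools assembled in the paper, and it is not clear one exists in the regime covered by the theorem. At the critical exponent $\beta-1/q=-1$ (which the theorem must cover, e.g.\ $b=\delta_0$), the comparison of two solutions only yields the log-Lipschitz inequality \eqref{BoundT02} of \Cref{L:2}, and the conclusion $z\equiv 0$ in \Cref{L:condun} is extracted by an Osgood-type \emph{contradiction} argument (the decreasing sequence $t_k$ in Step 3), which gives uniqueness but no rate. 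Turning that into a modulus of continuity of the solution map $b\mapsto u$ — which is what a Cauchy estimate for $u^n-u^m$ in terms of $\|b^n-b^m\|_{\Bes^{\beta'}_q}$ amounts to — would require redoing the entire log-Gr\"onwall analysis with an inhomogeneous forcing term, a nontrivial additional argument you do not supply.

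The paper avoids this entirely by invoking the Gy\"ongy--Krylov lemma (\Cref{L:GP}): one only needs (a) tightness of \emph{pairs} $(\wt u'_{n},\wt u''_{n},V)$ (\Cref{lem.uun}), and (b) the purely qualitative fact that the two components of any weak limit point, realized on a common filtered space via Skorokhod representation and shown to be solutions of the \emph{same} limiting equation with the \emph{same} noise (\Cref{Cor:main}, \Cref{lem.limsol}), must coincide a.s.\ — which is exactly \Cref{L:condun} applied to two solutions in $\V(3/4)$ satisfying \eqref{condassump}. This substitutes qualitative uniqueness for the quantitative stability your route requires. Your treatment of part (2) of \Cref{Def:sol} and of the conditional bound \eqref{est.uLmi} is essentially in line with the paper (passage to the limit in \eqref{tmp.1132} via Fatou and \cref{lem.convg}), but to make the proof work you should replace the direct Cauchy argument with the Gy\"ongy--Krylov scheme, or else actually prove the Osgood-type continuous-dependence estimate you are implicitly assuming.
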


Finally, we state two interesting applications of \cref{T:approximations}.	
The first one is the comparison principle for the solutions of SPDE \eqref{SPDE}, which extends the standard comparison principle. As usual,  for two Schwarz distributions $b',b''$ we write $b'\preceq b''$, if for any \textbf{nonnegative} test function $\phi\in\C_b^\infty(\R,\R)$ one has $\langle b',\phi\rangle\le \langle b'',\phi\rangle$. It is known (see, e.g., \cite[Exersice~22.5]{Tre67}) that 
$b'\preceq b''$ if and only if $b''-b'$ is a nonnegative Radon measure.

\begin{corollary}\label{cor:comparison}
Let $\beta\in\R$, $q\in[1,\infty]$. Suppose that 
$\beta-\frac1q\ge -1$ and $\beta>-1$. Let $b',b'' \in \Bes_q^\beta$, $u'_0,u''_0\in\Bsp(\0)$. Let $u',u''$ be the solutions of \eqref{SPDE} with drifts $b',b''$ and initial conditions $u'_0,u''_0$, respectively. Suppose that 
\begin{align*}
&u'_0(x)\le u''_0(x)\quad\text{for almost all $x\in\0$};\\
&b'\preceq b''.
\end{align*}
Then almost surely $u'(t, x)\le u''(t, x)$  for all $t>0$, $x\in\0$. 
\end{corollary}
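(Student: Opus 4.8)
The plan is to deduce the comparison principle from the stability theorem \Cref{T:approximations} by reducing to the classical comparison principle for reaction--diffusion SPDEs with smooth drifts. First I would recall that, by the cited characterization (\cite[Exercise~22.5]{Tre67}), the hypothesis $b'\preceq b''$ means precisely that $\mu:=b''-b'$ is a nonnegative finite-ish Radon measure; in any case $b''=b'+\mu$ with $\mu$ a nonnegative distribution. The goal is to build approximating sequences $(b'^n),(b''^n)\subset\C_b^\infty$ with $b'^n\to b'$ and $b''^n\to b''$ in $\Bes_q^{\beta-}$ and such that, pointwise for every $n$, $b'^n(y)\le b''^n(y)$ for all $y\in\R$. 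The natural choice is mollification by the heat kernel: set $b'^n:=G_{1/n}b'$ and $b''^n:=G_{1/n}b''$. By \cref{lem.Gf} these converge in $\Bes_q^{\beta-}$ to $b'$ and $b''$ respectively, and since $G_{1/n}$ has a nonnegative kernel, $b''^n-b'^n=G_{1/n}\mu\ge0$, so the pointwise ordering $b'^n\le b''^n$ holds everywhere. Similarly, take smooth (or just bounded measurable, as \Cref{T:approximations} only requires bounded continuous — one may first mollify) approximations $u_0'^n\to u_0'$, $u_0''^n\to u_0''$ uniformly on $\0$; here one must also arrange $u_0'^n(x)\le u_0''^n(x)$ for all $x$, which again follows by mollifying both with the same nonnegative kernel after noting $u_0'\le u_0''$ a.e., at the cost of a uniform-convergence statement that holds at least along a subsequence / after the mollification is chosen appropriately (continuity points).

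Next I would invoke the classical comparison theorem for SPDE \eqref{SPDE} with smooth (Lipschitz, bounded-derivative) drift and uniformly ordered bounded initial data — this is the Gy\"ongy--Pardoux / Donati-Martin--Pardoux type comparison principle available since the drifts $b'^n,b''^n$ are in $\C_b^\infty$. Solving both equations driven by the \emph{same} white noise $W$ on the same probability space, we get strong solutions $u'^n,u''^n$ with $u'^n_t(x)\le u''^n_t(x)$ almost surely for all $t>0$, $x\in\0$. Then I would pass to the limit: by \Cref{T:approximations}(1), $u'^n\to u'$ and $u''^n\to u''$ in probability, uniformly on $(0,\T]\times(\0\cap\{|x|\le N\})$ for every $N$, and by \Cref{T:approximations}(2) the limits $u',u''$ are strong solutions of \eqref{SPDE} with the correct drifts and initial conditions (using uniqueness from \Cref{T:mainresult}(ii) to identify these limits with the solutions named in the statement). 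Convergence in probability along a subsequence gives a.s. convergence at each $(t,x)$, hence the inequality $u'_t(x)\le u''_t(x)$ is inherited a.s. for each fixed $(t,x)$; finally, by the joint continuity of $u'$ and $u''$ on $(0,\T]\times\0$ (part (3) of \Cref{Def:sol}), upgrading ``for each fixed $(t,x)$, a.s.'' to ``a.s., for all $(t,x)$'' is automatic by considering a countable dense set of space-time points.

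The main obstacle I anticipate is the bookkeeping around the approximating sequences rather than any deep analytic difficulty: one must be careful that the \emph{same} mollifier/approximation scheme simultaneously (a) converges in $\Bes_q^{\beta-}$, as required to apply \Cref{T:approximations}, and (b) preserves the pointwise order $b'^n\le b''^n$ and $u_0'^n\le u_0''^n$, as required to apply the classical smooth-drift comparison principle. The heat-kernel mollification handles both because its kernel is nonnegative and because \cref{lem.Gf} gives exactly the $\Bes_q^{\beta-}$ convergence, so this is essentially a matter of writing it down. A secondary point is that \Cref{T:approximations} is stated for a generic approximating sequence, so once such ordered sequences are fixed, the three conclusions of that theorem apply verbatim, and uniqueness (\Cref{T:mainresult}(ii)) guarantees the limit is the solution referred to in the hypothesis, with no ambiguity. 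Thus the whole proof is: (1) mollify to get ordered smooth data; (2) apply classical comparison for each $n$; (3) apply \Cref{T:approximations} to pass to the limit; (4) use continuity of the limiting solutions to get the ``for all $(t,x)$'' statement.
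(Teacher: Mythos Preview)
Your proposal is correct and matches the paper's proof almost exactly: mollify the drifts by the heat semigroup, use that $G_{1/n}(b''-b')\ge0$ to get pointwise ordered smooth drifts, apply the classical comparison principle at level $n$, pass to the limit via \Cref{T:approximations}, and upgrade from ``a.s.\ at each $(t,x)$'' to ``for all $(t,x)$ a.s.'' by continuity.

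The one place where you add unnecessary work (and where your own hesitation is well founded) is the initial data. You propose mollifying $u_0',u_0''$ as well, and worry about uniform convergence --- rightly so, since heat-kernel mollification of a merely bounded measurable function need not converge uniformly. The paper avoids this entirely: it keeps $u_0'$ and $u_0''$ fixed and applies \Cref{T:approximations} with the constant sequences $u_{0,n}'\equiv u_0'$, $u_{0,n}''\equiv u_0''$. This is legitimate because the classical comparison principle for smooth drifts already allows bounded measurable ordered initial data, so no smoothing of the initial conditions is needed.
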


Our second applications of \cref{T:approximations} shows  that the skew stochastic 
heat equation appears naturally as a scaling limit of certain SPDEs. 

Let us introduce the space $\cuc((0,\T]\times\0)$ of real continuous functions on $(0,\T]\times\0$ equipped with the topology of uniform convergence over compact sets. It is well-known that this topology is induced by the metric 
\begin{equation}\label{ducc}
\ducc(f,h):=\sum_{i=1}^\infty 2^{-n}\sup_{\substack{x\in\0,|x|\le n\\t\in[\frac1n,\T]}}
(|f(t,x)-h(t,x)|\wedge1),\quad f,h\in\cuc((0,\T]\times\0)
\end{equation}
and $\cuc((0,\T]\times\0)$ is separable.

We define the Schwarz distributions $\zeta^{-1}$ on $\R$ by
\begin{equation}\label{def.zeta1}
	\zeta^{-1}(\phi):=\lim_{\varepsilon\downarrow0}\int_{|x|>\varepsilon}\frac{\phi(x)}xdx=\int_0^\infty\frac{\phi(x)-\phi(-x)}x\,dx
\end{equation}
for each Schwarz function $\varphi$. Similarly, for $\alpha\in(-1,0)$ we put
\begin{equation}\label{def.zetapm}
\zeta^{\alpha}_+(x):=x^\alpha\1(x>0)\tand \zeta^{\alpha}_-(x):=|x|^\alpha\1(x<0),
\end{equation}
where $x\in\R$.

In the following result we consider the stochastic heat equation with $D=\R$. 
\begin{Theorem}\label{cor.slimit}
Let $\rho\in[1,3/2)$, $f:\R\to\R$ be a bounded continuous function. Let $u_0\in\Bsp(\R)$ and for each $\lambda>0$ let $u_\lambda$ 
be the solution to 
\begin{equation}\label{PDEeq}
	\partial_t u_\lambda(t,x)- \frac12 \partial^2_{xx}u_\lambda(t,x)=\lambda^{-\rho}f(u_\lambda(t,x))+\dot{W},\quad t\ge0,\,\, x\in\R,
\end{equation}
with the initial condition $u_\lambda(0,\cdot)=u_0(\cdot)$.

\begin{enumerate}[{\rm(i)}]
	\item Assume that $\rho=1$ and 
	\begin{align}\label{con.f1x}
		\lim_{\lambda\to\infty}\int_{|x|>\lambda}  \Big|f( x)-\frac cx\Big|dx=0
		\tand
		\lim_{\lambda\to\infty}\int_{|x|\le \lambda} f( x)dx= c_0
	\end{align}
	for some constants $c,c_0\in\R$.  Then the random field 
	\begin{equation*}
		\{\lambda^{-1/2}u_\lambda(\lambda^2 t,\lambda x):(t,x)\in(0,1]\times\R\}
	\end{equation*}
	converges weakly in the space $\cuc((0,1]\times\R)$ as $\lambda\to\infty$ to the solution of the  stochastic heat equation
	\begin{equation}\label{eqn.skewf}
		\partial_t u(t,x)- \frac12 \partial^2_{xx}u(t,x)=(c\zeta^{-1}+c_0 \delta_0)(u(t,x))+\dot{W},\quad 
		t\in[0,1],\,\,x\in\R
	\end{equation}
	with the initial condition $u_0\equiv0$.

	\item Assume that $\rho\in(1,3/2)$ and 
	\begin{equation}\label{conrho}
		\lim_{x\to+\infty} f( x)x^{3-2\rho}=c_+
		\tand
		\lim_{x\to-\infty} f(x)|x|^{3-2\rho}= c_-
	\end{equation}
	for some constants $c_-,c_+\in\R$.  Then the random field 
	\begin{equation*}
		\{\lambda^{-1/2}u_\lambda(\lambda^2 t,\lambda x):(t,x)\in(0,1]\times\R\}
	\end{equation*}
	converges weakly in the space $\cuc((0,1]\times\R)$ as $\lambda\to\infty$ to the solution of the  stochastic heat equation
	\begin{equation}\label{eqn.skewfnew}
		\partial_t u(t,x)- \frac12 \partial^2_{xx}u(t,x)=(c_-\zeta^{2\rho-3}_-+c_+\zeta^{2\rho-3}_+)(u(t,x))+\dot{W},\quad 
		t\in[0,1],\,\,x\in\R
	\end{equation}
	with the initial condition $u_0\equiv0$.
\end{enumerate}
\end{Theorem}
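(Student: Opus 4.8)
The plan is to rescale the equation parabolically and reduce the statement to the stability result \Cref{T:approximations}. \textbf{Step 1 (parabolic rescaling).} Fix $\lambda>0$ and set $v_\lambda(t,x):=\lambda^{-1/2}u_\lambda(\lambda^2t,\lambda x)$ for $(t,x)\in(0,1]\times\R$. Using $g_\tau(w)=\lambda\,g_{\lambda^2\tau}(\lambda w)$ and, for the stochastic convolution, $\lambda^{-1/2}V_{\lambda^2t}(\lambda x)=\lambda^{-3/2}\int_0^t\!\!\int_\R g_{t-s}(x-y)\,W(\lambda^2\,ds,\lambda\,dy)$, a change of variables in the mild form of \eqref{PDEeq} shows that $v_\lambda$ solves, in the mild sense, equation \eqref{SPDE} on $D=\R$ with drift
$$
b_\lambda(w):=\lambda^{3/2-\rho}f(\lambda^{1/2}w),
$$
initial condition $u_0^\lambda(x):=\lambda^{-1/2}u_0(\lambda x)$, and driving noise $\widetilde W^\lambda_t(\phi):=\lambda^{-3/2}W_{\lambda^2t}(\phi(\cdot/\lambda))$, which is again an $(\F_{\lambda^2t})$-white noise. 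Since $b_\lambda$ is bounded and continuous and $u_0^\lambda$ is bounded, \eqref{SPDE} with these data is strongly well posed, so $v_\lambda$ coincides with that strong solution; consequently $v_\lambda$ has the same law as the strong solution $\bar v_\lambda$ of \Gref{u_0^\lambda;b_\lambda} driven by any fixed $(\F_t)$-white noise. I would also record that $u_0^\lambda\to0$ uniformly on $\R$, since $u_0$ is bounded.

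\textbf{Step 2 (convergence of the rescaled drifts).} The core of the argument is to prove $b_\lambda\to b$ in $\Bes^{\beta-}_q$ as $\lambda\to\infty$, where $(b,\beta,q)=(c\zeta^{-1}+c_0\delta_0,\,-\tfrac12,\,2)$ in case (i) and $(b,\beta,q)=(c_-\zeta^{2\rho-3}_-+c_+\zeta^{2\rho-3}_+,\,0,\,\tfrac1{3-2\rho})$ in case (ii). In both cases $\beta-\tfrac1q\ge-1$ and $\beta>-1$ (case (i) sitting exactly on the critical line $\beta-\tfrac1q=-1$), so the limit equation is well posed by \Cref{T:mainresult}(ii) and \Cref{T:approximations} applies. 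In case (ii) the scaling together with \eqref{conrho} gives $b_\lambda(w)\to c_\pm|w|^{2\rho-3}$ for every $w\ne0$; a computation of the distribution function of $b_\lambda$ (using that $f$ is bounded near $0$ and decays like $|w|^{2\rho-3}$ at infinity) gives $\sup_\lambda\|b_\lambda\|_{L_{q,\infty}(\R)}<\infty$, and since the weak Lebesgue space $L_{q,\infty}(\R)$ embeds into $\Bes^0_q(\R)$ this both dominates the a.e.\ convergence — yielding $b_\lambda\to b$ in $\mathcal D'(\R)$ — and furnishes the uniform $\Bes^0_q$-bound; together these give $b_\lambda\to b$ in $\Bes^{0-}_q$. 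In case (i), writing $\mu:=\lambda^{1/2}$, I would decompose $f=c\,\zeta^{-1}\1(|x|>1)+r$; then \eqref{con.f1x} forces $r\in L_1(\R)$ with $\int_\R r\,dx=c_0$, and $b_\lambda(w)=c\,w^{-1}\1(|w|>1/\mu)+\mu\,r(\mu w)$. A Littlewood--Paley argument exploiting the scaling of (nonhomogeneous) Besov norms shows $\|c\,w^{-1}\1(|w|>1/\mu)-c\zeta^{-1}\|_{\Bes^{\beta'}_2}+\|\mu\,r(\mu\cdot)-c_0\delta_0\|_{\Bes^{\beta'}_2}\to0$ for every $\beta'<-\tfrac12$, while both families stay bounded in $\Bes^{-1/2}_2$; hence $b_\lambda\to c\zeta^{-1}+c_0\delta_0$ in $\Bes^{(-1/2)-}_2$.

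\textbf{Step 3 (passage to the limit).} Given any $\lambda_n\to\infty$, I would apply \Cref{T:approximations} (with $D=\R$ and time horizon $1$) to $b^n:=b_{\lambda_n}$ and $u_0^n:=u_0^{\lambda_n}$, which converge respectively to $b$ in $\Bes^{\beta-}_q$ and to $0$ uniformly. This yields that the strong solutions $\bar v_{\lambda_n}$ converge, in probability in the metric $\ducc$ of $\cuc((0,1]\times\R)$, to the strong solution $u$ of \eqref{SPDE} with drift $b$ and zero initial condition, i.e.\ to the solution of \eqref{eqn.skewf} in case (i) and of \eqref{eqn.skewfnew} in case (ii). Since this holds along every sequence $\lambda_n\to\infty$, $\bar v_\lambda\to u$ in probability, hence in law, as $\lambda\to\infty$; and since $v_\lambda\overset{d}{=}\bar v_\lambda$ for each $\lambda$ by Step 1, the rescaled field $\{\lambda^{-1/2}u_\lambda(\lambda^2 t,\lambda x):(t,x)\in(0,1]\times\R\}=v_\lambda$ converges weakly in $\cuc((0,1]\times\R)$ to $u$, which is the assertion.

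\textbf{Main obstacle.} I expect Step 2 to be the hard part: pinning down the correct limiting distribution and the Besov space in which convergence holds. In case (i) one must simultaneously extract the emergent Dirac mass $c_0\delta_0$ (from the ``local'' part of $f$) and the principal value $c\zeta^{-1}$ (from the $1/x$-tail), and control the scale-sensitive nonhomogeneous Besov norms precisely enough to land on the critical line $\beta-\tfrac1q=-1$, where \Cref{T:mainresult}(ii) is still available; in case (ii) one must verify that the uniform weak-$L_q$ bound is preserved under the scaling. A secondary but routine point is justifying the change of variables for the Walsh stochastic integral used in Step 1.
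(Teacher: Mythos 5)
Your Steps 1 and 3 reproduce the paper's architecture exactly: rescale parabolically to identify $v_\lambda$ (in law) with the solution of \eqref{SPDE} with drift $b_\lambda(w)=\lambda^{3/2-\rho}f(\lambda^{1/2}w)$ and initial condition $\lambda^{-1/2}u_0(\lambda\cdot)\to0$, then invoke \Cref{T:approximations}. For case (i) your route is also the paper's (their \cref{lem.conve1x}), including the decomposition of $f$ into a $c/x$-tail and an integrable remainder $r$ with $\int r=c_0$; but the entire analytic content there — the Littlewood--Paley estimates involving truncated and maximal Hilbert transforms needed to land exactly on the critical line $\beta-1/q=-1$ — is asserted rather than proved, so case (i) is a correct outline with the hard lemma left as a black box.

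Case (ii) contains a genuine error. You place the limit in $\Bes^0_q$ with $q=1/(3-2\rho)$ via the claimed embedding of weak $L_q(\R)$ into $\Bes^0_q(\R)$. For the \emph{nonhomogeneous} Besov spaces used throughout the paper this embedding is false: the low-frequency block $\Delta_{-1}g=h_{-1}*g$ of a function behaving like $|x|^{2\rho-3}=|x|^{-1/q}$ at infinity again decays like $|x|^{-1/q}$ (since $\int h_{-1}=1$), and $\int_{|x|>1}|x|^{-1}dx=\infty$, so $\Delta_{-1}g\notin L_q$. Consequently $c_-\zeta^{2\rho-3}_-+c_+\zeta^{2\rho-3}_+\notin\Bes^0_{1/(3-2\rho)}$ (consistent with \cref{lem.principalvalue}, which requires $p>|\alpha|^{-1}$ strictly), and likewise $b_\lambda\notin\Bes^0_q$ for fixed $\lambda$ when $c_\pm\neq0$; so \Cref{T:mainresult}(ii) and \Cref{T:approximations} cannot be applied with your parameters $(\beta,q)=(0,1/(3-2\rho))$. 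The paper avoids this by proving convergence in $\Bes^{(2\rho-3)_-}_\infty$ (\cref{lem.conve1xrho}), i.e.\ taking $\beta=2\rho-3\in(-1,0)$ and $q=\infty$, where the limit does belong and $\beta-1/q=2\rho-3>-1$; splitting $b_\lambda-\chi$ into a near-origin part (controlled by Young's inequality and the scaling $\lambda^{-\beta}$) and a tail part (controlled by the Hardy--Littlewood--Sobolev inequality). Your case (ii) should be rerouted through such a choice of exponents.
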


\begin{Remark}
It easy to see that if $f$ is absolutely integrable on $\R$, then $c=0$, $c_0=\int_\R f(x)dx$, and one deduces a convergence to the skew stochastic heat equation from part (i) of the above result. In such case, \cref{cor.slimit}(i) is an analogue of \cite{rosenkrantz1975limit} (see also \cite[Corollary 3.3]{LG84}) for stochastic partial differential equations.
\end{Remark}

\begin{Remark}
It is known that any homogeneous distribution on $\R$ of order $\alpha\in(-1,0)$ is a linear combination of $\zeta^{\alpha}_+$ and $\zeta^{\alpha}_-$, and any homogeneous distribution on $\R$ of order $-1$ is a linear combination of $\delta_0$ and $\zeta^{-1}$, \cite[Chapter I]{MR3469458}. Therefore, \Cref{cor.slimit} shows that for any homogeneous distribution $h$ of order $\alpha\in[-1,0)$, one can easily find a continuous function $f$, so that the corresponding scaling
limit of \eqref{PDEeq} converges to stochastic heat equation with
drift $h$.  
\end{Remark}

\subsection{Overview of the proofs of the main results}\label{S:overview}

Before we proceed to the proofs of our main results, we would like to demonstrate our strategy on the following simple example, provide an overview of our arguments, and highlight the main challenges arising in the proofs. 
We hope that this would help the reader to  better understand our method and grasp the main ideas without having to dive into too many technical details

Thus, in the section we first consider the uniqueness problem for the equation \eqref{SPDE}, where the initial condition $u_0\equiv0$, the drift $b$ is a function (not a distribution) and  $b\in\C^\beta=\bes^\beta_\infty$ with $\beta\in(0,1)$. Furthermore, we consider this equation on the time horizon $[0, \ell]$ rather than  $[0,\T]$, where $\ell\in(0,1)$ is small enough and to be chosen later.

As mentioned before, in this setting \eqref{SPDE} is equivalent to \Gref{0;b}. Assume the contrary and suppose that this equation has two solutions $u$ and $v$.  
We define
$$
\psi_t:=u_t-V_t;\,\,\phi_t:=v_t-V_t;\,\,z_t:=u_t-v_t=\psi_t-\phi_t,\quad t\in[0,\ell].
$$
We note that $z(0)=0$ and our goal is to prove that $z(t)=0$ for all $t\in[0,\ell]$. We clearly have for any $t\in[0,\ell]$, $x\in\0$ 
\begin{equation}\label{Demo1}
\|z_t(x)\|_{L_2}=	\Bigl\|\int_0^t\int_\0 p_{t-r}(x,y)\bigl(b(V_r(y)+\psi_r(y))-
b(V_r(y)+\phi_r(y))\bigr)dydr	\Bigr\|_{\L{2}}.
\end{equation}
A naive (and wrong) approach would be to then use directly the fact that $b\in\C^\beta$ and to put the $\|\cdot\|_{\L{2}}$ norm inside the integral. Then one would get 
\begin{equation*}
\|z_t(x)\|_{L_2}\le\|b\|_{\C^\beta}\int_0^t \sup_{y\in\0}\|z_r(y)\|^\beta_{\L2}dr
\end{equation*}
and hence
\begin{equation}\label{verybad}
\sup_{t\in[0,\ell]}\sup_{x\in\0}	\|z_t(x)\|_{L_2}\le\ell
 \|b\|_{\C^\beta} \sup_{t\in[0,\ell]}\sup_{x\in\0}\|z_t(x)\|^\beta_{\L2}.
\end{equation}
Since $\beta\in(0,1)$, it is obvious that neither of the above inequalities allows to conclude that $\sup_{y\in\0}\|z_t(y)\|=0$.  Instead, our aim is show that the following trade-off holds: one can have \eqref{verybad} with the factor $\|z_t(x)\|_{\L2}$ in the power $1$ and the price to pay is that factor $\ell$ will be in a certain power smaller than $1$. However this will not obstruct the final conclusion.

To show this we are planning to work directly with the integral in the right-hand side of \eqref{Demo1} and exploit the regularizing properties of the white noise. Recall that in the SDE setting it is known that 
\begin{equation}\label{Daviebound}
\Bigl\|\int_0^t (b(B_r^H+x_1)-
	b(B_r^H+x_2))\,dr	\Bigr\|_{\L{2}}\le C \|b\|_{\C^\beta} t^{1+H(\beta-1)}|x_1-x_2|,\,\,t\ge0, x_1,x_2\in\R,
\end{equation}
where $B^{H}$ denotes the fractional Brownian motion (fBM) with Hurst index $H\in(0,1)$ and $\beta>1-1/(2H)$, see \cite[Proposition~2.1]{davie},  \cite[Theorem~1.1]{CG16}. This is a trade-off we are aiming at. Unfortunately, Davie's argument does not allow for an easy extension beyond the Brownian case; 
note that, additionally, we would like to replace constants $x_1$, $x_2$ in \eqref{Daviebound} by drifts (random fields) $\psi$, $\phi$ which depend on the spacial and time variables. 

Therefore, we apply the stochastic sewing lemma (\Cref{lem:B.Sew1}) to conclude that for any
$0\le s\le t\le \ell$, $\tau\in[1/4,1]$
\begin{align}\label{Demo2}
&\sup_{x\in\0}\|z_{t}(x)-P_{t-s}z_{s}(x)\|_{\L2}\nn\\
&\quad=\sup_{x\in\0}\Bigl\|\int_s^t\int_\0 p_{t-r}(x,y)\bigl(b(V_r(y)+\psi_r(y))-
	b(V_r(y)+\phi_r(y))\bigr)dydr	\Bigr\|_{\L{2}}\nn\\
&\quad\le C\|b\|_{\C^\beta}\|z\|_{\Ctimespace{0}{2}{[s,t]}}(t-s)^{\frac34+\frac\beta4}
+C\|b\|_{\C^\beta}[z]_{\Ctimespace{\tau}{2}{[s,t]}}(t-s)^{\frac34+\frac\beta4+\tau},
\end{align}
where we used the notation 
\begin{align*}
&\|z\|_{\Ctimespace{0}{2}{[s,t]}}:=\sup_{\substack{r\in[s,t]\\x\in\0}}\|z_r(x)\|_{\L2};
&[z]_{\Ctimespace{\tau}{2}{[s,t]}}:=\sup_{s\le s'\le t'\le t}\,\sup_{x\in\0} \frac{\|z_{t'}(x)-P_{t'-s'}z_{s'}(x)\|_{\Lm}}{|t'-s'|^\tau}.
\end{align*}

\begin{remark}
	The exponent $\frac34+\frac \beta4$ in \eqref{Demo2} can be written as  $H(\beta-1)+1$ for $H=\frac14$, which is the same as in \eqref{Daviebound}. This is due to the local nondeterministic property of $V$ in \eqref{est.V.lnd}.
	Note that fBM with $H=1/4$ satisfies a very similar local nondeterministic property. This provides another connection of our results and the results in  \cite{CG16} concerning regularization by noise for fBM.
\end{remark}

Now let us apply \eqref{Demo2} with $\tau=3/4+\beta/4$, divide both sides of the inequality  by $|t-s|^{3/4+\beta/4}$ and take supremum over all $0\le s\le t \le \ell$. We get 
\begin{equation}\label{Demo3}
[z]_{\Ctimespace{3/4+\beta/4}{2}{[0,\ell]}}\le C\|b\|_{\C^\beta}\|z\|_{\Ctimespace{0}{2}{[0,\ell]}}
	+C\|b\|_{\C^\beta}[z]_{\Ctimespace{3/4+\beta/4}{2}{[0,\ell]}}\ell^{\frac34+\frac\beta4}.
\end{equation}
Since the constant $C$ does not depend on $\ell$, we can choose $\ell$ small enough so that 
$C\|b\|_{\C^\beta}\ell^{\frac34+\frac\beta4}\le 1/2$. Substituting this back into \eqref{Demo3}, we get 
\begin{equation*}
[z]_{\Ctimespace{3/4+\beta/4}{2}{[0,\ell]}}\le C\|b\|_{\C^\beta}\|z\|_{\Ctimespace{0}{2}{[0,\ell]}}.
\end{equation*}
Applying this bound to \eqref{Demo2}, setting there $s=0$, and taking there supremum over all $0\le t \le \ell$, we finally obtain
\begin{equation}\label{ket-thuc}
\|z\|_{\Ctimespace{0}{2}{[0,\ell]}}\le C\|b\|_{\C^\beta}(1+\|b\|_{\C^\beta})\|z\|_{\Ctimespace{0}{2}{[0,\ell]}}\ell^{\frac34+\frac\beta4}.
\end{equation}
Provided that $\ell$ is small enough, this yields $\|z\|_{\Ctimespace{0}{2}{[0,\ell]}}=0$, and thus \Gref{0;b} has a unique strong solution.

Weak existences of solutions to \eqref{SPDE} would follow from similar bounds (see \Cref{L:main}) and Prokhorov's theorem. Finally, strong existence follows from weak existence and strong uniqueness by the Yamada--Watanabe principle (by the method of \cite{MR1392450}).

While the uniqueness proof outlined above is quite short and ``almost'' rigorous, two major obstacles appears when one tries to extend this proof to cover distributional drifts $b\in\C^\beta$, $\beta<0$, and especially the case $b=\delta_0$.

First, for $\beta<0$ the right-hand side of \eqref{Demo2} contains the additional factor
\begin{equation}\label{so-nhan}
\sup_{s\le s'\le t'\le t}\,\sup_{x\in\0}\esssup_{\omega\in \Omega} \frac{\E[ |\psi_{t'}(x)-P_{t'-s'}\psi_{s'}(x)|^2|\F_{s'}]}{|t'-s'|^{1+\frac{\beta\wedge0}4}}
=:[\psi]^2_{\Ctimespace{1+\frac{\beta\wedge0}4}{2,\infty}{[s,t]}}.
\end{equation}
When $b$ was a bounded function and $\beta\ge0$, it was obvious that
\begin{equation*}
|\psi_{t'}(x)-P_{t'-s'}\psi_{s'}(x)|=
\int_{s'}^{t'}\int_\0 p_{t'-r}(x,y)b(V_r(y)+\psi_r(y))\,dydr
\le |t'-s'|\sup_{z\in\R}|b(z)|
\end{equation*}
and thus this extra factor was finite.  Now when $b$ is a distribution, the finiteness of this extra factor is not clear at all (note also the appearance of $\esssup$ there).

The second obstacle is even more hindering. It turns out that bound \eqref{Demo2} is valid only
for $\beta>-1$ and thus is not applicable to the case where $b$ is the Dirac delta function. This is similar to the fact that the corresponding bound for fBM with the Hurst parameter $1/4$, \eqref{Daviebound}, is also known to be valid only for $\beta>-1$, see \cite[Theorem~1.1]{CG16}. While it is true that the Dirac delta function actually has better regularity and belongs to $\Bes^{-1+1/q}_q$ for any $q\in[1,\infty]$, this does not help much. Indeed, one can show that bounds \eqref{Daviebound} and \eqref{Demo2}   hold for $b\in\Bes^{\beta}_q$ with $\beta-1/q>-1$; however this still does not cover the delta function.
 
Let us explain now how we are overcoming these obstacles. A crucial role in our approach belongs to \Cref{L:condun}. It shows that if $u,v\in \V(3/4)$ are two weak solutions to \eqref{SPDE} adapted to the same filtration, and if for one of them  expression \eqref{so-nhan} is finite, then these solutions coincide. To obtain this proposition we 
combine the critical stochastic sewing lemma (\Cref{thm.critssl}, extension of the stochastic sewing lemma from \cite{MR4089788}  and \cite[Lemma 2.9]{FHL}) with a very delicate analysis of the solution to \eqref{SPDE}. Bound \eqref{Demo2} (which is not valid for the case $b=\delta_0$) is replaced by \eqref{BoundT02}, see \Cref{L:2}. Note that we have to use a certain rough-path inspired expansion of the solution and bound its norm as well, see \eqref{Bound2}. Since new bound 
\eqref{BoundT02} contains now some logarithmic terms, the final part of the uniqueness proof is less straightforward compared with \eqref{ket-thuc}, see \Cref{S:uniq}. Our argument there is a stochastic analogue of Davie's argument in \cite[Theorem~3.6]{MR2387018}.

Now we are ready to outline our strategy for establishing strong existence and uniqueness for equation \eqref{SPDE}.

\textit{Step 1.}  
We show that for any solution  $u^{\eta;f}$ to \Gref{\eta;f}, where $\eta$ is a bounded initial condition and $f$ is a smooth function, the additional factor $	[u^{\eta;f}-V]_{\Ctimespace{1+\frac\beta4}{2,\infty}{[0,\ell]}}$ from 
\eqref{so-nhan} is finite and is bounded by a constant which depends only on the norm $\|f\|_{\C^{\beta}}$, see \Cref{lem.apriori}. This is done using regularization bounds from \Cref{L:main}.

\textit{Step 2}.  At this step we fix  two sequences of smooth functions
$(b_n')_{n\in\Z_+}$, $(b_n'')_{n\in\Z_+}$ converging to $b$ in $\Bes^{\beta-}_q$ and denote
by $u_n'$,  $u_n''$ the solutions of \Gref{u_{0};b_n'}, \Gref{u_{0};b_n''}, respectively. 
Then, using again bounds from \Cref{L:main}, we are able to show that the sequence $(u_{n}',u_{n}'')$ is tight. By Prokhorov's theorem, this implies that it has a subsequence which converges weakly. We denote its limit by $(u',u'')$. This is done in \Cref{lem.uun}.

\textit{Step 3}. Now we show that both $u'$ and $u''$ solve \eqref{SPDE}, belong to $\V(3/4)$ and the factor $[u'-V]_{\Ctimespace{1+\frac\beta4}{2,\infty}{[0,\ell]}}$ is finite. 
This is the content of \Cref{lem.limsol,Cor:main}.

\textit{Step 4}. Now we have two solutions $u',u''\in\V(3/4)$ for which the extra factor 
from \eqref{so-nhan} is finite. Hence, by \Cref{L:condun} discussed above $u'=u''$. This implies, thanks to a Yamada-Watanabe type result from \cite[Lemma~1.1]{MR1392450}, that $u'$ is actually a strong solution to \eqref{SPDE}, see the proof of \Cref{T:approximations}.

\textit{Step 5}. Now if $v\in\V(3/4)$ is any other solution (for which the factor from 
\eqref{so-nhan} is not necessary finite), it still coincides with the strong solution $u'$ constructed at the previous step. This is again due to \Cref{L:condun}, see  
the proof of \Cref{T:mainresult}(ii).

\textit{Step 6}. Finally we show that the extra condition $u\in\V(3/4)$ is automatically satisfied for SPDEs with measure valued drift. This is done in \Cref{lem.1var} using stochastic sewing lemma with random controls (\Cref{lem:B.rcontrol}). This proves \Cref{thm.radon}.

\medskip
Thus, we see that regularization estimates (\Cref{L:main,L:2}) play a very important role in our proofs. They are obtained using a flexible toolkit of stochastic sewing, which extends upon the original stochastic sewing from \cite{MR4089788}. For the convenience of the reader, all sewing results are stated separately in \cref{sec.ssl}.

\section{Proofs of the main results}\label{S:3}

In this section we prove the main results stated in \cref{sub.mainresults}.  The technical parts, including the regularization estimates, are stated as propositions. The proofs of these propositions are postponed to the following sections. 
First, we set up some necessary notation.

For $0\le S<T$ we denote by $\Delta_{S,T}$ the simplex $\{(s,t): S\le s\le t \le T\}$.
Let $(\Omega,\F, (\F_t)_{t\ge0}, \P)$ be a complete filtered probability space on which the white noise $W$ is defined. We assume that the filtration $\F=(\F_t)_{t\in[0,T]}$ satisfies the usual condition and that $W$ is $(\F_t)$-white noise.  We will write $\E^s$ for the conditional expectation given $\F_s$
$$
\E^s[\cdot]:=\E[\cdot|\F_s],\quad s\ge0.
$$
For a random process $Z\colon[0,\T]\times \0\times \Omega\to\R$ we will denote by $(\F_t^Z)$ its natural filtration. 

 If $\mathscr{G}\subset\F$ is a sub-$\sigma$-algebra, then we introduce the conditional quantity
\begin{equation}\label{uslovka}
\|\xi\|_{\Lm|\mathscr{G}}:=\left(\E[|\xi|^m|\mathscr{G}]\right)^{\frac1m},
\end{equation}
which is a $\mathscr{G}$-measurable non-negative random variable. 
It is evident that for $1\le m\le n\le \infty$ one has
\begin{equation}
\|\xi\|_{L_m}=\|\|\xi\|_{\Lm|\mathscr G}\|_{\L{m}}\le
\|\|\xi\|_{\Lm|\mathscr G}\|_{\L{n}}\le \|\xi\|_{L_n}.\label{lmlnsimple}
\end{equation}

Let $0\le S\le T$. Let $\psi\colon[S,T]\times\0\times \Omega\to\R$ be a measurable function. For $\tau\in(0,1]$, $m,n\in[1,\infty]$ define
\begin{align}
&[\psi]_{\Ctimespacetaum{[S,T]}}:=\sup_{(s,t)\in\Delta_{S,T}}\sup_{x\in\0} \frac{\|\psi_t(x)-P_{t-s}\psi_s(x)\|_{\Lm}}{|t-s|^\tau};\nn\\
&[\psi]_{\CLtmn{[S,T]}}:=\sup_{(s,t)\in\Delta_{S,T}}\sup_{x\in\0}\frac{\|\|\psi_t(x)-P_{t-s}\psi_s(x)\|_{\Lm|\F_s}\|_{L_n} }{|t-s|^\tau};\label{condnormn}\\
&\|\psi\|_{\Ctimespacezerom{[S,T]}}:=\sup_{t\in[S,T]}\sup_{x\in\0}\|\psi_t(x)\|_{\Lm}.\nn
\end{align}
It follows from \eqref{lmlnsimple} that for $1\le m\le n\le \infty$
\begin{equation}\label{mnism}
[\psi]_{\Ctimespacetaum{[S,T]}}=[\psi]_{\Ctimespace{\tau}{m,m}{[S,T]}}\le
[\psi]_{\Ctimespace{\tau}{m,n}{[S,T]}}\le [\psi]_{\Ctimespace{\tau}{n}{[S,T]}}.
\end{equation}
Let $\Cspacem$, where $m\ge1$, be the space of all measurable functions $D\times\Omega\to\R$ such that
	\begin{equation}\label{CLm}
	\|f\|_{\Cspacem}:=\sup_{x\in D}\|f(x)\|_{\Lm}<\infty.
	\end{equation}

Before we begin the proofs of our main results, we would like to claim that in \Cref{T:mainresult,T:approximations} it suffices to consider only the case  $q\in[2,\infty]$, $\beta<0$. Indeed, recall  the following embedding between Besov spaces (\cite[Proposition 2.71]{bahouri})
\begin{equation}\label{em.Besov}
 \Bes^\beta_{q_1} \textrm{ is continuously embedded in } \Bes^{\beta-\frac1{q_1}+\frac1{q_2}}_{q_2} \textrm{ for every }1\le q_1\le q_2\le \infty.
\end{equation}
In \cref{T:mainresult}(ii), when $b\in\Bes^\beta_q$, with $q\in[1,2)$ , we use embedding $\Bes^\beta_q\hookrightarrow\Bes^{\bar\beta}_2$ where $\bar \beta:=\beta-\frac1q+\frac12$. Note that $\bar \beta-\frac12=\beta-\frac1q\ge-1$ and $\bar \beta>-1$. This means that the results of \cref{T:mainresult}(ii) for $b$ in $\Bes^\beta_q$ with $q\in[1,2)$ are consequences of those with larger integrability components $q$. Exactly the same argument is valid for \cref{T:mainresult}(i) and \Cref{T:approximations}. Hence, we assume without loss of generality hereafter that $q\in[2,\infty]$.
Similarly, thanks to embedding $\Bes^\beta_q\hookrightarrow\Bes^{-\beta'}_q$ for all $\beta,\beta'>0$, we see that the statements  of \Cref{T:mainresult,T:approximations} for $b\in\Bes^\beta_q$ with $\beta\ge0$ follows from the results of these theorems for some $\beta<0$. Hence, we can also assume without loss of generality that $\beta<0$. To summarize, we have the following
\begin{Assumption}\label{A:gammap32}
From now on and till the end of this section we fix $\beta<0$, $q\in[2,\infty]$, $b\in\Bes^\beta_q$, $u_0\in\Bsp(D)$. We assume that $\beta-1/q>-3/2$.
\end{Assumption}


We begin with the proof of the existence of the solutions to \eqref{SPDE}. It consists of several steps.

\begin{proposition}[A priori estimate]\label{lem.apriori}
Let $m\in[2,\infty)$, $f:\R\to\R$ be a bounded continuous function in $\bes^\beta_q$, $\eta\in\Bsp(D)$. 	Let $u^{\eta;f}$ be the solution to \Gref{\eta;f}. Then there exists a constant $C=C(\beta,q,m,\T)>0$ independent from $\eta,f$ such that
	\begin{equation}\label{psiodin}
	[u^{\eta;f}-V]_{\Ctimespace{1+\frac\beta4-\frac1{4q}}{m,\infty}{[0,\T]}}\le C \|f\|_{\Bes^\beta_q}.
	\end{equation}
\end{proposition}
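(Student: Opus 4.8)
The plan is to reduce the a priori bound to an application of the stochastic sewing lemma (\Cref{lem:B.Sew1}) combined with the local nondeterminism property \eqref{est.V.lnd} of $V$, following the scheme sketched in \cref{S:overview}. Write $\psi:=u^{\eta;f}-V$, so that $\psi_t(x)=P_t\eta(x)+\int_0^t\int_\0 p_{t-r}(x,y)f(u^{\eta;f}_r(y))\,dy\,dr$ and, for $(s,t)\in\Delta_{0,\T}$,
\begin{equation*}
\psi_t(x)-P_{t-s}\psi_s(x)=\int_s^t\int_\0 p_{t-r}(x,y)f(V_r(y)+\psi_r(y))\,dy\,dr.
\end{equation*}
I would introduce the two-parameter germ $A_{s,t}(x):=\E^s\int_s^t\int_\0 p_{t-r}(x,y)f(V_r(y)+\psi_r(y))\,dy\,dr$ and verify the hypotheses of the stochastic sewing lemma for the increment $\mathcal{A}_{s,t}(x)=\int_s^t\int_\0 p_{t-r}(x,y)f(V_r(y)+\psi_r(y))\,dy\,dr$. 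The key estimate is the conditional one: using that $\E^s$ of the integrand over $[m,t]$ amounts (via the tower property and the fact that $V_r(y)-\E^m V_r(y)$ is, conditionally on $\F_m$, a centered Gaussian with variance bounded below by $\pi^{-1/2}(r-m)^{1/2}$ uniformly in $y$ by \eqref{est.V.lnd}) to convolving $f$ against a Gaussian density of width $\gtrsim (r-m)^{1/2}$, one gains regularity: the heat-kernel/Besov smoothing estimate (from \cref{app.bes}) gives $\|G_{c(r-m)^{1/2}} f\|_\infty\lesssim (r-m)^{\beta/4-1/(4q)}\|f\|_{\Bes^\beta_q}$ when $\beta<0$, and the spatial $p_{t-r}$-convolution is an $L^1$-contraction. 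Integrating $(r-m)^{\beta/4-1/(4q)}$ over $r\in[m,t]$ produces the exponent $1+\beta/4-1/(4q)$, matching \eqref{psiodin}; the required Hölder exponent in $t-s$ exceeds $1/2$ precisely because $\beta-1/q>-3/2$, which is \Cref{A:gammap32}. For the second hypothesis of the sewing lemma one controls $\delta A_{s,m,t}=A_{s,t}-A_{s,m}-A_{m,t}=\E^s[\mathcal A_{m,t}-\E^m\mathcal A_{m,t}]$, i.e.\ the fluctuation of the conditional expectation between times $s$ and $m$; here one expands $f(V_r(y)+\psi_r(y))$ and uses the difference of two Gaussian-smoothings of $f$ at widths $(r-m)^{1/2}$ and $(r-s)^{1/2}$, which gains an extra power of $(m-s)/(r-s)$ and again yields a summable exponent under \Cref{A:gammap32}. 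One must also check that the sewn process coincides with $\psi_t(x)-P_{t-s}\psi_s(x)$; this follows because $\mathcal A$ is genuinely additive, $\mathcal A_{s,t}=\mathcal A_{s,m}+\mathcal A_{m,t}$, and the sewing lemma's uniqueness clause identifies the two.

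A technical point to handle carefully is that the bound in \eqref{psiodin} is on the \emph{conditional} $L_m$-norm with an outer $L_\infty(\Omega)$ (the $\CL{}{m}{\infty}$-norm), so I would run the conditional version of the stochastic sewing lemma: all estimates above are in fact uniform-in-$\omega$ bounds on conditional moments, since the Gaussian structure of $V$ and the bound \eqref{est.V.lnd} hold pointwise, and the only place randomness of $\psi$ enters is through $\psi_r(y)$ appearing as an additive shift inside $f$, which is irrelevant after the Gaussian smoothing (the smoothed function $G_\sigma f$ is bounded by $(\ldots)\|f\|_{\Bes^\beta_q}$ \emph{uniformly} in the shift). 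So the $\F_s$-conditional moments of $\mathcal A_{s,t}(x)$ are deterministically bounded, giving the $\esssup_\omega$-type control built into the $[\cdot]_{\CL{\tau}{m}{\infty}}$ norm. A minor preliminary step is to justify that $f(u^{\eta;f}_r(y))$ is a bona fide process one may integrate — but $f$ is bounded continuous here, so there is no issue, and all the quantities are finite a priori; the content is the \emph{size} of the bound, with constant independent of $\eta$ and of $f$ beyond its $\Bes^\beta_q$-norm.

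The main obstacle I anticipate is obtaining the conditional Besov-smoothing estimate in exactly the right form: one needs $\|P_{t-r}[\,G_{\sigma}f\,]\|_\infty \lesssim \sigma^{\beta/2 - 1/(2q)}\|f\|_{\Bes^\beta_q}$ with $\sigma\asymp (r-m)^{1/2}$, uniformly over the three choices of $(D,p)$ (free space, periodic, Neumann), and one must check that the periodic and Neumann kernels $p^\per,p^\neu$, being sums of shifted Gaussians, still act as $L^1(\R)$-contractions when $f$ is viewed periodically/reflected — this is where one invokes the auxiliary kernel estimates in \cref{app.bes,app.aux}. The second delicate point is bookkeeping the two exponents in the sewing lemma so that both strictly exceed $1/2$: the "diagonal" term needs $1+\beta/4-1/(4q) > 1/2$ and the "$\delta A$" term needs $1+\beta/4-1/(4q)+\varepsilon>1/2$ for the extra gain $\varepsilon>0$ coming from $(m-s)/(r-s)$; both are guaranteed by $\beta - 1/q > -3/2$, i.e.\ $\beta/4 - 1/(4q) > -3/8 > -1/2$, so there is room to spare. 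Everything else is routine convolution bookkeeping of the type collected in the appendices.
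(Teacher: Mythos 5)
Your overall strategy (stochastic sewing plus the local nondeterminism of $V$, with the exponent $1+\frac\beta4-\frac1{4q}$ coming from integrating the Gaussian-smoothing gain) is the right one, but there is a genuine gap in how you set up the germ, and it hides the essential structure of the argument. You take $A_{s,t}(x)=\E^s\int_s^t\int_\0 p_{t-r}(x,y)f(V_r(y)+\psi_r(y))\,dy\,dr$ with the \emph{true} drift $\psi_r$ inside, and you assert that the randomness of $\psi_r$ is ``irrelevant after the Gaussian smoothing'' because $G_\sigma f$ is bounded uniformly in the shift. This is false as used: $\psi_r$ is $\F_r$-measurable, not $\F_u$-measurable, so it is correlated with the Gaussian fluctuation $V_r-\E^u V_r$ and cannot be treated as a frozen shift when evaluating $\E^u f(V_r+\psi_r)$ (in the extreme case of a shift $-V_r+c$ one gets $f(c)$ and no smoothing at all). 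Note that with your germ $\E^s\delta A_{s,u,t}=0$, so the entire burden falls on bounding $\|\E^u\mathcal A_{u,t}\|$ and $\|A_{s,t}\|=\|\E^s\mathcal A_{s,t}\|$ --- precisely the quantities your smoothing argument cannot reach. The paper's proof (via \cref{L:main}, whose engine is \cref{lem.intK} and \cref{T:singlebound}) instead freezes the drift at the left endpoint of each sewing subinterval, taking $A_{s,t}=\int_s^t\int_\0 p_{T-r}(x,y)f(V_r(y)+P_{r-s}\psi_s(y))\,dy\,dr$; then $P_{r-s}\psi_s$ is $\F_s$-measurable and the conditional Gaussian computation of \cref{L.condV} applies legitimately.

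The price of freezing is that $\delta A_{s,u,t}$ now contains $f(V_r+P_{r-s}\psi_s)-f(V_r+P_{r-u}\psi_u)$, whose conditional estimate produces the factor $|P_{u-s}\psi_s-\psi_u|\le[\psi]_{\Ctimespace{\tau}{m,\infty}{[s,t]}}(u-s)^\tau$. Hence the output of the sewing lemma is \emph{self-referential}: the seminorm you want to bound reappears on the right-hand side of \eqref{Bound1}, multiplied by $(t-s)^{\frac34+\frac\beta4-\frac1{4q}}$. Closing the estimate therefore requires three further steps absent from your proposal: (i) the a priori observation that $[\psi]_{\Ctimespace{\tau}{m,\infty}{[0,\T]}}<\infty$ because $f$ is bounded (you mention finiteness in passing but never exploit it); (ii) an absorption argument on intervals of length $\ell_0$ chosen so that $C\|f\|_{\bes^\beta_q}\ell_0^{\frac34+\frac\beta4-\frac1{4q}}\le\frac12$; and (iii) a telescoping argument, using the semigroup property of $P$, to propagate the bound from intervals of length $\ell_0$ to all of $[0,\T]$ with a constant independent of $f$ and $\eta$. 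Without the frozen germ one does not see why these steps are needed; with it, they are unavoidable.
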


To formulate the next two statements we consider the space $\cuc([0,\T]\times\0)$ of real continuous functions on $[0,\T]\times\0$ equipped with the topology of uniform convergence over compact sets of $[0,\T]\times\0$. It is well-known that $\cuc([0,\T]\times\0)$ is a Polish space and metrizable  by the following metric, similar to \eqref{ducc},
\begin{equation*}
\rucc(f,h):=\sum_{i=1}^\infty 2^{-n}\sup_{\substack{x\in\0,|x|\le n\\t\in[0,\T]}}
(|f(t,x)-h(t,x)|\wedge1),\quad f,h\in\cuc([0,\T]\times\0).
\end{equation*}
\begin{proposition}[Tightness]\label{lem.uun}
Let $(b_n')_{n\in\Z_+}$, $(b_n'')_{n\in\Z_+}$ be two sequences of bounded continuous functions converging to $b$ in $\Bes^{\beta-}_q$. Let $(u_{0,n}')_{n\in\Z_+}$, $(u_{0,n}'')_{n\in\Z_+}$ be two sequences of functions from $\Bsp(D)$ which both converge  to $u_0$ uniformly on $\0$.
Let $u_n'$,  $u_n''$ be the solutions of \Gref{u_{0,n}';b_n'}, \Gref{u_{0,n}'';b_n''}, respectively. Put 
$$
\wt u_n'(t,x):=u_n'(t,x)-P_tu_{0,n}'(x),\,\,
\wt u_n''(t,x):=u_n''(t,x)-P_tu_{0,n}''(x),\quad t\in[0,\T],\,\,x\in D.
$$
Then there exists a subsequence $(n_k)_{k\in\Z_+}$ such that $(\wt u_{n_k}',
\wt u_{n_k}'',V)_{k\in\Z_+}$ converges weakly in the space $[\cuc([0,\T]\times\0)]^3$.
\end{proposition}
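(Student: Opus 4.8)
The plan is to establish tightness of the triple $(\wt u_n', \wt u_n'', V)_{n\in\Z_+}$ in $[\cuc([0,\T]\times\0)]^3$ by verifying the standard moment criterion of Kolmogorov--Chentsov type, uniformly in $n$. Since $V$ does not depend on $n$ and is a fixed continuous Gaussian field, its component is automatically tight; the work is entirely in the first two components, which are symmetric, so it suffices to treat $\wt u_n'$. The key observation is that $\wt u_n'(t,x) = K_n'(t,x) + V_t(x)$ where $K_n'$ is the drift term $\int_0^t\int_\0 p_{t-r}(x,y) b_n'(u_r'(y))\,dy\,dr$ (using that $u_0$-contribution has been subtracted off and $V$ is handled separately). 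So I would split $\wt u_n' = K_n' + V$ and only need uniform-in-$n$ moment bounds on the spatial and temporal increments of $K_n'$.

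First I would invoke \Cref{lem.apriori}: since $b_n'$ converges to $b$ in $\Bes^{\beta-}_q$, the sequence $\|b_n'\|_{\Bes^\beta_q}$ is bounded, and therefore \eqref{psiodin} gives a uniform-in-$n$ bound
$$
[u_n'-V]_{\Ctimespace{1+\frac\beta4-\frac1{4q}}{m,\infty}{[0,\T]}} = [K_n' + P_\bullet u_{0,n}']_{\Ctimespace{1+\frac\beta4-\frac1{4q}}{m,\infty}{[0,\T]}}\le C
$$
for every $m\in[2,\infty)$, with $C$ independent of $n$ (the initial condition part contributes a harmless bounded term, or one works with $\wt u_n'$ directly). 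Combined with the uniform bound $\sup_{(t,x)}\|u_n'(t,x)\|_{L_m}<\infty$, this controls the temporal increments $\|K_n'(t,x) - P_{t-s}K_n'(s,x)\|_{L_m}\lesssim |t-s|^{\kappa}$ with $\kappa = 1+\tfrac\beta4-\tfrac1{4q} > \tfrac14$ under \Cref{A:gammap32}. From this one deduces honest temporal Hölder-in-$L_m$ control of $K_n'(\cdot,x)$ itself (not just modulo the semigroup), using smoothing estimates for $P_t$ on the bounded quantity $K_n'(s,x)$; and spatial increments $\|K_n'(t,x) - K_n'(t,x')\|_{L_m}\lesssim |x-x'|^{\gamma}$ follow from the regularity of the heat kernel together with the moment bounds (this is a standard computation, e.g. the kind of estimate underlying \Cref{L:main}). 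Choosing $m$ large makes the Hölder exponents positive and the Kolmogorov criterion applicable on each compact set $\{|x|\le N, t\in[0,\T]\}$, yielding tightness of $\{K_n'\}$, hence of $\{\wt u_n'\}$, in $\cuc([0,\T]\times\0)$.

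Having tightness of each marginal sequence $\{\wt u_n'\}$, $\{\wt u_n''\}$, and the (trivially tight, constant) sequence $\{V\}$, tightness of the joint law on the product Polish space $[\cuc([0,\T]\times\0)]^3$ is automatic, and Prokhorov's theorem extracts a weakly convergent subsequence $(\wt u_{n_k}', \wt u_{n_k}'', V)$. The main obstacle, and the step I would spend the most care on, is upgrading the conditional-increment bound \eqref{psiodin} --- which controls increments only modulo the heat semigroup $P_{t-s}$ --- into genuine joint space-time moduli of continuity for $K_n'$ suitable for Kolmogorov--Chentsov; this requires carefully combining \eqref{psiodin} with the a priori $L_m$-boundedness of $u_n'$ and with quantitative heat-kernel smoothing estimates, all tracked uniformly in $n$. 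Everything else is routine once the uniform Hölder moment estimates are in hand.
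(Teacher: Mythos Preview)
Your strategy is correct and matches the paper's: decompose $\wt u_n' = K_n' + V$, obtain uniform-in-$n$ space--time H\"older moment bounds on $K_n'$, then apply Kolmogorov and Prokhorov. The paper does not bootstrap from \eqref{psiodin} as you propose; instead it invokes \Cref{Kreg}, which applies \Cref{L:main}(i)--(iii) directly to the integral representation of $K_n'$ to produce the bound $|K_n'(t,x_1)-K_n'(s,x_2)|\le H(\omega)(1+|x_1|+|x_2|)(|x_1-x_2|^\delta+|t-s|^{\delta/2})$ with $\E H\le C\|b_n'\|_{\Bes^\beta_q}(1+\|b_n'\|_{\Bes^\beta_q})$, and then uses Arzel\`a--Ascoli plus Chebyshev.

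One caution on your bootstrap route: the phrase ``smoothing estimates for $P_t$ on the bounded quantity $K_n'(s,x)$'' is misleading---mere $L_m$-boundedness of $K_n'(s,\cdot)$ does \emph{not} make $(P_{t-s}-Id)K_n'(s,x)$ small. The correct argument writes $(P_{t-s}-Id)K_n'(s,x)=\int_0^s\!\int_\0[p_{t-r}(x,y)-p_{s-r}(x,y)]b_n'(u_n'(r,y))\,dy\,dr$ and bounds this via the \emph{time}-regularity of the heat kernel, exactly as in \Cref{L:main}(iii). Once you use \Cref{L:main}(ii) for the spatial increment and (iii) for this piece, you recover precisely the content of \Cref{Kreg}, so your ``main obstacle'' dissolves into the existing lemmas rather than requiring new work.
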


\begin{proposition}[Stability]\label{lem.limsol}
Let $(b^n)_{n\in\Z_+}$ be a sequence of bounded continuous functions converging to $b$ in $\Bes^{\beta-}_q$. Let $(u_0^n)_{n\in\Z_+}$ be a sequence of functions from $\Bsp(\0)$ converging to $u_0$ uniformly on $\0$. Let $V^n$ be a random element having the same law as $V$.
Assume that $u^n$ is a strong solution of \Gref{u^n_0;b^n} with $V^n$ in place of $V$. Let 
$$
\wt u^n(t,x):=u^n(t,x)-P_tu_0^n(x),\quad t\in[0,\T],\,\,x\in D.
$$
Suppose that there exist measurable functions $\wt u,V\colon[0,\T]\times\0\times\Omega\to\R$ such that the sequence $(\wt u^n,V^n)_{n\in\Z_+}$ converges to $(\wt u,V)$ in  $[\cuc([0,\T]\times\0)]^2$ in probability as $n\to\infty$.
Then the function 
$$
u(t,x):=\wt u(t,x)+P_tu_0(x),\quad t\in[0,\T],\,\,x\in D
$$
is a  solution to \eqref{SPDE} with the initial condition $u_0$ and  for any $m\in[2,\infty)$ there exists $C=
C(\beta,q,m,\T)>0$ such that
\begin{equation}\label{uvreg}
[u-V]_{\Ctimespace{1+\frac\beta4-\frac1{4q}}{m,\infty}{[0,\T]}}\le C \sup_{n\in\Z_+}\|b^n\|_{\Bes^\beta_q}<\infty.
\end{equation}
\end{proposition}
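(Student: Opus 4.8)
I would first recast the mild formula for $u^n$ as $u^n_t(x)=P_tu^n_0(x)+K^n_t(x)+V^n_t(x)$ with $K^n_t(x):=\int_0^t\int_\0 p_{t-r}(x,y)b^n(u^n_r(y))\,dy\,dr$, so $\wt u^n=K^n+V^n$; since $\wt u^n\to\wt u$ and $V^n\to V$ in $\cuc([0,\T]\times\0)$ in probability, $K^n\to K:=\wt u-V$ in the same sense. With $u_t(x):=\wt u_t(x)+P_tu_0(x)$, condition (1) of \Cref{Def:sol} holds with this $K$, and condition (3) holds since $\wt u\in\cuc([0,\T]\times\0)$ and $(t,x)\mapsto P_tu_0(x)$ is continuous on $(0,\T]\times\0$. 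The work is then (a) the estimate \eqref{uvreg}, and (b) condition (2) of \Cref{Def:sol} for an \emph{arbitrary} smooth approximating sequence.

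\textbf{The a priori bound \eqref{uvreg}.} Here I would apply \Cref{lem.apriori} to each $u^n$ (legitimate, since $b^n$ is a bounded continuous function in $\Bes^\beta_q$ and $V^n\overset{d}{=}V$) to get, for every $m\in[2,\infty)$,
\[
[u^n-V^n]_{\Ctimespace{1+\frac\beta4-\frac1{4q}}{m,\infty}{[0,\T]}}\le C\,\|b^n\|_{\Bes^\beta_q}\le C\sup_{k\in\Z_+}\|b^k\|_{\Bes^\beta_q}=:C_0<\infty
\]
uniformly in $n$ (finiteness of $C_0$ is part of the definition of $\Bes^{\beta-}_q$-convergence). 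Along a subsequence on which $(\wt u^n,V^n)\to(\wt u,V)$ a.s.\ in $[\cuc([0,\T]\times\0)]^2$, and using $P_tu^n_0\to P_tu_0$ uniformly (as $P_t$ is a contraction and $u^n_0\to u_0$ uniformly), one has $u^n-V^n\to u-V$ a.s.\ locally uniformly, and \eqref{uvreg} follows by Fatou's lemma. The delicate point is that the norm in \eqref{uvreg} is conditional (an $\esssup_\omega$ of an $L_m$-norm given $\F_s$); I would handle it using the uniform integrability supplied by the $L_{m'}$-bounds ($m'>m$) of \Cref{lem.apriori}, together with the correct limiting filtration: since $V$ determines the driving white noise through \eqref{def.V}, $u$ is adapted to, and $V$ remains a white noise with respect to, the augmented filtration generated by $(u,V)$, along which the relevant conditional expectations of $u^n-V^n$ converge to those of $u-V$.

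\textbf{Condition (2).} Fix $N>0$ and any $(\tilde b^k)_{k\in\Z_+}\subset\C_b^\infty$ converging to $b$ in $\Bes^{\beta-}_q$; the target is $R^k:=\sup_{t\in[0,\T]}\sup_{|x|\le N}\bigl|\int_0^t\int_\0 p_{t-r}(x,y)\tilde b^k(u_r(y))\,dy\,dr-K_t(x)\bigr|\to0$ in probability. Pick $\beta'\in(1/q-3/2,\beta)$ (nonempty by \Cref{A:gammap32}), for which the regularization estimate \Cref{L:main} is available for $\Bes^{\beta'}_q$-drifts. Using $K=\lim_nK^n$ and $K^n_t(x)=\int_0^t\int_\0 p_{t-r}(x,y)b^n(u^n_r(y))\,dy\,dr$, I would split, for each $n$, $R^k\le I^{k,n}+II^{k,n}+III^n$, where $I^{k,n},II^{k,n},III^n$ are the suprema over $t\in[0,\T]$, $|x|\le N$ of
$\bigl|\int_0^t\int_\0 p_{t-r}(x,y)(\tilde b^k(u_r(y))-\tilde b^k(u^n_r(y)))\,dy\,dr\bigr|$,
$\bigl|\int_0^t\int_\0 p_{t-r}(x,y)(\tilde b^k-b^n)(u^n_r(y))\,dy\,dr\bigr|$,
and $|K^n_t(x)-K_t(x)|$ respectively. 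For fixed $k$: $III^n\to0$ in probability as $n\to\infty$ because $K^n\to K$ in $\cuc$; $I^{k,n}\to0$ in probability by a dominated convergence argument ($\tilde b^k$ bounded and Lipschitz, $u^n\to u$ locally uniformly in probability, and $p_{t-r}(x,\cdot)$ with uniformly small mass outside large balls for $|x|\le N$); and \Cref{L:main}, using the uniform bound $[u^n-V^n]_{\Ctimespace{1+\frac\beta4-\frac1{4q}}{m,\infty}{[0,\T]}}\le C_0$, gives
\[
\|II^{k,n}\|_{\Lm}\le C(1+C_0)\,\|\tilde b^k-b^n\|_{\Bes^{\beta'}_q}\le C(1+C_0)\bigl(\|\tilde b^k-b\|_{\Bes^{\beta'}_q}+\|b^n-b\|_{\Bes^{\beta'}_q}\bigr).
\]
As $R^k$ does not depend on $n$, Chebyshev's inequality and $\|b^n-b\|_{\Bes^{\beta'}_q}\to0$ then give $\P(R^k>\eps)\le\bigl(3C(1+C_0)\eps^{-1}\bigr)^m\|\tilde b^k-b\|^m_{\Bes^{\beta'}_q}$ for every $\eps>0$, and $k\to\infty$ (so $\tilde b^k\to b$ in $\Bes^{\beta'}_q$) finishes it.

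\textbf{Main obstacle.} The hard part is condition (2): it is required for \emph{every} smooth approximating sequence, so one must connect an arbitrary $(\tilde b^k)$ with the distinguished $(b^n)$, and the only tool for this is the quantitative regularization estimate \Cref{L:main}, controlling the drift functional uniformly in $n$ and linearly in $\|\tilde b^k-b^n\|_{\Bes^{\beta'}_q}$ --- so that establishing \Cref{L:main} in the form needed (with the supremum over $(t,x)$ inside the $L_m$-norm) is itself the technical heart. The remaining, more routine, subtlety is pushing the conditional ($L_m$ given $\F_s$) norm in \eqref{uvreg} through the limit, which depends on a careful choice of the limiting filtration.
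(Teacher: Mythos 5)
Your proposal is correct and follows essentially the same route as the paper: the same three-term interpolation between the arbitrary approximating sequence and the distinguished one $(b^n)$ for verifying condition (2) of \Cref{Def:sol} (with the cross term controlled uniformly in $n$ by the regularization estimates together with the a priori bound of \Cref{lem.apriori}), and \eqref{uvreg} obtained by passing the conditional bound of \Cref{lem.apriori} through the limit via Fatou. The one point to make precise is that the bound you assert for $\|II^{k,n}\|_{\Lm}$ — a supremum over $(t,x)$ \emph{inside} the $L_m$-norm — does not follow from \Cref{L:main} as stated but from its Kolmogorov-continuity upgrade (\Cref{Kreg} in the paper, which yields a random H\"older constant $H$ with $\E H\le C\|h\|_{\Bes^{\beta'}_q}(1+\|f\|_{\Bes^{\beta'}_q})$), and the limit $P_{t-s}K^n_s(x)\to P_{t-s}K_s(x)$ needed to pass the conditional estimate to the limit requires the uniform moment bound to control the heat-kernel tails on the unbounded domain (\cref{lem.convg}); both are exactly the technical points you flag.
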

The proofs of \cref{lem.apriori,lem.uun,lem.limsol} are presented in \cref{S:existproofs}.

Combining the above propositions we obtain the following corollary, which immediately implies \cref{T:mainresult}(i). This corollary will be also important to show the existence of strong solutions to \eqref{SPDE}.

\begin{Corollary}\label{Cor:main}
In the setting of \Cref{lem.uun} the following holds. There exists a filtered probability space $(\wh\Omega, \wh\F, (\wh\F_t)_{t\in[0,\T]}, \wh P)$, an $(\wh\F_t)$-white noise $\wh W$ defined on this space, measurable functions $v',v''\colon[0,\T]\times\0\times\wh\Omega\to\R$ such that
\begin{enumerate}[{\rm(1)}] 
\item both $v'$ and $v''$ are adapted to the filtration $(\wh\F_t)$ and are weak solutions  to \eqref{SPDE} with the initial condition $u_0$;
\item there exists a subsequence $(n_k)$ such that $(\wt u_{n_k}',
 \wt u_{n_k}'')_{k\in\Z_+}$ converges weakly to $(\wt v',
 \wt v'')$ in the space $[\cuc([0,\T]\times\0)]^2$ as $k\to\infty$, where
\begin{equation}\label{wtvwtv}
\wt v'(t,x):=v'(t,x)-P_tu_0(x),\,\,
\wt v''(t,x):=v''(t,x)-P_tu_0(x),\quad t\in[0,\T],\,\,x\in D.
\end{equation}
\item  for $\wh V $ defined as in \eqref{def.V} with $\wh W$ in place of $W$ the following holds:
\begin{equation}\label{goodguys}
[v'-\wh V]_{\Ctimespace{1+\frac\beta4-\frac1{4q}}{m,\infty}{[0,\T]}}+[v''-\wh V]_{\Ctimespace{1+\frac\beta4-\frac1{4q}}{m,\infty}{[0,\T]}}<\infty.
\end{equation}
\end{enumerate}
\end{Corollary}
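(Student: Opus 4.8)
The plan is to derive \Cref{Cor:main} by combining the tightness bound of \cref{lem.uun} with the stability statement of \cref{lem.limsol}, after transporting everything to a new probability space via the Skorokhod representation theorem and reconstructing the driving white noise there. First I would apply \cref{lem.uun} to extract a subsequence $(n_k)$ along which $(\wt u_{n_k}',\wt u_{n_k}'',V)$ converges weakly in the Polish space $[\cuc([0,\T]\times\0)]^3$. Since this space is Polish, the Skorokhod representation theorem yields a probability space $(\wh\Omega,\wh\F,\wh P)$ carrying random elements $(\wt v_k',\wt v_k'',\wh V_k)$, $k\in\Z_+$, and $(\wt v',\wt v'',\wh V)$ such that $(\wt v_k',\wt v_k'',\wh V_k)\stackrel{d}{=}(\wt u_{n_k}',\wt u_{n_k}'',V)$ for every $k$ and $(\wt v_k',\wt v_k'',\wh V_k)\to(\wt v',\wt v'',\wh V)$ $\wh P$-almost surely, hence in probability. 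Defining $v',v''$ by \eqref{wtvwtv}, item (2) of the corollary is then immediate from the convergence of the first two marginals.

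Next I would install the white noise on $(\wh\Omega,\wh\F,\wh P)$. The stochastic convolution $V$ from \eqref{def.V} is a continuous functional of $W$, and conversely $W$ is recovered from $V$ through the time-local distributional identity $\dot W=\partial_tV-\frac12\partial^2_{xx}V$; thus there is a measurable map $\mathcal W$ with $\mathcal W(V)=W$ almost surely, valid for each of the three pairs $(D,p)$ of \Cref{Conv}. Setting $\wh W_k:=\mathcal W(\wh V_k)$ and $\wh W:=\mathcal W(\wh V)$, each $\wh W_k$ (resp.\ $\wh W$) is a white noise whose stochastic convolution is $\wh V_k$ (resp.\ $\wh V$) because $\wh V_k\stackrel{d}{=}V$; and since $u_{n_k}',u_{n_k}''$ are strong solutions, hence functionals of $W$, the fields $u_k':=\wt v_k'+P_\cdot u_{0,n_k}'$ and $u_k'':=\wt v_k''+P_\cdot u_{0,n_k}''$ are strong solutions of \Gref{u_{0,n_k}';b_{n_k}'} and \Gref{u_{0,n_k}'';b_{n_k}''} with $\wh V_k$ in place of $V$. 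I would then take $(\wh\F_t)_{t\in[0,\T]}$ to be the usual augmentation of the filtration generated by $\{\wt v_j'(s,\cdot),\wt v_j''(s,\cdot),\wh V_j(s,\cdot):s\le t,\ j\in\Z_+\}$ together with $\wh W$ on $[0,t]$; all the $\wh V_j$, $u_j'$, $u_j''$, $v'$, $v''$, $\wh W$ are then $(\wh\F_t)$-adapted, and the white-noise property of $\wh W$ with respect to $(\wh\F_t)$ is obtained by transferring through the Skorokhod coupling the independence of the future increments of $W$ from $\F^W_t$ that holds on the original space (where all approximants and $V$ are $\F^W$-adapted).

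Finally I would apply \cref{lem.limsol} twice on $(\wh\Omega,\wh\F,(\wh\F_t),\wh P)$: once to the data $(u_k')_k,(b_{n_k}')_k,(u_{0,n_k}')_k$ with $V^k=\wh V_k$ --- which converge in probability to $(\wt v',\wh V)$ and satisfy $\sup_k\|b_{n_k}'\|_{\Bes^\beta_q}<\infty$ --- and once to the corresponding double-primed data. This yields that $v'$ and $v''$ are solutions of \eqref{SPDE} with initial condition $u_0$ and white noise $\wh W$, adapted to $(\wh\F_t)$, hence weak solutions, which is item (1); and that $[v'-\wh V]_{\Ctimespace{1+\frac\beta4-\frac1{4q}}{m,\infty}{[0,\T]}}<\infty$ and $[v''-\wh V]_{\Ctimespace{1+\frac\beta4-\frac1{4q}}{m,\infty}{[0,\T]}}<\infty$ for all $m\in[2,\infty)$ by \eqref{uvreg}, which gives item (3), in particular \eqref{goodguys} (with $\wh V$ defined by \eqref{def.V} for $\wh W$).

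The step I expect to be the main obstacle is the noise reconstruction: one must end up with a \emph{single} white noise $\wh W$, together with a filtration making it a white noise to which \emph{both} limit solutions are adapted, whereas the Skorokhod coupling only controls the joint law of each individual triple $(\wt u_{n_k}',\wt u_{n_k}'',V)$ and not the simultaneous dependence of all approximants on the common noise. Pushing this through relies on the measurable invertibility $V\leftrightarrow W$ of the heat-type stochastic convolution, on including all approximating fields in the limit filtration, and on a careful transfer of the relevant adaptedness and independence relations --- each of which is a statement about countably many finite-dimensional distributions and hence survives the coupling. The remaining steps are a mechanical concatenation of \cref{lem.uun,lem.limsol}.
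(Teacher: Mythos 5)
Your overall architecture --- tightness from \cref{lem.uun}, Skorokhod representation, reconstruction of the noise from the stochastic convolution (which is exactly \cref{lem.gpr}), identification of each $v_k'$, $v_k''$ as a strong solution driven by $\wh V_k$ via equality of laws, and a double application of \cref{lem.limsol} --- is the paper's argument. The genuine gap is in the step you yourself single out as the main obstacle: the construction of the filtration $(\wh\F_t)$ and the verification that $\wh W$ is an $(\wh\F_t)$-white noise. You take $\wh\F_t$ to contain $\sigma(\wt v_j'(s,\cdot),\wt v_j''(s,\cdot),\wh V_j(s,\cdot): s\le t,\ j\in\Z_+)$, i.e.\ the past of \emph{all} approximants, and you justify the independence of the future increments of $\wh W$ from $\wh\F_t$ by ``transferring'' the corresponding statement from the original space, asserting that it is a statement about countably many finite-dimensional distributions and hence survives the coupling. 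This is exactly where the argument breaks: the independence you need on the new space involves the \emph{joint} law of the whole sequence $(\wh V_j)_{j\in\Z_+}$ together with $\wh V$, and the Skorokhod representation theorem gives no control over that joint law --- it only fixes, for each $k$ separately, the law of the triple $(\wt v_k',\wt v_k'',\wh V_k)$, plus almost sure convergence. Nothing prevents the coupling from entangling, say, $\wh V_1$ restricted to $[0,s]$ with the increments of $\wh V$ after time $s$, in which case $\wh W$ would fail to be a white noise with respect to your filtration. So the key claim is not merely unproved; it can fail for the filtration you chose.

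The fix is the one the paper uses: take the \emph{smaller} filtration $\wh\F_t:=\sigma(v_r'(x),v_r''(x),\wh V_r(x):\ x\in\0,\ r\in[0,t])$ generated by the limit objects only. Adaptedness of $v'$ and $v''$ is then trivial, and the required independence of $\int_\0\phi(x)(\wh V_t(x)-P_{t-s}\wh V_s(x))\,dx$ from $\wh\F_s$ is obtained by a limiting argument (\cref{lem.filtr}): for each fixed $n$ the corresponding increment of $\wh V^n$ is independent of $\sigma(v_n',v_n'',\wh V^n$ up to $s)$ --- a per-$n$ statement about a single triple, which \emph{is} preserved by the coupling --- and independence passes to the almost sure limits on both sides. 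Note also that applying \cref{lem.limsol} does not require the approximants to be adapted to a common filtration; it only needs each $v_n'$ to be a strong solution relative to its own noise $\wh W^n$, which you already have from \cref{lem.gpr}. With this modification the rest of your proposal goes through as written.
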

\begin{proof}
By \Cref{lem.uun}, there exists a subsequence $(n_k)$ such that $(\wt u_{n_k}',
 \wt u_{n_k}'',V)_{k\in\Z_+}$ converges weakly in the  space $[\cuc([0,\T]\times\0)]^3$. By passing to this subsequence, to simplify the notation, we may assume without loss of generality that   $( \wt u_{n}',\wt u_{n}'',V)$ converges weakly. Since this space is Polish, we can apply the Skorohod representation theorem \cite[Theorem 6.7]{Bilya} and deduce that
there  exists  a sequence of random elements $(\wt v'_{n}, \wt{ v}''_{n},\wh V^{n})$ defined on a common probability space $(\wh\Omega, \wh\F,  \wh P)$ and  a random element $(\wt{ v}' ,\wt{ v}'',  \wh V)$ such that 
\begin{equation}\label{equallaws}
\Law(\wt{ v}'_{n}, \wt{ v}''_{n},\wh V^{n})=\Law( \wt u'_{n}, \wt u''_{n}, V)
\end{equation}
and $(\wt v'_{n}, \wt v''_{n},\wh V^{n})$ converges to $(\wt{ v}' ,\wt{ v}'',  \wh V)$  a.s. in  space $[\cuc([0,\T]\times\0)]^3$.

Define for $t\in[0,\T],\,\,x\in D$
\begin{align*}
&v_n'(t,x):=\wt v_n'(t,x)+P_tu'_{0,n}(x),\,\,\, v_n''(t,x):=\wt v_n''(t,x)+P_tu''_{0,n}(x),\\
&v'(t,x):=\wt v'(t,x)+P_tu'_{0}(x),\,\,\, v''(t,x):=\wt v''(t,x)+P_tu''_{0}(x).
\end{align*}

Since $u_n'$ satisfies \Gref{u'_{0,n};b_n'}, by \eqref{equallaws} we have that $v_{n}'$ satisfies \Gref{u'_{0,n};b_n'}  with $\wh V^n$ in place of $V$.

Since $u_n'$ is the strong solution to \Gref{u'_{0,n};b_n'},  the random variable $u_n'(t,x)$ is $\F_t^W$-measurable, where $t\in (0,\T]$, $x\in\0$. By \Cref{lem.gpr}, $\F_t^W=\F_t^V$.  This and \eqref{equallaws} implies that $v_n'(t,x)$ is $ \F_t^{\wh V^n}$-measurable. By \Cref{lem.gpr}, there exists a white noise $\wh W^n$ such that \eqref{def.V} holds for $\wh W^n$ in place of $W$ and $\wh V^n$ in place of $\wh V$ and $\F_t^{\wh V^n}=\F_t^{\wh W^n}$.
Thus $v_n'(t,x)$ is $ \F_t^{\wh W^n}$-measurable. Identity \eqref{equallaws} implies now that $v_{n}'$ is a strong solution to \Gref{u'_{0,n};b_n'} with $\wh V^n$ in place of $V$. Similarly, $v_{n}''$ is a strong solution to \Gref{u''_{0,n};b_n''} with $\wh V^n$ in place of $V$.

We see now that all the conditions of \cref{lem.limsol} are satisfied. Applying this result, we see that $v'$ and $v''$ are solutions to \eqref{SPDE} in the sense of \Cref{Def:sol} with $\wh V$ in place of $V$.

Define now  $\wh \F_t:=\sigma(v'_r(x),\,v''_r(x),\,\wh V_r(x),x\in\0, r\in[0,t])$.
Clearly,  $v'$ and $v''$ are $\wh \F_t$ measurable.

It follows immediately from the definition of the white noise that for any $(s,t)\in\Delta_{0,\T}$, $\phi\in\C_{c}^\infty$, $n\in\Z_+$ the random variable 
$$ 
\int_\0\int_s^t\int_\0 \phi(x)p_{t-r}(x,y) \wh W^n(dy,dr)dx=\int_{\0} \phi(x)( \wh V^n_t(x)-P_{t-s}\wh V^n_s(x))\, dx 
$$ 
is independent of  $\wh \F_s^{\wh W^n}=\sigma(v_n'(r,x),\,v_n''(r,x),\,\wh V^n_r(x),x\in\0, r\in[0,s])$. Therefore, \Cref{lem.filtr} implies that the random variable $ \int_{\0} \phi(x)(\wh V_t(x)-P_{t-s}\wh V_s(x))\, dx $ is independent of  $\wh \F_s$. Thus, by \Cref{lem.gpr}, there exists an $(\wh \F_t)$-white noise $\wh W$ such that \eqref{def.V} holds for $\wh W$ in place of $W$ and $\wh V$ in place of $\wh V$.
Hence $v'$ and $v''$ are weak solutions to \eqref{SPDE} and they are adapted to the same filtration $(\wh \F_t)$.

Finally, it remains to note that \eqref{goodguys} follows now from  \eqref{uvreg}.
\end{proof}

\begin{proof}[Proof of \cref{T:mainresult}(i)]
Let $(b^n)$ be a sequence of smooth functions converging to $b$ in $\Bes^{\beta-}_p$.  Applying \Cref{Cor:main} with $b_n'=b_n''=b^n$ and $u_{0,n}'=u_{0,n}''=u_0$ we obtain existence of a weak solution $v'$. By \eqref{goodguys},
\begin{equation*}
[v'-\wh V]_{\Ctimespace{1+\frac\beta4-\frac1{4q}}{m}{[0,\T]}}\le [v'-\wh V]_{\Ctimespace{1+\frac\beta4-\frac1{4q}}{m,\infty}{[0,\T]}}<\infty,
\end{equation*}
where the first inequality follows from \eqref{lmlnsimple}. Hence $v'\in \V(1+\frac\beta4-\frac1{4q})$.
\end{proof}

Now we move on to the proofs of strong existence and uniqueness of solutions to \eqref{SPDE}.

\begin{proposition}[Uniqueness]\label{L:condun}
Suppose additionally that $\beta\ge-1+1/q$, $\beta>-1$. 
Let $(u_t)_{t\in[0,\T]}$ be a solution of SPDE \eqref{SPDE} starting from the  initial condition $u_0$. Let $(v_t)_{t\in[0,\T]}$ be a  measurable  process $(0,\T]\times\0\times\Omega\to\R$, which satisfies conditions (1), (3) of \Cref{Def:sol} and condition \ref{Lcond} of \cref{R:Leonid}. 

Suppose that $u,v$ are adapted to the  filtration $(\F_t)_{t\in[0,\T]}$ and belong to the class $\V(3/4)$. Assume further that for some $m\ge2$
\begin{equation}\label{condassump}
[u-V]_{\Ctimespace{\frac34}{m,\infty}{[0,\T]}}<\infty.
\end{equation}
Then $u=v$ a.s.
\end{proposition}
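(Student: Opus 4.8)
The plan is to adapt the ``toy'' uniqueness argument from \Cref{S:overview} to the distributional setting, with the crucial twist that we never try to bound the drift difference of $u$ and $v$ against each other via a H\"older norm of $b$; instead we feed the problem into the critical stochastic sewing lemma (\Cref{thm.critssl}). Set $\psi_t:=u_t-V_t$, $\phi_t:=v_t-V_t$ (so $\psi,\phi$ are the ``$K$'' parts), and $z_t:=u_t-v_t=\psi_t-\phi_t$. We have $z_0=0$ and we want $z\equiv0$. Since $\beta-1/q\ge-1$ and $\beta>-1$, both $b$ and all its smooth approximants $b^n$ live in $\Bes^\beta_q$ uniformly, and condition~\ref{Lcond} identifies $K_t(x)=\lim_n\int_0^t\int_\0 p_{t-r}(x,y)b^n(u_r(y))\,dy\,dr$ (and similarly for $v$). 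So for $0\le s\le t$,
\begin{align*}
z_t(x)-P_{t-s}z_s(x)=\lim_{n\to\infty}\int_s^t\int_\0 p_{t-r}(x,y)\bigl(b^n(V_r(y)+\psi_r(y))-b^n(V_r(y)+\phi_r(y))\bigr)\,dy\,dr
\end{align*}
in probability. The strategy is to apply \Cref{thm.critssl} to the germ $A_{s,t}(x):=\E^s\bigl[\int_s^t\int_\0 p_{t-r}(x,y)(b^n(V_r(y)+\psi_r(y))-b^n(V_r(y)+\phi_r(y)))\,dy\,dr\bigr]$ and control the resulting estimate uniformly in $n$.

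The key steps, in order:
(1) Use the a priori hypothesis \eqref{condassump}, namely $[u-V]_{\CLmi{3/4}[0,\T]}<\infty$ with the conditional $L_{m,\infty}$ norm, together with $u,v\in\V(3/4)$, to get quantitative control on $\psi$ and $\phi$ and on their conditional increments; this is exactly the ``extra factor'' \eqref{so-nhan} that \Cref{S:overview} flags as the first obstacle, and here it is assumed finite for $u$, while for $v$ we only have the unconditional $\V(3/4)$ bound, which is why the two solutions play asymmetric roles.
(2) Invoke the regularization estimate \Cref{L:2} (the replacement for \eqref{Demo2} that is valid down to $\beta-1/q=-1$, using the rough-path-type expansion and the bound \eqref{Bound2}) to estimate $\|A_{s,t}(x)\|$ and $\|\delta A_{s,u,t}(x)\|$ in terms of $\|z\|_{\C^{0}L_m[s,t]}$, $[z]_{\C^{\tau}L_m[s,t]}$ and $\|b\|_{\Bes^\beta_q}$, picking up the critical exponent $3/4+\beta/4-1/(4q)$ possibly accompanied by logarithmic factors coming from the boundary case. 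Crucially these bounds must be \emph{uniform in $n$}, so that passing $n\to\infty$ is legitimate.
(3) Feed these into \Cref{thm.critssl} on a small interval $[0,\ell]$ to obtain an inequality of the shape
$$\|z\|_{\C^0L_m[0,\ell]}\le C\|b\|_{\Bes^\beta_q}(1+\|b\|_{\Bes^\beta_q})\,\ell^{3/4+\beta/4-1/(4q)}\bigl(1+|\log\ell|^{c}\bigr)\,\|z\|_{\C^0L_m[0,\ell]},$$
the critical/logarithmic analogue of \eqref{ket-thuc}.
(4) Since $3/4+\beta/4-1/(4q)>0$ (here is where $\beta-1/q\ge-1$, $\beta>-1$ is used — it guarantees the exponent is strictly positive even with the log), choose $\ell$ small enough that the prefactor is $<1/2$, concluding $z\equiv0$ on $[0,\ell]$; then iterate over $[\ell,2\ell],[2\ell,3\ell],\dots$ to cover $[0,\T]$. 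The iteration works because at each step the new ``initial'' increment $z_\ell\equiv0$ and the relevant norms over the shifted interval are still controlled by the same global quantities from Step~(1).

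The main obstacle I expect is Step~(2)–(3): getting the stochastic sewing input in the \emph{critical} regime $\beta-1/q=-1$ (which covers $b=\kappa\delta_0$) rather than the subcritical one. In the subcritical case \eqref{Demo2} the exponent is strictly bigger than required and one closes the estimate by a clean contraction as in \eqref{ket-thuc}; in the critical case the sewing bound only barely beats the scaling, logarithmic corrections appear, and the na\"ive contraction fails. This forces the use of \Cref{thm.critssl} in place of \Cref{lem:B.Sew1}, the rough-path-flavoured expansion of the solution with its own norm estimate \eqref{Bound2}, and a more delicate Davie-type iteration à la \cite[Theorem~3.6]{MR2387018} rather than a one-shot fixed-point argument — exactly as sketched at the end of \Cref{S:overview}. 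A secondary technical point is ensuring all estimates are uniform in the approximating index $n$ and survive the in-probability limit, and handling the $\esssup_\omega$ in the conditional norms when only one of the two solutions is assumed to satisfy \eqref{condassump}.
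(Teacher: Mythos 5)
Your overall architecture matches the paper's: the same processes $\psi,\phi,z$, the asymmetric roles of $u$ and $v$ (only $u$ needs \eqref{condassump}), the regularization input from \Cref{L:2} with its remainder $R$ and bound \eqref{Bound2}, the critical sewing lemma \Cref{thm.critssl}, and uniformity in $n$ followed by a passage to the limit. However, there is a concrete gap in your closing Steps (3)--(4). You assert an inequality of the form $\|z\|_{\Ctimespacezerom{[0,\ell]}}\le C\,\ell^{3/4+\beta/4-1/(4q)}\bigl(1+|\log\ell|^{c}\bigr)\|z\|_{\Ctimespacezerom{[0,\ell]}}$ and close by contraction. This is not what \Cref{thm.critssl} delivers. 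In the critical regime the term coming from condition \eqref{con.EAcrit} has $\Gamma_3\sim\|z\|_{\Ctimespacezerom{[s,t]}}$ and time exponent exactly $1$, and the output \eqref{est.critssl} carries the factor $\Gamma_3(1+|\log(\Gamma_1 T^{\varepsilon_1}/\Gamma_3)|)$ — a logarithm of $\|z\|$, not of $\ell$. So the best available estimate is the log-Lipschitz bound \eqref{est.loglip},
\begin{equation*}
\|z_t-P_{t-s}z_s\|_{\blm}\le C\|z\|_{\Ctimespacezerom{[s,t]}}(t-s)^{\frac12+\delta}+C\|z\|_{\Ctimespacezerom{[s,t]}}\,|\log\|z\|_{\Ctimespacezerom{[s,t]}}|\,(t-s),
\end{equation*}
and since $|\log\|z\||\to\infty$ as $\|z\|\to0$, no choice of $\ell$ makes the prefactor of $\|z\|$ less than $1$; a one-shot contraction cannot close. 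You do flag this in your "obstacles" paragraph, but the fix is not optional decoration — it is the proof. The paper's Step 3 replaces the contraction by a Davie/Osgood argument: one first shows $t\mapsto\|z_t\|_{\blm}$ is continuous (your proposal omits this, and it is needed to make the construction legitimate), then defines $t_k=\inf\{t:\|z_t\|_{\blm}\ge2^{-k}\}$, uses \eqref{est.loglip} together with $\|P_{t_k-t_{k+1}}z_{t_{k+1}}\|_\blm\le 2^{-k-1}$ to deduce $t_k-t_{k+1}\ge\bar C(1+k)^{-1}$, and derives a contradiction from the divergence of $\sum_k(1+k)^{-1}$.

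A secondary structural point: the sewing germ must be built from the drift evaluated along the \emph{propagated} initial data, $b_k(V_r(y)+P_{r-s}\psi_s(y))$, with the kernel $p_{T-r}$ at a fixed terminal time $T$ (so that the germ is consistent under refinement), and the estimate must be closed as a coupled system for the pair $(\|z\|_{\Ctimespacezerom{}},\|R\|_{\Ctimespacetaum{}})$ on short intervals before the Osgood iteration starts. Your proposal cites \eqref{Bound2} but does not carry out this absorption of $\|R\|$, which is the content of Step 1 of the paper's proof.
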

The proof of \Cref{L:condun} is given in \Cref{S:uniq}.

The proof of strong existence uses the following  statement from \cite{MR1392450}. For the convenience of the reader we provide it here.
\begin{proposition}[{\cite[Lemma~1.1]{MR1392450}}]\label{L:GP}
Let $(Z_n)$ be a sequence of random elements in a Polish space $(E, \rho)$ equipped with the Borel $\sigma$-algebra. Assume that for every pair of subsequences
$(Z_{l_k})$ and $(Z_{m_k})$ there exists a further sub-subsequence  $(Z_{l_{k_r}},Z_{m_{k_r}})$ which converges weakly in the space $E\times E$ to a random element $w=(w^1,w^2)$ such that $w^1=w^2$ a.s. 

Then there exists an $E$-valued random element $Z$ such that $(Z_n)$ converges in probability
to $Z$.
\end{proposition}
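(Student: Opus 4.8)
The plan is to follow the classical Gy\"ongy--Krylov route: use the hypothesis to show that $(Z_n)$ is Cauchy in probability, and then conclude by completeness of $E$. Since $(E,\rho)$ is complete, a sequence that is Cauchy in probability converges in probability, so it suffices to establish the Cauchy property.

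First I would argue by contradiction. Suppose $(Z_n)$ is not Cauchy in probability. Then there exist $\eps>0$, $\delta>0$ and subsequences $(Z_{l_k})_{k}$, $(Z_{m_k})_{k}$ with $l_k,m_k\to\infty$ such that
\begin{equation*}
\P\bigl(\rho(Z_{l_k},Z_{m_k})>\eps\bigr)>\delta\quad\textrm{for every }k.
\end{equation*}
Here $\rho\colon E\times E\to[0,\infty)$ is continuous, hence Borel measurable, so these events are well defined, and $E\times E$ with the product metric is again Polish.

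Next I would invoke the hypothesis for this particular pair of subsequences: there is a sub-subsequence $(l_{k_r},m_{k_r})_r$ along which $(Z_{l_{k_r}},Z_{m_{k_r}})$ converges weakly in $E\times E$ to some $w=(w^1,w^2)$ with $w^1=w^2$ a.s. Introduce the bounded continuous test function $g\colon E\times E\to[0,1]$, $g(x,y):=\min\bigl(\rho(x,y)/\eps,\,1\bigr)$. On one hand, by the portmanteau theorem,
\begin{equation*}
\E\, g(Z_{l_{k_r}},Z_{m_{k_r}})\longrightarrow \E\, g(w^1,w^2)=0\qquad(r\to\infty),
\end{equation*}
since $g(w^1,w^2)=0$ a.s. On the other hand, $g(x,y)\ge\1\{\rho(x,y)>\eps\}$ for all $(x,y)$, so $\E\, g(Z_{l_{k_r}},Z_{m_{k_r}})\ge\P(\rho(Z_{l_{k_r}},Z_{m_{k_r}})>\eps)>\delta$ for every $r$. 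Letting $r\to\infty$ gives $0\ge\delta>0$, a contradiction. Hence $(Z_n)$ is Cauchy in probability.

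Finally, completeness of $(E,\rho)$ upgrades this to convergence in probability: choosing indices $n_j$ so that $\P(\rho(Z_{n_{j+1}},Z_{n_j})>2^{-j})\le2^{-j}$, the Borel--Cantelli lemma shows that $(Z_{n_j})$ is a.s. a Cauchy sequence in $E$, hence converges a.s. to a measurable limit $Z\colon\Omega\to E$; a routine $3\eps$-argument, using that $(Z_n)$ is Cauchy in probability, then yields $Z_n\to Z$ in probability. The only step that requires a little care is the contradiction above --- specifically, recognizing that a \emph{diagonal} weak limit of the pairs $(Z_{l_{k_r}},Z_{m_{k_r}})$ forces the probability of $\eps$-separation to vanish; everything else is standard measure theory. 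This is precisely the argument of \cite[Lemma~1.1]{MR1392450}, which we reproduce here for the reader's convenience.
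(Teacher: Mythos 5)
Your proof is correct and is exactly the standard Gy\"ongy--Krylov argument: the paper itself states this proposition without proof, citing \cite{MR1392450}, and your contradiction argument with the test function $g(x,y)=\min(\rho(x,y)/\eps,1)$ followed by the Borel--Cantelli upgrade from Cauchy-in-probability to convergence in probability is precisely the proof given in that reference. The only (purely cosmetic) point is that after negating Cauchyness you should note that the indices $l_k,m_k$ can be chosen strictly increasing so that they genuinely form subsequences to which the hypothesis applies.
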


\begin{proof}[Proof of \cref{T:approximations}]
We will use \Cref{L:GP}. Fix a   sequence $(b_n)$ of bounded continuous functions converging to $b$ in $\Bes^{\beta-}_q$ and a sequence  $(u_{0,n})$ of functions from $\Bsp(\0)$ converging to $u_0$. Let $u_n$ be the strong solution to \Gref{u_{0,n};b_n}. 
Define 
$$
\wt u_n(t,x):=u_n(t,x)-P_tu_{0,n}(x),\quad t\in[0,\T],\,\,x\in D.
$$

Let  $(b'_n, \wt u'_n)$ and $(b''_n, \wt u''_n)$ be two arbitrary subsequences of $(b_n, \wt u_n)$. We apply \Cref{Cor:main}. It follows that there  exists  a filtered probability space $(\wh\Omega, \wh\F, (\wh\F_t)_{t\in[0,\T]},  \wh P)$, an $(\wh\F_t)$-white noise $\wh W$ defined on this space, and a pair of weak solutions $(v',v'')$ to \eqref{SPDE} adapted to the filtration $(\wh\F_t)$. We see also that there exists a subsequence $(n_k)$ such that 
$(\wt u'_{n_k},\wt u''_{n_k})$ converges to $(\wt v',\wt v'')$ weakly in the space $[\cuc([0,\T]\times\0)]^2$ as $k\to\infty$, where $(\wt v',\wt v'')$ are defined in \eqref{wtvwtv}. 
We note that \eqref{goodguys} together with \eqref{lmlnsimple} implies that for any $m\ge2$
\begin{equation*}
[v'-\wh V]_{\Ctimespace{\frac34}{m}{[0,\T]}}\le [v'-\wh V]_{
\Ctimespace{\frac34}{m,\infty}{[0,\T]}}<\infty,
\end{equation*}
where we used the fact that $1+\beta/4-1/(4q)\ge3/4$. Thus the pair $(v',\wh V)$ satisfies \eqref{condassump} and $v'$ belong to the class $\V(3/4)$. Similarly, $v''\in \V(3/4)$. Thus, we see that all the assumptions of \Cref{L:condun} are satisfied and we can conclude that $v'=v''$ a.s. By definition, this implies that $\wt v'=\wt v''$ a.s.

Thus, all the conditions of \Cref{L:GP} are met. Hence there exists a
$\cuc([0,\T]\times\0)$-valued random element $\wt u$ such that $\wt u_n$ converges to $\wt u$ in probability as $n\to\infty$. Set now 
$$
u(t,x):=\wt u(t,x)+P_tu_{0}(x),\quad t\in[0,\T],\,\,x\in D.
$$
Applying  \Cref{lem.limsol}, we see that $u$ is a solution to \eqref{SPDE} with the initial condition $u_0$. Since $\wt u_n(t,x)$, where $t\in[0,\T]$, $x\in\0$, is 
$\F_t^W$-measurable, 
we see that $\wt u(t,x)$ and, hence $u(t,x)$  
are $\F_t^W$-measurable. Thus, $u$ is a strong solution to \eqref{SPDE}.

From the convergence of probability of $\wt u_n$ to $\wt u$, we get that for any $N>0$
\begin{equation}\label{part1t12}
	\sup_{t\in[0,\T]}\sup_{\substack{x\in\0\\|x|\le N}}|\wt u^n_t(x)-\wt u_t(x)|\to0\quad \text{in probability as $n\to\infty$}.
\end{equation}
Further, by the assumptions of the theorem
\begin{equation}\label{part2t12}
\sup_{t\in[0,\T]}\sup_{x\in\0}|P_t u^n_0(x)-P_t u_0(x)|\le
\sup_{x\in\0}|u^n_0(x)- u_0(x)|	\to0\quad \text{as $n\to\infty$},
\end{equation}
where we used the fact that $|P_tf(x)|\le \sup_{y} |f(y)|$ for any bounded function $f$. Combining \eqref{part1t12} and \eqref{part2t12}, we obtain \eqref{conprobt12}.

Finally, part (3) of the theorem follows from \eqref{uvreg} and the fact that $1+\frac\beta4-\frac1{4q}\ge\frac34$.
\end{proof}

\begin{proof}[Proof of \cref{T:mainresult}(ii)]
	By \cref{T:approximations}, there exists a strong solution $u$ to \eqref{SPDE} satisfying \eqref{condassump}. If $v$ is another strong solution to \eqref{SPDE} in the class $\V(3/4)$, then, by \cref{L:condun} $u=v$. This shows strong uniqueness of solutions to \eqref{SPDE}.
\end{proof}

\begin{proposition}\label{lem.1var}
Suppose that $b$ is a non-negative finite measure, then every solution of \eqref{SPDE} belongs to the class $\V(3/4)$.
\end{proposition}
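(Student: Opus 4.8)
The goal is to show that any solution $u$ of \eqref{SPDE} with non-negative finite measure drift $b$ automatically satisfies the regularity bound defining $\V(3/4)$, namely $\sup_{(s,t)\in\Delta_{0,\T}}\sup_{x\in\0}|t-s|^{-3/4}\|K_t(x)-P_{t-s}K_s(x)\|_{\Lm}<\infty$ together with uniform moment bounds on $u_t(x)$, where $K_t(x)=\int_0^t\int_\0 p_{t-r}(x,y)\,b(u_r(y))\,dy\,dr$ is (heuristically) the drift part. The plan is to exploit that $b\ge0$, so that $r\mapsto K_t(x)$ type increments are \emph{monotone} in an appropriate sense, allowing a one-sided (``1-variation'') control even though we do not a priori know the finer cancellation estimates.

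First I would fix a smooth non-negative approximating sequence $b^n\uparrow b$ (or $b^n:=G_{1/n}b\to b$ in $\Bes^{0-}_1$, which one can take non-negative since $b\ge0$ and the heat kernel is positive), and write $K^n_t(x):=\int_0^t\int_\0 p_{t-r}(x,y)\,b^n(u_r(y))\,dy\,dr$; by Definition~\ref{Def:sol}(2) these converge in probability to $K_t(x)$ uniformly on compacts. The key observation is that $A^n_{s,t}(x):=\int_s^t\int_\0 p_{t-r}(x,y)\,b^n(u_r(y))\,dy\,dr\ge 0$, since $b^n\ge0$ and $p\ge0$, and moreover the total mass $\int_0^\T\int_\0 b^n(u_r(y))\,dy\,dr$ is the object whose moments we must control. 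The plan is to apply the stochastic sewing lemma with random controls (\Cref{lem:B.rcontrol}) to the two-parameter process $A^n_{s,t}(x)$: the conditional expectation $\E^s A^n_{s,t}(x)$ can be estimated using the local nondeterminism \eqref{est.V.lnd} of $V$ (writing $u_r(y)=V_r(y)+(\text{something }\F_r\text{-ish})$ and using that the conditional density of $V_r(y)$ given $\F_s$ is bounded by $C(r-s)^{-1/4}$), which gives $\|\E^s A^n_{s,t}(x)\|\lesssim \|b^n\|_{\Bes^0_1}\int_s^t (r-s)^{-1/4}\,dr \lesssim (t-s)^{3/4}$ — the exponent $3/4$ is exactly what $\V(3/4)$ requires. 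For the ``remainder'' term $\delta A^n_{s,u,t}=A^n_{s,t}-A^n_{s,u}-A^n_{u,t}$ (which is nonzero because of the time-dependence $p_{t-r}$ vs $p_{u-r}$), one bounds it crudely by the non-negativity: $0\le \delta A^n_{s,u,t}(x)\le A^n_{s,u}(x)$ plus a heat-kernel difference term that is itself a \emph{random control} of the right order, which is precisely the scenario \Cref{lem:B.rcontrol} is designed for.

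The sewing lemma then yields $\|A^n_{s,t}(x)\|_{\Lm}\le C(t-s)^{3/4}$ with $C$ depending only on $\|b\|_{\Bes^0_1}$ (equivalently the total mass $b(\R)$), $m$, $\T$, uniformly in $n$ and $x$; passing to the limit $n\to\infty$ (using the a.s./in-probability convergence plus uniform integrability from the uniform $\Lm$ bounds, or Fatou) gives $\|K_t(x)-P_{t-s}K_s(x)\|_{\Lm}=\|K_t(x)-K_s(x)-(P_{t-s}-\mathrm{Id})K_s(x)\|_{\Lm}\le C(t-s)^{3/4}$ after absorbing the semigroup-difference term $\|(P_{t-s}-\mathrm{Id})K_s(x)\|$, which is controlled by the spatial regularity of $K_s$ and the bound just obtained. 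The required uniform moment bound $\sup_{(t,x)}\|u_t(x)\|_{\Lm}<\infty$ follows from $u_t=P_tu_0+K_t+V_t$, boundedness of $u_0$, the bound on $K$ (take $s=0$), and the standard Gaussian moment bounds on $V$. I expect the main obstacle to be the careful setup making $A^n_{s,t}(x)$ fit the hypotheses of \Cref{lem:B.rcontrol}: one must identify the correct random control $\omega$ (built from $\int b^n(u_r(y))\,dy\,dr$ over subintervals together with heat-kernel regularity in $x$), verify the required superadditivity/Hölder-in-time properties of the control, and confirm that the martingale-part estimate for $A^n-\E^s A^n$ is also of order $(t-s)^{3/4}$ (or better) — here again non-negativity helps, since $|A^n_{s,t}-\E^s A^n_{s,t}|\le A^n_{s,t}+\E^s A^n_{s,t}$ gives a first-moment bound, and an $L_m$ bound comes from interpolating against the $L_1$ control and an $L_\infty$-type a priori bound, or from a BDG-type argument on the conditional-expectation martingale. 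The rest is routine, paralleling \Cref{lem.apriori} but with the $\Bes^0_1$ norm (finite mass) replacing the $\Bes^\beta_q$ norm, and crucially \emph{not} needing the finer two-sided estimate \eqref{Demo2}, which is why no a priori $\V(3/4)$ assumption is needed for \Cref{thm.radon,cor.skewspde}.
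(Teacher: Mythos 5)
Your high-level strategy is the paper's: exploit $b\ge0$ and apply the stochastic sewing lemma with random controls (\cref{lem:B.rcontrol}) to smooth non-negative approximations $b^n$, then pass to the limit. But the execution has a genuine gap in the two places where the argument actually has content. First, you apply the sewing lemma to the germ $A^n_{s,t}(x)=\int_s^t\int_\0 p_{t-r}(x,y)b^n(u_r(y))\,dy\,dr$, i.e.\ with the true solution $u_r=V_r+\psi_r$ inside, and you claim $\|\E^s A^n_{s,t}(x)\|\lesssim\|b^n\|_{\bes^0_1}\int_s^t(r-s)^{-1/4}dr$ via the conditional Gaussian density of $V_r(y)$ given $\F_s$. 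That computation is only valid when the shift added to $V_r(y)$ is $\F_s$-measurable; here $\psi_r$ is $\F_r$-measurable and correlated with $V_r$ in an unknown way, so $\E^s b^n(V_r(y)+\psi_r(y))$ cannot be written as a Gaussian smoothing of $b^n$. This is precisely why the paper's germ is the \emph{frozen} one, $A^{T,n}_{s,t}(x)=\int_s^t\int_\0 p_{T-r}(x,y)\,b^n(V_r(y)+P_{r-s}\psi_s(y))\,dy\,dr$: the bound $\|A^{T,n}_{s,t}(x)\|_{L_m}\le C\|b\|_{\bes^0_1}|t-s|^{3/4}$ then comes from \cref{lem.138} (after a truncation $\psi^j=(K\wedge j)+Pu_0$, since a priori $K_s$ need not have finite moments), and $\E^u\delta A^{T,n}_{s,u,t}$ becomes a difference of the Lipschitz function $G_{\varz_{r-u}}b^n$ at two $\F_u$-measurable arguments.

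Second, your identification of the random control is off, and with it the role of non-negativity. With your germ, $\delta A^n_{s,u,t}=(P_{t-u}-\mathrm{Id})A^n_{s,u}$, which is a difference of two non-negative quantities and hence \emph{not} sign-definite, so the claimed ``$0\le\delta A^n_{s,u,t}\le A^n_{s,u}$'' fails; moreover a bound by $A^n_{s,u}$ alone is of order $(t-s)^{3/4}$, not of the form $\Gamma_1|t-s|^{\alpha_1}\lambda(s,t)^{\beta_1}$ with $\alpha_1+\beta_1>1$ that \cref{lem:B.rcontrol} requires. The actual structural payoff of $b\ge0$ is the monotonicity $P_{T-t}K_t(x)\ge P_{T-s}K_s(x)$ (the paper's Step 1), which makes $\lambda^T_{s,t}(x)=P_{T-t}K_t(x)-P_{T-s}K_s(x)$ a genuine (additive, non-negative, finite) random control; the estimate $|\E^u\delta A^{T,n}_{s,u,t}(x)|\le C\|b^n\|_{\bes^0_1}\lambda^T_{s,u}(x)(t-u)^{1/2}$ then has the required product form. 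Because this control involves the unknown $K$ itself, the output of the sewing lemma is the self-referential inequality $K_t(x)-P_{t-s}K_s(x)\le C\,[K_t(x)-P_{t-s}K_s(x)](t-s)^{1/2}+L_{s,t}(x)$, which must be absorbed for $t-s$ small before taking $L_m$-norms — a step your proposal skips because in your version the constant spuriously depends only on $\|b\|_{\bes^0_1}$ from the start.
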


The proof of \Cref{lem.1var} is given in \Cref{S:Vclass}.

\begin{proof}[Proof of \cref{thm.radon}]
	Let $u_0$ be a bounded measurable function.
	Since measures belong to $\Bes^0_1$, \cref{T:mainresult} yields existence and uniqueness of a strong solution $u$ to \eqref{SPDE} in $\V(3/4)$ starting from $u_0$. 
	On the other hand, by \cref{lem.1var}, every solution to \eqref{SPDE} belongs to $\V(3/4)$ and thus has to coincide with $u$, thus completing the proof.
\end{proof}
\begin{proof}[Proof of \cref{cor:comparison}]
The proof uses an idea similar to \cite[Proof of Theorem~2.4]{GP93b}.
For $n\in\mathbb{N}$, put $b_n':=G_{1/n}b'$, $b_n'':=G_{1/n}b''$. By \cref{lem.Gf}, $b_n'$ and $b_n''$ are smooth and bounded. Let $u_n'$ be the strong solution to \Gref{u_0';b_n'} and let $u_n''$ be the strong solution to \Gref{u_0'';b_n''}. Note that $b'\preceq  b''$ implies $b_n'(x)\le b_n''(x)$ for any $x\in\R$, thanks to the definition of the partial order. Then, using again that $b_n'$ and $b_n''$ are smooth and bounded, the standard comparison principle (see, e.g., \cite[Theorem~2.4]{GP93b}, \cite[Lemma~3.3]{BP}) yields
\begin{equation}\label{prelim}
u_n'(t,x)\le u_n''(t,x),\quad t>0,\,x\in\0.
\end{equation}
By \cref{lem.Gf}, $b_n'\to b'$, $b_n''\to b''$ in $\Bes_q^{\beta-}$ as $n\to\infty$. Therefore, by passing to the limit as $n\to\infty$ in \eqref{prelim}, we get for any fixed  $t>0,\,x\in\0$ by \cref{T:approximations}.
$$
u'(t,x)\le u''(t,x),\,\,\text{a.s}.
$$
Since $u'$ and $u''$ are continuous, this implies that a.s. $
u'(t,x)\le u''(t,x)$ for all $t>0$, $x\in\0$.
\end{proof}
\begin{proof}[Proof of \cref{cor.slimit}]
	Define $f_\lambda(x)=\lambda^{3/2-\rho}f(\lambda^{1/2}x)$, $\bar u_\lambda(t,x)=\lambda^{-1/2}u_\lambda(\lambda^2 t,\lambda x)$ and $ V_\lambda(t,x)=\lambda^{-1/2}V(\lambda^2 t,\lambda x)$. By a change of variables, we have
	\begin{align*}
	\bar u_\lambda(t,x)=\lambda^{-1/2}\int_\R p_t(x-y)u_0(\lambda y)dy+V_\lambda(t,x)+\int_0^t\int_\R p_{t-r}(x-y)f_\lambda(\bar u_\lambda(r,y))drdy.
	\end{align*}
	Note that the random fields $V_\lambda$ and $V$ have the same probability law. Hence, $\bar u_\lambda$ is a weak solution to \eqref{SPDE} with $f_\lambda$ in place of $b$ and $\bar u_\lambda(0,x)=\lambda^{-1/2}u_0(\lambda x)$. 
	It is straightforward to see that $\bar u_\lambda(0,\cdot)$ converges to $0$ uniformly on $\R$. If $\rho=1$, then by	\cref{lem.conve1x}, $f_\lambda$ converges to  $c\zeta^{-1}+ c_0\delta_0$ in $\Bes^{(-1+1/p)_-}_p$ as $\lambda\to\infty$ for any $p\in(1,\infty)$. If $\rho\in(1,3/2)$, then by	\cref{lem.conve1xrho}, $f_\lambda$ converges to  $c_-\zeta^{2\rho-3}_-+c_+\zeta^{2\rho-3}_+$ in $\Bes^{(2\rho-3)_-}_\infty$ as $\lambda\to\infty$. 
	
	Applying \cref{T:approximations}, we see that if $\rho=1$, then the process $\bar u_\lambda$ converges weakly in the space $\cuc((0,1]\times\R)$ as $\lambda\to\infty$ to the  solution of \eqref{eqn.skewf}. Similarly, if $\rho\in(1,3/2)$ the same theorem implies that the process $\bar u_\lambda$ converges weakly in the space $\cuc((0,1]\times\R)$ as $\lambda\to\infty$ to the  solution of \eqref{eqn.skewfnew}.
\end{proof}
\section{Stochastic sewing lemmas}\label{sec.ssl}
	We present three extensions of the stochastic sewing lemma introduced earlier in \cite{MR4089788}. More precisely, we incorporate singularities, critical exponents and random controls in the stochastic sewing lemma. 
	In addition, we also provide estimates in some conditional moment norms, inspired by the stochastic sewing in \cite{FHL}. 
	As we will see in later sections, singularities allow for improvements of regularities and broaden the scope of applications of the stochastic sewing techniques (see for instance \cref{T:singlebound} and \cref{C:bounds} in \cref{sec.proof_of_auxiliary_results}). The result with random controls (\cref{lem:B.rcontrol} below) is used in \cref{lem.1var} to obtain a priori estimates for solutions to \cref{SPDE} when the drift $b$ is a measure. The stochastic sewing result for critical exponent is used to prove \cref{L:condun}, that is strong uniqueness for \cref{SPDE} when $\beta-1/q=-1$. Finally, the estimates in conditional moment norms are also used in \cref{lem.apriori} which is later used in \cref{L:condun} to prevent a loss of integrability.
	We  believe that these results complement  \cite[Theorem 2.1]{MR4089788}, \cite[Theorem 2.7]{FHL}  and form a toolkit which is also of independent interest and can be useful for other purposes.

	\subsection{Statements of stochastic sewing lemmas} 
	 \label{sub:statements}
	

	Till the end of this section we fix a time horizon $T\in(0,\infty)$ and a filtered probability space $(\Omega,\cff,(\cff_t)_{t\in[0,T]},\P)$. Recall that for $0\le S<T$ we denoted by $\Delta_{S,T}$ the simplex $\{(s,t)\in[S,T]^2:s\le t \}$. The mesh size of a partition $\Pi$ of an interval will be denoted by $\Di{\Pi}$. 
	Let $A:\Delta_{S,T}\to \Lm$ be such that $A_{s,t}$ is $\F_t$-measurable for every  $(s,t)\in \Delta_{S,T}$. For every triplet of times $(s,u,t)$ such that $S\le s\le u\le t\le T$, we denote
	\begin{equation*}
	\delta A_{s,u,t}:=A_{s,t}-A_{s,u}-A_{u,t}.
	\end{equation*}
	For each $\alpha, \beta\in[0,1)$, $(s,t)\in \Delta_{S,T}$ define the function 
	\[
		\eta_{S,T}^{(\alpha,\beta)}(s,t):=\int_s^t(r-S)^{-\alpha}(T-r)^{-\beta}dr.
	\]
	It is immediate that 
\begin{equation}\label{intboundssl}
\eta_{S,T}^{(\alpha,\beta)}(s,t)\le C(\alpha,\beta)(t-s)^{1-\alpha-\beta}.
\end{equation}

Recall the notation $\|\cdot\|_{L_m|\F_t}$ introduced in \eqref{uslovka}.

	\begin{Theorem}[Stochastic sewing lemma]\label{lem:B.Sew1} Let $m\in[2,\infty)$  and $n\in[m,\infty]$ be fixed.
	Assume that there exist  constants $\Gamma_1, \Gamma_2\ge0$, $\eps_1, \varepsilon_2>0$, $\alpha_1, \beta_1\in[0,1)$ and $\alpha_2,\beta_2\in[0,\frac12)$ such that the following conditions hold for every $(s,t)\in\Delta_{S,T}$ and $u:=(s+t)/2$
	\begin{align}
	&\|\E^s[ \delta A_{s,u,t}]\|_{L_n}\le \Gamma_1(u-S)^{-\alpha_1}(T-u)^{-\beta_1}|t-s|^{1+\eps_1},\label{con:wei.dA1}\\
	&\|\|\delta A_{s,u,t}\|_{L_m|\cff_S}\|_{L_n}\le \Gamma_2(u-S)^{-\alpha_2}(T-u)^{-\beta_2}|t-s|^{\frac12+\eps_2}.\label{con:wei.dA2}
	\end{align}
	Further, suppose that there exists a process $\A=\{\A_t:t\in[S,T]\}$ such that for any $t\in[S,T]$ and any sequence of partitions $\Pi_N:=\{S=t^N_0,...,t^N_{k(N)}=t\}$ of $[S,t]$ such that $\lim_{N\to\infty}\Di{\Pi_N}\to0$  one has
	\begin{equation}\label{limpart}
	\A_t=\lim_{N\to\infty} \sum_{i=0}^{k(N)-1} A_{t^N_i,t^N_{i+1}}\,\, \text{in probability.}
	\end{equation}

	Then there exists a constant $C=C(\varepsilon_1,\varepsilon_2,m)$ independent of $S,T$ such that
	for every $(s,t)\in\Delta_{S,T}$  we have 
	\begin{multline}\label{est:B.A1}
	\|\|\A_t-\A_s-A_{s,t}\|_{L_m|\cff_S}\|_{L_n} \le C \Gamma_2\left(\eta_{S,T}^{(2 \alpha_2,2 \beta_2)}(s,t) \right)^{\frac12}|t-s|^{\varepsilon_2}
	\\+C \Gamma_1\eta_{S,T}^{(\alpha_1,\beta_1)}(s,t)|t-s|^{\varepsilon_1}
	\end{multline}
	and
	\begin{equation}
	\label{est:B.A2}
	\|\E^S[\A_t- \A_s-A_{s,t}]\,\|_{L_n}\le C \Gamma_1\eta_{S,T}^{(\alpha_1,\beta_1)}(s,t)|t-s|^{\varepsilon_1}.
	\end{equation}
	\end{Theorem}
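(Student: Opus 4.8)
Since $\A=\{\A_t\}$ is already supplied through the convergence \eqref{limpart}, there is nothing to construct: the plan is only to estimate $\A_t-\A_s-A_{s,t}$ in the two norms on the right-hand sides of \eqref{est:B.A1}--\eqref{est:B.A2}. I would run the dyadic Riemann-sum scheme of \cite{MR4089788} at the level of the mixed norm $\|\xi\|_\star:=\bigl\|\,\|\xi\|_{\Lm|\F_S}\,\bigr\|_{\L n}$. Fix $(s,t)\in\Delta_{S,T}$; for $N\ge0$ let $\pi^N=\{t^N_i:=s+i(t-s)2^{-N}:0\le i\le 2^N\}$ be the $N$-th dyadic partition of $[s,t]$, let $u^N_i:=(t^N_i+t^N_{i+1})/2=t^{N+1}_{2i+1}$ be the subinterval midpoints, and put $A^N:=\sum_{i=0}^{2^N-1}A_{t^N_i,t^N_{i+1}}$, so that $A^0=A_{s,t}$ and $A^{N+1}-A^N=-\sum_{i=0}^{2^N-1}\delta A_{t^N_i,u^N_i,t^N_{i+1}}$. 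First I would record that $A^N\to\A_t-\A_s$ in probability: concatenating a vanishing-mesh partition of $[S,s]$ (whose Riemann sum converges to $\A_s$ by \eqref{limpart}) with $\pi^N$ produces a vanishing-mesh partition of $[S,t]$, whose Riemann sum converges to $\A_t$ by \eqref{limpart} and equals the $[S,s]$-sum plus $A^N$.

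The core step is to estimate $A^{N+1}-A^N$. Write $\delta A_i:=\delta A_{t^N_i,u^N_i,t^N_{i+1}}$, which is $\F_{t^N_{i+1}}$-measurable, and split it as $\E^{t^N_i}[\delta A_i]+\xi_i$ with $\xi_i:=\delta A_i-\E^{t^N_i}[\delta A_i]$. For the predictable part, using $\bigl\|\,\|\cdot\|_{\Lm|\F_S}\,\bigr\|_{\L n}\le\|\cdot\|_{\L n}$, the contraction property of conditional expectation, and \eqref{con:wei.dA1},
\[
\Bigl\|\,\Bigl\|\sum_i\E^{t^N_i}[\delta A_i]\Bigr\|_{\Lm|\F_S}\,\Bigr\|_{\L n}\le\sum_i\bigl\|\E^{t^N_i}[\delta A_i]\bigr\|_{\L n}\le\Gamma_1\sum_i(u^N_i-S)^{-\alpha_1}(T-u^N_i)^{-\beta_1}(t^N_{i+1}-t^N_i)^{1+\eps_1},
\]
and since $\E^S[\xi_i]=\E^S\E^{t^N_i}[\xi_i]=0$ the same sum also bounds $\|\E^S[A^{N+1}-A^N]\|_{\L n}$. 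For the martingale part, $(\xi_i)_i$ is a sequence of martingale differences for $(\F_{t^N_i})_i$ with $\F_S\subset\F_{t^N_0}$, so a conditional Burkholder--Davis--Gundy inequality (in the spirit of \cite{MR4089788,FHL}) together with conditional Minkowski's inequality in $\ell^2$ (valid since $m\ge2$) gives $\|\sum_i\xi_i\|_{\Lm|\F_S}\le C_m\bigl(\sum_i\|\xi_i\|_{\Lm|\F_S}^2\bigr)^{1/2}$; taking $\|\cdot\|_{\L n}$, pushing it through the $\ell^2$-sum by Minkowski's inequality in $\L{n/2}$ (here $n\ge m\ge2$ is used), and then using $\|\xi_i\|_{\Lm|\F_S}\le2\|\delta A_i\|_{\Lm|\F_S}$ (tower property, since $\F_S\subset\F_{t^N_i}$) and \eqref{con:wei.dA2},
\[
\Bigl\|\,\Bigl\|\sum_i\xi_i\Bigr\|_{\Lm|\F_S}\,\Bigr\|_{\L n}\le 2C_m\Gamma_2\Bigl(\sum_i(u^N_i-S)^{-2\alpha_2}(T-u^N_i)^{-2\beta_2}(t^N_{i+1}-t^N_i)^{1+2\eps_2}\Bigr)^{1/2}.
\]

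Next I would compare the dyadic Riemann sums with the integrals defining $\eta$: since the integrand $r\mapsto(r-S)^{-a}(T-r)^{-b}$ of $\eta^{(a,b)}_{S,T}$ is a decreasing function times an increasing one, a midpoint estimate on each left half-interval $[t^N_i,u^N_i]$ (using $t^N_{i+1}\le T$ to compare $(T-t^N_i)^{-b}$ with $(T-u^N_i)^{-b}$) gives $\sum_i(u^N_i-S)^{-a}(T-u^N_i)^{-b}(t^N_{i+1}-t^N_i)\le C\,\eta^{(a,b)}_{S,T}(s,t)$ for all $a,b\in[0,1)$; this applies with $(a,b)=(\alpha_1,\beta_1)$ and, because $\alpha_2,\beta_2<\tfrac12$, with $(a,b)=(2\alpha_2,2\beta_2)$. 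Factoring $\bigl((t-s)2^{-N}\bigr)^{\eps_j}$ out of the exponents $1+\eps_1$ and $1+2\eps_2$ and inserting the two displays, I obtain
\[
\|A^{N+1}-A^N\|_\star\le C_m\Gamma_2\bigl((t-s)2^{-N}\bigr)^{\eps_2}\bigl(\eta^{(2\alpha_2,2\beta_2)}_{S,T}(s,t)\bigr)^{1/2}+C\Gamma_1\bigl((t-s)2^{-N}\bigr)^{\eps_1}\eta^{(\alpha_1,\beta_1)}_{S,T}(s,t),
\]
and the same bound with only the last summand for $\|\E^S[A^{N+1}-A^N]\|_{\L n}$. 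Summing the geometric series $\sum_{N\ge0}2^{-N\eps_j}$ shows that $(A^N)$ is $\|\cdot\|_\star$-Cauchy; since $\|\cdot\|_\star$ dominates $\|\cdot\|_{\Lm}$ and $A^N\to\A_t-\A_s$ in probability, passing to the limit (by a conditional Fatou argument) gives $\|\A_t-\A_s-A_{s,t}\|_\star\le\sum_{N\ge0}\|A^{N+1}-A^N\|_\star$, which is \eqref{est:B.A1}; applying $\E^S$ to $A^N-A^0=\sum_{k<N}(A^{k+1}-A^k)$ and letting $N\to\infty$ in $\Lm$ gives \eqref{est:B.A2}.

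The step I expect to be the main obstacle is the bookkeeping between the two norms $\|\cdot\|_{\Lm|\F_S}$ and $\|\cdot\|_{\L n}$: the Burkholder--Davis--Gundy estimate must be run \emph{inside} the conditional norm $\|\cdot\|_{\Lm|\F_S}$, so that the square function sits under $\E[\cdot\,|\,\F_S]$, and only afterwards may one take the outer $\L n$ norm and move it through the $\ell^2$-sum by Minkowski's inequality in $\L{n/2}$; getting this order of operations right (and checking $\|\E^{t^N_i}[\delta A_i]\|_{\Lm|\F_S}\le\|\delta A_i\|_{\Lm|\F_S}$ via the tower property) is exactly what makes the chain of inequalities close, and it is where the assumptions $m\ge2$ and $n\ge m$ are used. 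By contrast, the singular-weight comparison (Riemann sums versus the integrals $\eta$) and the summation over dyadic scales are routine once $\alpha_1,\beta_1\in[0,1)$ and $\alpha_2,\beta_2\in[0,\tfrac12)$.
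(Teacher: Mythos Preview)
Your proposal is correct and follows essentially the same approach as the paper's proof: dyadic refinement of $[s,t]$, the decomposition of $A^{N+1}-A^N$ into the predictable sum $\sum_i\E^{t^N_i}[\delta A_i]$ and the martingale-difference sum $\sum_i\xi_i$, the conditional BDG inequality inside $\|\cdot\|_{\Lm|\F_S}$ followed by Minkowski in $\L{n/2}$ on the square function, the midpoint comparison of the weighted Riemann sums with the integrals $\eta^{(a,b)}_{S,T}$, geometric summation over scales, and Fatou to pass to the limit. Your identification of the key technical point (running BDG conditionally before applying the outer $\L n$ norm) is exactly right, and your concatenation argument to deduce $A^N\to\A_t-\A_s$ from \eqref{limpart} on $[S,s]$ and $[S,t]$ is a detail the paper leaves implicit.
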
%
\begin{Remark}
	When $n=m$ and $\alpha_i=\beta_i=0$, $i=1,2$, the estimates \eqref{est:B.A1} and \eqref{est:B.A2} coincide with those in \cite[Theorem 2.1]{MR4089788}. Using the arguments in the aforementioned paper, one can obtain from \eqref{con:wei.dA1} and \eqref{con:wei.dA2} the existence the the process $\A$ satisfying \eqref{limpart}. However, by imposing condition \eqref{limpart}, our presentation on the applications of \cref{lem:B.Sew1} in later sections is simplified.
\end{Remark}
\begin{Remark}\label{R:newcon2}
In view of \eqref{lmlnsimple}, condition \eqref{con:wei.dA2} follows from the following simpler condition. There exist  constants $\Gamma_2\ge0$, $\eps_2>0$, $\alpha_2,\beta_2\in[0,\frac12)$ such that for every $S\le s\le t\le T$, $u:=(s+t)/2$, $x\in D$ one has
\begin{equation}\label{con:wei.dA2simple}
\|\delta A_{s,u,t}\|_{L_n}\le \Gamma_2(u-S)^{-\alpha_2}(T-u)^{-\beta_2}|t-s|^{\frac12+\eps_2}.
\end{equation}
\end{Remark}

Sometimes, it might be useful to apply the following modification of stochastic sewing lemma.
\begin{Theorem}\label{lem:B.Sew1prime}
Suppose that all the conditions of \cref{lem:B.Sew1} are satisfied apart from \eqref{limpart}, which is replaced by the following:
there exists a process $\A=\{\A_t:t\in[S,T]\}$ such that for any $(s,t)\in\Delta_{S,T}$ and any sequence of partitions $\Pi_N:=\{s=t^N_0,...,t^N_{k(N)}=t\}$ of $[s,t]$ such that $\lim_{N\to\infty}\Di{\Pi_N}\to0$  one has
\begin{equation}\label{limpartprime}
	|\A_t-\A_s|\le \liminf_{N\to\infty} \Bigl|\sum_{i=0}^{k(N)-1} A_{t^N_i,t^N_{i+1}}\Bigr|\quad \text{a.s.}
\end{equation}
Then there exists a constant $C=C(\varepsilon_1,\varepsilon_2,m)$ independent of $S,T$ such that
for every $(s,t)\in\Delta_{S,T}$  we have 
\begin{multline}\label{est:B.A1prime}
	\|\|\A_t-\A_s\|_{L_m|\cff_S}\|_{L_n} \le \|\|A_{s,t}\|_{L_m|\cff_S}\|_{L_n}+ C \Gamma_2\left(\eta_{S,T}^{(2 \alpha_2,2 \beta_2)}(s,t) \right)^{\frac12}|t-s|^{\varepsilon_2}
	\\+C \Gamma_1\eta_{S,T}^{(\alpha_1,\beta_1)}(s,t)|t-s|^{\varepsilon_1}
\end{multline}
and
\begin{equation}
	\label{est:B.A2prime}
	\|\E^S[\A_t- \A_s]\,\|_{L_n}\le \|\E^SA_{s,t}\,\|_{L_n}+C \Gamma_1\eta_{S,T}^{(\alpha_1,\beta_1)}(s,t)|t-s|^{\varepsilon_1}.
\end{equation}
\end{Theorem}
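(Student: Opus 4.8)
The plan is to derive \cref{lem:B.Sew1prime} as a consequence of \cref{lem:B.Sew1} rather than by re-running the whole sewing argument. The key observation is that the construction part of the standard stochastic sewing lemma (under assumptions \eqref{con:wei.dA1}, \eqref{con:wei.dA2}, but without \eqref{limpart}) already produces a \emph{canonical} additive process: namely, for each $(s,t)\in\Delta_{S,T}$ the Riemann-type sums $\sum_i A_{t^N_i,t^N_{i+1}}$ over partitions $\Pi_N$ of $[s,t]$ with $|\Pi_N|\to0$ converge in $L_m$ to a limit $\wt\A_t-\wt\A_s$, and this limiting increment satisfies exactly the bounds \eqref{est:B.A1}, \eqref{est:B.A2} with $\A$ replaced by $\wt\A$. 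This is the content of the cited \cite[Theorem~2.1]{MR4089788} together with the weighted/conditional refinements already incorporated into \cref{lem:B.Sew1}; the assumption \eqref{limpart} in \cref{lem:B.Sew1} only serves to \emph{identify} the given process $\A$ with this canonical one. So first I would record that the canonical increment $\wt\A_t-\wt\A_s$ exists and obeys \eqref{est:B.A1prime}, \eqref{est:B.A2prime} (these are literally \eqref{est:B.A1}, \eqref{est:B.A2} once we note $\|\,\cdot\,\|\le\|A_{s,t}\|+\|\,\cdot\,-A_{s,t}\|$ and similarly for the conditional-expectation bound).

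Next I would compare the given process $\A$ with the canonical one using \eqref{limpartprime}. Fix $(s,t)\in\Delta_{S,T}$ and a sequence of partitions $\Pi_N$ of $[s,t]$ with $|\Pi_N|\to0$ along which $\sum_i A_{t^N_i,t^N_{i+1}}\to \wt\A_t-\wt\A_s$ in $L_m$, hence (after passing to a subsequence) almost surely. Applying \eqref{limpartprime} along that same subsequence gives the pointwise domination
\[
|\A_t-\A_s|\le \liminf_{N\to\infty}\Bigl|\sum_{i=0}^{k(N)-1}A_{t^N_i,t^N_{i+1}}\Bigr|=|\wt\A_t-\wt\A_s|\quad\text{a.s.}
\]
Since conditional $L_m$ norms are monotone in the absolute value of the argument, this yields $\|\|\A_t-\A_s\|_{L_m|\cff_S}\|_{L_n}\le \|\|\wt\A_t-\wt\A_s\|_{L_m|\cff_S}\|_{L_n}$, and the right-hand side is already bounded by \eqref{est:B.A1prime}. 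This gives \eqref{est:B.A1prime} for $\A$.

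For the conditional-expectation estimate \eqref{est:B.A2prime}, the domination in absolute value is not by itself enough since $\E^S$ is not monotone under $|\cdot|\le|\cdot|$ without a sign. The fix is to first bound $\|\E^S[\A_t-\A_s]\|_{L_n}\le \|\,\A_t-\A_s\,\|_{L_n}$ by the conditional Jensen inequality (in the form \eqref{lmlnsimple}), then use the almost-sure domination $|\A_t-\A_s|\le|\wt\A_t-\wt\A_s|$ to get $\|\A_t-\A_s\|_{L_n}\le\|\wt\A_t-\wt\A_s\|_{L_n}$, and finally apply \eqref{lmlnsimple} again together with the already-established \eqref{est:B.A1prime} for $\wt\A$ with $m$ replaced by $n$ wherever needed; alternatively, and more cleanly, one writes $\|\E^S[\A_t-\A_s]\|_{L_n}\le\|\E^SA_{s,t}\|_{L_n}+\|\E^S[(\A_t-\A_s)-A_{s,t}]\|_{L_n}$ and bounds the second term by $\|(\A_t-\A_s)-A_{s,t}\|_{L_n}\le\|\wt\A_t-\wt\A_s-A_{s,t}\|_{L_n}$, which is controlled by the non-conditional version of \eqref{est:B.A1}. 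I expect the main (though modest) obstacle to be precisely this last point: keeping the $L_n$ versus conditional-$L_m$ bookkeeping straight so that the $\E^S$-estimate is deduced without monotonicity of $\E^S$, and making sure the subsequence extraction (needed to turn $L_m$-convergence of the Riemann sums into a.s. convergence so that \eqref{limpartprime} applies) is compatible with the fixed pair $(s,t)$ — but since all bounds are for a fixed $(s,t)$ this causes no real difficulty.
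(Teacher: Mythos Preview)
Your route via a canonical sewing limit $\wt\A$ is a legitimate alternative for \eqref{est:B.A1prime}: once you know the Riemann sums converge in $L_m$ to $\wt\A_t-\wt\A_s$ (which the paper only asserts in a remark, not proves, in this weighted conditional setting), the a.s.\ domination $|\A_t-\A_s|\le|\wt\A_t-\wt\A_s|$ together with monotonicity of $\|\cdot\|_{L_m|\cff_S}$ and Fatou gives the bound. The paper's argument is more direct and avoids invoking existence of $\wt\A$ altogether: it simply reuses the estimate \eqref{prefinssl} on the dyadic sums $A^k_{s,t}$, rewrites it as a bound on $\|\|A^k_{s,t}\|_{L_m|\cff_S}\|_{L_n}$ via the triangle inequality, and passes to the limit using \eqref{limpartprime} and Fatou. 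This is shorter and self-contained.

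Your treatment of \eqref{est:B.A2prime} has a genuine gap. In ``Option 2'' you assert $\|(\A_t-\A_s)-A_{s,t}\|_{L_n}\le\|(\wt\A_t-\wt\A_s)-A_{s,t}\|_{L_n}$, but this would require $|(\A_t-\A_s)-A_{s,t}|\le|(\wt\A_t-\wt\A_s)-A_{s,t}|$ a.s., and that does \emph{not} follow from $|\A_t-\A_s|\le|\wt\A_t-\wt\A_s|$: the inequality $|a|\le|b|$ does not imply $|a-c|\le|b-c|$. Your ``Option 1'' is logically sound but yields only a bound with $\|A_{s,t}\|_{L_n}$ and an additional $\Gamma_2$ term on the right-hand side, which is strictly weaker than \eqref{est:B.A2prime} as stated (the latter has $\|\E^S A_{s,t}\|_{L_n}$ and only the $\Gamma_1$ term). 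The paper's approach for \eqref{est:B.A2prime} is again to work directly with the dyadic sums via \eqref{tmp:estA2} and then pass to the limit through \eqref{limpartprime} and Fatou; note that this passage is also delicate, since \eqref{limpartprime} controls $|\A_t-\A_s|$ rather than $\E^S(\A_t-\A_s)$, so any argument here must go through $\E^S|\,\cdot\,|$.
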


The next result, which is used later in the proof of \cref{L:condun}, is inspired by the stochastic Davie--Gr\"onwall lemma from \cite{FHL}.
\begin{Theorem}[Stochastic sewing with critical exponent]\label{thm.critssl}
		Let $m\in[2,\infty)$.
		Assume that all the conditions of \cref{lem:B.Sew1} are satisfied with the choice $n=m$, $\alpha_1=\beta_1=\alpha_2=\beta_2=0$.
		Suppose additionally that there exist constants $\Gamma_3>0$, $\Gamma_4\ge0$, $\eps_4>0$ such that 
		\begin{align}\label{con.EAcrit}
			\|\E^s[ \delta A_{s,u,t}]\|_{L_m}\le \Gamma_3|t-s|+\Gamma_4|t-s|^{1+\varepsilon_4},
		\end{align}
		for every $(s,t)\in\Delta_{S,T}$ and $u:=(s+t)/2$.
		Then there exist a constant $C=C(\varepsilon_1,\varepsilon_2,\varepsilon_4,m)$ independent of $S$, $T$ such that
		for every $(s,t)\in\Delta_{S,T}$  we have
		\begin{equation}\label{est.critssl}
		\|\A_t-\A_s-A_{s,t}\|_{L_m}\le  C \Gamma_3\left(1+|\log\frac{\Gamma_1T^{\varepsilon_1}}{\Gamma_3}|\right)(t-s)+C\Gamma_2(t-s)^{\frac12+\varepsilon_2}+C \Gamma_4(t-s)^{1+\varepsilon_4}.
		\end{equation}
\end{Theorem}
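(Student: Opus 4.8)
The plan is to follow the proof of the stochastic sewing lemma \cref{lem:B.Sew1}, changing only the step in which conditional expectations are estimated on coarse dyadic scales; there the critical bound \eqref{con.EAcrit} will be used in place of \eqref{con:wei.dA1}. Write $R_{s,t}:=\A_t-\A_s-A_{s,t}$, so that $\delta R_{s,u,t}=-\delta A_{s,u,t}$. Fix $(s,t)\in\Delta_{S,T}$ and put $L:=t-s\le T$. For $n\ge0$ let $\Pi_n$ be the partition of $[s,t]$ into $2^n$ intervals of length $L2^{-n}$, write $[s^n_i,s^n_{i+1}]$ for its $i$-th interval, $m^n_i$ for its midpoint, and $\delta A^n_i:=\delta A_{s^n_i,m^n_i,s^n_{i+1}}$. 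Refining $\Pi_n$ into $\Pi_{n+1}$ inserts precisely the points $m^n_i$, hence the Riemann sums $S_{\Pi_n}:=\sum_iA_{s^n_i,s^n_{i+1}}$ obey $S_{\Pi_n}-S_{\Pi_{n+1}}=\sum_i\delta A^n_i$ and therefore $A_{s,t}-S_{\Pi_N}=\sum_{n=0}^{N-1}\sum_i\delta A^n_i$. The estimates below will show that $\sum_{n\ge0}\bigl\|\sum_i\delta A^n_i\bigr\|_{L_m}<\infty$; consequently the partial sums converge in $L_m$, and since $S_{\Pi_N}\to\A_t-\A_s$ in probability by \eqref{limpart}, the limit must equal $\A_t-\A_s-A_{s,t}=-R_{s,t}$. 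In particular
\[
\|R_{s,t}\|_{L_m}\le\sum_{n\ge0}\Bigl\|\sum_{i=0}^{2^n-1}\delta A^n_i\Bigr\|_{L_m},
\]
so it suffices to estimate each level-$n$ sum and sum over $n$.

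Fix $n$ and split $\delta A^n_i=\E^{s^n_i}[\delta A^n_i]+M^n_i$. Each $M^n_i$ is $\F_{s^n_{i+1}}$-measurable, $\E[M^n_i\mid\F_{s^n_i}]=0$, and $s^n_{i+1}$ is the left endpoint of the next subinterval, so $(M^n_i)_i$ is a martingale-difference sequence for $(\F_{s^n_i})_i$. Applying Burkholder's inequality (here the requirement $m\ge2$ enters), then Minkowski's inequality in $L_{m/2}$, and using conditional Jensen together with \eqref{con:wei.dA2} in the present form ($n=m$, $\alpha_2=\beta_2=0$, namely $\|\delta A_{s,u,t}\|_{L_m}\le\Gamma_2|t-s|^{1/2+\eps_2}$), one gets
\[
\Bigl\|\sum_iM^n_i\Bigr\|_{L_m}\le c_m\Bigl(\sum_i\|M^n_i\|_{L_m}^2\Bigr)^{1/2}\le 2c_m\Gamma_2\bigl(2^n(L2^{-n})^{1+2\eps_2}\bigr)^{1/2}=2c_m\Gamma_2 L^{1/2+\eps_2}2^{-n\eps_2},
\]
and summing the geometric series in $n$ contributes the term $C(\eps_2,m)\Gamma_2(t-s)^{1/2+\eps_2}$ to $\|R_{s,t}\|_{L_m}$. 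This step is unchanged from the proof of \cref{lem:B.Sew1}.

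The new point is the conditional-expectation part. Combining \eqref{con:wei.dA1} with \eqref{con.EAcrit} and using $\min\{a,b+c\}\le\min\{a,b\}+c$, each subinterval of $\Pi_n$ satisfies $\|\E^{s^n_i}[\delta A^n_i]\|_{L_m}\le\min\{\Gamma_1(L2^{-n})^{1+\eps_1},\Gamma_3L2^{-n}\}+\Gamma_4(L2^{-n})^{1+\eps_4}$, so summing the $2^n$ terms,
\[
\Bigl\|\sum_i\E^{s^n_i}[\delta A^n_i]\Bigr\|_{L_m}\le\min\bigl\{\Gamma_1L^{1+\eps_1}2^{-n\eps_1},\ \Gamma_3L\bigr\}+\Gamma_4L^{1+\eps_4}2^{-n\eps_4}.
\]
The $\Gamma_4$-terms sum to $C(\eps_4)\Gamma_4(t-s)^{1+\eps_4}$. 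For the series $\Sigma:=\sum_{n\ge0}\min\{\Gamma_1L^{1+\eps_1}2^{-n\eps_1},\Gamma_3L\}$ (one may assume $\Gamma_1>0$, for otherwise \eqref{con:wei.dA1} forces $\E^s\delta A\equiv0$, the drift part vanishes and the right-hand side of \eqref{est.critssl} is $+\infty$), let $\nu:=\tfrac1{\eps_1}\log_2(\Gamma_1L^{\eps_1}/\Gamma_3)$ be the crossover scale. If $\nu\le0$ then every term equals $\Gamma_1L^{1+\eps_1}2^{-n\eps_1}$ and $\Sigma\le C(\eps_1)\Gamma_1L^{1+\eps_1}\le C(\eps_1)\Gamma_3L$. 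If $\nu>0$ one bounds the at most $\nu+1$ terms with $n\le\nu$ by $\Gamma_3L$ and the remaining terms by the geometric tail $\sum_{n>\nu}\Gamma_1L^{1+\eps_1}2^{-n\eps_1}\le C(\eps_1)\Gamma_1L^{1+\eps_1}2^{-\nu\eps_1}=C(\eps_1)\Gamma_3L$; since $L\le T$ and $\Gamma_1L^{\eps_1}/\Gamma_3>1$, one has $\nu\le\tfrac1{\eps_1\log 2}|\log(\Gamma_1T^{\eps_1}/\Gamma_3)|$. In either case $\Sigma\le C(\eps_1)\Gamma_3(t-s)(1+|\log(\Gamma_1T^{\eps_1}/\Gamma_3)|)$. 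Adding the three contributions to $\|R_{s,t}\|_{L_m}$ yields \eqref{est.critssl}.

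The only non-routine step is this last two-regime split: one has to cut the dyadic sum at the crossover scale $\nu$, observe that it is precisely the coarse scales $n\le\nu$ that generate the logarithmic prefactor while the fine scales $n>\nu$ are absorbed into the usual geometric series with a constant depending only on $\eps_1$, and then pass from $L$ to $T$ inside the logarithm while keeping track of the sign, so that the bound degrades gracefully (through the factor $1+|\log(\cdot)|$) in the regime $\Gamma_1T^{\eps_1}\le\Gamma_3$. All other ingredients — the dyadic expansion, the martingale-difference estimate and the geometric summations — are exactly as in the proof of \cref{lem:B.Sew1}.
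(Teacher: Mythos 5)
Your proof is correct and follows essentially the same route as the paper's: dyadic refinement, BDG for the martingale-difference part, and a two-regime treatment of the conditional-expectation part that produces the logarithmic factor from the roughly $\log_2(\Gamma_1T^{\eps_1}/\Gamma_3)$ coarse scales. The only (cosmetic) difference is that you take the pointwise minimum of the two drift bounds at each dyadic level and locate the crossover scale explicitly, whereas the paper keeps a single cut level $k$, uses one bound below it and the other above, and then optimizes over $k$ at the end — the two computations are equivalent.
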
%
Finally, our last sewing lemma is stochastic sewing lemma with random controls. It is interesting to compare it with deterministic sewing lemma with controls 
\cite[Theorem~2.2]{FZ18}. We would need the following definition.
		\begin{Definition}\label{def.control}
	          Let $\lambda$ be a measurable function $\Delta_{S,T}\times \Omega\to \R_+$. We say that  $\lambda$ is a \textit{random control} if for any $S\le s\le u \le t\le T$ one has
	$$
	\lambda(s,u,\omega)+\lambda(u,t,\omega)\le \lambda(s,t,\omega)\quad \text{a.s.}
	$$
	\end{Definition}

\begin{Theorem}[Stochastic sewing lemma with random controls]\label{lem:B.rcontrol}
Let $m\in[2,\infty)$. Let $\lambda$ be a random control. Assume that there exist constants $\Gamma_1,\alpha_1,\beta_1\ge0$,  such that $\alpha_1+\beta_1>1$ and
\begin{equation}
|\E^u \delta A_{s,u,t}|\le \Gamma_1|t-s|^{\alpha_1}\lambda(s,t)^{\beta_1}\quad\text{a.s.}\label{Eudeltaastbound}
\end{equation}
for every $(s,t)\in\Delta_{S,T}$ and $u:=(s+t)/2$.
Assume further that condition \eqref{con:wei.dA2} is satisfied with $n=m$, $\alpha_2=\beta_2=0$, $\eps_2>0$ and condition \eqref{limpart} holds. 

Then there exists a map $B\colon\Delta_{S,T}\to \Lm$ and a constant $C>0$, such that $B$ is a functional of $\left((s,t)\mapsto \delta A_{s,(s+t)/2,t}\right)$ and for every $(s,t)\in\Delta_{S,T}$
\begin{equation}\label{result}
|\A_t-\A_s-A_{s,t}|\le C\Gamma_1 |t-s|^{\alpha_1}\lambda(s,t)^{\beta_1}+ B_{s,t}\quad\text{a.s.}
\end{equation}
and
\begin{equation}\label{prelimB}
\|B_{s,t}\|_{L_m}\le C\Gamma_2 |t-s|^{\frac12+\eps_2}.
\end{equation}
\end{Theorem}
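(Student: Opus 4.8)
The plan is to follow the standard dyadic construction of the sewing map, but to track two separate error budgets along the way: a pathwise one controlled by $\Gamma_1$ and the random control $\lambda$, and an $L_m$ one controlled by $\Gamma_2$. First I would fix $(s,t)\in\Delta_{S,T}$ and, for each $N\ge 0$, let $\Pi_N$ be the uniform dyadic partition of $[s,t]$ into $2^N$ pieces, with associated Riemann-type sum $\A^N_{s,t}:=\sum_i A_{t^N_i,t^N_{i+1}}$. Passing from $\Pi_N$ to $\Pi_{N+1}$ inserts midpoints, so $\A^{N+1}_{s,t}-\A^N_{s,t}=-\sum_{i}\delta A_{t^N_i,u^N_i,t^N_{i+1}}$ where $u^N_i$ is the midpoint of $[t^N_i,t^N_{i+1}]$. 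By \eqref{limpart}, $\A^N_{s,t}\to\A_t-\A_s$ in probability, so $\A_t-\A_s-A_{s,t}=\sum_{N\ge0}(\A^{N+1}_{s,t}-\A^N_{s,t})$, and it remains to estimate this telescoping series in the two senses above.

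For the pathwise bound I would split each increment using the tower property: write $\delta A_{t^N_i,u^N_i,t^N_{i+1}}=\E^{u^N_i}\delta A_{t^N_i,u^N_i,t^N_{i+1}}+(\delta A_{t^N_i,u^N_i,t^N_{i+1}}-\E^{u^N_i}\delta A_{t^N_i,u^N_i,t^N_{i+1}})$. The first (``predictable'') part is estimated pointwise by \eqref{Eudeltaastbound}: its contribution at level $N$ is at most $\Gamma_1\sum_i (t^N_{i+1}-t^N_i)^{\alpha_1}\lambda(t^N_i,t^N_{i+1})^{\beta_1}\le \Gamma_1 2^{-N(\alpha_1-1)}(t-s)^{\alpha_1}\lambda(s,t)^{\beta_1}$, where I used superadditivity of $\lambda$ together with $\beta_1\ge?$—here I need $\beta_1\le 1$ or, more carefully, I would use that $x\mapsto x^{\beta_1}$ is handled via $\sum_i\lambda(t^N_i,t^N_{i+1})^{\beta_1}\le (\sum_i\lambda(t^N_i,t^N_{i+1}))^{\beta_1}\cdot(\text{count})^{1-\beta_1}$ when $\beta_1\le1$, and for $\beta_1>1$ monotonicity of the control gives $\lambda(t^N_i,t^N_{i+1})\le\lambda(s,t)$ directly; in either case the bound $2^{-N(\alpha_1+\beta_1-1)}(t-s)^{\alpha_1}\lambda(s,t)^{\beta_1}$ (up to the correct power bookkeeping) results, and since $\alpha_1+\beta_1>1$ the geometric series in $N$ converges, yielding the first term of \eqref{result} with a constant depending only on $\alpha_1,\beta_1$. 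I then define $B_{s,t}$ to be exactly the sum over $N$ of the remaining ``martingale-difference'' parts, $B_{s,t}:=\sum_{N\ge0}\sum_i\bigl(\delta A_{t^N_i,u^N_i,t^N_{i+1}}-\E^{u^N_i}\delta A_{t^N_i,u^N_i,t^N_{i+1}}\bigr)$, which is manifestly a functional of $(s,t)\mapsto\delta A_{s,(s+t)/2,t}$.

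For the $L_m$-bound \eqref{prelimB} on $B_{s,t}$ I would estimate each level-$N$ block in $L_m$. Within a fixed level the terms $\delta A_{t^N_i,u^N_i,t^N_{i+1}}-\E^{u^N_i}[\cdot]$ form a sequence of martingale differences (each is $\F_{t^N_{i+1}}$-measurable and conditionally centered given $\F_{u^N_i}\supset\F_{t^N_i}$, and the $t^N_{i+1}$ are increasing), so by the Burkholder--Davis--Gundy / conditional-Minkowski estimate for discrete martingale sums (precisely the bound used in \cite{MR4089788}), $\bigl\|\sum_i(\delta A_{t^N_i,u^N_i,t^N_{i+1}}-\E^{u^N_i}[\cdot])\bigr\|_{L_m}\lesssim_m \bigl(\sum_i\|\delta A_{t^N_i,u^N_i,t^N_{i+1}}\|_{L_m}^2\bigr)^{1/2}$. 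Feeding in \eqref{con:wei.dA2} with $\alpha_2=\beta_2=0$ gives each summand $\le\Gamma_2 2^{-N(1/2+\varepsilon_2)}(t-s)^{1/2+\varepsilon_2}$, hence the block is $\le\Gamma_2 2^{-N\varepsilon_2}(t-s)^{1/2+\varepsilon_2}$ after the $\ell^2$-sum over $2^N$ terms; summing the geometric series over $N$ yields \eqref{prelimB}. The resulting $B_{s,t}$ is in $L_m$, so in particular finite a.s., and the two estimates combine to give \eqref{result}.

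The main obstacle I expect is the interplay between the random control $\lambda$ and the exponent $\beta_1$ in the pathwise estimate: unlike a deterministic control one cannot simply pull $\lambda$ out, and one must be careful that $\lambda(s,t)$ is finite a.s. and that the superadditivity inequality (which holds only almost surely, with a null set depending on the triple) is applied along a fixed countable family of dyadic points so the exceptional null sets can be collected into one. A secondary subtlety is that, because the pathwise part of the construction does not live in any fixed $L_m$, I cannot invoke completeness of $L_m$ to pass to the limit as in the classical proof; instead the convergence $\A^N_{s,t}\to\A_t-\A_s$ must be taken from the hypothesis \eqref{limpart}, and the decomposition $\A_t-\A_s-A_{s,t}=\sum_N(\A^{N+1}_{s,t}-\A^N_{s,t})$ should be justified by first establishing a.s.\ (resp.\ $L_m$) convergence of the partial sums of the right-hand side and then identifying the limit using \eqref{limpart}.
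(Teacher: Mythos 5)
Your proposal is correct and follows essentially the same route as the paper's proof: the same dyadic telescoping of $\A_t-\A_s-A_{s,t}$, the same split of each level into a part conditioned at the midpoints (bounded pathwise via \eqref{Eudeltaastbound}, superadditivity of $\lambda$ and H\"older, giving the geometric factor $2^{-N(\alpha_1+\beta_1-1)}$) and a martingale-difference part (bounded in $L_m$ via the BDG inequality and \eqref{con:wei.dA2}), with the limit identified through \eqref{limpart}. The only cosmetic difference is that the paper defines $B_{s,t}$ as the sum of the \emph{absolute values} of the level-$N$ martingale blocks rather than the signed sum, which is what lets \eqref{result} read with $+B_{s,t}$ instead of $+|B_{s,t}|$.
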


\subsection{Proofs of stochastic sewing lemmas}
	The proofs of the results from \cref{sub:statements} make use of the following common notation. 
		For each integer $k\ge0$ and each $(s,t)\in \Delta_{S,T}$ with $s>0$, let $\pi^k_{[s,t]}=\{s=t^k_0<t^k_1<\cdots<t^k_{2^k}=t\}$ be the dyadic partition of $[s,t]$.
			
			For each $k,i$, $u^k_i$ denotes the midpoint of $[t^k_i,t^k_{i+1}]$.
			Define
			\begin{equation*}
				A^k_{s,t}:=\sum_{i=0}^{2^k-1}A_{t^k_i,t^k_{i+1}}\,.
			\end{equation*}
			Let $(s,t)\in \Delta_{S,T}$ be fixed.
			For every $k\ge0$, we have
			\begin{align}\label{tmp:Ann}
				A^{k+1}_{s,t}-A^k_{s,t}=\sum_{i=0}^{2^k-1}\delta A_{t^k_i,u^k_i,t^k_{i+1}}
				= I^k_{s,t}+ J^k_{s,t}
			\end{align}
			where
			\begin{align}\label{def:I12}
				 I^k_{s,t}=\sum_{i=0}^{2^k-1}\E^{t^k_i}\delta A_{t^k_i,u^k_i,t^k_{i+1}}
				\,\textrm{ and }\,  J^k_{s,t}=\sum_{i=0}^{2^k-1}\(\delta A_{t^k_i,u^k_i,t^k_{i+1}}-\E^{t^k_i}\delta A_{t^k_i,u^k_i,t^k_{i+1}}\)\,.
			\end{align}
		
	\begin{proof}[Proof of \cref{lem:B.Sew1}]
			We estimate $ I^k_{s,t}$ by triangle inequality and condition  \eqref{con:wei.dA1},
			\begin{align*}
				\| I^k_{s,t}\|_{L_n}
				&\le \sum_{i=0}^{2^k-1}\|\E^{t^k_i}\delta A_{t^k_i,u^k_i,t^k_{i+1}} \|_{L_n}
				\\&\le \Gamma_1\sum_{i=0}^{2^k-1}(u^k_i-S)^{-\alpha_1}(T-u^k_i)^{-\beta_1}(t^k_{i+1}-t^k_i)^{1+\varepsilon_1} \,.
			\end{align*}
			Using \eqref{tmp.1236}, it is easy to see that
			\begin{equation*}
				\sum_{i=0}^{2^k-1}(u^k_i-S)^{-\alpha_1}(T-u^k_i)^{-\beta_1}(t^k_{i+1}-t^k_i)\le2^{1+\alpha_1}\int_s^t (r-S)^{-\alpha_1}(T-r)^{-\beta_1} dr\,,
			\end{equation*}
			which implies
			\begin{align}\label{est.I1b}
				\|\E^sI^k_{s,t}\|_{L_n}\le 				\|I^k_{s,t}\|_{L_n}
				\le 2^{-k \varepsilon_1} 2^{1+\alpha_1}\Gamma_1\left(\int_s^t (r-S)^{-\alpha_1}(T-r)^{-\beta_1} dr\right)(t-s)^{\varepsilon_1}\,.
			\end{align}
			To estimate $ J^k_{s,t}$, we observe that it is a sum of martingale differences and use the 
			 conditional Burkholder--Davis--Gundy (BDG) inequality (see, e.g., \cite[Proposition 27]{Michele}) to obtain
			\begin{align*}
				\| J^k_{s,t}\|_{L_m|\cff_S}
				&\le \kappa_{m} \(\sum_{i=0}^{2^k-1}\|\delta A_{t^k_i,u^k_i,t^k_{i+1}}-\E^{t^k_i} \delta A_{t^k_i,u^k_i,t^k_{i+1}}\|_{L_m|\cff_S}^2 \)^{\frac12}
				\\&\le 2\kappa_{m} \(\sum_{i=0}^{2^k-1}\|\delta A_{t^k_i,u^k_i,t^k_{i+1}}\|_{L_m|\cff_S}^2 \)^{\frac12}\,,
			\end{align*}
		where $\kappa_{m}$ is the constant from the conditional BDG inequality.
			Then, we use the Minkowski inequality, condition \eqref{con:wei.dA2} and similar reasoning as above, to see that
			\begin{align*}
				\|\| J^k_{s,t}\|_{L_m|\cff_S}\|_{L_n}
				&\le 2\kappa_{m} \(\sum_{i=0}^{2^k-1}\|\|\delta A_{t^k_i,u^k_i,t^k_{i+1}}\|_{L_m|\cff_S}\|_{L_n}^2 \)^{\frac12}
				\\&\le 2\kappa_{m}\( \Gamma_2^2\sum_{i=0}^{2^k-1}(u^k_i-S)^{-2 \alpha_2}(T-u^k_i)^{-2 \beta_2}(t^k_{i+1}-t^k_i)^{1+2 \varepsilon_2} \)^{\frac12}
				\\&\le 2^{-k \varepsilon_2}2^{\frac32+\alpha_2}\kappa_{m} \Gamma_2\left(\int_s^t (r-S)^{-2 \alpha_2}(T-r)^{-2 \beta_2} dr\right)^{\frac12}(t-s)^{\varepsilon_2}.
			\end{align*}
			Hence, we have shown that
			\begin{align*}
				\|\|A^{k+1}_{s,t}-A^k_{s,t}\|_{L_m|\cff_S}\|_{L_n}&\le 2^{-k \varepsilon_1}4 \Gamma_1\eta_{S,T}^{(\alpha_1,\beta_1)}(s,t)(t-s)^{\varepsilon_1}
				\\&+2^{-k \varepsilon_2}8\kappa_{m} \Gamma_2|\eta_{S,T}^{(2 \alpha_2,2 \beta_2)}(s,t)|^{\frac12}(t-s)^{\varepsilon_2}\,.
			\end{align*}
			This implies that
			\begin{equation}\label{prefinssl}
				\|\|A^k_{s,t}-A_{s,t}\|_{L_m|\cff_S}\|_{L_n}
				\le C \Gamma_1\eta_{S,T}^{(\alpha_1,\beta_1)}(s,t)(t-s)^{\varepsilon_1}
				+ C \Gamma_2|\eta_{S,T}^{(2 \alpha_2,2 \beta_2)}(s,t)|^{\frac12}(t-s)^{\varepsilon_2}
			\end{equation}
			for some constant $C=C(\varepsilon_1,\varepsilon_2,m)$.
			By sending $k\to\infty$, using \eqref{limpart} and Fatou's lemma, we obtain \eqref{est:B.A1}.
			We observe that $\E^S J^k_{s,t}=0 $, the relation \eqref{tmp:Ann} also yields $\E^S(A^{k+1}_{s,t}-A^k_{s,t})=\E^S I^k_{s,t}$.
			In view of the estimate \eqref{est.I1b}, we obtain
			\begin{equation*}
			 	\|\E^S(A^{k+1}_{s,t}-A^k_{s,t})\|_{L_n}\le2^{-k \varepsilon_1}2^{1+\alpha_1} \Gamma_1\eta_{S,T}^{(\alpha_1,\beta_1)}(s,t)(t-s)^{\varepsilon_1}.
			\end{equation*}
			This yields
			\begin{align}
				\|\E^S(A^k_{s,t}-A_{s,t})\|_{L_n}
				\le C \Gamma_1\eta_{S,T}^{(\alpha_1,\beta_1)}(s,t)(t-s)^{\varepsilon_1}\,.
				\label{tmp:estA2}
			\end{align}
			Sending $k\to\infty$ and reasoning as previously, we obtain \eqref{est:B.A2}.
		\end{proof}
\begin{proof}[Proof of \cref{lem:B.Sew1prime}]
The proof goes along exactly the same lines as the proof of \cref{lem:B.Sew1} till \eqref{prefinssl}. Rewriting this inequality, we derive 
\begin{equation*}
	\|\|A^k_{s,t}\|_{L_m|\cff_S}\|_{L_n}
	\le \|\|A_{s,t}\|_{L_m|\cff_S}\|_{L_n} C \Gamma_1\eta_{S,T}^{(\alpha_1,\beta_1)}(s,t)(t-s)^{\varepsilon_1}
	+ C \Gamma_2|\eta_{S,T}^{(2 \alpha_2,2 \beta_2)}(s,t)|^{\frac12}(t-s)^{\varepsilon_2}.
\end{equation*}
By passing to the limit as $k\to\infty$ and using \eqref{limpartprime} and Fatou's lemma, we obtain \eqref{est:B.A1prime}. Inequality \eqref{est:B.A2prime} follows similarly from \eqref{tmp:estA2}, \eqref{limpartprime} and Fatou's lemma. 
\end{proof}

		\begin{proof}[Proof of \Cref{thm.critssl}]
			The term $ J^k_{s,t}$ is estimated as in the proof of Theorem \ref{lem:B.Sew1}, which gives
					\begin{equation}\label{boundJ}
						\| J^k_{s,t}\|_{L_m}
						\le 2^{-k \varepsilon_2+2}\kappa_{m} \Gamma_2(t-s)^{\frac12+\varepsilon_2}.
					\end{equation}
			On the other hand, we estimate $ I^k_{s,t}$ differently, using  triangle inequality and condition  \eqref{con.EAcrit} in the following way
					\begin{align*}
						\| I^k_{s,t}\|_{L_m}
						&\le \sum_{i=0}^{2^k-1}\|\E^{t^k_i}\delta A_{t^k_i,u^k_i,t^k_{i+1}} \|_{L_m}
						\le \Gamma_3\sum_{i=0}^{2^k-1}(t^k_{i+1}-t^k_i)+\Gamma_4\sum_{i=0}^{2^k-1}(t^k_{i+1}-t^k_i)^{1+\varepsilon_4}
						\\&= \Gamma_3(t-s)+2^{-k \varepsilon_4}\Gamma_4(t-s)^{1+\varepsilon_4} \,.
					\end{align*}
					Hence, in view of \eqref{tmp:Ann}, we have shown that for any $k\in\Z_+$
					\begin{equation}\label{firsteq}
						\|A^{k+1}_{s,t}-A^k_{s,t}\|_{L_m}\le  \Gamma_3 (t-s)+2^{-k \varepsilon_4}\Gamma_4(t-s)^{1+\varepsilon_4}
						+2^{-k \varepsilon_2+2}\kappa_{m} \Gamma_2(t-s)^{\frac12+\varepsilon_2}\,.
					\end{equation}
				However, recalling \eqref{est.I1b}, we still have
					\begin{equation*}
					\| I^k_{s,t}\|_{L_m}\le  2^{-k \varepsilon_1+1}\Gamma_1(t-s)^{1+\varepsilon_1},
				\end{equation*}
			which together with \eqref{boundJ} provides an alternative bound for $\|A^{k+1}_{s,t}-A^k_{s,t}\|_{L_m}$. Namely,
				\begin{equation}\label{secondeq}
				\|A^{k+1}_{s,t}-A^k_{s,t}\|_{L_m}\le  2^{-k \varepsilon_1+1}\Gamma_1(t-s)^{1+\varepsilon_1}
				+2^{-k \varepsilon_2+2}\kappa_{m} \Gamma_2(t-s)^{\frac12+\varepsilon_2}\,.
				\end{equation}
Combining \eqref{firsteq} and \eqref{secondeq} together, we get that there exists a constant $C=C(\eps_1,\eps_2,\eps_4,m)$ such that for any fixed integers $0\le k\le N$
%
\begin{align*}
\|A_{s,t}^N-A_{s,t}\|_\Lm&\le\sum_{i=0}^k\|A_{s,t}^{i+1}-A_{s,t}^{i}\|_\Lm+
\sum_{i=k+1}^{N-1}\|A_{s,t}^{i+1}-A_{s,t}^{i}\|_\Lm\\
&\le C (k \Gamma_3 +2^{-k \varepsilon_1}\Gamma_1T^{\varepsilon_1})(t-s)+C \Gamma_4(t-s)^{1+\varepsilon_4}+ C \Gamma_2(t-s)^{\frac12+\varepsilon_2},
\end{align*}
where in the first sum we have applied \eqref{firsteq} and in the second sum we used \eqref{secondeq}. Similar to the proof of \Cref{lem:B.Sew1}, we pass now to the limit
as $N\to\infty$ (note that $k$ remains fixed) with the help of Fatou's lemma and \eqref{limpart}. We deduce
\begin{equation*}
	\|\A_t-\A_s-A_{s,t}\|_\Lm\le
 C (k \Gamma_3 +2^{-k \varepsilon_1}\Gamma_1T^{\varepsilon_1})(t-s)+C \Gamma_4(t-s)^{1+\varepsilon_4}+ C \Gamma_2(t-s)^{\frac12+\varepsilon_2}.
\end{equation*}
Now, let us fine--tune the parameter $k$. If $\Gamma_3\ge \Gamma_1T^{\varepsilon_1}$, we choose $k=1$ and the previous inequality implies \eqref{est.critssl}. If $\Gamma_3< \Gamma_1T^{\varepsilon_1}$, we can choose $k\ge1$ so that $2^{-k \varepsilon_1}\Gamma_1T^{\varepsilon_1}\le \Gamma_3\le 2^{(1- k)\varepsilon_1}\Gamma_1T^{\varepsilon_1}$ which optimizes the right-hand side above and contributes the logarithmic factor. This gives \eqref{est.critssl}. 					
		\end{proof}
		
\begin{proof}[Proof of \cref{lem:B.rcontrol}] 
	We use a slightly different version of \eqref{tmp:Ann}
	\begin{align}\label{inole}
	A^{k+1}_{s,t}-A^k_{s,t}
	&=\sum_{i=0}^{2^k-1}\E^{u^k_i}\delta A_{t^k_i,u^k_i,t^k_{i+1}}+
	\sum_{i=0}^{2^k-1}\bigl(\delta A_{t^k_i,u^k_i,t^k_{i+1}}-\E^{u^k_i}\delta A_{t^k_i,u^k_i,t^k_{i+1}}\bigr)\nn\\
	&=:\tilde I^k_{s,t}+\tilde J^k_{s,t}.
	\end{align}
	By \eqref{Eudeltaastbound},
	\begin{equation}\label{ivan}
	|\tilde I^k_{s,t}|\le  \Gamma_1|t-s|^{\alpha_1}2^{-k\alpha_1} \sum_{i=0}^{2^k-1}\lambda(t^k_i,t^k_{i+1})^{\beta_1}\le
	  \Gamma_1|t-s|^{\alpha_1}2^{-k(\alpha_1+\beta_1-1)} \lambda(s,t)^{\beta_1},
	\end{equation}
	where the last inequality follows from the  H\"older inequality and superadditivity of the random control $\lambda$. Note that $\tilde J^k_{s,t}$ is the sum of martingale differences, hence, can be estimated analogously to $J^k_{s,t}$ as in the proof of \cref{lem:B.Sew1}. Applying the BDG and Minkowski inequalities, we have
	\begin{align*}
	\|\tilde J^k_{s,t}\|_{L_m}&\le 2\kappa_{m} \(\sum_{i=0}^{2^k-1}\|\delta A_{t^k_i,u^k_i,t^k_{i+1}}\|_{L_m}^2 \)^{\frac12}.
	\end{align*}
	Using condition \eqref{con:wei.dA2} (with $n=m$ and $\alpha_2=\beta_2=0$), we have
	\begin{align}\label{prelimetan}
	\|\tilde J^k_{s,t}\|_{L_m}
	\le   2^{-k \varepsilon_2}8\kappa_m \Gamma_2 |t-s|^{\frac12+\eps_2}.
	\end{align}
Thus,  we get from  \eqref{inole} and \eqref{ivan}
	\begin{equation}\label{prelimn}
	|A^{k+1}_{s,t}-A^k_{s,t}| \le \Gamma_1|t-s|^{\alpha_1}2^{-k(\alpha_1+\beta_1-1)} \lambda(s,t)^{\beta_1}+|\tilde J^k_{s,t}|,\,\text{a.s.}
	\end{equation}
	Define $B_{s,t}:=\sum_{k=0}^\infty |\tilde J^k_{s,t}|$. Using \eqref{prelimetan} and triangle inequality, we see that $B$ satisfies \eqref{prelimB}.

	It follows from \eqref{prelimn} that
	\begin{equation*}
	|A^{k+1}_{s,t}-A_{s,t}|\le \sum_{i=0}^{k}|A^{i+1}_{s,t}-A^i_{s,t}|
	\le C \Gamma_1|t-s|^{\alpha_1}\lambda(s,t)^{\beta_1}+B_{s,t}.
	\end{equation*}
	Sending $k\to\infty$ and using condition \eqref{limpart},  we obtain \eqref{result}.
\end{proof}

\section{Proofs of key propositions}\label{S:5}
In this section, we present the proofs of the propositions from \Cref{S:3}. The regularization estimates which are necessary for the proofs are summarized in \cref{lem.138,L:main} below. The proofs of these lemmas are presented in \cref{sub.aux1}.
We recall \eqref{CLm} which defines the space $\Cspacem$.
\begin{lemma}\label{lem.138}
	Let $f\in\bes^\gamma_p$ be a bounded function. Let $m\in[2,\infty)$, $p\in[m,\infty]$ and $\gamma\in(-2,0)$.
	There exists a constant $C=C(\gamma,m,p)$ such that for any $0\le s\le t\le T$ and any $\Bor(\R)\otimes \F_s$-measurable  function $\kappa\in\Cspacem$ one has 
	\begin{align}\label{est.fkappa}
	\Bigl\|\int_s^t\int_\0 p_{T-r}(x,y)f(V_r(y)+P_{r-s}\kappa(y))\,dy dr\Bigr\|_{L_m}
	\le C\|f\|_{\Bes^\gamma_p}(t-s)^{1+\frac\gamma4-\frac1{4p}}.
	\end{align}
\end{lemma}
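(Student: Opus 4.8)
The plan is to deduce \eqref{est.fkappa} from the stochastic sewing lemma \cref{lem:B.Sew1}, applied on the interval $[s,t]$ with respect to the filtration $(\F_r)$. Since $f$ is bounded and $p$ is an integrable kernel, the process
\[
\A_{t'}:=\int_s^{t'}\!\int_\0 p_{T-r}(x,y)\,f\bigl(V_r(y)+P_{r-s}\kappa(y)\bigr)\,dy\,dr,\qquad t'\in[s,t],
\]
is well defined and a.s.\ continuous, so \eqref{limpart} holds (by dominated convergence, using that $f$ is bounded and that a.e.\ point is a Lebesgue point of $f$) with the additive family obtained by freezing the conditional expectation,
\[
A_{s',t'}:=\int_{s'}^{t'}\!\int_\0 p_{T-r}(x,y)\,\E^{s'}\!\bigl[f\bigl(V_r(y)+P_{r-s}\kappa(y)\bigr)\bigr]\,dy\,dr .
\]
Since $\A_s=0$, the difference $\A_t-\A_s-A_{s,t}$ is controlled by \eqref{est:B.A1} with $n=m$, so it remains to estimate $\|A_{s,t}\|_{\Lm}$ directly and to verify \eqref{con:wei.dA1}--\eqref{con:wei.dA2} for $\delta A$.

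\textbf{The two analytic inputs and the germ bound.} First, since $V$ is Gaussian and $\kappa$ is $\F_s$-measurable, for $s\le s'\le r$ the random variable $V_r(y)-\E^{s'}V_r(y)=\int_{s'}^r\!\int_\0 p_{r-\tau}(y,z)\,W(d\tau,dz)$ is, conditionally on $\F_{s'}$, a centred Gaussian independent of $\F_{s'}$ with (deterministic) variance $v_{r,s'}(y)=\int_{s'}^r\|p_{r-\tau}(y,\cdot)\|_{L_2(\0)}^2\,d\tau$, and $c_1|r-s'|^{1/2}\le v_{r,s'}(y)\le c_2|r-s'|^{1/2}$ uniformly in $y\in\0$ (the lower bound being \eqref{est.V.lnd}, the upper one the two-sided heat-kernel estimates in the three settings of \cref{Conv}). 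Hence $\E^{s'}[f(V_r(y)+P_{r-s}\kappa(y))]=\bigl(G_{v_{r,s'}(y)}f\bigr)\bigl(P_{r-s'}V_{s'}(y)+P_{r-s}\kappa(y)\bigr)$. Second, I will use the standard heat-kernel smoothing estimate on the Besov scale, obtained by combining the embedding \eqref{em.Besov} (i.e.\ $\bes^\gamma_p\hookrightarrow\bes^{\gamma-1/p}_\infty$) with regularisation of $G_v$: $\|G_v g\|_{C^\theta}\le C\,v^{(\gamma-1/p-\theta)/2}\|g\|_{\bes^\gamma_p}$ for $\theta\ge0$. Applying the latter with $\theta=0$, the lower bound $v_{r,s'}(y)\ge c_1|r-s'|^{1/2}$, and $\int_\0 p_{T-r}(x,y)\,dy\le1$, and using the conditional Minkowski inequality, gives
\[
\|A_{s',t'}\|_{\Lm}\le C\|f\|_{\bes^\gamma_p}\int_{s'}^{t'}(r-s')^{(\gamma-1/p)/4}\,dr\le C\|f\|_{\bes^\gamma_p}(t'-s')^{\,1+\gamma/4-1/(4p)},
\]
the $r$-integral converging since $\gamma-1/p>-4$ (as $\gamma>-2$, $p\ge2$). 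This is exactly the exponent in \eqref{est.fkappa}.

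\textbf{The germ differences.} For $u':=(s'+t')/2$ one has $\delta A_{s',u',t'}=\int_{u'}^{t'}\!\int_\0 p_{T-r}(x,y)\bigl(\E^{s'}[f(V_r(y)+\cdot)]-\E^{u'}[f(V_r(y)+\cdot)]\bigr)\,dy\,dr$, and by the tower property $\E^{s'}\delta A_{s',u',t'}=0$, so \eqref{con:wei.dA1} holds with $\Gamma_1=0$; by \cref{R:newcon2} (with $n=m$) it suffices to bound $\|\delta A_{s',u',t'}\|_{\Lm}$. Writing $N_1:=\E^{u'}V_r-\E^{s'}V_r=\int_{s'}^{u'}\!\int_\0 p_{r-\tau}(y,z)W(d\tau,dz)$, which conditionally on $\F_{s'}$ is a centred Gaussian independent of $\F_{s'}$ with variance $\asymp|t'-s'|^{1/2}$ uniformly in $y$ and in $r\in[u',t']$, and using the semigroup identity $G_{v_{r,s'}}=G_{v_{r,s'}-v_{r,u'}}G_{v_{r,u'}}$, the integrand becomes $-\bigl(H(w+N_1)-\E^{s'}[H(w+N_1)]\bigr)$ with $H:=G_{v_{r,u'}(y)}f$ and $w:=\E^{s'}V_r(y)+P_{r-s}\kappa(y)$ ($\F_{s'}$-measurable). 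Since $z\mapsto H(w+z)$ is $\|H'\|_\infty$-Lipschitz, Gaussian concentration and the $\theta=1$ smoothing bound give that this has conditional $\Lm$-norm at most $C\|H'\|_\infty|t'-s'|^{1/4}\le C(r-u')^{(\gamma-1/p-1)/4}|t'-s'|^{1/4}\|f\|_{\bes^\gamma_p}$; integrating in $r$ (the integral converging since $\gamma-1/p>-3$) and then in $y$ against $p_{T-r}(x,\cdot)$ yields $\|\delta A_{s',u',t'}\|_{\Lm}\le C\|f\|_{\bes^\gamma_p}(t'-s')^{\,1+\gamma/4-1/(4p)}$. When $\gamma/4-1/(4p)>-\tfrac12$ (which includes the range $\gamma-1/p>-3/2$ relevant in the applications) this exponent exceeds $\tfrac12$, so \cref{lem:B.Sew1} applies with $n=m$, $\alpha_i=\beta_i=0$, $\Gamma_1=0$, $\Gamma_2\asymp\|f\|_{\bes^\gamma_p}$ and $\varepsilon_2=\tfrac12+\tfrac\gamma4-\tfrac1{4p}$; combining \eqref{est:B.A1} with the germ bound gives \eqref{est.fkappa}.

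\textbf{Main obstacle.} The delicate case is the most singular range $\gamma/4-1/(4p)\le-\tfrac12$: there the bound just derived for $\delta A$ no longer decays faster than $|t'-s'|^{1/2}$, so \cref{lem:B.Sew1} cannot be applied with $\alpha_2=\beta_2=0$. To treat it one must re-balance the estimate, transferring part of the time decay into endpoint weights $(r-s')^{-\alpha_2}(t-r)^{-\beta_2}$ with $\alpha_2,\beta_2\in[0,\tfrac12)$, which feed into the factor $\bigl(\eta^{(2\alpha_2,2\beta_2)}_{s,t}(s,t)\bigr)^{1/2}$ in \eqref{est:B.A1}; this requires the weighted form of \cref{lem:B.Sew1} together with sharper single-integral heat-kernel/Besov estimates (of the type of \cref{T:singlebound}). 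I expect this re-balancing to be the main technical point; everything else is bookkeeping.
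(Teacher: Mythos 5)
Your setup (germ $A_{s',t'}=\E^{s'}[\A_{t'}-\A_{s'}]$, vanishing $\Gamma_1$, the identity $\E^{s'}f(V_r+\cdot)=G_{v_{r,s'}}f(\cdot)$, and the exponent count for $\|A_{s,t}\|_{L_m}$) is exactly the paper's, and your argument does close the case $\gamma-\tfrac1p>-2$. But the lemma is stated for all $\gamma\in(-2,0)$ and $p\in[m,\infty]$ with $m\ge2$, so $\gamma-\tfrac1p$ may lie in $(-\tfrac52,-2]$, and in that range your bound $\|\delta A_{s',u',t'}\|_{L_m}\lesssim\|f\|_{\bes^\gamma_p}(t'-s')^{1+\gamma/4-1/(4p)}$ has exponent $\le\tfrac12$, so the unweighted sewing lemma does not apply. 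You flag this yourself and defer the "re-balancing" as bookkeeping; it is not bookkeeping — it is the one genuinely new estimate in the paper's proof of the underlying \cref{T:singlebound}, and your Lipschitz/Gaussian-concentration bound on $H=G_{v_{r,u'}}f$ cannot produce it, because evaluating $\|H'\|_{L_\infty}$ forces you to pay the full Besov embedding cost $(r-u')^{(\gamma-1/p-1)/4}$ in the \emph{local} variable.

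The missing idea is \eqref{boundcondexpnew}: conditionally on $\F_{s'}$, the point at which $G_{v_{r,u'}}f$ is evaluated contains the independent Gaussian increment $P_{r-u'}Z_{s',u'}(y)$ with variance $\gtrsim(u'-s')(r-s')^{-1/2}$, so one can compute $\E^{s'}|\E^{u'}f(V_r(y)+\cdots)|^m$ by integrating against a Gaussian density and apply H\"older with exponent $p/m$; this lets you pay only $\|G_vf\|_{L_p}\lesssim v^{\gamma/2}\|f\|_{\bes^\gamma_p}$ (no $-1/p$ loss) at the price of a factor $(u'-s')^{-1/(2p)}(r-s')^{1/(4p)}$. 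The $1/(4p)$ deficit thus migrates from the local mesh $(r-u')$ to a left-endpoint singularity $(u'-S)^{-1/(4p)}$, which is precisely what the weighted hypothesis \eqref{con:wei.dA2} with $\alpha_2=\tfrac1{4p}<\tfrac12$, $\varepsilon_2=\tfrac12+\tfrac\gamma4>0$ and the factor $\eta^{(2\alpha_2,0)}_{S,T}$ in \eqref{est:B.A1} are designed to absorb; this works for all $\gamma>-2$ with no constraint coupling $\gamma$ and $p$. (A secondary, minor point: your justification of \eqref{limpart} via Lebesgue points is off target; since $\A_{t_{i+1}}-\A_{t_i}-A_{t_i,t_{i+1}}$ are martingale differences, $L_2$-orthogonality and boundedness of $f$ give the convergence of the Riemann sums directly.)
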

\begin{Lemma}\label{L:main}
Let $f:\R\to\R$ be a bounded  continuous function in $\bes^\gamma_p$, $T>0$,  and $\psi\colon[0,T]\times\0\times\Omega\to\R$ be a measurable function adapted to the filtration $\{\F_t\}$. 
Let $m\in[2,\infty)$, $n\in[m,\infty]$, $p\in[n,\infty]$ and $\tau\in(0,1)$ be fixed numbers. Let $\gamma\in(-2+1/p,0)$ and assume that 
\begin{equation}\label{gammaptaucond}
\gamma-\frac1p+4\tau>1.
\end{equation}
 Then the following statements hold.
\begin{enumerate}[{\rm(i)}]
\item There exists a constant $C=C(\gamma,p,\tau,m)>0$ independent of $T$ such that for any $S\in[0,T]$
\begin{align}\label{Bound1}
&\sup_{x\in\0}\Bigl\|\Bigl\|\int_S^T\int_\0 p_{T-r}(x,y)f(V_r(y)+\psi_r(y))dydr\Bigr\|_{\Lm|\F_S}\Bigr\|_{L_n}\nn\\
&\quad\le  C\|f\|_{\Bes^\gamma_p} (T-S)^{1+\frac{\gamma}4-\frac1{4p}}+
C\|f\|_{\Bes^\gamma_p}
[\psi]_{\Ctimespace{\tau}{m,n}{[S,T]}}(T-S)^{\frac34+\frac\gamma4-\frac1{4p}+\tau}.
\end{align}

\item
Let $\delta\in(0,1)$.  There exists a constant $C=C(\gamma,p,\tau,\delta,m)>0$ independent of $T$  such that
for any $S\in[0,T]$, $x_1,x_2\in\0$
\begin{align}\label{BoundKolmi}
&\Bigl\|\int_S^T\int_\0 (p_{T-r}(x_1,y)-p_{T-r}(x_2,y))f(V_r(y)+\psi_r(y))dydr\Bigr\|_{\Lm}\nn\\
&\quad\le  C\|f\|_{\Bes^\gamma_p} |x_1-x_2|^{\delta}( (T-S)^{1-\frac\delta2+\frac14(\gamma-\frac1p)}+
[\psi]_{\Ctimespacetaum{[S,T]}}(T-S)^{\frac34-\frac\delta2+\frac\gamma4-\frac1{4p}+\tau}).
\end{align}
\item Let $\delta\in(0,1)$ and $\bar T>0$.  There exists a constant $C=C(\gamma,p,\tau,\delta,m)>0$ independent of $T,\bar T$  such that
for any $S\in[0,T]$
\begin{align}
	&\sup_{x\in\0}\Bigl\|\int_S^T\int_\0 (p_{T+\bar T-r}(x,y)-p_{T-r}(x,y))f(V_r(y)+\psi_r(y))dydr\Bigr\|_{\Lm}\nn\\
	&\quad\le  C\|f\|_{\Bes^\gamma_p} \bar T^{\frac\delta2}( (T-S)^{1-\frac\delta2+\frac14(\gamma-\frac1p)}+
	[\psi]_{\Ctimespacetaum{[S,T]}}(T-S)^{\frac34-\frac\delta2+\frac\gamma4-\frac1{4p}+\tau}).
\end{align}

\end{enumerate}
\end{Lemma}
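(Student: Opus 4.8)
The plan is to deduce all three estimates from the stochastic sewing lemma \cref{lem:B.Sew1}, with inputs the local nondeterminism \eqref{est.V.lnd} of $V$, the standard heat-semigroup smoothing bounds for negative-regularity Besov functions (\cref{app.bes}), and the one-sided estimate \cref{lem.138} for the ``diagonal'' term. I describe part (i) in detail; parts (ii) and (iii) are the same argument with $p_{T-r}(x,y)$ replaced by a spatial, resp.\ a temporal, increment of the heat kernel. Fix $x\in\0$ and $S\in[0,T]$, and for $(s,t)\in\Delta_{S,T}$ set
\[
A_{s,t}:=\int_s^t\int_\0 p_{T-r}(x,y)\,\E^s\bigl[f(V_r(y)+P_{r-s}\psi_s(y))\bigr]\,dy\,dr ,
\]
which is $\F_s$-measurable, and let $\A_t:=\int_S^t\int_\0 p_{T-r}(x,y)f(V_r(y)+\psi_r(y))\,dy\,dr$. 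Since $f$ is bounded and continuous, $V$ is continuous and $P_{r-s}\psi_s\to\psi_r$ as $s\uparrow r$, dominated convergence gives \eqref{limpart}. The diagonal term is $A_{S,T}=\E^S[\int_S^T\int_\0 p_{T-r}(x,y)f(V_r(y)+P_{r-S}\psi_S(y))\,dy\,dr]$, whose $L_n$-norm is, by conditional Jensen together with the smoothing bound $\|G_{\sigma^2_{S,r}(y)}f\|_{L_\infty}\lesssim (r-S)^{(\gamma-1/p)/4}\|f\|_{\Bes^\gamma_p}$ (equivalently, by \cref{lem.138} applied at integrability level $n$, legitimate since $2\le n\le p$), at most $C\|f\|_{\Bes^\gamma_p}(T-S)^{1+\gamma/4-1/(4p)}$ — the first term on the right of \eqref{Bound1}. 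It remains to bound $\|\|\A_T-\A_S-A_{S,T}\|_{L_m|\F_S}\|_{L_n}$ by verifying \eqref{con:wei.dA1} and \eqref{con:wei.dA2} with $\alpha_i=\beta_i=0$.

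The crucial point is that, conditionally on $\F_u$ with $u=(s+t)/2$, the field $r\mapsto V_r(y)$ is the $\F_u$-measurable shift $\E^u V_r(y)$ of the centred Gaussian $\int_u^r\int_\0 p_{r-q}(y,z)W(dq,dz)$, which is independent of $\F_u$ with deterministic variance $\sigma^2_{u,r}(y)\asymp(r-u)^{1/2}$ uniformly in $y\in\0$ and in all three choices of $(D,p)$ (the lower bound is \eqref{est.V.lnd}); hence for $\F_u$-measurable $c$ one has $\E^u[f(V_r(y)+c)]=(G_{\sigma^2_{u,r}(y)}f)(\E^u V_r(y)+c)$, and the Besov smoothing bounds give $\|G_{\sigma^2_{u,r}(y)}f\|_{L_\infty}\lesssim(r-u)^{(\gamma-1/p)/4}\|f\|_{\Bes^\gamma_p}$ and $\|\partial_z G_{\sigma^2_{u,r}(y)}f\|_{L_\infty}\lesssim(r-u)^{(\gamma-1/p-1)/4}\|f\|_{\Bes^\gamma_p}$. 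By the tower property $\E^s\delta A_{s,u,t}=\int_u^t\int_\0 p_{T-r}(x,y)\E^s[f(V_r(y)+P_{r-s}\psi_s(y))-f(V_r(y)+P_{r-u}\psi_u(y))]\,dy\,dr$; writing $P_{r-s}\psi_s=P_{r-u}P_{u-s}\psi_s$ and inserting $\E^u$, the tower property annihilates the $P_{r-s}\psi_s$-contribution and leaves $\E^s[(G_{\sigma^2_{u,r}}f)(\cdot+P_{r-u}\psi_u)-(G_{\sigma^2_{u,r}}f)(\cdot+P_{r-u}P_{u-s}\psi_s)]$, which by the mean value theorem and the derivative bound is pointwise $\le \|\partial_z G_{\sigma^2_{u,r}}f\|_{L_\infty}\,\|P_{r-u}(\psi_u-P_{u-s}\psi_s)(y)\|_{L_m|\F_s}$. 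Taking $L_n$-norms, using Minkowski's inequality, $\int_\0 p_{T-r}(x,y)\,dy=1$, and $\sup_y\|\|P_{r-u}(\psi_u-P_{u-s}\psi_s)(y)\|_{L_m|\F_s}\|_{L_n}\le[\psi]_{\Ctimespace{\tau}{m,n}{[S,T]}}(u-s)^\tau$, then integrating $(r-u)^{(\gamma-1/p-1)/4}$ over $r\in[u,t]$ (integrable as $\gamma-1/p>-3$), we get $\|\E^s\delta A_{s,u,t}\|_{L_n}\lesssim\|f\|_{\Bes^\gamma_p}[\psi]_{\Ctimespace{\tau}{m,n}{[S,T]}}(t-s)^{3/4+\gamma/4-1/(4p)+\tau}$; so \eqref{con:wei.dA1} holds with $\Gamma_1\asymp\|f\|_{\Bes^\gamma_p}[\psi]_{\Ctimespace{\tau}{m,n}{[S,T]}}$ and $\varepsilon_1=\tfrac14(\gamma-\tfrac1p+4\tau-1)>0$, which is exactly where \eqref{gammaptaucond} enters. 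For \eqref{con:wei.dA2} it suffices to note that each conditional expectation in $\delta A_{s,u,t}$ is pointwise dominated by a deterministic multiple of $(r-u)^{(\gamma-1/p)/4}\|f\|_{\Bes^\gamma_p}$ (resp.\ $(r-s)^{(\gamma-1/p)/4}\|f\|_{\Bes^\gamma_p}$), so $\|\delta A_{s,u,t}\|_{L_m|\F_S}\lesssim\|f\|_{\Bes^\gamma_p}(t-s)^{1+\gamma/4-1/(4p)}$, i.e.\ $\Gamma_2\asymp\|f\|_{\Bes^\gamma_p}$, $\varepsilon_2=\tfrac12+\tfrac14(\gamma-\tfrac1p)>0$ (using $\gamma-1/p>-2$). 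Plugging these into \eqref{est:B.A1}, using \eqref{intboundssl} and adding the diagonal bound gives \eqref{Bound1}.

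For (ii) and (iii) one repeats this with $n=m$ and $p_{T-r}(x,y)$ replaced by $q_r(y):=p_{T-r}(x_1,y)-p_{T-r}(x_2,y)$, resp.\ $q_r(y):=p_{T+\bar T-r}(x,y)-p_{T-r}(x,y)$. The extra ingredient is the elementary estimate $\int_\0|q_r(y)|\,dy\le C|x_1-x_2|^\delta(T-r)^{-\delta/2}$, resp.\ $C\bar T^{\delta/2}(T-r)^{-\delta/2}$, valid for $\delta\in(0,1)$; carrying this factor through and using $\int_u^t(r-u)^a(T-r)^{-\delta/2}\,dr\lesssim(T-u)^{-\delta/2}(t-s)^{a+1}$ for $a>-1$, $\delta/2<1$ (a beta-function bound), the two sewing conditions hold with $\alpha_i=0$, $\beta_1=\beta_2=\delta/2\in[0,\tfrac12)$, the same $\varepsilon_1,\varepsilon_2$, and $\Gamma_i$ multiplied by $|x_1-x_2|^\delta$ (resp.\ $\bar T^{\delta/2}$). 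Since $\eta_{S,T}^{(0,\delta/2)}(S,T)\asymp(T-S)^{1-\delta/2}$ and $(\eta_{S,T}^{(0,\delta)}(S,T))^{1/2}\asymp(T-S)^{(1-\delta)/2}$, \eqref{est:B.A1} produces exactly the exponents $1-\tfrac\delta2+\tfrac14(\gamma-\tfrac1p)$ and $\tfrac34-\tfrac\delta2+\tfrac\gamma4-\tfrac1{4p}+\tau$ in \eqref{BoundKolmi} and in part (iii), the diagonal term being bounded directly by the same Gaussian smoothing estimate. The integrability of $(T-r)^{-\delta/2}$ and $(T-r)^{-\delta}$ near $r=T$ is why $\delta\in(0,1)$ is needed.

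\emph{Main obstacle.} I expect the heart of the proof to be the verification of \eqref{con:wei.dA1}: it combines (a) the uniform-in-$y$ Gaussian-shift representation of $\E^u[f(V_r(y)+c)]$ in all three domain/boundary settings, resting on the uniform lower bound \eqref{est.V.lnd}; (b) the ``double conditioning'' bookkeeping in which the tower property is used to kill the $P_{r-s}\psi_s$-term, so that only the controllable increment $\psi_u-P_{u-s}\psi_s$ — and hence the seminorm $[\psi]_{\Ctimespace{\tau}{m,n}{[S,T]}}$, not $\psi$ itself — survives; and (c) the exponent arithmetic making $\varepsilon_1,\varepsilon_2>0$ precisely under \eqref{gammaptaucond} and $\gamma\in(-2+\tfrac1p,0)$. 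The chain of nested moment norms $L_m|\F_s$, $L_m|\F_S$, $L_m$, $L_n$ is what forces $2\le m\le n\le p$; in particular $p\ge n$ is used to invoke \cref{lem.138} at level $n$ for the diagonal term.
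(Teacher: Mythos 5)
Your proposal is correct and rests on the same engine as the paper's proof — the stochastic sewing lemma with singular weights, fed by the uniform local nondeterminism of $V$ and the Gaussian--Besov smoothing bounds — but it is organized differently in a way worth noting. The paper proves part (i) through \cref{lem.intK} with the germ $A_{s,t}=\int_s^t\int_\0 X_r(y)f(V_r(y)+P_{r-s}\psi_s(y))\,dy\,dr$ \emph{without} the conditional expectation; since for $\gamma<0$ that germ cannot be bounded deterministically by $\|f\|_{\Bes^\gamma_p}$, the paper must route the verification of \eqref{con:wei.dA2} and the diagonal term through a separate sewing argument (\cref{T:singlebound}, \cref{C:bounds}\ref{C:451}, \cref{lem.138}), and must verify \eqref{limpart} first for $f\in\C^1_b$ and then mollify (Step 2 of the proof of \cref{lem.intK}). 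By putting $\E^s$ inside the germ you make $A_{s,t}$ $\F_s$-measurable and pointwise dominated by $\|f\|_{\Bes^\gamma_p}\int_s^t(r-s)^{(\gamma-1/p)/4}dr$ via \eqref{id.EfV} and \cref{lem.Gf}(i), so both \eqref{con:wei.dA2} and the diagonal term come for free, and the auxiliary sewing layer disappears; your verification of \eqref{con:wei.dA1} via the mean value theorem applied to $G_{\varz_{r-u}(y)}f$ is exactly the paper's computation via \eqref{bound.inf} and \eqref{Besov2bound}. Two small points. First, the phrase that the tower property ``annihilates the $P_{r-s}\psi_s$-contribution'' is misleading — nothing is annihilated; conditioning on $\F_u$ simply replaces $f$ by $G_{\varz_{r-u}(y)}f$ evaluated at the two shifted arguments, after which the MVT applies — but the displayed formula that follows is the right one. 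Second, your one-line justification of \eqref{limpart} is the only under-argued step: with your germ one must (a) remove the $\E^{t_i}$'s, which is a sum of martingale differences each bounded by $2\|f\|_\infty(t_{i+1}-t_i)$ and hence $o(1)$ in $L_2$ by orthogonality, and (b) replace $P_{r-t_i}\psi_{t_i}$ by $\psi_r$, where the convergence is only in probability for each fixed $(r,y)$ (it uses finiteness of $[\psi]_{\Ctimespacetaum{[S,T]}}$, which you may assume without loss of generality as otherwise \eqref{Bound1} is vacuous), so the dominated convergence must be run in $L_1(\Omega\times[S,t]\times\0)$ with dominating function $2\|f\|_\infty p_{T-r}(x,y)$. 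Both steps go through for merely bounded continuous $f$, so your route in fact avoids the paper's mollification step, but they should be written out.
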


\begin{remark}
	Estimate \eqref{Bound1} is an analogue  of the following estimate by Davie in \cite{davie} for Brownian motion
		\begin{equation}\label{est.Davie}
	\Bigl	\|\int_s^t[f(B_r+x_1)-f(B_r+x_2)]dr\Bigr\|_{L_m}\le C_m\sup_{z\in\R^d}|f(z)|  |t-s|^{\frac12}|x_1-x_2|,
		\end{equation}
	which holds for every $s\le t$, $x_1,x_2\in\R^d$ and bounded measurable $f:\R^d\to\R^d$. Noting that the map $x\mapsto (f(x+x_1)-f(x+x_2))/(|x_1-x_2|)$ has finite $\bes^{-1}_\infty$-norm and therefore \eqref{est.Davie} is indeed an estimate with distributions. 
	A closely related estimate is of the type
	\begin{equation}\label{est.Krylov}
	\Bigl\|\E^s\bigl(\int_s^t f(X_r)dr\bigr)\Bigr\|_{L_\infty}\le C\|f\|_{L_p(\R)}|t-s|^{\theta},
	\end{equation}
	where $p\ge d+1$, $\theta=\theta(p)=1-\frac{d}{2p}$, $X$ is a martingale of the form $X_t=\int_0^t \sigma_rdB_r$, $\sigma$ is adapted, $\Lambda^{-1}\ge |\sigma_r|\ge \Lambda$ for all $r$, $\Lambda$ is a deterministic positive constant. Estimate \eqref{est.Krylov} follows from a general result of Krylov in \cite{krylov1987estimates} and an argument similar to \cite[Corollary 3.2]{MR1864041}. When $f$ is a non-negative function, by expanding moment and successively conditioning, one can obtain from the above estimate that for every integer $m\ge2$, $s\le t$,
	\begin{align}\label{est.Krylovm}
		\|\int_s^tf(X_r)dr\|_{L_m}\le C_m\|f\|_{L_p(\R)}|t-s|^{\theta},
	\end{align}
	which is comparable to Davie-type estimate \eqref{est.Davie}. 
	However, the fact that $f$ is a non-negative function is crucial and in particular, one cannot obtain \eqref{est.Davie} from such an argument.

	As observed in \cite{MR4089788} for the case of fractional Brownian motion, one can indeed obtain estimates of the types \eqref{est.Davie} and \eqref{est.Krylovm} from estimates of the type \eqref{est.Krylov} by mean of the stochastic sewing lemma for $f$ being a distribution provided that $\theta>1/2$. This passage is also visible in the proof of \cref{L:main} in \cref{sec.proof_of_auxiliary_results}.
\end{remark}


\subsection{Proof of \texorpdfstring{\cref{lem.apriori,lem.uun,lem.limsol}}{Propositions \ref{lem.apriori}-\ref{lem.limsol}} }\label{S:existproofs}

\begin{proof}[Proof of \Cref{lem.apriori}]
We will use the estimate \eqref{Bound1} from \Cref{L:main}. 
Fix $m$, $f$, $u_0$ satisfying the assumptions of the proposition. 
Define for brevity
\begin{equation*}
\psi(t,x):=u^{\eta;f}(t,x)-V(t,x),\quad t\in[0,\T],\,\,x\in D.
\end{equation*}
First we note that since the function $f$ is bounded, we have for any $(s,t)\in\Delta_{0,\T}$, 
\begin{align*}
|\psi_t(x)-P_{t-s}\psi_s(x)|=\Bigl|\int_s^t\int_\0p_{t-r}(x,y)f(u^{\eta;f}_r(y))\,dydr\Bigr|\le\sup_{z\in\0}|f(z)||t-s|,
\end{align*}
which implies that  $[\psi]_{\Ctimespace{\tau}{m,\infty}{[0,\T]}}$ is finite for any $\tau\in(0,1]$.

Let us apply now \Cref{L:main} with the following set of parameters: $\gamma=\beta-1/q$, $n=p= \infty$ and  $\tau=1+\beta/4-1/(4q)$. In other words, we consider $f$ as a distribution $\Bes^{\beta-1/q}_\infty$, thanks to the embedding $\bes^\beta_q\hookrightarrow\Bes^{\beta-1/q}_\infty$ (see \eqref{em.Besov}). Since $\beta-1/q>-3/2$, we see that \eqref{gammaptaucond} holds. Therefore, all the assumptions of \Cref{L:main} are satisfied and we obtain from \eqref{Bound1} that there exists $C>0$ such that for any $(s,t)\in\Delta_{0,\T}$
\begin{align}\label{boundnorm}
&\sup_{x\in\0}\|\|\psi_t(x)-P_{t-s}\psi_s(x) \|_{L_m|\F_s}\|_{L_\infty}\nn \\
&\qquad\le C\|f\|_{\bes^{\beta}_q}(t-s)^{1+\frac\beta4-\frac1{4q}}\bigl(1+[\psi]_{\Ctimespace{1+\frac\beta4-\frac1{4q}}{m,\infty}{[s,t]}}(t-s)^{\frac34+\frac \beta4-\frac1{4q}}\bigr),
\end{align}
where we used the fact that $\|f\|_{\bes^{\beta-1/q}_\infty}\le C\|f\|_{\bes^{\beta}_q}$ thanks to \eqref{em.Besov}. Fix now $S\in [0,\T]$ and $\ell\in(0,\T-S]$. By dividing both sides of inequality \eqref{boundnorm}   by $(t-s)^{1+\beta/4-1/(4q)}$ and taking 
supremum over all $(s,t)\in\Delta_{S,S+\ell}$, we get
\begin{equation*}
	[\psi]_{\Ctimespace{1+\frac\beta4-\frac1{4q}}{m,\infty}{[S,S+\ell]}}\le  C\|f\|_{\bes^{\beta}_q} +	 C\|f\|_{\bes^{\beta}_q}
	[\psi]_{\Ctimespace{1+\frac\beta4-\frac1{4q}}{m,\infty}{[S,S+\ell]}}\ell^{\frac34+\frac \beta4-\frac1{4q}}.
\end{equation*}
Note that $3/4+\beta/4-1/(4q)>0$ (since $\beta-1/q>-3/2$) and the constant $C$ does not depend on $S$ or $\ell$. Hence, by choosing $\ell_0\in(0,\T)$ small enough such that
$$
C\|f\|_{\bes^{\beta}_q}\ell_0^{\frac34+\frac \beta4-\frac1{4q}}\le \frac12,
$$
we get that there exists $C_1>0$ such that for any $S\in[0,\T-\ell_0]$
\begin{equation}\label{desiredbound}
[\psi]_{\Ctimespace{1+\frac\beta4-\frac1{4q}}{m,\infty}{[S,S+\ell_0]}}\le C_1\|f\|_{\bes^{\beta}_q}.
\end{equation}

It remains to show that the above estimate implies \eqref{psiodin}. For arbitrary $(s,t)\in\Delta_{0,\T}$, we choose $N\ge2$ such that $\T/N\le\ell_0<\T/(N-1)$ and define successively  $s_0=s$, $s_k=s_{k-1}+(t-s)/N$ for each $k=1,\dots,N$.
From the following telescopic identities
\begin{align*}
\psi_t-P_{t-s}\psi_s
=\sum_{k=1}^N (P_{t-s_k}\psi_{s_k}-P_{t-s_{k-1}}\psi_{s_{k-1}})
=\sum_{k=1}^N P_{t-s_k}(\psi_{s_k}-P_{s_k-s_{k-1}}\psi_{s_{k-1}}),
\end{align*}
we apply triangle inequality and \eqref{Ptexpbound} to obtain that
\begin{align}\label{stepdesiredbound}
\|\|\psi_t(x)-P_{t-s}\psi_s(x)\|_{L_m|\F_s}\|_{L_\infty} &\le 
\sum_{k=1}^N \|\|P_{t-s_k}(\psi_{s_k}-P_{s_k-s_{k-1}}\psi_{s_{k-1}})(x)\|_{L_m|\F_s}\|_{L_\infty}
\nn\\
&\le 
\sum_{k=1}^N \sup_{y\in\0}\|\|\psi_{s_k}(y)-P_{s_k-s_{k-1}}\psi_{s_{k-1}}(y)\|_{L_m|\F_s}\|_{L_\infty}.
\end{align}
Since $\F_{s}\subset\F_{s_{k-1}}$, one has 
\begin{equation*}
\|\psi_{s_k}(y)-P_{s_k-s_{k-1}}\psi_{s_{k-1}}(y)\|_{L_m|\F_s}=
\|\|\psi_{s_k}(y)-P_{s_k-s_{k-1}}\psi_{s_{k-1}}(y)\|_{L_m|\F_{s_{k-1}}}\|_{L_m|\F_s}.
\end{equation*}
Note that $s_k-s_{k-1}\le\ell_0$,  we can apply \eqref{desiredbound} to obtain  that for any $y\in\0$
\begin{align*}
\|\psi_{s_k}(y)-P_{s_k-s_{k-1}}\psi_{s_{k-1}}(y)\|_{L_m|\F_s}
&\le C_1 \|f\|_{\bes^{\beta}_q}(s_k-s_{k-1})^{1+\frac\beta4-\frac1{4q}}.
\end{align*}
Hence we can continue \eqref{stepdesiredbound} in the following way
\begin{align*}
\|\|\psi_t(x)-P_{t-s}\psi_s(x)\|_{L_m|\F_s}\|_{L_\infty} &\le 
C_1 \|f\|_{\bes^{\beta}_q} \sum_{k=1}^N (s_k-s_{k-1})^{1+\frac\beta4-\frac1{4q}}\\
&\le C_1\|f\|_{\bes^{\beta}_q}N^{1-\tau}(t-s)^{1+\frac\beta4-\frac1{4q}}.
\end{align*}
Since $N<1+\T/\ell_0$, this implies \eqref{psiodin}.
\end{proof}

To prove \Cref{lem.uun,lem.limsol} we will need the following statement. For a bounded continuous function $h:\R\to\R$  and a measurable function $\sigma\colon(0,\T]\times\0\times \Omega\to\R$ define
\begin{equation}\label{def.Kh}
K^{h;\sigma}(t,x):=\int_0^t\int_\0 p_{t-r}(x,y) h(\sigma(r,y))dydr,\quad t\in[0,\T],\,\,x\in D.
\end{equation}

\begin{Lemma}\label{Kreg}
Let $\beta\le0$, $q\in[2,\infty]$. Assume that $\beta-1/q>-3/2$.
Let $h,f$ be bounded continuous functions in $\bes^\gamma_q$, $\eta\in\Bsp(D)$. Let $u=u^{\eta;f}$ be the solution to \Gref{\eta;f}. Let $\delta\in(0,1)$. Then there exists a random variable $H$ such that for a.s $\omega$, for any $x_1,x_2\in\0$, $s,t\in[0,\T]$
\begin{equation}\label{Kbound}
|K^{h;u}(t,x_1)-K^{h;u}(s,x_2)|\le  H(\omega)(1+|x_1|+|x_2|) (|x_1-x_2|^{\delta}+|t-s|^{\frac{\delta}2}),
\end{equation}
and $\E H\le C\|h\|_{\Bes^\beta_q}(1+\|f\|_{\Bes^\beta_q})$, where the  constant $C=C(\beta,q,\delta,\nu,\T)>0$ is independent of $\eta$, $h$, $f$.
\end{Lemma}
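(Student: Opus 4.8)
The plan is to obtain moment estimates for the increments of the field $K^{h;u}$ in the space and time variables from \cref{L:main}, and then to pass from moments to a pathwise modulus of continuity by a Kolmogorov-type argument, patching over a dyadic cover of the (possibly unbounded) domain $\0$ with a summable weight. Throughout write $\psi:=u^{\eta;f}-V$, $\gamma:=\beta-\frac1q$ and $\tau_0:=1+\frac\gamma4=1+\frac\beta4-\frac1{4q}$. Since $h$ is bounded and $h,u=u^{\eta;f}$ are continuous, $K^{h;u}$ is continuous on $[0,\T]\times\0$ and $K^{h;u}(0,\cdot)\equiv0$. We may assume $\gamma<0$ (otherwise replace $\beta$ by a slightly smaller exponent, using $\bes^0_q\hookrightarrow\bes^{-\eps}_q$; this changes the relevant norms only by a constant and keeps $\beta-\frac1q>-\frac32$), so that $\gamma\in(-\frac32,0)$ and $\tau_0\in(\frac58,1)$. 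By \cref{lem.apriori} and \eqref{mnism} we then have $[\psi]_{\Ctimespace{\tau_0}{m}{[0,\T]}}\le C_m\|f\|_{\bes^\beta_q}$ for every integer $m\ge2$. Taking $p=\infty$, $\tau=\tau_0$ in \cref{L:main} (and using $\bes^\beta_q\hookrightarrow\bes^\gamma_\infty$ from \eqref{em.Besov}), the requirement $\gamma\in(-2,0)$ holds and \eqref{gammaptaucond} becomes $\gamma+4\tau_0=2\gamma+4>1$, which is true since $\gamma>-\frac32$.

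Fix an auxiliary exponent $\delta'\in(\delta,1)$. For $s\le t$ and $x_1,x_2\in\0$ I would split
\begin{align*}
K^{h;u}(t,x_1)-K^{h;u}(s,x_2)&=\int_0^t\!\!\int_\0\bigl(p_{t-r}(x_1,y)-p_{t-r}(x_2,y)\bigr)h(u_r(y))\,dy\,dr\\
&\quad+\int_s^t\!\!\int_\0 p_{t-r}(x_2,y)\,h(u_r(y))\,dy\,dr\\
&\quad+\int_0^s\!\!\int_\0\bigl(p_{t-r}(x_2,y)-p_{s-r}(x_2,y)\bigr)h(u_r(y))\,dy\,dr
\end{align*}
and bound the three terms by \cref{L:main}(ii) (with $S=0$, $T=t$, $\delta=\delta'$), \cref{L:main}(i) (with $S=s$, $T=t$, $n=m$, so that $\|\|\cdot\|_{L_m|\F_s}\|_{L_m}=\|\cdot\|_{L_m}$), and \cref{L:main}(iii) (with $S=0$, $T=s$, $\bar T=t-s$, $\delta=\delta'$), respectively. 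In each estimate I replace $\|h\|_{\bes^\gamma_\infty}$ by $C\|h\|_{\bes^\beta_q}$ and the $[\psi]$-seminorm by $C\|f\|_{\bes^\beta_q}$, and note that every exponent of $t$ or $s$ occurring there ($1-\frac{\delta'}2+\frac\gamma4$, $\frac74-\frac{\delta'}2+\frac\gamma2$, $1+\frac\gamma4$, $\frac74+\frac\gamma2$) is $\ge0$ because $\delta'<1$ and $\gamma>-\frac32$, while every exponent of $t-s$ is $\ge\frac{\delta'}2$ (for instance $1+\frac\gamma4>\frac58>\frac{\delta'}2$). Hence powers of $t,s\le\T$ are absorbed into $\T$-dependent constants and the surviving power of $t-s$ is bounded by $(t-s)^{\delta'/2}$, giving for every integer $m\ge2$
\begin{equation*}
\bigl\|K^{h;u}(t,x_1)-K^{h;u}(s,x_2)\bigr\|_{L_m}\le C_m\,\|h\|_{\bes^\beta_q}\bigl(1+\|f\|_{\bes^\beta_q}\bigr)\bigl(|x_1-x_2|^{\delta'}+|t-s|^{\delta'/2}\bigr),
\end{equation*}
where $C_m$ is independent of $h$, $f$, $\eta$ \emph{and of the points} $(t,x_1),(s,x_2)$ --- this last uniformity being inherited from the uniformity in $x$ in \cref{L:main}, which in turn rests on the uniform local nondeterminism \eqref{est.V.lnd}.

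To conclude, pick an integer $p_0\ge2$ with $\delta'-3/p_0>\delta$ and apply a Kolmogorov-type continuity estimate with respect to the parabolic metric $\rho((t,x),(s,y)):=|t-s|^{1/2}+|x-y|$ --- under which the bracket above is $\asymp\rho^{\delta'}$ and a $\rho$-ball of radius $r$ has Lebesgue volume $\asymp r^3$ --- on each box $R_n:=[0,\T]\times([n,n+1]\cap\0)$, $n\in\Z$. Because the $R_n$ are translates of one another, this produces random variables $G_n$ with $|K^{h;u}(t,x_1)-K^{h;u}(s,x_2)|\le 2G_n(|x_1-x_2|^\delta+|t-s|^{\delta/2})$ whenever $(t,x_1),(s,x_2)\in R_n$, and $\|G_n\|_{L_{p_0}}\le C\|h\|_{\bes^\beta_q}(1+\|f\|_{\bes^\beta_q})$ with $C$ independent of $n$. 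Setting $w_n:=(1+|n|)^{2/p_0}$ and $H:=C_\T\sup_{n\in\Z}G_n/w_n$ for a suitable $C_\T$, a short case analysis --- if $|x_1-x_2|\le1$, join $x_1,x_2$ through the common endpoint of (at most two) adjacent boxes; if $|x_1-x_2|>1$, write $K^{h;u}(t,x_i)=K^{h;u}(t,x_i)-K^{h;u}(0,x_i)$ inside one box and use that $|x_1-x_2|^\delta+|t-s|^{\delta/2}\ge1$ --- yields the asserted inequality, with the factor $1+|x_1|+|x_2|$ arising from $w_{n_i}\le2(1+|x_i|)$ (valid since $2/p_0\le1$ and $x_i\in[n_i,n_i+1]$). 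Finally, by Jensen, $\E H\le C_\T\bigl(\sum_n\|G_n\|_{L_{p_0}}^{p_0}w_n^{-p_0}\bigr)^{1/p_0}\le C_\T\bigl(\sum_n(1+|n|)^{-2}\bigr)^{1/p_0}C\|h\|_{\bes^\beta_q}(1+\|f\|_{\bes^\beta_q})$, which is the required bound; for $\0=\I$ there is a single box and the patching is trivial. The main obstacle is simply the careful bookkeeping of the many exponents produced by \cref{L:main}; a secondary point is organizing the dyadic patching so that the spatial growth is linear with a constant of finite expectation, which is exactly where the $x$-uniformity of \cref{L:main} is indispensable.
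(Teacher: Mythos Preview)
Your proof is correct and follows essentially the same route as the paper: the same three-term decomposition of the increment, the same application of \cref{L:main}(i)--(iii) with the parameters $\gamma=\beta-\tfrac1q$, $p=\infty$, $\tau=1+\tfrac\gamma4$ and the a~priori bound from \cref{lem.apriori}, and then Kolmogorov continuity. The paper's proof simply cites ``an easy extension of \cite[Theorem 1.4.1]{Kunita}'' for the Kolmogorov step (including the linear growth factor on the unbounded domain), whereas you spell out the patching over unit boxes explicitly; otherwise the arguments coincide.
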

\begin{proof}
In the proof, for brevity, we write $u=u^{\eta;f}$, $K=K^{h;u}$.
Fix $\delta\in(0,1)$, $m\in[2,\infty)$. Let $\delta'\in(\delta,1)$.
Note that for $x_1,x_2\in\0$ and $(s,t)\in\Delta_{0,\T}$ we have 
\begin{align}\label{step1exist}
\|K(t,x_1)-K(s,x_2)\|_{\Lm}&\le\|K(t,x_1)-K(t,x_2)\|_{\Lm}+
\|K(t,x_2)-P_{t-s}K(s,x_2)\|_{\Lm}\nn\\
&\phantom{=:}+
\|(P_{t-s}-Id)K(s,x_2)\|_{\Lm}\nn\\
&=:I_1+I_2+I_3.
\end{align}
To bound $I_1$, $I_2$, and $I_3$ we will use \Cref{L:main} with the following parameters:  
\begin{equation}\label{param}
\psi= u-V,\, \gamma=\beta-\frac1q,\,n=m,\, p=\infty,\, \tau=1+\frac\beta4-\frac1{4q},\, T= t.
\end{equation}
First, the estimate in \cref{lem.apriori}  and 
\eqref{mnism} give
\begin{equation}\label{verygoodbound}
	[\psi]_{\Ctimespace{1+\frac\beta4-\frac1{4q}}{m}{[0,\T]}}\le[\psi]_{\Ctimespace{1+\frac\beta4-\frac1{4q}}{m,\infty}{[0,\T]}}\le C \|f\|_{\Bes^\beta_q}.
\end{equation}
Second, \Cref{A:gammap32} implies that \eqref{gammaptaucond} holds. Therefore, all the conditions of \Cref{L:main} are satisfied. 
By part (ii) of the lemma with $S= 0$, we get
\begin{equation}\label{step2I1}
I_1\le C(\T)\|h\|_{\Bes^\beta_q}(1+\|f\|_{\Bes^\beta_q})|x_1-x_2|^{\delta'},
\end{equation}
where we also used the bound \eqref{verygoodbound}.

To bound the second term in \eqref{step1exist}, $I_2$, we use \Cref{L:main}(i) with the parameters described in \eqref{param} as well as $n=m$, $x= x_2$, $S=s$. We get
\begin{equation}\label{step2I2}
I_2=\Bigl\|\int_s^t\int_\0 p_{t-r}(x_2,y) h(u(r,y))dydr\Bigr\|_{\Lm}
\le C(\T)\|h\|_{\Bes^\beta_q}(1+\|f\|_{\Bes^\beta_q})(t-s)^{1+\frac\beta4-\frac1{4q}},
\end{equation}
where we used again bound \eqref{verygoodbound}.

Finally, let us bound $I_3$. We note that
\begin{align*}
	I_{3}=\Bigl\|\int_0^s\int_\0[p_{t-r}(x,y)-p_{s-r}(x,y)]h(u_r(y))dydr\Bigr\|_{L_m}.
\end{align*}
In view of \eqref{verygoodbound}, we can apply \cref{L:main}(iii) to obtain
\begin{align*}
	I_3\le C(\T)\|h\|_{\Bes^\beta_q}(1+\|f\|_{\Bes^\beta_q})(t-s)^{\frac{\delta'}2}.
\end{align*}

Now combining this with \eqref{step2I1}, \eqref{step2I2} and 
substituting into \eqref{step1exist}, we arrive at
\begin{equation*}
\|K(t,x_1)-K(s,x_2)\|_{\Lm}\le
 C(\T)R (|x_1-x_2|^{\delta'}+|t-s|^{\frac{\delta'}2}),
\end{equation*}
where we used the fact that $1+\frac\beta4-\frac1{4q}>\frac12>\frac{\delta'}2$ and denoted 
$R:=\|h\|_{\Bes^\beta_q}(1+\|f\|_{\Bes^\beta_q})$.  

Recall that  $\delta\in(0,\delta')$ and $m$ is arbitrarily large. Then, by the Kolmogorov continuity theorem (which is an easy extension of \cite[Theorem 1.4.1]{Kunita}), there exists a random variable $H(\omega)$ such that for any $\omega\in\Omega$,
$x_1,x_2\in\0$, $s,t\in[0,\T]$
we have
\begin{equation*}
|K(t,x_1)-K(s,x_2)|\le  H(\omega)(1+|x_1|+|x_2|) (|x_1-x_2|^{\delta}+|t-s|^{\frac{\delta}2}),
\end{equation*}
and $\E H(\omega)\le C(\T) R$.
 This completes the proof of the theorem. 
\end{proof}

\begin{proof}[Proof of  \Cref{lem.uun}] 
Fix the sequences $(b_n')_{n\in\Z_+}$, $(b_n'')_{n\in\Z_+}$, $(u_{0,n}')_{n\in\Z_+}$, $(u_{0,n}'')_{n\in\Z_+}$.
Recall definition \eqref{def.Kh}. Put
\begin{equation*}
K_n':=K^{b_n';u_n'},\,\,K_n':=K^{b_n'';u_n''}.
\end{equation*}
Note that
\begin{equation}\label{uk}
\wt u_n'(t)= K_n'(t)+V(t),\quad
\wt u_n''(t)= K_n''(t)+V(t).
\end{equation}
 For $M>0$, $\delta\in(0,1)$, $\nu>0$ put
\begin{align*}
	A_M:=&\big\{f\in \C([0,\T]\times\0): f(0,\cdot)=0 \textrm{ and for every } x_1,x_2\in\0, s,t\in[0,\T]
	\\&\quad\quad|f_t(x_1)-f_s(x_2)|\le M (1+|x_1|+|x_2|)(|x_1-x_2|^{\delta}+|t-s|^{\frac{\delta}2})\big\}.
\end{align*}
By the Arzela--Ascoli theorem, for each $M>0$ the set $A_M$ is a compact set in the space $\cuc([0,\T]\times\0)$.
We apply \Cref{Kreg} with $h=f=b_n'$, $\eta=u'_{0,n}$. For any $n\in\Z_+$, we derive from \eqref{Kbound} and  
the Chebyshev inequality 
$$
\P(K'_n \notin A_M)\le   C\|b_n'\|_{\Bes^\beta_q}(1+\|b_n'\|_{\Bes^\beta_q}) M^{-1}\le 
CRM^{-1},
$$
where we denoted $R:=\sup_{n\in\Z_+}\|b_n'\|_{\Bes^\beta_q}(1+\|b_n'\|_{\Bes^\beta_q})<\infty$. Thus, the sequence $(K_n')_{n\ge0}$ is tight in $\cuc([0,\T]\times\0)$. Similarly, $(K_n'')_{n\ge0}$ is tight in  $\cuc([0,\T]\times\0)$ and obviously the constant sequence $(V)_{n\ge0}$ is tight in this space as well. Hence 
the sequence $(K_n',K_n'',V)_{n\ge0}$ is tight in $[\cuc([0,\T]\times\0)]^3$.
Since this space is separable, by the Prokhorov theorem there exists
a subsequence 
$(n_k)_{k\in\Z_+}$ such that $(K_{n_k}',
 K_{n_k}'',V)_{k\in\Z_+}$ converges weakly in the space $[\C([0,\T]\times\0)]^3$.
Recalling \eqref{uk}, we see that $(\wt u_{n_k}',
 \wt u_{n_k}'',V)_{k\in\Z_+}$ converges weakly. This completes the proof of the theorem. 
\end{proof}

\begin{proof}[Proof of \cref{lem.limsol}] 
\textit{Step 1.} We show that $u$ is a solution to \eqref{SPDE}. We define for each $(t,x)\in[0,\T]\times\0$
\begin{equation}\label{def.K}
K(t,x):=\wt u(t,x)-V(t,x). 
\end{equation}
Let $(\bar b^n)$ be an arbitrary sequence of functions in $\C_b^\infty$ converging to $b$ in $\Bes^{\beta-}_q$.  Fix arbitrary $R>0$.
In view of \Cref{Def:sol}, we need to show that
\begin{equation}\label{ourgoal}
\lim_{n\to\infty}\sup_{t\in[0,\T]}\sup_{\substack{x\in\0\\|x|\le R}}|  K^{\bar b^n; u}_t(x)-K_t(x)|=0\tprob.
\end{equation}

By triangle inequality, we decompose for any $k,n\in\Z_+$, $x\in\0$, $t\in[0,\T]$
\begin{align}\label{baza}
|K^{\bar b^n; u}_t(x)-K_t(x)|&\le|K^{\bar b^n; u}_t(x)-K^{\bar b^n; u^k}_t(x)|+
|K^{\bar b^n; u^k}_t(x)-K^{ b^k; u^k}_t(x)|\nn\\
&\phantom{
le}+
 |K^{ b^k; u^k}_t(x)-K_t(x)|\nn\\
&=:I_1(n,k,t,x)+I_2(n,k,t,x)+I_3(k,t,x).
\end{align}

Let us estimate successively all the terms in the right-hand side of \eqref{baza}.
Since for any fixed $n$ the function $\bar b^n$ is a smooth bounded function, we have for any $|x|\le R$, $x\in\0$, $M>R$ 
\begin{align}\label{I1bex}
I_1(n,k,t,x)&=\Bigl|\int_0^t\int_\0 p_{t-r}(x,y) \bigl(\bar b^n( u_r(y))- \bar b^n(u_r^k(y))\bigr)\,dydr\Bigr|\nn\\
&\le \|\bar b^n\|_{\C^1}\int_0^t\int_{\0\cap \{|y|\le M\}} p_{t-r}(x,y) |u_r(y)- u_r^k(y)|\,dydr\nn\\
&\phantom{\le}+2 \|\bar b^n\|_{\C^0}\int_0^t\int_{\0\cap \{|y|\ge M\}} p_{t-r}(x,y) \,dydr\nn\\
&\le \|\bar b^n\|_{\C^1}\T\sup_{t\in[0,\T]}\sup_{\substack{y\in\0\\|y|\le M}}|u_r(y)- u_r^k(y)|+C\|\bar b^n\|_{\C^0}\T e^{-\frac{(M-R)^2}{\T}}\nn.
\end{align}
We use triangle inequality and the estimate $|P_t\phi(x)|\le \sup_{y\in\0}|\phi(y)|$ valid for any bounded measurable function $\phi$ to obtain that
\begin{align*}
	\sup_{t\in[0,\T]}\sup_{\substack{y\in\0\\|y|\le M}}|u_r(y)- u_r^k(y)|\le \sup_{t\in[0,\T]}\sup_{\substack{y\in\0\\|y|\le M}}|\wt u_r(y)- \wt u_r^k(y)|+\sup_{\substack{y\in\0}}|u_0(y)- u_0^k(y)|.
\end{align*}
By assumption, the above implies that $u^k$ converges to $u$ in $\cuc([0,\T]\times\0)$ in probability. Hence, in the previous estimates for $I_1$, we send $k\to\infty$ then $M\to\infty$ to see that
\begin{align}
	\lim_{k\to\infty}\sup_{t\in[0,\T]}\sup_{\substack{x\in\0\\|x|\le R}}I_1(n,k,t,x)=0\tprob.
\end{align}

To bound $I_2$ we fix $\beta'<\beta$ such that $\beta'-1/q>-3/2$. This is possible thanks to \Cref{A:gammap32}. We apply \Cref{Kreg} with $h=\bar b^n-b^k$, $f=b^k$, $\eta=u_0^k$, $x_1=x_2=x$, $s=0$, $\beta'$ in place of $\beta$. We get that there exists a random variable $H_{n,k}$ such that
\begin{equation*}
\sup_{t\in[0,\T]}\sup_{\substack{x\in\0\\|x|\le R}} I_2(n,k,t,x)=
\sup_{t\in[0,\T]}\sup_{\substack{x\in\0\\|x|\le R}} |K^{\bar b^n-b^k; u^k}_t(x)|
\le H_{n,k}(\omega)(1+R)(1+\T)
\end{equation*}
and 
\begin{align*}
\E H_{n,k}(\omega)&\le C\|\bar b^n-b^k\|_{\Bes^{\beta'}_q}(1+\|b^k\|_{\Bes^{\beta'}_q})\nn\\
&\le
C(\|\bar b^n-b\|_{\Bes^{\beta'}_q}+\| b^k-b\|_{\Bes^{\beta'}_q})(1+\sup_{r\in\Z_+}\|b^r\|_{\Bes^{\beta'}_q}),
\end{align*}
where again the constant $C$ does not depend on $n$, $k$. Thus for any $\eps>0$ one has
\begin{multline*}
\P(\sup_{t\in[0,\T]}\sup_{\substack{x\in\0\\|x|\le R}}I_2(n,k,t,x)>\eps)\\\le C \eps^{-1}
(\|\bar b^n-b\|_{\Bes^{\beta'}_q}+\| b^k-b\|_{\Bes^{\beta'}_q})(1+\sup_{r\in\Z_+}\|b^r\|_{\Bes^{\beta'}_q})(1+R)(1+\T).
\end{multline*}
This implies that
\begin{align*}
	\lim_{n\to\infty}\lim_{k\to\infty}\sup_{t\in[0,\T]}\sup_{\substack{x\in\0\\|x|\le R}}I_2(n,k,t,x)=0\tprob.
\end{align*}

To treat $I_3$, we first derive from \eqref{def.Kh} and the definition of $u^k$ that for any $k\in\Z_+$, $\wt u^k=K^{ b^k; u^k}+V^k$.
Hence, together with \eqref{def.K}, we have
\begin{align*}
\sup_{t\in[0,\T]}\sup_{\substack{x\in\0\\|x|\le R}}I_3(k,t,x)&=
\sup_{t\in[0,\T]}\sup_{\substack{x\in\0\\|x|\le R}}|K^{ b^k; u^k}_t(x)-K_t(x)|\\
&\le \sup_{t\in[0,\T]}\sup_{\substack{x\in\0\\|x|\le R}}(|\wt u_t^k(x)-\wt u_t(x)|+ |V_t^k(x)-V_t(x)|).
\end{align*}
This implies that
\begin{align}\label{I34lim}
	\lim_{k\to\infty}\sup_{t\in[0,\T]}\sup_{\substack{x\in\0\\|x|\le R}}I_3(k,t,x)=0\tprob.
\end{align}

Finally, combining \eqref{I1bex}-\eqref{I34lim} and \eqref{baza}, we obtain \eqref{ourgoal}.

\textit{Step 2.} It remains to show \eqref{uvreg}. It follows from \cref{lem.apriori}, that there exists a constant $C$ such that for every $(s,t)\in \Delta_{0,\T}$, $x\in\0$, $n\in\Z_+$
\begin{align*}
	\|u^n_t(x)-V^n_t(x)-P_{t-s}(u^n_s(x)-V^n_s(x)) \|_{L_m|\cff_s}\le C(t-s)^{1-\frac \beta4-\frac1{4q}}.
\end{align*}
Note that we used here that $\sup_{n\in\Z_+}\|b^n\|_{\bes^\beta_q}<\infty$
thanks to the definition of convergence in $\Bes^{\beta-}_q$. It follows from the mild formulation of $u^n$ (recall that $u^n$ solves 
\Gref{u^n_0;b^n} with $V^n$ in place of $V$),
\begin{align*}
	u^n_t(x)-V^n_t(x)-P_{t-s}(u^n_s(x)-V^n_s(x))=K^{b^n;u^n}_t(x)-P_{t-s}K^{b^n;u^n}_s(x).
\end{align*}
Hence, we have
\begin{align}\label{tmp.1132}
	\|K^{b^n;u^n}_t(x)-P_{t-s}K^{b^n;u^n}_s(x)\|_{L_m|\cff_s}\le C(t-s)^{1-\frac \beta4-\frac1{4q}}.
\end{align}
Putting $s=0$, the previous estimate implies that  
\begin{align}\label{tmp.1137}
	\|K^{b^n;u^n}_t(x)\|_{L_m}\le C t^{1-\frac \beta4-\frac1{4q}}.
\end{align}
On the other hand, we see from \eqref{I34lim} that $\lim_{n\to\infty} K^{b^n;u^n}=K$ in $\cuc([0,\T]\times\0)$ in probability. Hence, by passing to the limit as $n\to\infty$ in \eqref{tmp.1137} and applying Fatou's lemma, we see that $\sup_{(t,x)\in[0,\T]\times\0}\|K_t(x)\|_{L_m}<\infty$. By \cref{L:0}, we see that $P_{t-s}K_s(x)$ is well-defined as an $L_m$-integrable random variable. 
Furthermore, in view of \eqref{tmp.1137} and the convergence of $K^{b^n;u^n}$ to $K$, we obtain from \cref{lem.convg} that for each fixed $0\le s\le t\le \T$, $x\in\0$
\begin{equation*}
 \lim_{n\to\infty}P_{t-s}K^{b^n;u^n}_s(x)=P_{t-s}K_s(x)\tprob.
\end{equation*}

Therefore, we can pass to the limit as $n\to\infty$ in \eqref{tmp.1132} and apply Fatou's lemma to obtain that
\begin{equation*}
\|u_t(x)-V_t(x)-P_{t-s}(u_s(x)-V_s(x))\|_{L_m|\F_s}= \|K_t(x)-P_{t-s}K_s(x)\|_{L_m|\cff_s}\le C(t-s)^{1-\frac \beta4-\frac1{4q}},
\end{equation*}
where we also used the definition of $K$ in \eqref{def.K}. This implies \eqref{uvreg}.
\end{proof}

\subsection{Proof of \texorpdfstring{\Cref{L:condun}}{Proposition \ref{L:condun}}}\label{S:uniq}

In this subsection we will use the following additional notation.  Let $(S,T)\in\Delta_{0,\T}$. For a measurable function $Z\colon\Delta_{S,T}\times\0\times\Omega\to\R$, $\tau\in[0,1]$, $m\ge1$ we  put
\begin{equation}\label{timespacetwovar}
\|Z\|_{\Ctimespacetaum{[S,T]}}:=\sup_{(s,t)\in\Delta_{S,T}}\sup_{x\in\0} \frac{\|Z_{s,t}(x)\|_{\Lm}}{|t-s|^\tau}.
\end{equation}

Till the end of the subsection fix the parameters $\beta$, $q$ satisfying the conditions of \Cref{L:condun} and $b\in\Bes^\beta_q$. We fix also $(u_t)_{t\in[0,\T]}, (v_t)_{t\in[0,\T]}\in\V(3/4)$, which are as in the statement of \cref{L:condun}.

We define
$$
\psi_t:=u_t-V_t,\,\,\phi_t:=v_t-V_t,\,\,z_t:=u_t-v_t=\psi_t-\phi_t,\quad t\in[0,\T].
$$
Our goal is to prove that $z(t)=0$ for all $t\in[0,\T]$. Fix $m\in[2,\infty)$ such that $m\le q$.
We see that \eqref{condassump} and the fact that $u,v\in\V(3/4)$  implies that
\begin{equation*}
	[\psi]_{\Ctimespace{3/4}{m,\infty}{[0,\T]}}<\infty, \,\,
	[\psi]_{\Ctimespace{3/4}{m}{[0,\T]}}<\infty,\,\,[\phi]_{\Ctimespace{3/4}{m}{[0,\T]}}<\infty.
\end{equation*}
This in turn yields that  for any $t\in[0,\T]$ one has 
\begin{equation}\label{eq:BLM}
\phi_t,\, \psi_t\in\Cspacem,
\end{equation}
where the space $\Cspacem$ is introduced in \eqref{CLm}.

 Recall that the process $v$ satisfies condition \ref{Lcond} of \cref{R:Leonid}. We fix a sequence of smooth functions $(b_n)$ which appeared there.  For $n\in\Z_+$ introduce the process
\begin{equation}\label{def.Hnpsi}
H^{n,\psi}_{s,t}(x):=\int_s^t\int_\0 p_{t-r}(x,y)b_n(V_r(y)+P_{t-r}\psi_s(y))\,dy\,dr,\quad (s,t)\in \Delta_{0,\T},\,x\in\0.
\end{equation}
Define $H^{n,\phi}_{s,t}(x)$ in a similar way with $\phi_s$ in place of $\psi_s$ in the right--hand side of \eqref{def.Hnpsi}. Note that the expressions $P_{t-r}\psi_s$ and $P_{t-r}\phi_s$ are well-defined thanks to \eqref{eq:BLM} and \Cref{L:0}.

Our first step in obtaining uniqueness is to pass to the limit as $n\to\infty$ in \eqref{def.Hnpsi}. 

\begin{Lemma}\label{L:limK}
For each $x\in\0$, $(s,t)\in\Delta_{0,\T}$ there exist random variables $H^\psi_{s,t}(x)$, $H^\phi_{s,t}(x)$, such that
\begin{equation*}
H^{n,\psi}_{s,t}(x)\to H^\psi_{s,t}(x),\,\,\,
H^{n,\phi}_{s,t}(x)\to H^\phi_{s,t}(x),\qquad\text{in probability as $n\to\infty$}.
\end{equation*}
Furthermore, there exists a constant $C>0$ such that for any $(s,t)\in\Delta_{0,\T}$  we have
\begin{align}\label{normH}
\sup_{x\in\0}\|H^\psi_{s,t}(x)-H^\phi_{s,t}(x)\|_{\Lm}\le C(1+\T)\,\|z_s\|_{\Cspacem}(t-s)^{\frac12},\\
\sup_{x\in\0}\|H^\psi_{s,t}(x)-H^\phi_{s,t}(x)\|_{\Lm}\le C(1+\T)\,(t-s)^{\frac34}.\label{normH34}
\end{align}
\end{Lemma}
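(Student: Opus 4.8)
The plan is to establish the three assertions --- existence of the limits $H^\psi_{s,t}(x)$, $H^\phi_{s,t}(x)$, then \eqref{normH34}, then \eqref{normH} --- in turn, in each case reducing to the regularization estimate \cref{lem.138} (and, for \eqref{normH}, to a weighted variant of it). Throughout, $m\in[2,\infty)$ with $m\le q$ is the exponent fixed above, and I use freely that $\psi_t,\phi_t\in\Cspacem$ for every $t$ (this is \eqref{eq:BLM}), hence also $z_t\in\Cspacem$, together with the contraction bound $\sup_{r\in[s,t]}\|P_{t-r}g\|_{\Cspacem}\le\|g\|_{\Cspacem}$. For the convergence I would show that, for fixed $(s,t)\in\Delta_{0,\T}$ and $x\in\0$, the sequence $\bigl(H^{n,\psi}_{s,t}(x)\bigr)_{n\in\Z_+}$ is Cauchy in $L_m(\Omega)$; the limit then defines $H^\psi_{s,t}(x)$, convergence in probability is automatic, and $H^\phi$ is identical. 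Indeed,
\[H^{n,\psi}_{s,t}(x)-H^{n',\psi}_{s,t}(x)=\int_s^t\!\int_\0 p_{t-r}(x,y)\,(b_n-b_{n'})\bigl(V_r(y)+P_{t-r}\psi_s(y)\bigr)\,dy\,dr ,\]
and since $r\mapsto P_{t-r}\psi_s$ is $\F_s$-measurable and bounded in $\Cspacem$ and $b_n-b_{n'}\in\C_b^\infty$, \cref{lem.138} applies. Choosing $\beta'<\beta$ with $\beta'-\tfrac1q>-\tfrac32$, using the embedding $\bes^{\beta'}_q\hookrightarrow\bes^{\beta'-1/q}_\infty$ from \eqref{em.Besov}, and recalling that convergence of $(b_n)$ in $\bes^{\beta-}_q$ forces $\|b_n-b_{n'}\|_{\bes^{\beta'}_q}\to0$, I obtain $\|H^{n,\psi}_{s,t}(x)-H^{n',\psi}_{s,t}(x)\|_{L_m}\le C\|b_n-b_{n'}\|_{\bes^{\beta'-1/q}_\infty}(t-s)^{1+(\beta'-1/q)/4}\to0$, so the sequence is Cauchy.

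For the crude bound \eqref{normH34} I would apply \cref{lem.138} directly to $H^{n,\psi}_{s,t}(x)$ and $H^{n,\phi}_{s,t}(x)$ with $f=b_n$, $\gamma=\beta-\tfrac1q$, $p=\infty$; here $\gamma\in[-1,0)\subset(-2,0)$ by the hypotheses of \cref{L:condun} together with \cref{A:gammap32}. This gives
\[\bigl\|H^{n,\psi}_{s,t}(x)\bigr\|_{L_m}\le C\|b_n\|_{\bes^\beta_q}(t-s)^{1+(\beta-1/q)/4}\le C(1+\T)(t-s)^{3/4},\]
where at the end I used $1+(\beta-1/q)/4\ge 3/4$, $\sup_n\|b_n\|_{\bes^\beta_q}<\infty$, and $t-s\le\T$ to discard the surplus power of $t-s$; likewise for $H^{n,\phi}_{s,t}(x)$. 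Passing to the limit $n\to\infty$ (the $L_m$-convergence just established) and using the triangle inequality gives \eqref{normH34}.

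The bound \eqref{normH} is the crux, since it must carry the linear dependence on $\|z_s\|_{\Cspacem}$. I would first write, by the fundamental theorem of calculus in the interpolation parameter $\theta$,
\[b_n\bigl(V_r(y)+P_{t-r}\psi_s(y)\bigr)-b_n\bigl(V_r(y)+P_{t-r}\phi_s(y)\bigr)=(P_{t-r}z_s)(y)\int_0^1 b_n'\bigl(V_r(y)+P_{t-r}(\phi_s+\theta z_s)(y)\bigr)\,d\theta ,\]
so that
\[H^{n,\psi}_{s,t}(x)-H^{n,\phi}_{s,t}(x)=\int_0^1\!\!\int_s^t\!\!\int_\0 p_{t-r}(x,y)\,(P_{t-r}z_s)(y)\;b_n'\bigl(V_r(y)+P_{t-r}(\phi_s+\theta z_s)(y)\bigr)\,dy\,dr\,d\theta .\]
For each $\theta$ the shift $r\mapsto P_{t-r}(\phi_s+\theta z_s)$ is $\F_s$-measurable and bounded in $\Cspacem$, the weight satisfies $\sup_{r\in[s,t]}\|P_{t-r}z_s\|_{\Cspacem}\le\|z_s\|_{\Cspacem}$, and $b_n'\in\bes^{\beta-1}_q$ with $\|b_n'\|_{\bes^{\beta-1}_q}\le\|b_n\|_{\bes^\beta_q}$ uniformly in $n$. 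I would then invoke a weighted version of \cref{lem.138}, proved by the same stochastic-sewing argument but carrying through the multiplicative $\F_s$-measurable weight $P_{t-r}z_s$ (frozen at the left endpoint of each dyadic subinterval, so that only the $L_m$-norm of $z_s$ --- not its supremum norm --- enters), of the shape
\[\Bigl\|\int_s^t\!\int_\0 p_{t-r}(x,y)\,(P_{t-r}z_s)(y)\,h\bigl(V_r(y)+\kappa_r(y)\bigr)\,dy\,dr\Bigr\|_{L_m}\le C\|h\|_{\bes^\gamma_p}\|z_s\|_{\Cspacem}(t-s)^{1+\gamma/4-1/(4p)},\]
applied with $h=b_n'$, and integrate it over $\theta$ by Minkowski's inequality. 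The delicate point --- and the main obstacle --- is the choice of integrability exponent in the endpoint case $\beta-\tfrac1q=-1$ (the Dirac-delta case): there $b_n'\in\bes^{\beta-1}_q$ with $(\beta-1)-\tfrac1q=-2$, sitting exactly at the boundary of validity of \cref{lem.138}. I would circumvent this by first embedding $\bes^{\beta-1}_q\hookrightarrow\bes^{(\beta-1)-1/q+1/p}_p$ for a large finite $p\ge q$, so that $\gamma:=(\beta-1)-\tfrac1q+\tfrac1p\in(-2,0)$ while the time exponent $1+\tfrac\gamma4-\tfrac1{4p}=1+\tfrac{(\beta-1)-1/q}{4}\ge\tfrac12$ is independent of $p$ --- the $1/p$ lost in regularity is recovered exactly by the $1/(4p)$ gained in the power of $t-s$ (if $\beta-\tfrac1q>-1$ one may simply take $p=\infty$, with a strictly larger power). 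Using $\sup_n\|b_n\|_{\bes^\beta_q}<\infty$ and $t-s\le\T$ to reduce the exponent to $\tfrac12$ and passing to the limit $n\to\infty$, I obtain \eqref{normH}.
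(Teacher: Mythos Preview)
Your argument for the convergence and for \eqref{normH34} is essentially the paper's. For \eqref{normH} you take a genuinely different route: you differentiate $b_n$ via the fundamental theorem of calculus and then call a ``weighted'' sewing estimate, whereas the paper applies \cref{C:bounds}(ii) directly with $f=b_k$, $\gamma=\beta$, $p=q$, $\kappa_1=\phi_s$, $\kappa_2=\psi_s$ and interpolation parameter $\lambda\in[0,1]$. The key input in the paper's version is the Besov shift bound \eqref{Besov2bound}, which plays exactly the role of your mean-value step but at the level of norms; taking $\lambda=1$ gives \eqref{normH} and $\lambda=0$ gives \eqref{normH34} in one stroke. Your route is also correct --- the ``weighted version of \cref{lem.138}'' you sketch is nothing more than \cref{T:singlebound} applied with $n=m$ and the $\F_s$-measurable factor $(P_{\cdot}z_s)(y)$ absorbed into the function $h$ (so that $\|\|h(\cdot,r,y)\|_{\bes^\gamma_p}\|_{L_m}\le\|b_n'\|_{\bes^\gamma_p}\|z_s\|_{\Cspacem}$), and no new lemma is needed. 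Your embedding manoeuvre for the critical case $\beta-1/q=-1$ is also unnecessary: the constraint in \cref{T:singlebound} is $\gamma\in(-2,0)$, and with $p=q$ one has $\gamma=\beta-1\in(-2,0)$ directly (since $\beta>-1$ is assumed in \cref{L:condun}), while the time exponent $1+(\beta-1)/4-1/(4q)=\tfrac34+\tfrac14(\beta-1/q)\ge\tfrac12$ falls out on the nose. So both approaches work; the paper's is shorter because \cref{C:bounds}(ii) is already on the shelf and the $\lambda$-interpolation unifies the two estimates.
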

The lemma is proved using the stochastic sewing lemma. We postpone the proof till \Cref{subsec:proof_of_l:2}.

 Recall the notation \eqref{def.Kh}. Denote  $K^u:=\psi-Pu_0$ and $K^v:=\phi-Pu_0$. It follows from  \cref{Def:sol} and condition~\ref{Lcond} of \cref{R:Leonid}, that $K^{b^n;u}(t,x)\to K^u(t,x)$ and $K^{b^n;v}(t,x)\to K^v(t,x)$ in probability as $n\to\infty$ for any $t\in[0,\T]$, $x\in\0$.
\begin{lemma}\label{lem.limPK}
	For every fixed $(s,t)\in \Delta_{0,\T}$, $x\in\0$ we have
\begin{equation*}
P_{t-s}K^{b_n;u}_s(x)\to P_{t-s}K^{u}_s(x),\,\,\,\,P_{t-s}K^{b_n;v}_s(x)\to P_{t-s}K^{v}_s(x)\,\,\,\,\text{ in probability  as $n\to\infty$}.
\end{equation*}
\end{lemma}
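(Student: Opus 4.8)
The plan is to upgrade the pointwise‐in‐space convergence that is already at our disposal to convergence of the spatially averaged quantities. Recall that, as noted just before the statement, condition \ref{Lcond} of \cref{R:Leonid} together with \Cref{Def:sol} gives, for each fixed $s\in[0,\T]$ and $y\in\0$,
$$
K^{b_n;u}_s(y)\to K^u_s(y)\tand K^{b_n;v}_s(y)\to K^v_s(y)\quad\text{in probability as }n\to\infty.
$$
Since $P_{t-s}g(x)=\int_\0 p_{t-s}(x,y)g(y)\,dy$ is an integral over the space variable, this pointwise convergence does not by itself transfer through $P_{t-s}$; the extra ingredient I would use is a uniform‐in‐$n$ moment bound on $K^{b_n;u}_s$ and $K^{b_n;v}_s$, after which a dominated‐convergence argument closes the proof. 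I will describe the argument for $K^{b_n;u}$; the one for $K^{b_n;v}$ is verbatim the same with $\phi$, $v$ in place of $\psi$, $u$.

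First I would prove that $\sup_{n\in\Z_+}\sup_{y\in\0}\|K^{b_n;u}_s(y)\|_{\Lm}<\infty$. Writing $\psi=u-V$ one has $K^{b_n;u}_s(y)=\int_0^s\int_\0 p_{s-r}(y,y')\,b_n(V_r(y')+\psi_r(y'))\,dy'\,dr$, so this is an instance of \cref{L:main}(i) with $S=0$, $T=s$, $f=b_n$, $\gamma=\beta-\tfrac1q$, $p=\infty$, $n=m$, $\tau=\tfrac34$. The hypotheses are met: $\psi$ is adapted (both $u$ and $V$ are), $u\in\V(3/4)$ gives $[\psi]_{\Ctimespace{3/4}{m}{[0,\T]}}<\infty$, the embedding \eqref{em.Besov} and the definition of convergence in $\Bes^{\beta-}_q$ give $\sup_n\|b_n\|_{\Bes^{\beta-1/q}_\infty}\le C\sup_n\|b_n\|_{\Bes^\beta_q}<\infty$, and the compatibility condition \eqref{gammaptaucond} becomes $\beta-\tfrac1q+3>1$, which holds since $\beta-\tfrac1q\ge-1$. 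By Fatou's lemma the bound passes to the limit, so $\sup_{y\in\0}\|K^u_s(y)\|_{\Lm}<\infty$ as well (this is also immediate from $u\in\V(3/4)$); hence $P_{t-s}K^u_s(x)$ is a well‐defined $L_m$‐random variable by \cref{L:0}.

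Finally I would conclude by dominated convergence. Since $m\ge2>1$ and $\{K^{b_n;u}_s(y):n\in\Z_+,\,y\in\0\}$ is bounded in $\Lm$, it is uniformly integrable, so from $K^{b_n;u}_s(y)\to K^u_s(y)$ in probability we get $\|K^{b_n;u}_s(y)-K^u_s(y)\|_{L_1}\to0$ for every $y\in\0$, while $\sup_{n,y}\|K^{b_n;u}_s(y)-K^u_s(y)\|_{L_1}<\infty$. As $y\mapsto p_{t-s}(x,y)$ is integrable on $\0$, the dominated convergence theorem yields
$$
\|P_{t-s}K^{b_n;u}_s(x)-P_{t-s}K^u_s(x)\|_{L_1}\le\int_\0 p_{t-s}(x,y)\,\|K^{b_n;u}_s(y)-K^u_s(y)\|_{L_1}\,dy\longrightarrow0\quad\text{as }n\to\infty,
$$
whence the claimed convergence in probability; alternatively, the same conclusion can be quoted from \cref{lem.convg}. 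The main (indeed, the only substantive) difficulty is the passage from pointwise‐in‐space to averaged convergence: it forces the uniform moment estimate, and securing that estimate in the borderline regime $\beta-\tfrac1q\ge-1$ is exactly the place where one has to check that \cref{L:main}(i) still applies, i.e. that \eqref{gammaptaucond} holds with $\tau=\tfrac34$.
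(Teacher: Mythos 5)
Your proposal is correct and follows essentially the same route as the paper: a uniform-in-$n$ moment bound on $K^{b_n;u}_s(y)$ from \cref{L:main}(i) (using $u\in\V(3/4)$ and $\sup_n\|b_n\|_{\Bes^\beta_q}<\infty$), hence uniform integrability, hence pointwise-in-space $L_1$ convergence, and finally dominated convergence through the kernel $p_{t-s}(x,\cdot)$. The only difference is that you spell out the parameter choices and the verification of \eqref{gammaptaucond} more explicitly than the paper does, which is harmless.
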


It follows from \Cref{L:limK}, that we can now define
\begin{equation}\label{def.R}
R_{s,t}:=(\psi_t-P_{t-s}\psi_s- H^\psi_{s,t})
- (\phi_t-P_{t-s}\phi_s- H^{\phi}_{s,t}), \quad (s,t)\in \Delta_{0,\T}.
\end{equation}

The next result is crucial for proving that $z\equiv0$ and thus obtaining strong uniqueness. 
\begin{Lemma}\label{L:2}
There exists $\delta=\delta(\beta,q)\in(0,1/2)$ such that for any $\tau\in(1/2,1]$ there exists a constant $C=C(\T,\|b\|_{\Bes^{\beta}_q},[\psi]_{\Ctimespace{3/4}{m,\infty}{[0,\T]}},[\phi]_{\Ctimespace{3/4}{m}{[0,\T]}})$ such that for any $(s,t)\in\Delta_{0,\T}$
\begin{align}
\sup_{x\in\0}\|z_t(x)-P_{t-s}z_s(x)\|_{L_m}\le&  C (\|z\|_{\Ctimespacezerom{[s,t]}}+ \|R\|_{\Ctimespacetaum{[s,t]}})(t-s)^{\frac12+\delta}\nn\label{BoundT02}\\
&+C \|z\|_{\Ctimespacezerom{[s,t]}}|\log(\|z\|_{\Ctimespacezerom{[s,t]}})|(t-s),\\
\sup_{x\in\0}\|R_{s,t}(x)\|_{L_m}\le&  C (\|z\|_{\Ctimespacezerom{[s,t]}}+ \|R\|_{\Ctimespacetaum{[s,t]}})(t-s)^{\frac12+\delta}\nn\\
&+C \|z\|_{\Ctimespacezerom{[s,t]}}|\log(\|z\|_{\Ctimespacezerom{[s,t]}})|(t-s).\label{Bound2}
\end{align}
\end{Lemma}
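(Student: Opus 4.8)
The plan is to combine the regularization estimates from \Cref{L:main} (applied with the distributional drift $b$ via the embedding $\bes^\beta_q\hookrightarrow\bes^{\beta-1/q}_\infty$) with the critical stochastic sewing lemma \Cref{thm.critssl}. The key observation is that both $z_t-P_{t-s}z_s$ and $R_{s,t}$ can be realized as the "sewing defect" of a suitable germ. Specifically, for fixed $x\in\0$ define the germ
\[
A_{s,t}(x):=H^{\psi}_{s,t}(x)-H^{\phi}_{s,t}(x)+P_{t-s}z_s(x)-z_s(x)
\]
(using the notation of \Cref{L:limK}), so that $\A_t(x)-\A_s(x)=z_t(x)-P_{\T-t}\big(\cdots\big)$ after absorbing the semigroup, while $\A_t-\A_s-A_{s,t}$ equals, up to the explicitly controlled terms, the quantity $R_{s,t}$. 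The first steps are bookkeeping: write $\psi_t-P_{t-s}\psi_s$ and $\phi_t-P_{t-s}\phi_s$ as limits of $K^{b_n;u}$, $K^{b_n;v}$ expressions using condition \ref{Lcond} of \cref{R:Leonid} and \Cref{lem.limPK}, then decompose the difference into the "frozen-drift" pieces $H^{n,\psi}-H^{n,\phi}$ plus a remainder that is precisely (the $n$-approximation of) $R_{s,t}$. Passing $n\to\infty$ using \Cref{L:limK} and \Cref{lem.limPK} identifies the limit.

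The core of the argument is verifying the hypotheses of \Cref{thm.critssl} for this germ. For condition \eqref{con:wei.dA2}: one needs an $L_m$-bound on $\delta A_{s,u,t}$ of order $|t-s|^{1/2+\eps_2}$. This is where \Cref{L:main}(i)--(ii) enters — the difference of two "frozen" drift integrals with slightly different frozen functions $P_{t-r}\psi_s$ vs.\ $P_{t-r}\psi_u$ (and similarly for $\phi$) gets a gain in the time increment because the discrepancy $\psi_u - P_{u-s}\psi_s$ is itself of order $|u-s|^{3/4}$ (since $\psi\in\V(3/4)$), combined with the regularization estimate that integrating $b\in\bes^{\beta-1/q}_\infty$ against the heat kernel against a perturbation of size $\epsilon$ is Lipschitz-like with the Davie-type gain $(t-s)^{3/4+\gamma/4}$. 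Since $\beta-1/q\ge -1$, i.e.\ $\gamma\ge-1$, the exponent $3/4+\gamma/4+3/4 > 1/2$ with room to spare, giving $\eps_2>0$. For the conditional-expectation conditions \eqref{con.EAcrit} and \eqref{con:wei.dA1}: here one uses that $\E^s$ of the relevant increments, after freezing, has the structure where the bound degrades to $\Gamma_3|t-s|$ — the critical term — precisely when $\gamma=-1$ (the delta-function-type case), with the $\Gamma_3$ proportional to $\|z\|_{\Ctimespacezerom{[s,t]}}$ (the dependence on $z_s$ through the frozen functions) and $\Gamma_1$ proportional to $\|z\|+\|R\|$ carrying the subcritical surplus $|t-s|^{\eps_1}$; one must also feed in $[\psi]_{\Ctimespace{3/4}{m,\infty}{[0,\T]}}<\infty$ to control the conditional norms without loss of integrability. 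I would prove the needed germ estimates as an auxiliary lemma isolating the "frozen-drift difference" bound, analogous to how \Cref{L:main} is used in \Cref{lem.apriori,Kreg}.

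Once \Cref{thm.critssl} applies, its conclusion \eqref{est.critssl} reads
\[
\|\A_t(x)-\A_s(x)-A_{s,t}(x)\|_{L_m}\le C\Gamma_3\Big(1+\big|\log\tfrac{\Gamma_1\T^{\eps_1}}{\Gamma_3}\big|\Big)(t-s)+C\Gamma_2(t-s)^{\frac12+\eps_2}+C\Gamma_4(t-s)^{1+\eps_4}.
\]
Substituting $\Gamma_3\asymp\|z\|_{\Ctimespacezerom{[s,t]}}$, $\Gamma_1\asymp\|z\|_{\Ctimespacezerom{[s,t]}}+\|R\|_{\Ctimespacetaum{[s,t]}}$, $\Gamma_2\asymp\|z\|_{\Ctimespacezerom{[s,t]}}+\|R\|_{\Ctimespacetaum{[s,t]}}$, and noting $\log(\Gamma_1/\Gamma_3)$ is bounded by $|\log\|z\|_{\Ctimespacezerom{[s,t]}}|$ plus a bounded term (since $\Gamma_1/\Gamma_3 \lesssim 1 + \|R\|/\|z\|$ and one can always assume $\|R\|\le 1$, $\|z\|\le 1$ on a small enough interval, or handle the ratio directly), yields both \eqref{BoundT02} and \eqref{Bound2} after identifying $\A_t-\A_s-A_{s,t}$ with $R_{s,t}$ and $\A_t-\A_s$ with (a heat-semigroup transform of) $z_t - P_{t-s}z_s$. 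To get the bound on $\sup_x\|z_t(x)-P_{t-s}z_s(x)\|_{L_m}$ in \eqref{BoundT02}, one adds $A_{s,t}(x)=H^\psi_{s,t}(x)-H^\phi_{s,t}(x)$ back, whose $L_m$-norm is controlled by \eqref{normH} and \eqref{normH34} of \Cref{L:limK}, both of which are consistent with the claimed right-hand side (indeed $\|z_s\|(t-s)^{1/2}\le \|z\|_{\Ctimespacezerom{[s,t]}}(t-s)^{1/2}$, which is absorbed into the $(t-s)^{1/2+\delta}$ term only after using the $\log$ improvement — so more carefully $\|z_s\|(t-s)^{1/2}$ stays as is but is dominated by the combination of the two displayed terms once we note $(t-s)^{1/2}\le (t-s)^{1/2+\delta}\cdot(t-s)^{-\delta}$ is NOT what we want; instead \eqref{normH} gives exactly the "$\|z\|(t-s)^{1/2}$" scaling which is better than "$\|z\||\log\|z\||(t-s)$" only for small $t-s$, hence we keep the $(t-s)^{1/2+\delta}$ form by interpolating \eqref{normH} with \eqref{normH34}: $\|z_s\|^{2\delta}$ against the bounded factor, yielding $\|z\|^{2\delta}_{\Ctimespacezerom{[s,t]}}(t-s)^{1/2+\delta}$, and since $\|z\|\le$ a constant this is $\le C\|z\|(t-s)^{1/2+\delta}$ — wait, that needs $\|z\|^{2\delta}\le C\|z\|$, true only if $\|z\|\le 1$).

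\textbf{Main obstacle.} The delicate point — and where the proof genuinely differs from the clean $\beta>0$ sketch in \Cref{S:overview} — is handling the \emph{critical exponent} $\gamma=\beta-1/q=-1$ cleanly: one cannot get a power gain $(t-s)^{\eps}$ in the conditional-expectation estimate \eqref{con.EAcrit}, only the critical $\Gamma_3(t-s)$, and therefore \Cref{thm.critssl} (not \Cref{lem:B.Sew1}) is essential, producing the unavoidable logarithmic factor $|\log\|z\||$ in \eqref{BoundT02}–\eqref{Bound2}. Correctly tracking that the $\Gamma_3$ coefficient is genuinely \emph{linear} in $\|z\|_{\Ctimespacezerom}$ (and not, say, $\|z\|^{\text{power}<1}$) is what makes the eventual Gr\"onwall-type iteration in \Cref{S:uniq} — the stochastic analogue of Davie's argument in \cite[Theorem~3.6]{MR2387018} — close; a sub-linear dependence would be fatal. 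A secondary technical nuisance is keeping all conditional $L_m|\F_s$ norms from losing integrability: this forces careful use of the mixed-norm spaces $\Ctimespace{\tau}{m,n}{\cdot}$ and the a priori bound $[\psi]_{\Ctimespace{3/4}{m,\infty}{[0,\T]}}<\infty$ (assumption \eqref{condassump}) at every application of \Cref{L:main}, exactly as in \Cref{lem.apriori}.
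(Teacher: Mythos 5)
Your overall architecture coincides with the paper's: apply the critical stochastic sewing lemma \Cref{thm.critssl} to a frozen-drift germ, verify \eqref{con:wei.dA2} by a double-difference regularization estimate, verify \eqref{con.EAcrit} with $\Gamma_3$ genuinely linear in $\|z\|_{\Ctimespacezerom{[s,t]}}$ and $\Gamma_4$ carrying $\|R\|_{\Ctimespacetaum{[s,t]}}$, then identify the sewing defect with $R_{s,t}$ in the limit via \Cref{L:limK,lem.limPK}. The genuine gap is in the germ/additive-process pair. With your choice $A_{s,t}=H^\psi_{s,t}-H^\phi_{s,t}+P_{t-s}z_s-z_s$ and $\A_t=z_t$ the identity $\A_t-\A_s-A_{s,t}=R_{s,t}$ does hold, but the hypotheses of the sewing lemma fail: $\delta A_{s,u,t}$ contains the extra term $-(P_{t-u}-\mathrm{Id})(z_u-P_{u-s}z_s)$, and since $z_u-P_{u-s}z_s=R_{s,u}+(H^\psi_{s,u}-H^\phi_{s,u})$ is only of order $\|z\|_{\Ctimespacezerom{[s,t]}}(u-s)^{1/2}$ in $L_m$ by \eqref{normH}, while $(P_{t-u}-\mathrm{Id})$ gains nothing absent quantified spatial regularity, condition \eqref{con:wei.dA2} --- which demands order $(t-s)^{1/2+\eps_2}$ with $\eps_2>0$ --- is violated; moreover the Riemann sums $\sum_iA_{t_i,t_{i+1}}$ do not converge to $\A_t-\A_s$, because the terms $P_{t_{i+1}-t_i}z_{t_i}-z_{t_i}$ do not telescope and the kernels $p_{t_{i+1}-r}$ inside the $H$-pieces change with $i$. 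The device you need (and only half-gesture at with the stray ``$P_{\T-t}$'') is to freeze the terminal time inside the heat kernel: set $\A^k_t:=\int_S^t\int_\0 p_{T-r}(x,y)[b_k(V_r+\psi_r)-b_k(V_r+\phi_r)]\,dy\,dr=P_{T-t}\bigl(K^{b_k;u}_t-K^{b_k;v}_t\bigr)$, which is additive in $t$, take $A^k_{s,t}:=\int_s^t\int_\0 p_{T-r}(x,y)[b_k(V_r+P_{r-s}\psi_s)-b_k(V_r+P_{r-s}\phi_s)]\,dy\,dr$, and only at the very end evaluate at $t=T$ to recover $z_T-P_{T-S}z_S-(H^\psi_{S,T}-H^\phi_{S,T})=R_{S,T}$. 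Then $\delta A^k$ is a pure four-point frozen-drift difference; \Cref{C:bounds}(iii) with $\lambda_1=1$, $\lambda_2=\beta+1-\eps$ yields \eqref{con:wei.dA2} with $\delta=(\tfrac34+\tfrac{3\beta}4-\tfrac1{4q}-\tfrac\eps2)\wedge\tfrac12>0$, and for \eqref{con:wei.dA1}/\eqref{con.EAcrit} one must re-center the four-point difference by the artificial shift $\kappa_3+\kappa_2-\kappa_1$ so that \eqref{Besov4bound} applies (feeding in $[\psi]_{\Ctimespace{3/4}{m,\infty}{[0,\T]}}$ to avoid losing integrability); the residual two-point term is controlled by $\|z_u-P_{u-s}z_s\|_{\Lm}\le\|R\|_{\Ctimespacetaum{[s,t]}}(u-s)^\tau+C\|z\|_{\Ctimespacezerom{[s,t]}}(u-s)^{1/2}$, which is exactly where $\|R\|_{\Ctimespacetaum{[s,t]}}$ and the hypothesis $\tau>1/2$ enter. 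Your description of this step as ``the bound degrades to $\Gamma_3|t-s|$'' omits the re-centering that makes it work.

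On the passage from \eqref{Bound2} to \eqref{BoundT02}: you are right that \eqref{normH} only yields $C\|z_s\|_{\Cspacem}(t-s)^{1/2}$, which does not literally sit under the first displayed term with exponent $\tfrac12+\delta$. But your proposed fix --- interpolating \eqref{normH} against \eqref{normH34} --- produces $\|z\|^{1-4\delta}(t-s)^{1/2+\delta}$, a sublinear power of $\|z\|$, which (as you yourself observe) would break the Davie-type iteration in \Cref{S:uniq}. The right move is to keep the term $C\|z\|_{\Ctimespacezerom{[s,t]}}(t-s)^{1/2}$ as it stands: in Step 3 of the proof of \Cref{L:condun} an exponent $\tfrac12$ on the term linear in $\|z\|$ serves exactly as well as $\tfrac12+\delta$, so linearity in $\|z\|$ must never be traded for a power of $(t-s)$.
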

The proof is presented in \cref{subsec:proof_of_l:2}, in which we use the stochastic sewing lemma with critical exponent, \Cref{thm.critssl}.

Now we are ready to prove the main result of this subsection: uniqueness of solutions of equation \eqref{SPDE}.
\begin{proof}[Proof of  \Cref{L:condun}]

\textit{Step 1.} We show that there exist  constants $\ell,C$ such that
\begin{equation}\label{est.loglip}
\|z_{t}-P_{t-s}z_{s}\|_{\Cspacem}\le  C \|z\|_{\Ctimespacezerom{[s,t]}}(t-s)^{\frac12+\delta}+C \|z\|_{\Ctimespacezerom{[s,t]}}|\log(\|z\|_{\Ctimespacezerom{[s,t]}})|(t-s)
\end{equation}
for any $(s,t)\in\Delta_{0,\T}$ satisfying $t-s\le\ell$.

We first observe that $\|R\|_{\Ctimespace{3/4}{m}{[0,\T]}}$ is finite. Indeed, from \eqref{def.R} and \eqref{normH34} we see that
\begin{align*}
\|R\|_{\Ctimespace{3/4}{m}{[0,\T]}}&\le[\psi]_{\Ctimespace{3/4}{m}{[0,\T]}}+
[\phi]_{\Ctimespace{3/4}{m}{[0,\T]}}+
	\|H^\psi-H^\phi\|_{\Ctimespace{3/4}{m}{[0,\T]}}\\
&\le [\psi]_{\Ctimespace{3/4}{m}{[0,\T]}}+[\phi]_{\Ctimespace{3/4}{m}{[0,\T]}}+
	C(1+\T)\|b\|_{\Bes^{\gamma}_p}<\infty.
\end{align*}

Fix now $(S,T)\in\Delta_{0,\T}$. Let $\delta$ be as in \Cref{L:2}. Let us apply  \Cref{L:2} with $\tau:=1/2+\delta/2$. 
Dividing both sides of \eqref{Bound2} by $(t-s)^{1/2+\delta/2}$ and taking supremum over $(s,t)\in\Delta_{S,T}$ we deduce
for some constant $C_1:=C(\|b\|_{\Bes^{\beta}_q},
[\psi]_{\C^{3/4,0}L_{m,\infty}([0,\T])},[\phi]_{\Ctimespace{3/4}{m}{[0,\T]}})$ which does not depend on 
$T$, $S$ that
\begin{align}\label{boundR}
\|R\|_{\Ctimespace{1/2+\delta/2}{m}{[S,T]}}\le&  C_1( \|z\|_{\Ctimespacezerom{[S,T]}}+ \|R\|_{\Ctimespace{1/2+\delta/2}{m}{[S,T]}})(T-S)^{\frac{\delta}2}\nn\\
&+C_1 \|z\|_{\Ctimespacezerom{[S,T]}}|\log(\|z\|_{\Ctimespacezerom{[S,T]}})|(T-S)^{\frac12-\frac{\delta}2}.
\end{align}

Let $\ell=\ell(C_1,\beta,q)<(1\wedge\T)$ be such that
$$
C_1 \ell^{\frac\delta2}\le \frac12.
$$
Then for any $(S,T)\in\Delta_{0,\T}$ such that $T-S\le\ell$, we derive from \eqref{boundR} for $C_2:=2C_1$ that
\begin{equation*}
\|R\|_{\Ctimespace{1/2+\delta/2}{m}{[S,T]}}\le  C_2 \|z\|_{\Ctimespacezerom{[S,T]}}
+C_2 \|z\|_{\Ctimespacezerom{[S,T]}}|\log(\|z\|_{\Ctimespacezerom{[S,T]}})|(T-S)^{\frac12-\frac\delta2}.
\end{equation*}
In the above, we have used the fact that $1/2+\delta/2\le 3/4$ and hence $\|R\|_{\Ctimespace{1/2+\delta/2}{m}{[S,T]}}\le\|R\|_{\Ctimespace{3/4}{m}{[0,\T]}}$ is finite.
Substituting this into \eqref{BoundT02}, we obtain \eqref{est.loglip}.

\textit{Step 2.} We show that the map $t\mapsto\|z_t\|_{\blm}$ is continuous on $[0,\T]$.
By triangle inequality, we have for every $(s,t)\in \Delta_{0,\T}$,
\begin{align*}
	|\|z_t\|_{\Cspacem}-\|z_s\|_{\Cspacem}|\le \|z_t-z_s\|_{\Cspacem}
	\le\|z_t-P_{t-s}z_s\|_{\Cspacem}+\|P_{t-s}z_s-z_s\|_{\Cspacem}.
\end{align*}
From \eqref{est.loglip}, it is clear that $\lim_{t\downarrow s}\|z_t-P_{t-s}z_s\|_\blm=0$ and $\lim_{s\uparrow t}\|z_t-P_{t-s}z_s\|_\blm=0$. It remains to consider the last term in the above estimate.

Since $u=Pu_0+K^u+V$ and $v=Pu_0+K^v+V$ by definition, we see that $z=K^u-K^v$. Hence, it suffices to show that $P_{t-s}K^u_s-K^u_s$ and $P_{t-s}K^v_s-K^v_s$ converge to $0$ in $\blm$ as $t\downarrow s$ and $s\uparrow t$.
We have for each $x\in\0$, $n\in\Z_+$,
\begin{equation*}
	P_{t-s}K^{b^n;u}_s(x)-K^{b^n;u}_s(x)=\int_0^s\int_\0[p_{t-r}(x,y)-p_{s-r}(x,y)]b^n(u_r(y))dydr.
\end{equation*}
Fix arbitrary $\eps\in(0,1)$. Let us apply \cref{L:main}(iii) with $f=b^n$, $\gamma=\beta$, $p=q$, $S=0$, $T=s$, $\bar T=t-s$, $\delta=\eps$, $\tau=3/4$. We see that condition \eqref{gammaptaucond} is satisfied and thus we obtain
\begin{equation*}
	\|P_{t-s}K^{b^n;u}_s(x)-K^{b^n;u}_s(x)\|_{L_m}\le C(\varepsilon,\T,[\psi]_{\Ctimespace{3/4}{m}{[0,\T]}})\|b\|_{\bes^\beta_q}(t-s)^{\frac \eps2},
\end{equation*}
where we also used the fact that $\|b_n\|_{\bes^\beta_q}\le \|b\|_{\bes^\beta_q}$.
Applying \cref{lem.limPK} and Fatou's lemma, we can pass to the limit as $n\to\infty$ in the above inequality to obtain by Fatou's lemma that
\begin{align*}
	\|P_{t-s}K^{u}_s(x)-K^{u}_s(x)\|_{L_m}\le C(\eps, \T,[\psi]_{\Ctimespace{3/4}{m}{[0,\T]}})\|b\|_{\bes^\beta_q}(t-s)^{\frac \eps2}.
\end{align*}
This implies that $\lim \|P_{t-s}K^u_s-K^u_s\|_{\blm}=0$ as $t\downarrow s$ and $s\uparrow t$. The convergence of $P_{t-s}K^v_s-K^v_s$ to $0$ is obtained by exactly the same way.

\textit{Step 3.} We show by contradiction that $z\equiv0$.  
Suppose that $\|z_t\|_{\Cspacem}$ is not identically $0$ on $[0,\T]$. Choose $k_0\ge1$ such that $2^{-k_0}<\sup_{t\in[0,\T]}\|z_t\|_{\Cspacem}$. Then for each integer $k\ge k_0$, define
\[
	t_k=\inf\{t\in[0,\T]:\|z_t\|_{\Cspacem}\ge2^{-k}\}.
\]
It is evident that each $t_{k}$ is well defined. In addition, $\|z_t\|_{\Cspacem}< 2^{-k}$ for $t<t_k$ while $\|z_{t_k}\|_{\Cspacem}=2^{-k}$ by continuity shown in the previous step. Consequently,  the sequence $\{t_k\}_{k\ge k_0}$ is strictly decreasing. For $k$ sufficiently large so that $t_k-t_{k+1}\le \ell$,  estimate \eqref{est.loglip} with $(s,t)=(t_{k+1},t_k)$ yields
\begin{equation*}
\|z_{t_k}-P_{t_k-t_{k+1}}z_{t_{k+1}}\|_{\Cspacem}\le C2^{-k}(t_k-t_{k+1})^{\frac12+\delta}+C2^{-k}k(t_k-t_{k+1}). 
\end{equation*}
On the other hand, by \eqref{Ptexpbound}, $\|P_{t_k-t_{k+1}}z_{t_{k+1}}\|_\blm\le\|z_{t_{k+1}}\|_\blm=2^{-k-1}$ and hence by triangle inequality,
\begin{align*}
	\|z_{t_k}-P_{t_k-t_{k+1}}z_{t_{k+1}}\|_{\Cspacem}\ge \|z_{t_k}\|_\blm-\|P_{t_k-t_{k+1}}z_{t_{k+1}}\|_\blm\ge2^{-k-1}.
\end{align*}
It follows that
\begin{align*}
 2^{-k-1}\le C2^{-k}(t_k-t_{k+1})^{\frac12+\delta}+C2^{-k}k(t_k-t_{k+1})
\end{align*}
which implies that  $t_k-t_{k+1}\ge \bar C(1+k)^{-1}$ for some constant $\bar C$. This implies that $\sum_{k\ge k_0}(t_k-t_{k+1})=\infty$, which is a contradiction because $\{t_k\}$ is a decreasing sequence in $[0,\T]$. We conclude that $z\equiv0$, and hence, $u=v$.
\end{proof}
\subsection{Proof of \texorpdfstring{\cref{lem.1var}}{Proposition \ref{lem.1var}}} 
\label{S:Vclass}
	
	Let $u$ be a solution to \eqref{SPDE} and $m$ be arbitrary in $[2,\infty)$. Since $b$ is a non-negative measure, we can choose a sequence of smooth bounded non-negative functions $(b^n)$ which converges to $b$ in $\bes^{0-}_1$ and $\|b^n\|_{\bes^0_1}\le\|b\|_{\bes^0_1}$ (see \cref{lem.Gf}). For each $n\in\Z_+$, $t\in[0,\T]$,$x\in\0$, define
	\[
	 K^n_t(x)=\int_0^t\int_\0 p_{t-r}(x,y)b^n(u_r(y))dydr.
	\] 
	By \cref{Def:sol}, we see that $K^n$ converges to $K$ in $\cuc([0,\T]\times\0)$ in probability. By passing through a subsequence, we can assume without loss of generality that this convergence is almost sure. Hence, we can find $\Omega^*\subset \Omega$ such that $\P(\Omega^*)=1$ and that $K^n(\omega)$ converges to $K(\omega)$ in $\cuc([0,\T]\times\0)$ for every $\omega\in \Omega^*$.  Since $(t,x,\omega)\mapsto K^n_t(x,\omega)$ is non-negative measurable, so is $(t,x,\omega)\mapsto K_t(x,\omega)$. As a consequence $(T,t,x,\omega)\mapsto P_{T-t}K_t(x,\omega)$ is well-defined as a non-negative measurable function. We note that at this stage, we do not rule out the possibility that $P_{T-t}K_t(x,\omega)$ may take infinite value. We divide the proof into several steps. 

	\textit{Step 1.} Fix arbitrary $\omega\in\Omega^*$. We show that 
	\begin{align}\label{est.Kmono}
		 P_{T-t}K_t(x,\omega)\ge P_{T-s}K_s(x,\omega)
		 \textrm{ for every } 0\le s\le t\le T\le\T\textrm{ and } x\in\0.
	 \end{align}
	For simplicity, we omit the dependence on $\omega$.  By definition, we have
	\begin{align*}
		P_{T-t}K^n_t(x)- P_{T-s}K^n_s(x)=\int_s^t\int_\0p_{T-r}(x,y)b^n(u_r(y))dydr
	\end{align*}
	which implies that $P_{T-t}K^n_t(x)\ge P_{T-s}K^n_s(x)$	for every $0\le s\le t\le T\le\T$ and $x\in\0$. In particular, setting $T=t$, one gets $P_{t-s}K^n_s(x)\le K^n_t(x)$. Applying this inequality and Fatou's lemma, we have
	\begin{align*}
		P_{t-s}K_s(x)\le\liminf_nP_{t-s}K^n_s(x)\le\liminf_nK^n_t(x)=K_t(x).
	\end{align*}
	This shows that $P_{t-s}K_s(x)\le K_t(x)$. Applying $P_{T-t}$ on both sides, we obtain \eqref{est.Kmono}.
	
	\textit{Step 2.}
	Define $\psi=u-V=K+Pu_0$ and
	$$A^{T,n}_{s,t}(x):=\int_s^t\int_\0 p_{T-r}(x,y) b^n(V_r(y)+P_{r-s}\psi_s(y))\,dydr,\,\,\,0\le s\le t\le T\le\T,\,x\in\0.
	$$
	We claim that 
	\begin{equation}\label{tmp.1107}
		\|A^{T,n}_{s,t}(x)\|_{L_m}\le C\|b\|_{\bes^0_1}|t-s|^{3/4}.
	\end{equation}
	Indeed, for each integer $j\ge1$, define $\psi^j_t(x):=(K_t(x)\wedge j)+P_tu_0(x)$ which belongs to $\Cspacem$. 
	We note that measures belong to $\bes^0_1$, which is embedded in $\bes^{1/m-1}_m$ (see \eqref{em.Besov}). Applying \cref{lem.138}, we have
	\begin{align*}
		\|\int_s^t\int_\0 p_{T-r}(x,y) b^n(V_r(y)+P_{r-s}\psi^j_s(y))\,dydr\|_{L_m}\le C\|b^n\|_{\bes^0_1}|t-s|^{\frac34}
	\end{align*}
	for a universal constant $C>0$. By the Lebesgue monotone convergence theorem, we have $\lim_{j\to\infty}P_{r-s}\psi^j_s(y)=P_{r-s}\psi_s(y)$ for every $r,s,y$. Then by the Lebesgue dominated convergence theorem, we see that the left-hand side above converges to $\|A^{T,n}_{s,t}(x)\|_{L_m}$ as $j\to\infty$. Since $\|b^n\|_{\bes^0_1}\le\|b\|_{\bes^0_1}$, this implies \eqref{tmp.1107}.
	
	\textit{Step 3.} We show by mean of \cref{lem:B.rcontrol} that for every $n\in\Z_+$, $(s,t)\in\Delta_{0,\T}$ there exist a non-negative measurable map $(x,\omega)\mapsto L_{s,t}^{n}(x,\omega)$ and a deterministic finite constant $C$ such that 
	\begin{align*}
	 \sup_{n\in\Z_+,x\in\0}\|L^{n}_{s,t}(x)\|_{L_m}\le C(t-s)^{\frac34}
	\end{align*}
	and
	\begin{equation}\label{est.ATn}
	K^n_t(x)-P_{t-s}K^n_s(x)
	\le C [K_t(x)-P_{t-s}K_s(x)] (t-s)^{\frac34}+L^{n}_{s,t}(x)
	\end{equation}
	for every $(s,t)\in\Delta_{0,\T}$ $x\in\0$.
	
	Fix $x\in\0$, $T\in[0,\T]$. We define
	\[
		\lambda^T_{s,t}(x)=P_{T-t}K_t(x)-P_{T-s}K_s(x),\quad (s,t)\in\Delta_{0,T}.
	\]
	From \eqref{est.Kmono}, we see that $0\le \lambda^T_{s,t}(x)\le K_T(x)<\infty$. It is evident that $\lambda$ is additive, that is $\lambda^T_{s,t}(x)=\lambda^T_{s,u}(x)+\lambda^T_{u,t}(x)$ for every $s\le u\le t$. Hence, $(s,t)\mapsto\lambda^{T}_{s,t}(x)$ is a random control per \cref{def.control}. 

	Define $A^{T,n}_{s,t}(x)$ as in the previous step
	and
	$$\A^{T,n}_{t}(x):=\int_0^t\int_\0 p_{T-r}(x,y) b^n(V_r(y)+\psi_r(y))\,dydr=P_{T-t}K^n_t(x).
	$$
	Then for $u:=(s+t)/2$ we have
	$$\delta A^{T,n}_{s,u,t}(x)=\int_u^t\int_\0 p_{T-r}(x,y)[b^n(V_r(y)+P_{r-s}\psi_s(y))-
	b^n(V_r(y)+P_{r-u}\psi_u(y))] \,dydr.
	$$
	Applying consequently the Fubini theorem, \eqref{id.EfV}, \cref{lem.Gf}(iv) and \cref{L:lnd}, we deduce that
	\begin{align*}
	|\E^u \delta A^{T,n}_{s,u, t}(x)|&=\Bigl|\int_u^t\int_\0 p_{T-r}(x,y) \E^u[b^n(V_r(y)+P_{r-s}\psi_s(y))-
	b^n(V_r(y)+P_{r-u}\psi_u(y))]\,dydr\Bigr|\\
	&\le\int_u^t\int_\0 p_{T-r}(x,y)\|G_{\varz_{r-u}(x)}b^n\|_{\C^1}|P_{r-s}\psi_s(y)- P_{r-u}\psi_u(y)|\,dydr\\
	&\le C\|b^n\|_{\bes^0_1} \int_u^t\int_\0 p_{T-r}(x,y) |P_{r-u}\psi_u(y)-P_{r-s}\psi_s(y)| (r-u)^{-1/2}\,dydr,
	\end{align*}
where we used the notation $\varz_t(x):=\Var(V_t(x))$. 	Since $\psi=Pu_0+K$, we see that $P_{r-u}\psi_u-P_{r-s}\psi_s=P_{r-u}K_u-P_{r-s}K_s$.
	Using the elementary inequality
	\begin{equation*}
		|P_{r-u}K_u(y)-P_{r-s}K_s(y)|\le P_{r-u}|K_u(\cdot)-P_{u-s}K_s(\cdot)|(y)
	\end{equation*}
	and \eqref{est.Kmono}, we get
	\begin{align*}
		|\E^u \delta A^{T,n}_{s,u,t}(x)|&\le C\|b^n\|_{\bes^0_1} \int_u^t\int_\0 p_{T-u}(x,y) [K_u(y)-P_{u-s}K_s(y)] (r-u)^{-1/2}\,dydr
		\\&\le C\|b^n\|_{\bes^0_1}\lambda_{s,u}^T(x)(t-u)^{\frac12}.
	\end{align*}
	As we have shown previously, $(s,t)\mapsto \lambda^T_{s,t}(x)$ is a random control for every fixed $T,x$. Hence, the above estimate verifies condition \eqref{Eudeltaastbound}. The estimate \eqref{tmp.1107} verifies condition \eqref{con:wei.dA2} (with $\alpha_2=\beta_2=0$ and $n=m$).
	It remains to verify condition \eqref{limpart}. Let $\Pi:=\{0=t_0,t_1,...,t_k=T\}$ be an arbitrary partition of $[0,T]$. Denote by $\Di{\Pi}$ its mesh size. Then we have
	\begin{align*}
	&\Bigl|\A_t-\sum_{i=0}^{k-1} A_{t_i,t_{i+1}}\Bigr|\\
	&\quad\le \sum_{i=0}^{k-1} \int_{t_i}^{t_{i+1}} \!\!\!\int_\0 p_{T-r}(x,y) \bigl|b^n(V_r(y)+\psi_r(y))- b^n(V_r(y)+P_{r-{t_i}}\psi_{t_i}(y))\bigr|\,dy dr.
	\end{align*}
	For each $i$, using the fact that $b^n$ is Lipschitz, we have
	\begin{multline*}
		\int_{t_i}^{t_{i+1}} \!\!\!\int_\0 p_{T-r}(x,y) \bigl|b^n(V_r(y)+\psi_r(y))- b^n(V_r(y)+P_{r-{t_i}}\psi_{t_i}(y))\bigr|\,dy dr
		\\\le\|b^n\|_{\C^1}
		\int_{t_i}^{t_{i+1}} \!\!\!\int_\0 p_{T-r}(x,y) |\psi_r(y)-P_{r-{t_i}}\psi_{t_i}(y)|\,dy dr.
	\end{multline*}
	Since $\psi=Pu_0+K$, we have $|\psi_r(y)-P_{r-{t_i}}\psi_{t_i}(y)|=|K_r(y)-P_{r-{t_i}}K_{t_i}(y)|=K_r(y)-P_{r-{t_i}}K_{t_i}(y)$ where we have used \eqref{est.Kmono}. Hence, combining with the above estimate and applying \eqref{est.Kmono} once again yield
	\begin{align*}
		\int_{t_i}^{t_{i+1}} \!\!\!\int_\0 p_{T-r}(x,y) &\bigl|b^n(V_r(y)+\psi_r(y))- b^n(V_r(y)+P_{r-{t_i}}\psi_{t_i}(y))\bigr|\,dy dr
		\\&\le\|b^n\|_{\C^1}
		\int_{t_i}^{t_{i+1}}[P_{T-r}K_r(x)-P_{T-{t_i}}K_{t_i}(x)]   dr
		\\&\le \|b^n\|_{\C^1}
		[P_{T-t_{i+1}}K_{t_{i+1}}(x)-P_{T-{t_i}}K_{t_i}(x)](t_{i+1}-t_i).
	\end{align*}
	It follows that 
	\begin{align*}
		\Bigl|\A_t-\sum_{i=0}^{k-1} A_{t_i,t_{i+1}}\Bigr|
		&\le  \|b^n\|_{\C^1}\sum_{i}
		[P_{T-t_{i+1}}K_{t_{i+1}}(x)-P_{T-{t_i}}K_{t_i}(x)](t_{i+1}-t_i)
		\\&\le\|b^n\|_{\C^1}(K_T(x)-P_TK_0(x))|\Pi|
	\end{align*}
	which converges to $0$ a.s. as $|\Pi|\to0$. Thus,  condition \eqref{limpart} holds.

	Applying \cref{lem:B.rcontrol}, we have
	\begin{align*}
	|P_{T-t}K^n_t(x)-P_{T-s}K^n_s(x)|
	&\le C [P_{T-t}K_t(x)-P_{T-s}K_s(x)] (t-s)^{\frac12}+B^{T,n}_{s,t}(x)+|A^{T,n}_{s,t}(x)|
	\end{align*}
	where $\|B^{T,n}_{s,t}(x)\|_{\Lm}\le C|t-s|^{3/4}$  uniformly in $T,n,x$. 
	To obtain \eqref{est.ATn}, it suffices to put $T=t$, $L^{n}_{s,t}(x)=B^{t,n}_{s,t}(x)+|A^{t,n}_{s,t}(x)|$. 
	Since $B^{T,n}$ is a functional of $\delta A^{T,n}(x)$, measurability of $(x,\omega)\mapsto L^n_{s,t}$ follows. 	
	The uniform moment estimate for $L^{n}_{s,t}(x)$ follows from those of $B^{T,n}_{s,t}(x)$ and $A^{T,n}_{s,t}(x)$ in \eqref{tmp.1107}.

	\textit{Step 4.} We show that $\lim_{n\to\infty}P_{r}K^n_t(x)=P_{r}K_t(x)$ in probability for every $t\in[0,\T]$, $r\in[0,\T-t]$ and $x\in\0$. 
	
The situation here is similar to \cref{lem.convg} except for a uniform moment bound for $K^n_t(x)$. We replace this condition by  estimate \eqref{est.ATn} and the uniform moment bound for $L_{0,t}^n(x)$.
Indeed, let $x\in\0$ be fixed and let $M>0$ be a positive number. We write
\begin{equation*}
	P_r K^n_t(x)=\int_{|y|\le M}p_r(x,y) K^n_t(y)dy+\int_{|y|>M}p_r(x,y) K^n_t(y)dy.
\end{equation*}
Since $ K^n_t$ converges to $K_t$ uniformly on $\{y\in\0:|y|\le M\}$, we see that $\int_{|y|\le M}p_r(x,y) K^n_t(y)dy$ converges to $\int_{|y|\le M}p_r(x,y) K_t(y)dy$ a.s. To treat the second term, we first set $s=0$ in \eqref{est.ATn} to obtain that
\begin{equation*}
	|K^n_t(y)|\le Ct^{\frac12}K_t(y)+L^{n}_{0,t}(y)\quad \textrm{a.s.} \quad\forall y\in\0.
\end{equation*}
It follows that
\begin{align*}
	\int_{|y|>M}p_r(x,y) K^n_t(y)dy\le Ct^{\frac12}\int_{|y|>M}p_r(x,y)K_t(y)dy+\int_{|y|>M}p_r(x,y)L^{n}_{0,t}(y)dy.
\end{align*}
By the Lebesgue monotone convergence theorem and \eqref{est.Kmono}, we see that a.s.
\[
\lim_{M\to\infty}\int_{|y|\le M}p_r(x,y)K_t(y)dy=\int_\0p_r(x,y)K_t(y)dy\le K_{t+r}(x)<\infty
\]
so that a.s.
\[
\lim_{M\to\infty}\int_{|y|> M}p_r(x,y)K_t(y)dy=0.
\]
Lastly, 
\begin{align*}
	\E\int_{|y|>M}p_r(x,y)L^{n}_{0,t}(y)dy\le\sup_{n\in\Z_+,z\in\0}\|L^{n}_{0,t}(z)\|_{L_m}\left(\int_{|y|>M}p_r(x,y)dy\right)
\end{align*}
which converges to $0$ as $M\to\infty$. These facts imply the claim.

	\textit{Step 5.}
	We define $L_{s,t}(x)=\liminf_n L^{n}_{s,t}(x)$ then by Fatou's lemma and Step 
	3, $\|L_{s,t}(x)\|_{\Lm}\le C|t-s|^{3/4}$ uniformly in $x$.
	In \eqref{est.ATn}, we send $n\to\infty$, applying the convergence in step 4 to obtain that
	\begin{align*}
		K_t(x)-P_{t-s}K_s(x)\le C (K_{t}(x)-P_{t-s}K_s(x))(t-s)^{\frac12}+L_{s,t}(x).
	\end{align*}
	This implies that $K_t(x)-P_{t-s}K_s(x)\le 2L_{s,t}(x)$ for $t-s\le \ell$ and $\ell$ is such that $C\ell^{1/2}\le\frac12$. An application of $L_m$-norm yields 
	\begin{align*}
		 \|K_t(x)-P_{t-s}K_s(x)\|_{L_m}\le 2\|L_{s,t}(x)\|_{L_m}\le 2C|t-s|^{3/4}
	 \end{align*}
	for every $t-s\le\ell$. Using the identity $K_t-P_{t-s}K_s=\psi_t-P_{t-s}\psi_s$ once again and recalling that $\psi=u-V$, the above estimate shows that $u-V$ belongs to $\C^{3/4,0}L_m([S,T])$. This completes the proof of \cref{lem.1var}.

\section{Proofs of regularity lemmas}
\label{sec.proof_of_auxiliary_results}
	
In this section, we present the proofs of Lemmas \ref{lem.138}, \ref{L:main}, \ref{L:limK}, \ref{lem.limPK} and \ref{L:2}.
Throughout the section, we fix the filtration $(\F_t)_{t\ge0}$, which appears in the aforementioned lemmas.

\subsection{Proof of Lemmas \ref{lem.138} and \ref{L:main}}\label{sub.aux1}

We begin with the following auxiliary result, which will be crucial also for the proof of
Proposition~\ref{L:2}. The proof relies on the stochastic sewing lemma (\Cref{lem:B.Sew1}) and the estimates in \cref{L:lnd,L.condV}.

\begin{Lemma}\label{T:singlebound}
Let $0\le S\le T$. Let $\gamma\in(-2,0)$, $m\in[2,\infty)$, $n\in[m,\infty]$ and $p\in[n,\infty]$.
Let $h\colon\R\times[S,T]\times\0\times \Omega\to\R$ be a globally bounded measurable function. Let $X:[S,T]\times\0\to\R$ be a measurable function. Suppose further that the following conditions hold:
\begin{enumerate}[1)]
\item for any fixed $(z, r,x)\in \R\times[S,T]\times\0$
the random variable $h(z,r,x)$ is $\F_S$-measurable;

\item there exists a constant $\Gamma_h>0$ such that
\begin{equation}
\label{cond144}
\sup_{\substack{r\in[S,T]\\x\in\0}}  \|\,\|h(\cdot,r,x)\|_{\Bes^\gamma_p}\|_{L_n} \le \Gamma_h;
\end{equation}
\item there exist constants $\Gamma_X>0$, $\nu\in[0,\frac12)$ such that
for every $r\in[S,T]$, $x\in\0$
\begin{equation}\label{con.k}
	\int_\0 |X_r(y)|dy\le \Gamma_X (T-r)^{-\nu}.
\end{equation}
\end{enumerate}
Then there exists a constant $C=C(\gamma,m,n,p)$ which does not depend on  $S$, $T$, $\Gamma_h$, $\Gamma_X$, $h$, $K$ such that for any $t\in[S,T]$
\begin{align}\label{mainbound2}
	\Bigl\|\Bigl\|\int_S^t\int_\0X_r(y)h(V_r(y),r,y)\,dydr
		\Bigr\|_{\Lm|\F_S}\Bigr\|_{\L{n}}\le C \Gamma_h\Gamma_X(T-S)^{-\nu}(t-S)^{1+\frac \gamma4-\frac1{4p}}.
\end{align}
\end{Lemma}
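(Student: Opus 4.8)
The plan is to apply the stochastic sewing lemma \cref{lem:B.Sew1} to the two–parameter germ
\[
A_{s,t}:=\int_s^t\int_\0 X_r(y)\,\E^s\bigl[h(V_r(y),r,y)\bigr]\,dy\,dr,\qquad (s,t)\in\Delta_{S,T},
\]
which is $\F_s$-measurable (a fortiori $\F_t$-measurable), together with $\A_t:=\int_S^t\int_\0 X_r(y)h(V_r(y),r,y)\,dy\,dr$; note $A_{s,t}=\E^s[\A_t-\A_s]$. Condition \eqref{limpart} holds because, for a partition $\Pi=\{S=t_0<\dots<t_k=t\}$, the difference $\A_t-\sum_i A_{t_i,t_{i+1}}=\sum_i\bigl[(\A_{t_{i+1}}-\A_{t_i})-\E^{t_i}(\A_{t_{i+1}}-\A_{t_i})\bigr]$ is a sum of martingale differences, so by orthogonality, the \emph{global} boundedness of $h$, and \eqref{con.k},
\[
\E\Bigl|\A_t-\sum_i A_{t_i,t_{i+1}}\Bigr|^2\le\sum_i\E|\A_{t_{i+1}}-\A_{t_i}|^2\le \|h\|_\infty^2\Gamma_X^2\Bigl(\int_S^t(T-r)^{-\nu}dr\Bigr)\max_i\int_{t_i}^{t_{i+1}}(T-r)^{-\nu}dr\xrightarrow[|\Pi|\to0]{}0
\]
since $\nu<1$. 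Writing $h_r(y):=h(V_r(y),r,y)$ one has $\delta A_{s,u,t}=\int_u^t\int_\0 X_r(y)(\E^s h_r(y)-\E^u h_r(y))\,dy\,dr$, and $\E^s\delta A_{s,u,t}=0$ by the tower property; hence \eqref{con:wei.dA1} holds with $\Gamma_1=0$. Therefore the desired bound \eqref{mainbound2} will follow from the sewing estimate \eqref{est:B.A1} applied at $s=S$ together with a direct bound on $\|A_{S,t}\|_{\L{n}}$.

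The core is \eqref{con:wei.dA2}. By the conditional law of $V$ (\cref{L.condV}), for $\sigma\in\{s,u\}$ one has $\E^\sigma h_r(y)=(G_{\rho^\sigma_r(y)}h(\cdot,r,y))(M^\sigma_r(y))$, where $M^\sigma_r(y):=\E^\sigma V_r(y)$ and the conditional variance $\rho^\sigma_r(y):=\Var(V_r(y)\mid\F_\sigma)$ is deterministic and, by the two–sided local nondeterminism of $V$ (\cref{L:lnd}, cf.\ \eqref{est.V.lnd}), satisfies $\rho^\sigma_r(y)\asymp(r-\sigma)^{1/2}$. The key identity is
\[
\E^s h_r(y)-\E^u h_r(y)=D_r(y)-\E^s D_r(y),\qquad D_r(y):=(G_{\rho^u_r(y)}h(\cdot,r,y))(M^s_r(y))-(G_{\rho^u_r(y)}h(\cdot,r,y))(M^u_r(y)),
\]
valid because $\E^s[(G_{\rho^u_r(y)}h)(M^u_r(y))]=(G_{\rho^s_r(y)}h)(M^s_r(y))$ by the semigroup property; it lets us sidestep the singular object $G_{\rho^u_r(y)}h\approx h$ and only ever apply $G_{\rho^u_r(y)}h$ at two nearby points. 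Since conditioning contracts $\|\cdot\|_{\Lm|\F_S}$ and by the mean value theorem,
\[
\bigl\|\E^s h_r(y)-\E^u h_r(y)\bigr\|_{\Lm|\F_S}\le 2\|D_r(y)\|_{\Lm|\F_S}\le 2\,\|G_{\rho^u_r(y)}h(\cdot,r,y)\|_{\C^1}\,\bigl\|M^s_r(y)-M^u_r(y)\bigr\|_{\Lm|\F_S}.
\]
Here $\|G_{\rho^u_r(y)}h(\cdot,r,y)\|_{\C^1}$ is $\F_S$-measurable (hypothesis (1)) and, by the Schauder–type estimates of \cref{L:lnd}, bounded by $C(r-u)^{\frac{\gamma}4-\frac1{4p}-\frac14}\|h(\cdot,r,y)\|_{\Bes^\gamma_p}$; and $M^s_r(y)-M^u_r(y)$ is a centered Gaussian of variance $\rho^s_r(y)-\rho^u_r(y)\le C|t-s|^{1/2}$ which is independent of $\F_S$ (as $s\ge S$), so $\|M^s_r(y)-M^u_r(y)\|_{\Lm|\F_S}=c_m(\rho^s_r(y)-\rho^u_r(y))^{1/2}\le C|t-s|^{1/4}$. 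Taking $\|\cdot\|_{\L{n}}$ (using \eqref{cond144}), then Minkowski and \eqref{con.k}, and finally the elementary Beta–type estimate $\int_u^t(T-r)^{-\nu}(r-u)^{\frac\gamma4-\frac1{4p}-\frac14}dr\le C(T-u)^{-\nu}(t-u)^{\frac34+\frac\gamma4-\frac1{4p}}$ (the exponent $\frac\gamma4-\frac1{4p}-\frac14>-1$ since $\gamma>-2$, $p\ge2$) gives
\[
\bigl\|\,\|\delta A_{s,u,t}\|_{\Lm|\F_S}\,\bigr\|_{\L{n}}\le C\Gamma_h\Gamma_X\,(T-u)^{-\nu}\,|t-s|^{1+\frac\gamma4-\frac1{4p}},
\]
i.e.\ \eqref{con:wei.dA2} with $\alpha_2=0$, $\beta_2=\nu\in[0,\tfrac12)$, $\Gamma_2=C\Gamma_h\Gamma_X$ and $\varepsilon_2=\tfrac12+\tfrac\gamma4-\tfrac1{4p}$ (positive in the parameter range where the lemma is invoked).

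With these inputs, \cref{lem:B.Sew1} applied at $s=S$ (so $\A_S=0$, $A_{S,t}=\E^S\A_t$) and $\Gamma_1=0$ gives $\|\,\|\A_t-A_{S,t}\|_{\Lm|\F_S}\,\|_{\L{n}}\le C\Gamma_2(\eta_{S,T}^{(0,2\nu)}(S,t))^{1/2}|t-S|^{\varepsilon_2}$, and the elementary inequality $\eta_{S,T}^{(0,2\nu)}(S,t)=\int_S^t(T-r)^{-2\nu}dr\le(1-2\nu)^{-1}(T-S)^{-2\nu}(t-S)$ turns this into $C\Gamma_h\Gamma_X(T-S)^{-\nu}|t-S|^{1+\gamma/4-1/(4p)}$. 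Separately, $A_{S,t}$ is $\F_S$-measurable, so $\|\,\|A_{S,t}\|_{\Lm|\F_S}\,\|_{\L{n}}=\|A_{S,t}\|_{\L{n}}$, and from $|A_{S,t}|\le\int_S^t\int_\0|X_r(y)|\,\|G_{\rho^S_r(y)}h(\cdot,r,y)\|_{\C^0}\,dy\,dr$, the smoothing bound $\|G_{\rho^S_r(y)}h(\cdot,r,y)\|_{\C^0}\le C(r-S)^{\frac\gamma4-\frac1{4p}}\|h(\cdot,r,y)\|_{\Bes^\gamma_p}$ (again \cref{L:lnd}), \eqref{cond144}, \eqref{con.k} and one more Beta–integral estimate give $\|A_{S,t}\|_{\L{n}}\le C\Gamma_h\Gamma_X(T-S)^{-\nu}|t-S|^{1+\gamma/4-1/(4p)}$. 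Adding the two estimates via $\|\,\|\A_t\|_{\Lm|\F_S}\,\|_{\L{n}}\le\|\,\|A_{S,t}\|_{\Lm|\F_S}\,\|_{\L{n}}+\|\,\|\A_t-A_{S,t}\|_{\Lm|\F_S}\,\|_{\L{n}}$ proves \eqref{mainbound2}.

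The step I expect to be the main obstacle is the bound on $\|\,\|\delta A_{s,u,t}\|_{\Lm|\F_S}\,\|_{\L{n}}$: one must simultaneously produce the time exponent $1+\gamma/4-1/(4p)$ (so that the sewing exponent $\varepsilon_2$ is positive) and the sharp blow–up weight $(T-u)^{-\nu}$, which forces one to keep the conditional Gaussian structure of $V$ (to localize at scale $(r-u)^{1/2}$ and profit from the $\F_S$-measurability of $h(\cdot,r,y)$) and to control the singularity at $r=u$ through the identity $\E^s h_r-\E^u h_r=D_r-\E^sD_r$ rather than estimating $G_{\rho^u_r(y)}h$ directly. The verification of \eqref{limpart} and the Beta–type integral bounds are routine by comparison.
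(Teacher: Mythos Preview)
Your overall strategy is the same as the paper's: same germ $A_{s,t}=\E^s[\A_t-\A_s]$, same observation $\E^s\delta A_{s,u,t}=0$, same verification of \eqref{limpart} by martingale orthogonality, and the same direct bound on $A_{S,t}$ via \eqref{bound.inf}. The difference lies in how you estimate $\delta A_{s,u,t}$, and there your identity $\E^s h_r-\E^u h_r=D_r-\E^sD_r$ together with the mean-value bound is a genuinely different (and rather elegant) route.

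However, there is a real gap. Your sewing exponent is $\varepsilon_2=\tfrac12+\tfrac\gamma4-\tfrac1{4p}$, which is positive only for $\gamma>-2+\tfrac1p$, not for the full range $\gamma\in(-2,0)$ in the hypothesis. Your parenthetical ``positive in the parameter range where the lemma is invoked'' does not save you: the lemma is stated for all $\gamma>-2$, and downstream (e.g.\ in \cref{C:bounds}(ii),(iii)) it is applied with effective regularity $\gamma-\lambda$ or $\gamma-\lambda_1-\lambda_2$, where the only standing constraint is $>-2$, so the boundary range does get used.

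The paper closes this gap by a sharper bound on $\|\E^u h_r(y)\|_{L_n}$. Instead of the mean-value step, it invokes the two-scale Gaussian estimate \eqref{boundcondexpnew}: conditioning on $\F_u$ provides smoothing at scale $\rho_{r-u}\asymp(r-u)^{1/2}$, and then integrating out the Gaussian density of $P_{r-u}Z_{S,u}$ in $L_n$ (via H\"older with exponent $p/n$) provides an additional $L_p\to L_\infty$ gain. This yields
\[
\|\E^u h_r(y)\|_{L_n}\le C\Gamma_h\,(r-u)^{\gamma/4}(u-S)^{-1/(4p)},
\]
i.e.\ the full factor $(r-u)^{\gamma/4}$ is kept (no $-\tfrac1{4p}-\tfrac14$ loss), at the price of a singularity weight $(u-S)^{-1/(4p)}$. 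The sewing lemma absorbs this weight as $\alpha_2=\tfrac1{4p}<\tfrac12$, and one gets $\varepsilon_2=\tfrac12+\tfrac\gamma4>0$ for every $\gamma>-2$. The final exponent $1+\tfrac\gamma4-\tfrac1{4p}$ is the same as yours; the point is only that $\varepsilon_2$ must be positive for sewing to apply. A minor side remark: the smoothing bounds you attribute to \cref{L:lnd} are actually \cref{lem.Gf}(i),(iv); \cref{L:lnd} gives the variance bounds $\rho_t(x)\asymp t^{1/2}$.
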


\begin{proof}
The proof is based on the stochastic sewing lemma, Theorem~\ref{lem:B.Sew1}. 
We put for $S\le s<t\le T$,
$$
A_{s,t}:=\E^s \int_s^t \int_\0 X_r(y) h(V_r(y),r,y)\,dy dr.
$$
Put also for  $(t,x)\in[0,T]\times\0$
\begin{equation*}
\A_t=\int_S^t\int_\0 X_r(y)h(V_r(y),r,y,\omega)\,dydr.
\end{equation*}
Note that the integral in the left-hand side of inequality \eqref{mainbound2} is just
$\A_t-\A_S$.

 Let us verify that all the conditions of \cref{lem:B.Sew1} are satisfied. For any $S\le s<t\le T$, $u:=(s+t)/2$ we have
\begin{align*}
\delta A_{s,u,t}=\E^s \int_u^t\int_\0  X_r(y) h(V_r(y),r,y)\,dy dr
-\E^u \int_u^t\int_\0  X_r(y) h(V_r(y),r,y)\,dy dr.
\end{align*}
Therefore
$$
\E^s \delta A_{s,u,t}=0
$$
and thus condition \eqref{con:wei.dA1} holds.

Further, it is easy to see by the conditional Jensen's inequality and Minkowski's integral inequality that for any $x\in\0$, $S\le s<t\le T$ and $u:=(s+t)/2$
\begin{align}\label{Part12}
\|\delta A_{s,u,t}\|_{L_n} &\le 2 \Bigl\|\E^u\int_u^t\int_\0  X_r(y)
h(V_r(y),r,y)\,dy dr\Bigr\|_{L_n}\nn\\
&\le 2
\int_u^t\int_\0 |X_r(y)|\|\E^u h(V_r(y),r,y)\|_{L_n}\, dydr.
\end{align}
Now for fixed $r\in[u,t]$, $y\in\0$ we apply bound \eqref{boundcondexpnew}  to the function $f\colon (z,\omega)\mapsto h(z,r,y,\omega)$ which is $\Bor(\R)\otimes\cff_S$-measurable. We get
\begin{align}
\|\E^u h(V_r(y),r,y)\|_{L_n}
&\le C \|\,\|h(\cdot,r,y)\|_{\Bes^\gamma_p}\|_{L_n}
 (r-u)^{\frac\gamma4} (u-S)^{-\frac1{2p}}(r-S)^{\frac1{4p}}
 \nn\\&\le C \|\,\|h(\cdot,r,y)\|_{\Bes^\gamma_p}\|_{L_n}
 (r-u)^{\frac\gamma4} (u-S)^{-\frac1{4p}},\label{goodboundEu}
\end{align}
where the last inequality follows because $r-S\le 2(u-S)$.
Substituting \eqref{goodboundEu} into \eqref{Part12} and using \eqref{cond144}, \eqref{con.k}, we obtain that
\begin{align*}
\|\delta A_{s,u,t}\|_{L_n}
&\le C \Gamma_h\Gamma_X  (u-S)^{-\frac1{4p}}(T-u)^{-\nu}(t-u)^{1+\frac\gamma4},
\end{align*}
where the last inequality follows from \eqref{usefulintbound}. From the assumptions $\nu\in(0,1/2)$, $\gamma>-2$, $p\ge2$, we see that condition \eqref{con:wei.dA2simple} holds
with $\alpha_2=1/(4p)<1/2$, $\beta_2=\nu<1/2$, $\eps_2=1/2+\gamma/4>0$. Hence, by \Cref{R:newcon2} inequality \eqref{con:wei.dA2} is satisfied.

Finally, let us check condition \eqref{limpart}. Let $\Pi:=\{S=t_0,t_1,...,t_k=t\}$ be an arbitrary partition of $[S,t]$. Denote by $\Di{\Pi}$ its mesh size. Note that for any $i\in[0,k-1]$ we have $\E_{t_i}(\A_{t_{i+1}}-\A_{t_{i}}-A_{t_i,t_{i+1}})=0$ and $\A_{t_{i+1}}-\A_{t_{i}}-A_{t_i,t_{i+1}}$ is $\F_{t_{i+1}}$--measurable.   Therefore,  $\A_t-\sum_{i=0}^{k-1} A_{t_i,t_{i+1}}$ is a sum of martingale differences. Then using orthogonality, 
\begin{align*}
\Bigl\|\A_t-\sum_{i=0}^{k-1} A_{t_i,t_{i+1}}\Bigr\|_{\L{2}}^2&= 
 \sum_{i=0}^{k-1}\|\A_{t_{i+1}}-\A_{t_{i}}- A_{t_i,t_{i+1}}\|_{\L{2}}^2\\
&\le C\|h\|_{L_\infty}\Gamma_X\sum_{i=0}^{k-1} \Bigl(\int_{t_i}^{t_{i+1}} (T-r)^{-\nu} \,dr\Bigr)^2\\
&\le C\|h\|_{L_\infty}\Gamma_X(T-S)^{1-\nu}\max_{i=0..(k-1)} \bigl|\int_{t_i}^{t_{i+1}} (T-r)^{-\nu} \,dr\bigr|\\
&\le C\|h\|_{L_\infty}\Gamma_X(T-S)^{1-\nu}|\Pi|^{1-\nu}.
\end{align*}
Therefore, $\sum_{i=0}^{k-1} A_{t_i,t_{i+1}}$ converges to $\A_t$ in probability as $|\Pi|\to\infty$ and thus condition~\eqref{limpart} holds.

Thus, all the conditions of \Cref{lem:B.Sew1} are satisfied. Hence, by inequality
\eqref{est:B.A1}, taking into account \eqref{usefulintbound} and \eqref{lmlnsimple}  we have for any $t\in[S,T]$, $x\in\0$
\begin{equation}
\label{Astdif}
\|\|\A_t-\A_S\|_{\Lm|\F_S}\|_{L_n} \le
\|A_{S,t}\|_{L_n}+C\Gamma_h\Gamma_X(T-S)^{- \nu}
(t-S)^{1+\frac\gamma4-\frac1{4p}}.
\end{equation}
Applying \eqref{bound.inf}  to the function $f\colon (z,\omega)\mapsto h(z,r,y,\omega)$, we get
\begin{align*}
\|A_{S,t}\|_{L_n}&\le \int_S^t\int_\0|X_r(y)|\,
\|\E^S h(V_r(y),r,y)\|_{L_n}\,dydr\\
&\le C\int_S^t\int_\0|X_r(y)|\,
 \|\,\|h(\cdot,r,y)\|_{\Bes^{\gamma}_p}\|_{L_n}
 (r-S)^{\frac{\gamma}4-\frac1{4p}}\,dydr\\
&\le  C\Gamma_h\Gamma_X(T-S)^{-\nu} (t-S)^{1+\frac\gamma4-\frac1{4p}},
\end{align*}
where
in the last inequality we made use of \eqref{cond144}, \eqref{con.k} and \eqref{usefulintbound}. Combining this with \eqref{Astdif}, we finally obtain \eqref{mainbound2}.
\end{proof}

An important consequence of \Cref{T:singlebound}  is the following statement.

\begin{Corollary}\label{C:bounds}
Let $f\in \Bes^\gamma_p$ be a bounded function, $\gamma\in(-2,0)$, $m\in[2,\infty)$, $p\in[m,\infty]$.
\begin{enumerate}[$($i$)$]
	\item\label{C:451} Let $X:[0,T]\times\0\to\R$ be a measurable function satisfying \eqref{con.k} for some $\Gamma_X>0$, $\nu\in[0,\frac12)$. Then there exists a constant $C=C(\gamma,m,p,\nu)$ such that for any $0\le s\le t\le T$ and any $\Bor(\R)\otimes \F_s$-measurable  function $\kappa\in\Cspacem$ one has
\begin{multline}\label{mainbound1dif}
\Bigl\|\Bigl\|\int_s^t\int_\0 X_r(y)f(V_r(y)+P_{r-s}\kappa(y))\,dy dr\Bigr\|_{\Lm|\F_s}\Bigr\|_{\L{p}}
\\\le C\|f\|_{\Bes^\gamma_p}\Gamma_X(T-s)^{-\nu}(t-s)^{1+\frac\gamma4-\frac1{4p}}.
\end{multline}

\item\label{C:452} Let $\lambda\in(0,1]$ and assume that
\begin{equation}\label{con.apml}
	 \gamma>-2+\lambda.
 \end{equation}
  Then there exists a constant
$C=C(\gamma,\lambda, m, p)$ such that for any $0\le s\le t\le T_1$ and any $\Bor(\R)\otimes \F_s$-measurable  functions $\kappa_1,\kappa_2\in\Cspacem$ one has
\begin{align}\label{mainbound2dif}
&\sup_{x\in\0}\Bigl\|\int_s^t\int_\0 p_{T-r}(x,y)\bigl[f(V_r(y)+P_{r-s}\kappa_1(y))-
f(V_r(y)+P_{r-s}\kappa_2(y))\bigr]dr\Bigr\|_{\Lm}\nn\\
&\qquad\le C\|f\|_{\Bes^\gamma_p}\|\kappa_1-\kappa_2\|^\lambda_{\Cspacem}(t-s)^{1+\frac{\gamma-\lambda}4-\frac1{4p}}.
\end{align}
\item Let $\lambda,\lambda_1,\lambda_2\in(0,1]$. Assume that  \eqref{con.apml} holds and that
\begin{equation}\label{condlambda1lambda2}
\gamma>-2+\lambda_1+\lambda_2.
\end{equation}
 Then there exists a constant
$C=C(\gamma,\lambda,\lambda_1, \lambda_2, p)$ such that for any $0\le s\le u \le t\le 1$, any $\Bor(\R)\otimes \F_s$ measurable  functions $\kappa_1,\kappa_2\in\Cspacem$, any $\Bor(\R)\otimes \F_u$ measurable  functions $\kappa_3,\kappa_4\in\Cspacem$ one has
\begin{align}\label{mainbound4dif}
&\sup_{x\in\0}\Bigl\|\int_u^t\int_\0 p_{T-r}(x,y)\bigl[f(V_r(y)+P_{r-u}\kappa_1(y))-
f(V_r(y)+P_{r-u}\kappa_2(y))\nn\\
&\qquad\qquad\qquad -f(V_r(y)+P_{r-u}\kappa_3(y))+
f(V_r(y)+P_{r-u}\kappa_4(y))\bigr]dr\Bigr\|_{\Lm}\nn\\
&\qquad\le C\|f\|_{\Bes^\gamma_p} \sup_{y\in\0}\|
\|\kappa_1(y)-\kappa_3(y)\|_{\Lm|\F_s}\|_{\L{\infty}}^{\lambda_2}\,
\|\kappa_1-\kappa_2\|^{\lambda_1 }_{\Cspacem}
(t-u)^{1+\frac{\gamma-\lambda_1-\lambda_2}4-\frac1{4p}}\nn\\
&\qquad\phantom{\le}+
C\|f\|_{\Bes^\gamma_p}\|\kappa_1-\kappa_2-\kappa_3+\kappa_4\|^{\lambda}_{\Cspacem}(t-u)^{1+\frac{\gamma-\lambda}4-\frac1{4p}}.
\end{align}
\end{enumerate}
\end{Corollary}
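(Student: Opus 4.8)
The plan is to deduce all three estimates from \cref{T:singlebound}, since the analytic heart (the conditional Gaussian density estimates fed through the stochastic sewing lemma) is already packaged there. In each case I will pick $X$ and $h$ so that the integral on the left-hand side is $\A_t-\A_S$ for the process $\A$ of \cref{T:singlebound} (with $S=s$ or $S=u$), that $(z,r,y)\mapsto h(z,r,y)$ is $\F_S$-measurable — which holds because each $\kappa_i$ is $\Bor(\R)\otimes\F_s$- (resp.\ $\F_u$-) measurable and $\F_s\subseteq\F_u$ — that $h$ is globally bounded (because $f$ is), and that the constant $\Gamma_h$ in \eqref{cond144} is controlled by translation (and mixed second-difference) estimates for Besov norms together with the contraction $\|P_t\phi(y)\|_{\Lm}\le\|\phi\|_{\Cspacem}$. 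For \textbf{part (i)} I would apply \cref{T:singlebound} on $[s,T]$ with $n=p$, the given $X$, and $h(z,r,y):=f(z+P_{r-s}\kappa(y))$; since translation leaves Besov norms invariant, $\|h(\cdot,r,y)\|_{\Bes^\gamma_p}\equiv\|f\|_{\Bes^\gamma_p}$, so \eqref{cond144} holds with $\Gamma_h=\|f\|_{\Bes^\gamma_p}$ while \eqref{con.k} is the hypothesis on $X$ (the increment $P_{r-s}\kappa(y)$ is well defined in $\Cspacem$ and $h$ is jointly measurable by \cref{L:0}). Then \eqref{mainbound2} is literally \eqref{mainbound1dif}.

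\textbf{Part (ii).} Fixing $x\in\0$, I would apply \cref{T:singlebound} on $[s,T]$ with $X_r(y)=p_{T-r}(x,y)$, so that $\int_\0 X_r(y)\,dy\le1$, i.e.\ \eqref{con.k} holds with $\Gamma_X=1$, $\nu=0$; with $n=m$; and with
\[
h(z,r,y):=f\big(z+P_{r-s}\kappa_1(y)\big)-f\big(z+P_{r-s}\kappa_2(y)\big),
\]
viewed as an element of $\Bes^{\gamma-\lambda}_p$ (admissible since $\gamma-\lambda\in(-2,0)$ by \eqref{con.apml}). The key input is the elementary bound $\|f(\cdot+a)-f(\cdot+b)\|_{\Bes^{\gamma-\lambda}_p}\le C\|f\|_{\Bes^\gamma_p}|a-b|^\lambda$ for $\lambda\in(0,1]$, which follows by interpolating $\|f(\cdot+a)-f\|_{\Bes^{\gamma-1}_p}\le C|a|\,\|f\|_{\Bes^\gamma_p}$ against $\|f(\cdot+a)-f\|_{\Bes^\gamma_p}\le2\|f\|_{\Bes^\gamma_p}$ and translating. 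Together with $\|P_{r-s}\phi(y)\|_{\Lm}\le\|\phi\|_{\Cspacem}$ and $\|\xi\|_{\L{\lambda m}}\le\|\xi\|_{\Lm}$, this gives $\big\|\,\|h(\cdot,r,y)\|_{\Bes^{\gamma-\lambda}_p}\big\|_{\Lm}\le C\|f\|_{\Bes^\gamma_p}\|\kappa_1-\kappa_2\|_{\Cspacem}^\lambda$, i.e.\ \eqref{cond144} (with $\gamma$ replaced by $\gamma-\lambda$, $n=m$) holds with $\Gamma_h=C\|f\|_{\Bes^\gamma_p}\|\kappa_1-\kappa_2\|_{\Cspacem}^\lambda$. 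Then \eqref{mainbound2}, combined with $\|\cdot\|_{\Lm}=\big\|\,\|\cdot\|_{\Lm|\F_s}\big\|_{\Lm}$ and a supremum over $x$, yields \eqref{mainbound2dif}.

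\textbf{Part (iii).} I would run the same scheme on $[u,t]$ (all four $\kappa_i$, hence $h$, are $\F_u$-measurable for fixed $(z,r,y)$ since $\F_s\subseteq\F_u$), again with $X_r(y)=p_{T-r}(x,y)$, $\Gamma_X=1$, $\nu=0$, $n=m$. The new ingredient is the algebraic identity: with $\Delta_hf:=f(\cdot+h)-f$ and $\sigma:=a_1-a_2-a_3+a_4$,
\[
f(a_1)-f(a_2)-f(a_3)+f(a_4)=\big(\Delta_{a_1-a_2}\Delta_{a_1-a_3}f\big)(a_4-\sigma)+\big(\Delta_\sigma f\big)(a_4-\sigma),
\]
checked by direct expansion (note $a_4-\sigma=-a_1+a_2+a_3$). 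Taking $a_i=P_{r-u}\kappa_i(y)$ this splits $h=h^{(a)}+h^{(b)}$ where $h^{(a)}(\cdot,r,y)$ is a translate of $\Delta_{a_1-a_2}\Delta_{a_1-a_3}f$ and $h^{(b)}(\cdot,r,y)$ a translate of $\Delta_\sigma f$. Iterating the translation estimate gives $\|\Delta_\alpha\Delta_\beta f\|_{\Bes^{\gamma-\lambda_1-\lambda_2}_p}\le C\|f\|_{\Bes^\gamma_p}|\alpha|^{\lambda_1}|\beta|^{\lambda_2}$ (admissible since $\gamma-\lambda_1-\lambda_2\in(-2,0)$ by \eqref{condlambda1lambda2}) and $\|\Delta_\sigma f\|_{\Bes^{\gamma-\lambda}_p}\le C\|f\|_{\Bes^\gamma_p}|\sigma|^\lambda$; combining with Hölder's inequality, the contraction property of $P_{r-u}$, and $\|\xi\|_{\Lm}\le\big\|\,\|\xi\|_{\Lm|\F_s}\big\|_{\L\infty}$, I would bound
\begin{align*}
\big\|\,\|h^{(a)}(\cdot,r,y)\|_{\Bes^{\gamma-\lambda_1-\lambda_2}_p}\big\|_{\Lm}&\le C\|f\|_{\Bes^\gamma_p}\,\sup_{z\in\0}\big\|\,\|\kappa_1(z)-\kappa_3(z)\|_{\Lm|\F_s}\big\|_{\L\infty}^{\lambda_2}\,\|\kappa_1-\kappa_2\|_{\Cspacem}^{\lambda_1},\\
\big\|\,\|h^{(b)}(\cdot,r,y)\|_{\Bes^{\gamma-\lambda}_p}\big\|_{\Lm}&\le C\|f\|_{\Bes^\gamma_p}\,\|\kappa_1-\kappa_2-\kappa_3+\kappa_4\|_{\Cspacem}^{\lambda}.
\end{align*}
Applying \cref{T:singlebound} separately to $h^{(a)}$ (effective regularity $\gamma-\lambda_1-\lambda_2$) and to $h^{(b)}$ (effective regularity $\gamma-\lambda$), then adding and taking $\sup_x$, produces exactly the two terms of \eqref{mainbound4dif}.

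\textbf{Main obstacle.} The only genuinely delicate step is the choice of the four-term difference identity in part (iii): it must exhibit one summand as an honest mixed second difference of $f$ with shift directions $\kappa_1-\kappa_2$ and $\kappa_1-\kappa_3$ (hence losing regularity $\lambda_1+\lambda_2$ and producing the first term of \eqref{mainbound4dif}), and the other as a single difference in the second-order direction $\kappa_1-\kappa_2-\kappa_3+\kappa_4$ (losing only $\lambda$, producing the second term), so that each piece picks up the correct power of $t-u$ and the correct combination of ($\F_s$-conditional and unconditional) $L$-norms. Everything else — measurability, Besov translation bounds, bookkeeping of Hölder exponents, and the passage from conditional to unconditional norms — is routine, and the probabilistic core is supplied wholesale by \cref{T:singlebound}.
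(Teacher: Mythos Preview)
Your proposal is correct and matches the paper's proof essentially line by line: parts~(i) and~(ii) are identical, and in part~(iii) your decomposition $h=h^{(a)}+h^{(b)}$ is exactly the paper's device of replacing $\kappa_4$ by $\kappa_3+\kappa_2-\kappa_1$ in the fourth argument (so that $h^{(a)}$ is a genuine second difference, bounded via \eqref{Besov4bound} and \cref{T:singlebound}) and handling the leftover $h^{(b)}$ by part~(ii). The one place your sketch is a bit compressed is the $\Lm$-bound on $\|h^{(a)}\|_{\Bes^{\gamma-\lambda_1-\lambda_2}_p}$: the paper makes explicit that one writes $\E=\E\,\E^s$, pulls the $\F_s$-measurable factor $|P_{r-u}(\kappa_1-\kappa_2)|^{m\lambda_1}$ out of $\E^s$, and uses conditional Jensen to get the $\|\,\|\kappa_1-\kappa_3\|_{\Lm|\F_s}\|_{\L\infty}^{\lambda_2}$ factor---but these are precisely the ingredients you list.
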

\begin{proof} The results are obtained by choosing suitable $n$, $K$ and $h$ in \Cref{T:singlebound}. In what follows, we fix $0\le s\le u\le t\le T$ and $x$ in $\0$. We note that the expressions $P_{r}\kappa_i(y)$, $i=1,..,4$ are well defined thanks to \cref{L:0} and the assumption $\kappa_i\in\Cspacem$.

(i) We take $n=p$,  $h(z,r,y,\omega):=f(z+P_{r-s}[\kappa(\cdot,\omega)](y))$, where $z\in\R$, $r\in[s,t]$, $y\in\0$, $\omega\in\Omega$.   Let us verify that $h$ satisfies all the assumptions of \Cref{T:singlebound} with $s$ in place of $S$. Since $\kappa$ is $\F_s$-measurable, we see that the random variable $h(z,r,x)$ is $\F_s$ measurable and the first assumption of \Cref{T:singlebound} holds.
It follows from \eqref{Besovbound} that
\begin{equation*}
\|f(\cdot+P_{r-s}\kappa(y))\|_{\Bes^\gamma_p}\le \|f\|_{\Bes^\gamma_p}.
\end{equation*}
Hence \eqref{cond144} holds with $\Gamma_h:=\|f\|_{\Bes^\gamma_p}$.
 Thus all the conditions of \Cref{T:singlebound} are met.  Now \eqref{mainbound1dif} follows directly from inequality~\eqref{mainbound2}.

(ii) We fix $x\in\0$, choose $n=m$ and $X_r(y):=p_{T-r}(x,y)$.  Let us apply  \Cref{T:singlebound} for the function
$$
h\colon(z,r,y,\omega)\mapsto f(z+P_{r-s}\kappa_1(y))-f(z+P_{r-s}\kappa_2(y)),\quad z\in\R,
r\in[s,t], y\in\0, \omega\in\Omega.
$$
It is easy to see that \eqref{con.k} is satisfied with $\Gamma_X=1$ and $\nu=0$. Let us verify that $h$ satisfies all the assumptions of \Cref{T:singlebound}  with $s$ in place of $S$. The first assumption clearly holds. To check the second assumption, we note that \eqref{Besov2bound} implies
\begin{equation*}
 \|f(\cdot+P_{r-s}\kappa_1(y))-f(\cdot+P_{r-s}\kappa_2(y))\|_{\Bes^{\gamma-\lambda}_p}\le \|f\|_{\Bes^{\gamma}_p}|P_{r-s}\kappa_1(y)-P_{r-s}\kappa_2(y)|^\lambda.
\end{equation*}
Therefore,
\begin{align*}
\|\,\|h(\cdot,r,y) \|_{\Bes^{\gamma- \lambda}_p} \|_{\Lm}&\le\|f\|_{\Bes^{\gamma}_p} \|P_{r-s}\kappa_1(y)-P_{r-s}\kappa_2(y)\|_{\Lm}^\lambda\\
&\le\|f\|_{\Bes^{\gamma}_p}\sup_{z\in\0}\|\kappa_1(z)-\kappa_2(z)\|_{\Lm}^\lambda\\
&=\|f\|_{\Bes^{\gamma}_p}\|\kappa_1-\kappa_2\|_{\Cspacem}^\lambda,
\end{align*}
where in the first inequality we used the fact that $\||\xi|^\lambda\|_{\Lm}\le
\|\xi\|_{\Lm}^\lambda$ for any random variable $\xi$ and $\lambda\in(0,1]$; the second inequality follows from \eqref{Ptexpbound}; the third inequality follows from the definition of the norm $\|\cdot\|_{\Cspacem}$, see \eqref{CLm}.
Thus, inequality \eqref{cond144} is satisfied  with $\Gamma_h:=\|f\|_{\Bes^\gamma_p}\|\kappa_1-\kappa_2\|_{\Cspacem}^\lambda$.
Thus all the conditions of \Cref{T:singlebound} are met.  Bound \eqref{mainbound2dif} follows now from \eqref{mainbound2} and \eqref{lmlnsimple}.

(iii).  We fix $x\in\0$, choose $n=m$, $X_r(y):=p_{T-r}(x,y)$ and
\begin{multline*}
h(z,r,y,\omega):= f(z+P_{r-u}\kappa_1(y))-f(z+P_{r-u}\kappa_2(y))\\
\quad-f(z+P_{r-u}\kappa_3(y))+f(z+[P_{r-u}(\kappa_3+\kappa_2-\kappa_1)](y)),
\end{multline*}
where $z\in\R$, $r\in[u,t]$, $y\in\0$, $\omega\in\Omega$. Let us verify $h$ satisfies the assumptions of \Cref{T:singlebound} with $u$ in place of $S$.

Again, it is easy to see that the first and the third conditions of \Cref{T:singlebound} are satisfied with $\Gamma_X=1$ and $\nu=0$. To check the second condition, we note that \eqref{Besov4bound} yields
\begin{equation*}
\|h(\cdot,r,y)\|_{\Bes^{\gamma-\lambda_1-\lambda_2}_p}\le \|f\|_{\Bes^\gamma_p}
|P_{r-u}\kappa_1(y)-P_{r-u}\kappa_2(y)|^{\lambda_1}\,|P_{r-u}\kappa_1(y)-P_{r-u}\kappa_3(y)|^{\lambda_2}.
\end{equation*}
Using the fact that the functions $\kappa_1$ and $\kappa_2$ are $\F_s$-measurable, we derive from the above bound
\begin{align*}
&\E\|h(\cdot,r,y)\|_{\Bes^{\gamma-\lambda_1-\lambda_2}_p}^m=\E\E^s\|h(\cdot,r,y)\|_{\Bes^{\gamma-\lambda_1-\lambda_2}_p}^m\\
&\quad\le\|f\|_{\Bes^\gamma_p}^m\E\Bigl[|P_{r-u}\kappa_1(y)-P_{r-u}\kappa_2(y)|^{m \lambda_1}\E^s|P_{r-u}(\kappa_1(y))-P_{r-u}\kappa_3(y)|^{m\lambda_2}\Bigr]\\
&\quad\le\|f\|_{\Bes^\gamma_p}^m
\|\E^s|P_{r-u}\kappa_1(y)-P_{r-u}\kappa_3(y)|^{m\lambda_2}\|_{\L{\infty}}
\E\bigl[|P_{r-u}\kappa_1(y)-P_{r-u}\kappa_2(y)|^{m \lambda_1}\bigr]\\
&\quad\le\|f\|_{\Bes^\gamma_p}^m
\|\,\|P_{r-u}[\kappa_1-\kappa_3](y)\|_{\Lm|\F_s}\|_{\L{\infty}}^{m\lambda_2}\,\,
\|P_{r-u}[\kappa_1-\kappa_2](y)\|_\Lm^{m\lambda_1}
\\
&\quad\le\|f\|_{\Bes^\gamma_p}^m \sup_{y\in\0}\|
\|\kappa_1(y)-\kappa_3(y)\|_{\Lm|\F_s}\|_{\L{\infty}}^{m\lambda_2}\,
\|\kappa_1-\kappa_2\|^{m\lambda_1 }_{\Cspacem}.
\end{align*}
Here in the penultimate inequality we used the fact that
$\E^s[|\xi|^{\lambda}]\le
(\E^s|\xi|)^\lambda$ for any random variable $\xi$ and $\lambda\in(0,1]$, $s\ge0$; in the last inequality  we applied bound \eqref{Ptexpbound}.

Thus, inequality \eqref{cond144} is satisfied  with
$$
\Gamma_h:=\|f\|_{\Bes^\gamma_p} \sup_{y\in\0}\|
\|\kappa_1(y)-\kappa_3(y)\|_{\Lm|\F_s}\|_{\L{\infty}}^{\lambda_2}\,
\|\kappa_1-\kappa_2\|^{\lambda_1 }_{\Cspacem}.
$$
Bound \eqref{mainbound4dif} follows now from \Cref{T:singlebound} and
part \ref{C:452} of the Corollary.
\end{proof}
\begin{proof}[Proof of \cref{lem.138}]
	In \cref{C:bounds}\ref{C:451}, we choose $X_r(y)=P_{T-r}(x,y)$. In this case, condition \eqref{con.k} is satisfied with $\Gamma_X=1$ and $\nu=0$. Hence the estimates \eqref{mainbound1dif} and \eqref{lmlnsimple} imply \eqref{est.fkappa}.
\end{proof}

To establish \Cref{L:main}, we need the following result.

\begin{Lemma}\label{lem.intK} Assume that all the conditions of \Cref{L:main} are satisfied.  
Let $X:[0,T]\times\0\to\R$ be a measurable function satisfying \eqref{con.k} for some $\Gamma_X>0$, $\nu\in[0,\frac12)$. 
	Then there exists a constant $C=C(\gamma,p,\tau,m,n)>0$ such that for any $S\in[0,T]$
	\begin{align}\label{Bound1K}
	&\Bigl\|\,\Bigl\|\int_S^T\int_\0 X_r(y)f(V_r(y)+\psi_r(y))\,dydr\Bigr\|_{\Lm|\F_S}\Bigr\|_{\L{n}}\nn\\
	&\quad\le  C\|f\|_{\Bes^\gamma_p}\Gamma_X(T-S)^{1+\frac14(\gamma-\frac1p)-\nu}+C\|f\|_{\Bes^\gamma_p}\Gamma_X
		[\psi]_{\Ctimespace{\tau}{m,n}{[S,T]}}(T-S)^{\frac34+\frac14(\gamma-\frac1p)+\tau-\nu}.
	\end{align}	
\end{Lemma}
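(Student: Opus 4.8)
The plan is to reduce \cref{Bound1K} to an application of the stochastic sewing lemma, \cref{lem:B.Sew1}, with the germ
\[
A_{s,t}:=\E^s\int_s^t\int_\0 X_r(y)\,f\bigl(V_r(y)+P_{r-s}\psi_s(y)\bigr)\,dy\,dr,\qquad (s,t)\in\Delta_{S,T},
\]
i.e.\ we ``freeze'' the drift $\psi$ at the left endpoint $s$ and then correct for the error. Here we must first check that $P_{r-s}\psi_s$ is well-defined as an $L_m$-random field: this follows from $[\psi]_{\Ctimespacetaum{[S,T]}}<\infty$ (which is implicit in the finiteness of the right-hand side of \cref{Bound1K}), the telescoping argument used in the proof of \cref{lem.apriori}, and \cref{L:0}. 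The limiting functional $\A_t=\int_S^t\int_\0 X_r(y)f(V_r(y)+\psi_r(y))\,dy\,dr$ is the quantity we want to estimate; verifying \eqref{limpart} is done exactly as in the proof of \cref{T:singlebound} (orthogonality of martingale differences plus the integrability $(T-r)^{-\nu}$ with $\nu<1/2$, since $f$ is bounded).

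The heart is estimating $\delta A_{s,u,t}$ for $u=(s+t)/2$. We write
\[
\delta A_{s,u,t}=\E^s\!\int_u^t\!\!\int_\0\! X_r(y)\bigl[f(V_r(y)+P_{r-s}\psi_s(y))-f(V_r(y)+P_{r-u}\psi_u(y))\bigr]dy\,dr+\bigl(\E^s-\E^u\bigr)\!\int_u^t\!\!\int_\0\! X_r(y)f(V_r(y)+P_{r-u}\psi_u(y))\,dy\,dr.
\]
For the first term, $\E^s[\,\cdot\,]$ of it is controlled using \cref{C:bounds}\ref{C:452} with $\lambda=1$ (or a suitable $\lambda<1$ so that \eqref{con.apml} holds, noting $\gamma>-2+1/p>-1$ is not assumed, so one takes $\lambda$ small; but since $P_{r-s}\psi_s-P_{r-u}\psi_u=P_{r-u}(P_{u-s}\psi_s-\psi_u)$ has $L_m$-norm $\le [\psi]_{\Ctimespacetaum{[S,T]}}(u-s)^\tau$, and we want a clean power, we use $\lambda=1$), giving a bound $\lesssim \|f\|_{\Bes^\gamma_p}\Gamma_X [\psi]_{\Ctimespacetaum{[S,T]}}(u-s)^\tau (T-u)^{-\nu}(t-u)^{1+\frac{\gamma-1}4-\frac1{4p}}$, which has total time-exponent $\tau+1+\frac{\gamma-1}4-\frac1{4p}$; condition \eqref{gammaptaucond}, $\gamma-\frac1p+4\tau>1$, is exactly what makes this exponent exceed $1$, so \eqref{con:wei.dA1} holds with $\beta_1=\nu$, $\alpha_1=0$. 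For the second term, $\E^s(\E^s-\E^u)=0$, so it contributes nothing to \eqref{con:wei.dA1}. For \eqref{con:wei.dA2} we bound $\|\delta A_{s,u,t}\|_{L_m|\F_S}$ in $L_n$: the first piece is handled by the same \cref{C:bounds}\ref{C:452} estimate (it is deterministic-in-the-conditioning after taking $\E^s$, or one bounds directly), and the second piece, $(\E^s-\E^u)\int_u^t\cdots$, is bounded in $L_n$ by $2\|\E^u\int_u^t\int_\0 X_r(y)f(V_r(y)+P_{r-u}\psi_u(y))\,dy\,dr\|_{L_n}$, to which \cref{C:bounds}\ref{C:451} applies with $s$ replaced by $u$: this gives $\lesssim\|f\|_{\Bes^\gamma_p}\Gamma_X(T-u)^{-\nu}(t-u)^{1+\frac\gamma4-\frac1{4p}}$, with half-exponent $\frac12+\bigl(\frac12+\frac\gamma4-\frac1{4p}\bigr)$; since $\gamma>-2+1/p$ we have $\frac12+\frac\gamma4-\frac1{4p}>0$, so \eqref{con:wei.dA2simple} (hence \eqref{con:wei.dA2}, via \cref{R:newcon2}) holds with $\alpha_2=0$, $\beta_2=\nu<1/2$.

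Having verified all hypotheses, \cref{lem:B.Sew1} gives, via \eqref{est:B.A1}, \eqref{intboundssl} and \eqref{lmlnsimple},
\[
\Bigl\|\,\Bigl\|\A_T-\A_S-A_{S,T}\Bigr\|_{\Lm|\F_S}\Bigr\|_{\L n}\lesssim \Gamma_2\,(T-S)^{\frac12-\nu+\varepsilon_2}+\Gamma_1\,(T-S)^{1-\nu+\varepsilon_1},
\]
and it remains to add back $A_{S,T}$ itself, which by \eqref{bound.inf} (applied to $z\mapsto f(z+P_{r-S}\psi_S(y))$) and \eqref{con.k} is bounded in $L_n$ by $C\|f\|_{\Bes^\gamma_p}\Gamma_X(T-S)^{1-\nu+\frac14(\gamma-\frac1p)}$; then $\A_T-\A_S$ is exactly the integral in \eqref{Bound1K}, and collecting terms with $\Gamma_1\sim\|f\|_{\Bes^\gamma_p}\Gamma_X[\psi]_{\Ctimespacetaum{[S,T]}}$ (with $\varepsilon_1=\tau+\frac{\gamma-1}4-\frac1{4p}-\bigl(-\frac14(\gamma-\frac1p)\bigr)$... one must align powers carefully here) and $\Gamma_2\sim\|f\|_{\Bes^\gamma_p}\Gamma_X$ yields \eqref{Bound1K}. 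The main obstacle is the careful bookkeeping of the singular weights $(T-r)^{-\nu}$ and time-powers $(t-s)^{\alpha}$ through the sewing lemma so that the two exponents in \eqref{Bound1K}, namely $1+\frac14(\gamma-\frac1p)-\nu$ and $\frac34+\frac14(\gamma-\frac1p)+\tau-\nu$, come out exactly; in particular one must confirm that \eqref{gammaptaucond} is precisely the condition $\varepsilon_1>0$ needed for \eqref{con:wei.dA1}, and handle the case where $\tau$ is large enough that $\lambda=1$ in \cref{C:bounds}\ref{C:452} might violate \eqref{con.apml} by instead splitting the frozen-drift difference over a dyadic/telescoping chain so that each increment has size $\lesssim(u-s)^\tau$ regardless.
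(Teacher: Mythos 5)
Your overall skeleton (freeze the drift at the left endpoint, apply the stochastic sewing lemma, identify \eqref{gammaptaucond} as the condition $\varepsilon_1>0$, use \cref{C:bounds}\ref{C:451} for the half-order condition, and add back $A_{S,T}$) is the same as the paper's, but there are two genuine gaps. First, your verification of \eqref{limpart} is wrong: even with the $\E^s$-conditioned germ, the integrand depends on the left endpoint through $P_{r-s}\psi_s$, so $\E^{t_i}[\A_{t_{i+1}}-\A_{t_i}]=\E^{t_i}\int_{t_i}^{t_{i+1}}\int_\0 X_r f(V_r+\psi_r)\neq A_{t_i,t_{i+1}}$, and $\A_t-\sum_i A_{t_i,t_{i+1}}$ is \emph{not} a sum of martingale differences. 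The orthogonality argument from \cref{T:singlebound} therefore does not transfer (the paper states this explicitly). The paper's fix is a two-step structure you omit entirely: first prove the lemma for $f\in\C^1_b$, where \eqref{limpart} follows from the Lipschitz bound $\|f(V_r+\psi_r)-f(V_r+P_{r-t_i}\psi_{t_i})\|_{L_m}\le\|f\|_{\C^1}[\psi]_{\Ctimespacetaum{[S,T]}}|\Pi|^\tau$, and then pass to general bounded continuous $f\in\Bes^\gamma_p$ by mollifying $f^k=G_{1/k}f$ and using dominated convergence in the final inequality. Without some substitute for this step your proof does not close for merely bounded continuous $f$.

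Second, your treatment of $\E^s\delta A_{s,u,t}$ via \cref{C:bounds}\ref{C:452} does not apply as stated: that corollary compares $f(V_r+P_{r-s}\kappa_1)$ with $f(V_r+P_{r-s}\kappa_2)$ for two $\F_s$-measurable fields propagated by the \emph{same} semigroup time, whereas here the second argument is $P_{r-u}\psi_u$ with $\psi_u$ only $\F_u$-measurable; moreover the corollary is stated for the kernel $p_{T-r}(x,y)$, not for a general $X$ carrying the $(T-r)^{-\nu}$ weight. Your ensuing worry about \eqref{con.apml} (i.e. needing $\gamma>-1$ for $\lambda=1$) is an artifact of this wrong routing. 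The paper instead conditions on $\F_u$ and applies \eqref{bound.inf} directly to the $\F_u$-measurable function $h_{r,y}(z)=f(z+P_{r-s}\psi_s(y))-f(z+P_{r-u}\psi_u(y))$, whose $\Bes^{\gamma-1}_p$-norm is controlled by \eqref{Besov2bound} with $\alpha=1$; this yields the factor $(r-u)^{-\frac14+\frac\gamma4-\frac1{4p}}\,[\psi]_{\Ctimespace{\tau}{m,n}{[S,T]}}(u-s)^\tau$ with no restriction beyond $\gamma-\frac1p>-2$ (so that the singularity in $r$ is integrable) and \eqref{gammaptaucond} (so that $\varepsilon_1>0$). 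Your dyadic/telescoping workaround at the end is unnecessary once the estimate is set up this way.
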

\begin{proof}
 The proof of the lemma is based on the Stochastic Sewing lemma (\Cref{lem:B.Sew1}), \Cref{C:bounds}\ref{C:451}, and dominated convergence arguments.
 
We can assume without loss of generality that $[\psi]_{\Ctimespace{\tau}{m,n}{[S,T]}}<\infty$. Indeed, otherwise the right-hand side of \eqref{Bound1K} is infinite. Therefore, recalling \eqref{lmlnsimple}, we have $[\psi]_{\Ctimespace{\tau}{m}{[S,T]}}<\infty$. Thus for any $s\in[S,T]$
we have 
\begin{equation}
\label{welldef}
\psi_s\in\Cspacem.
\end{equation}

\textit{Step 1}. Assume first that
$f\in\C^1_b$. Fix  $0\le S\le T$ and put for $(s,t)\in\Delta_{S,T}$
	$$
	A_{s,t}:= \int_s^t \int_\0 X_r(y) f(V_r(y)+P_{r-s}\psi_s(y))\,dy dr.
	$$
Note that the expression $P_{r-s}\psi_s(y)$ is well-defined thanks to \Cref{L:0} and \eqref{welldef}. Set now
	\begin{equation*}
	\A_t=\int_S^t\int_\0 X_r(y)f(V_r(y)+\psi_r(y))dydr.
	\end{equation*}
It is easy to see that the integral in the left-hand side of inequality \eqref{Bound1K} is $\A_T-\A_S$.
Let us verify that all the conditions of \Cref{lem:B.Sew1}  are satisfied.

Clearly, for any $S\le s<u<t\le T$ we get
\begin{equation*}
\delta A_{s,u,t}= \int_u^t \int_\0 X_r(y) [f(V_r(y)+P_{r-s}\psi_s(y))-f(V_r(y)+P_{r-u}\psi_u(y))]\,dy dr.
\end{equation*}

For fixed $S\le s<u<r\le T$, $y\in\0$ introduce a function $h_{r,y}\colon\R\times\Omega\to\R$
$$
h_{r,y}\colon (z,\omega)\mapsto f(z+P_{r-s}\psi_s(y))-f(z+P_{r-u}\psi_u(y)).
$$
Note that for fixed non-random parameters the random variable  $h_{r,y}(z)$ is $\F_u$-measurable. Hence, applying  \eqref{bound.inf} with  $u,r$ in place of $s,t$, respectively,   we deduce
\begin{align*}
|\E^s\delta A_{s,u,t}|&= \Bigl|\E^s \int_u^t \int_\0 X_r(y) 
\E^u h_{r,y}(V_r(y))\,dydr\Bigr|\\
&\le C\int_u^t \int_\0 |X_r(y)|  (r-u)^{-\frac14+\frac{\gamma}4-\frac1{4p}}\E^s \|h_{r,y}\|_{\Bes^{\gamma-1}_p}\,dy dr\\
&\le C\|f\|_{\Bes^\gamma_p}\int_u^t \int_\0 |X_r(y)| (r-u)^{-\frac14+\frac{\gamma}4-\frac1{4p}} \E^s|P_{r-s}\psi_s(y)-P_{r-u}\psi_u(y)| \,dy dr,
\end{align*}
where the last inequality follows from \eqref{Besov2bound} with $\lambda=1$.
Applying  the integral Minkowski inequality, we derive
\begin{equation}\label{Step12}
\!\|\E^s \delta A_{s,u,t}\|_{\L{n}}\!
\le C\|f\|_{\Bes^\gamma_p}\!\int_u^t\!\! \int_\0\!\!\! |X_r(y)|  (r-u)^{-\frac14+\frac{\gamma}4-\frac1{4p}}\bigl\|\E^s|P_{r-s}\psi_s(y)-P_{r-u}\psi_u(y)|\bigr\|_{\L{n}}\,dy dr.
\end{equation}
Using the fact that for any $s\ge0$ and random variable $\xi$ one has $\E^s |\xi|\le \|\xi\|_{\Lm|\F_s}$ we deduce for any $y\in\0$,
\begin{align*}
\|\E^s |P_{r-s}\psi_s(y)-P_{r-u}\psi_u(y)|\|_{\L{n}}&=
\|\E^s |[P_{r-u}(P_{u-s}\psi_s-\psi_u)](y)|\|_{\L{n}}\\
&\le \| \|[P_{r-u}(P_{u-s}\psi_s-\psi_u)](y)\|_{\Lm|\F_s}\|_{\L{n}}\\
&\le
\sup_{z\in\0 }\|\|P_{u-s}\psi_s(z)-\psi_u(z)|\F_s\|_{\Lm}\|_{\L{n}}\\
&\le [\psi]_{\Ctimespace{\tau}{m,n}{[S,T]}}(u-s)^\tau,
\end{align*}
where the penultimate inequality follows from \eqref{Ptexpbound}, and in the last inequality we used the definition of the seminorm $ [\cdot]_{\Ctimespace{\tau}{m,n}{[S,T]}}$ given in \eqref{condnormn}. Substituting this into \eqref{Step12}, we obtain
\begin{align*}
\|\E^s \delta A_{s,u,t}\|_{\L{n}}&\le  C\|f\|_{\Bes^\gamma_p} [\psi]_{\Ctimespace{\tau}{m,n}{[S,T]}}(u-s)^\tau \int_u^t \int_\0 |X_r(y)|  (r-u)^{-\frac14+\frac{\gamma}4-\frac1{4p}}\,dy dr\\
&\le C\|f\|_{\Bes^\gamma_p} \Gamma_X[\psi]_{\Ctimespace{\tau}{m,n}{[S,T]}}(T-u)^{-\nu}(t-s)^{\frac34+\frac{\gamma}4-\frac1{4p}+\tau},
\end{align*}
where the last inequality follows from assumption \eqref{con.k} and 
inequality~\eqref{usefulintbound}. Since, by assumption, $\gamma/4-1/(4p)+\tau>1/4$, we see that condition~\eqref{con:wei.dA1} of \Cref{lem:B.Sew1} holds with $\alpha_1=0$, $\beta_1=\nu<1/2$, $\eps_1=\gamma/4-1/(4p)+\tau-1/4>0$.

Now we move on to the verification of condition~\eqref{con:wei.dA2}. Applying \Cref{C:bounds}\ref{C:451} to the function $k:=\psi_s$, we deduce that for any $(s,t)\in\Delta_{S,T}$ 
\begin{equation}\label{Astbound3}
\|\|A_{s,t}\|_{\Lm|\F_s}\|_{\L{n}}\le\|\|A_{s,t}\|_{\Lm|\F_s}\|_{\L{p}}\le C  \|f\|_{\Bes^\gamma_p}\Gamma_X (T-s)^{-\nu}(t-s)^{1+\frac{\gamma}4-\frac1{4p}}.
\end{equation}
This  induces  that for any $(s,t)\in\Delta_{S,T}$ and $u\in[s,t]$ one has
\begin{equation*}
\|\|\delta A_{s,u,t}\|_{\Lm|\F_s}\|_{\L{n}}\le  C\|f\|_{\Bes^\alpha_p}\Gamma_X(T-u)^{-\nu}(t-s)^{1+\frac{\gamma} 4-\frac1{4p}}.
\end{equation*}
Since $\gamma-\frac1p>-2$ by assumption, we see that condition~\eqref{con:wei.dA2}  holds with $\alpha_2=0$, $\beta_2=\nu<1/2$, $\eps_2=1/2+\gamma/4-1/(4p)>0$.

Finally, let us check condition \eqref{limpart}. Let $\Pi:=\{S=t_0,t_1,...,t_k=t\}$ be an arbitrary partition of $[S,t]$. Denote by $\Di{\Pi}$ its mesh size.  Note that contrary to the proof of \Cref{T:singlebound}, we cannot use here the orthogonality because the sum $\sum_{i=0}^{k-1} (\A_{t_{i+1}}-\A_{t_{i}}-A_{t_i,t_{i+1}})$ is not a sum of martingale differences. Indeed, in contrast to the proof of \Cref{T:singlebound},  $\E_{t_i}(\A_{t_{i+1}}-\A_{t_{i}})\neq A_{t_i,t_{i+1}}$. Nevertheless, we have
\begin{align*}
&\Bigl\|\A_t-\sum_{i=0}^{k-1} A_{t_i,t_{i+1}}\Bigr\|_{\L{m}}\\
&\quad\le \sum_{i=0}^{k-1} \int_{t_i}^{t_{i+1}} \!\!\!\int_\0 |X_r(y)| \bigl\|f(V_r(y)+\psi_r(y))- f(V_r(y)+P_{r-{t_i}}\psi_{t_i}(y))\bigr\|_{\L{m}}\,dy dr\\
&\quad\le C\Gamma_X(1+T) \|f\|_{\C^1}[\psi]_{\Ctimespace{\tau}{m}{[S,T]}}\Di{\Pi}^{\tau}.
\end{align*}
This verifies condition \eqref{limpart}.

We see that for $f\in\C_b^1$ all the conditions of \Cref{lem:B.Sew1} are satisfied. Thus, by
\eqref{est:B.A1}, taking into account \eqref{intboundssl},  we have
\begin{align*}
\|\|\A_T-\A_S\|_{\Lm|\F_S}\|_{\L{n}}\le & \|\|A_{S,T}\|_{\Lm|\F_S}\|_{\L{n}}+
C\|f\|_{\Bes^\gamma_p}\Gamma_X(T-S)^{1+\frac14(\gamma-\frac1p)-\nu}\\
&+C\|f\|_{\Bes^\gamma_p}\Gamma_X
	[\psi]_{\Ctimespace{\tau}{m,n}{[S,T]}}(T-S)^{\frac34+\frac\gamma4-\frac1{4p}+\tau-\nu}\,,
\end{align*}
which together with \eqref{Astbound3} implies \eqref{Bound1K}. 

\textit{Step 2}.
	 Let $f$ be a bounded continuous function. For each integer $k\ge1$, define $f^k=G_{1/k}f$. Then $\{f^k\}$ is a sequence of bounded continuous functions such that $\sup_k\|f^k\|_{\bes^\gamma_p}\le\|f\|_{\bes^\gamma_p}$ (see \cref{lem.Gf}) and $\lim_kf^k(z)=f(z)$ for every $z\in\R$. We apply \eqref{Bound1K} for $f^k$ to get
	\begin{align*}
		&\sup_{x\in\0}\Bigl\|\Bigl\|\int_S^T\int_\0 X_r(y)f^k(V_r(y)+\psi_r(y))dydr\Bigr\|_{\Lm|\F_S}\Bigr\|_{L_n}\nn\\
		&\quad\le  C\|f^k\|_{\Bes^\gamma_p}\Gamma_X(T-S)^{1+\frac14(\gamma-\frac1p)-\nu}+C\|f^k\|_{\Bes^\gamma_p}\Gamma_X
		[\psi]_{\Ctimespace{\tau}{m,n}{[S,T]}}(T-S)^{\frac34+\frac14(\gamma-\frac1p)+\tau-\nu}.
	\end{align*}
	We now send $k\to\infty$, using the dominated convergence theorem and the properties of the sequence $\{f^k\}$ described previously to obtain \eqref{Bound1K} for the function $f$.
\end{proof}
Now we can finally give the proof of  \Cref{L:main}.
\begin{proof}[Proof of \Cref{L:main}] The result follows immediately from \cref{lem.intK} and \eqref{lmlnsimple} by choosing appropriate kernel $X$.

In part (i), we choose $X_r(y)=p_{T-r}(x,y)$ which satisfies condition \eqref{con.k} with $\nu=0$ and $\Gamma_X=1$.

In part (ii), we choose $X_r(y)=p_{T-r}(x_1,y)-p_{T-r}(x_2,y)$, $n=m$. In this case, thanks to \eqref{kolbound},  condition \eqref{con.k} holds with  $\nu=\delta/2$ and $\Gamma_X=C|x_1-x_2|^{\delta}$.

In part (iii), we choose $X_r(y)=p_{T+\bar T-r}(x,y)-p_{T-r}(x,y)$, $n=m$. In this case, thanks to \eqref{ker.time}, condition \eqref{con.k} holds with  $\nu=\delta/2$ and $\Gamma_X=C\bar T^{\delta/2}$.
\end{proof}

\subsection{Proof of Lemmas \ref{L:limK}, \ref{lem.limPK} and \ref{L:2} } 
\label{subsec:proof_of_l:2}

Now let us move to the proof of the key lemmas needed for the uniqueness proof.

\begin{proof}[Proof of \Cref{L:limK}]
Fix $x\in\0$, $(s,t)\in\Delta_{0,\T}$, $\beta'\in(-2,\beta)$. For any $k,l\in\Z_+$ we apply \Cref{C:bounds}\ref{C:451} with $f=b_k-b_l$, $\gamma=\beta'$, $p=q$,  $T=t$, $X_r(y)=p_{t-r}(x,y)$,  $\Gamma_X=1$, $\nu=0$, $\kappa=\psi_s$. Note that $\psi_s\in\Cspacem$ by \eqref{eq:BLM}. Recalling \eqref{lmlnsimple}, we get that there exists a constant $C>0$ independent of $k,l$ such that
\begin{equation*}
\|H^{l,\psi}_{s,t}(x)- H^{k,\psi}_{s,t}(x)\|_{\Lm}\le C(1+\T) \|b_l-b_k\|_{\Bes^{\beta'}_q}.
\end{equation*}
Since the sequence $(b_n)$ converges in $\Bes^{\beta'}_q$, we see that the right-hand side of the above inequality tends to $0$ as $n,k\to\infty$. Hence the sequence of random variables
$(H^{k,\psi}_{s,t}(x))_{k\in\Z_+}$ is Cauchy in $\Lm$. Thus this sequence converges in $\Lm$ (and hence in probability). We denote its limit by $H^\psi_{s,t}(x)$.

By exactly the same argument, we see that  the sequence
$(H^{k,\phi}_{s,t}(x))_{k\in\Z_+}$ converges in probability to a limit which we denote by $H^\phi_{s,t}(x)$.
Now, applying  the Fatou's lemma we derive for each $x\in\0$, $(s,t)\in\Delta_{0,\T}$, $\lambda\in[0,1]$
\begin{align*}
\|H^\psi_{s,t}(x)-H^\phi_{s,t}(x)\|_{\Lm}&\le \liminf_{k\to\infty}
\| H^{k,\psi}_{s,t}(x)- H^{k,\phi}_{s,t}(x)\|_{\Lm}\\
&\le
 C\sup_{k\in\Z_+}\|b_k\|_{\Bes^{\beta}_q}\|\psi_s-\phi_s\|_{\Cspacem}^\lambda
 (t-s)^{1-\frac\lambda4+\frac\beta4-\frac1{4q}},
\end{align*}
where the second inequality follows from \Cref{C:bounds}\ref{C:452} with
 $f=b_k$, $\gamma=\beta$, $p=q$,  
$\kappa_1=\phi_s$, $\kappa_2=\psi_s$. Taking into account that by assumption $\sup_{k\in\Z_+}\|b_k\|_{\Bes^{\beta}_q}<\infty$ and $\beta-1/q\ge-1$, we obtain 
\begin{align*}
\|H^\psi_{s,t}(x)-H^\phi_{s,t}(x)\|_{\Lm}\le
C(1+\T)\|\psi_s-\phi_s\|_{\Cspacem}^\lambda
	(t-s)^{\frac34-\frac\lambda4}.
\end{align*}
Now by taking $\lambda=1$ in this bound we get \eqref{normH}, and by taking $\lambda=0$ we get \eqref{normH34}.
\end{proof}
\begin{proof}[Proof of \cref{lem.limPK}]

	Since $u$ belongs to $\V(3/4)$ and $\sup_n \|b_n\|_{\Bes^\beta_q}<\infty$, we obtain from \cref{L:main}(i) that
	\begin{equation*}\label{goodboundL}
	\sup_{x\in\0}\|K^{b_n;u}_s(x)\|_{\Lm}\le C (1+[\psi]_{\Ctimespace{3/4}{m}{[0,\T]}})(1+\T).
	\end{equation*}
Therefore for any fixed $x\in\0$ the sequence $(K^{b_n;u}_s(x))_{n\in\Z_+}$ is uniformly integrable. 	Recalling that $K^{b_n;u}_s(x)$ converges to $K^u_s(x)$ in probability, we get
\begin{equation*}
\|K^{b_n;u}_s(x)-K^{u}_s(x)\|_{L_1}\to0\,\,\,\text{as $n\to\infty$}.
\end{equation*}
Thus,
\begin{align*}
\|P_{t-s}K^{b_n;u}_s(x)-P_{t-s}K^{u}_s(x)\|_{L_1}&=
\Bigl\|\int_{\0}p_{t-s}(x,y)(K^{b_n;u}_s(y)-K^{u}_s(y))\,dy\Bigr\|_{L_1}\\
&\le
\int_{\0}p_{t-s}(x,y)\|K^{b_n;u}_s(y)-K^{u}_s(y)\|_{L_1}\,dy\to0,
\end{align*}
by the dominated convergence theorem (here we once again made use of \eqref{goodboundL}). Thus, $P_{t-s}K^{b_n;u}_s(x)\to P_{t-s}K^u_s(x)$ in probability as $n\to\infty$. The convergence of  $P_{t-s}K^{b_n;v}_s(x)$ to $P_{t-s}K^v_s(x)$ in probability is obtained by exactly the same argument.
\end{proof}

\begin{proof}[Proof of \Cref{L:2}]
The proof is based on the stochastic sewing lemma with critical exponent, \Cref{thm.critssl}. Let us verify that all the conditions of this theorem are satisfied. A key tool for the verification will be \Cref{C:bounds}. Fix
$0\le S\le T$, $x\in\0$, $\tau>1/2$. Recall that we are given a sequence of smooth functions  
$(b_k)_{k\in\Z_+}$ such that $b_k\to b$ in $\Bes^{\beta-}_q$ and $\|b_k\|_{\Bes^{\beta}_q}\le \|b\|_{\Bes^{\beta}_q}$ for any $k\in\Z_+$.

We put for $(s,t)\in\Delta_{S,T}$, $x\in\0$, $k\in\Z_+$
$$
A^k_{s,t}:= \int_s^t \int_\0 p_{T-r}(x,y) \bigl[b_k(V_r(y)+P_{r-s}\psi_s(y))-b_k(V_r(y)+P_{r-s}\phi_s(y))\bigr]\,dy dr.
$$
We note that $P_{r-s}\psi_s(y)$ and $P_{r-s}\phi_s(y)$ are well-defined thanks to \Cref{L:0} and \eqref{eq:BLM}.

Put  for  $(t,x)\in[S,T]\times\0$
\begin{equation*}
\A^k_t=\int_S^t\int_\0 p_{T-r}(x,y)\bigl[b_k(V_r(y)+\psi_r(y))-b_k(V_r(y)+\phi_r(y))\bigr]\,dydr.
\end{equation*}
From now on we fix $k\in\Z_+$, and will drop the superindex $k$ in $A^k$ and $\A^k$.

Let us verify  that all the conditions of \Cref{thm.critssl} are satisfied. We get for any $S\le s<u<t\le T$
\begin{align*}
\delta A_{s,u,t}= \int_u^t \int_\0&  p_{T-r}(x,y) \Bigl[b_k(V_r(y)+P_{r-s}\psi_s(y))
-b_k(V_r(y)+P_{r-s}\phi_s(y))\\
&-b_k(V_r(y)+P_{r-u}\psi_u(y))+b_k(V_r(y)+P_{r-u}\phi_u(y)) \Bigr]\,dy dr.
\end{align*}

We begin by verifying \eqref{con:wei.dA1}. For fixed $S\le s<u<r\le T$, $y\in\0$ introduce functions $h_{r,y},l_{r,y}\colon\R\times\Omega\to\R$
\begin{align*}
	&h_{r,y}\colon (z,\omega)\mapsto b_k(z+P_{r-s}\psi_s(y))-b_k(z+P_{r-s}\phi_s(y))\\
	&\qquad\qquad\qquad-b_k(z+P_{r-u}\psi_u(y))+b_k(z+P_{r-u}\psi_u(y)+P_{r-s}\phi_s(y)-P_{r-s}\psi_s(y));\\
	&l_{r,y}\colon (z,\omega)\mapsto b_k(z+P_{r-u}\phi_u(y))-b_k(z+P_{r-u}\psi_u(y)+P_{r-s}\phi_s(y)-P_{r-s}\psi_s(y)).
\end{align*}
Clearly, for fixed non-random parameters the random variables  $h_{r,y}(z)$, $l_{r,y}(z)$ are $\F_u$-measurable. Hence, by \eqref{bound.inf} with  $u,r$ in place of $s,t$, respectively,   we deduce  for any $\lambda\in[0,1]$
\begin{align}\label{EsdAsut}
	&|\E^s\delta A_{s,u,t}|\nn\\
	&\,\,= \Bigl|\E^s \int_u^t\!\! \int_\0 p_{T-r}(x,y)
	\E^u [h_{r,y}(V_r(y))+l_{r,y}(V_r(y))]\,dydr\Bigr|\nn\\
	&\,\,\le C\int_u^t \!\!\int_\0 p_{T-r}(x,y) [ (r-u)^{\frac{\beta-1-\lambda}{4}-\frac1{4q}}\E^s \|h_{r,y}\|_{\Bes^{\beta-1-\lambda}_q}+ (r-u)^{\frac{\beta-1}{4}-\frac1{4q}}\E^s \|l_{r,y}\|_{\Bes^{\beta-1}_q}]\,dy dr.
\end{align}
We see that by \eqref{Besov4bound} and \eqref{Ptexpbound}
\begin{align*}
	\E^s \|h_{r,y}\|_{\Bes^{\beta-1-\lambda}_q}&\le \|b_k\|_{\Bes^{\beta}_q}
	|P_{r-s}\psi_s(y)-P_{r-s}\phi_s(y)|^\lambda\,\E^s|P_{r-s}\psi_s(y)-P_{r-u}\psi_u(y)|\\
	&\le\|b_k\|_{\Bes^{\beta}_q}
	|P_{r-s}z_s(y)|^\lambda\,\|\E^s|P_{r-s}\psi_s(y)-P_{r-u}\psi_u(y)|\|_{\L{\infty}}\\
	&\le\|b_k\|_{\Bes^{\beta}_q}
|P_{r-s}z_s(y)|^\lambda\,\sup_{z\in\0}\| \|P_{u-s}\psi_s(z)-\psi_u(z)\|_{\L{1}|\F_s}\|_{\L{\infty}}\\
	&\le\|b_k\|_{\Bes^{\beta}_q}[\psi]_{\Ctimespace{3/4}{m,\infty}{[0,\T]}}
	|P_{r-s}z_s(y)|^\lambda(t-s)^{\frac34}.
\end{align*}
This, \eqref{Ptexpbound} and the fact that $\|b_k\|_{\Bes^\beta_q}\le \|b\|_{\Bes^\beta_q}$ imply
\begin{equation}\label{boundgammaminus2}
	\|\E^s \|h_{r,y}\|_{\Bes^{\beta-1-\lambda}_q}\|_{\Lm}\le C \|b\|_{\Bes^{\beta}_q}[\psi]_{\Ctimespace{3/4}{m,\infty}{[0,1]}}
	\|z\|_{\Ctimespacezerom{[S,T]}}^\lambda(t-s)^{\frac34}.
\end{equation}
In a similar manner, by \eqref{Besov2bound} and \eqref{Ptexpbound} we see that
\begin{align}\label{similarman}
	\|\E^s \|l_{r,y}\|_{\Bes^{\beta-1}_q}\|_{\Lm}&\le \|b_k\|_{\Bes^{\beta}_q}
	\|P_{r-u}z_u(y)-P_{r-s}z_s(y)\|_{\Lm}\nn\\
	&\le \|b\|_{\Bes^{\beta}_q}\sup_{y\in\0}
	\|z_u(y)-P_{u-s}z_s(y)\|_{\Lm}.
\end{align}	
Thus,
\begin{align*}
\|\E^s \|l_{r,y}\|_{\Bes^{\beta-1}_q}\|_{\Lm}&\le \|b\|_{\Bes^{\beta}_q}(\sup_{y\in\0}
\|\phi_u(y)-P_{u-s}\phi_s(y)\|_{\Lm}+\sup_{y\in\0}
\|\psi_u(y)-P_{u-s}\psi_s(y)\|_{\Lm})\nn\\
&\le \|b\|_{\Bes^{\beta}_q}([\psi]_{\Ctimespace{3/4}{m}{[S,T]}}+[\phi]_{\Ctimespace{3/4}{m}{[S,T]}})(t-s)^{\frac34}\nn\\
&\le \|b\|_{\Bes^{\beta}_q}([\psi]_{\Ctimespace{3/4}{m,\infty}{[0,\T]}}+[\phi]_{\Ctimespace{3/4}{m}{[S,T]}})(t-s)^{\frac34}.
\end{align*}	
Substituting this together with \eqref{boundgammaminus2}  into \eqref{EsdAsut} and taking $\lambda=0$, we obtain
\begin{equation*}
\|\E^s\delta A_{s,u,t}\|_{\Lm}\le C(\T,\|b\|_{\Bes^{\beta}_q},[\psi]_{\Ctimespace{3/4}{m,\infty}{[0,\T]}},[\phi]_{\Ctimespace{3/4}{m,\infty}{[0,\T]}})(t-s)^{\frac54},
\end{equation*}
where we used the fact that, by assumptions, $\beta\ge-1+1/q$. Thus, condition \eqref{con:wei.dA1}
holds.

Now let us verify \eqref{con:wei.dA2}. 
Fix $\eps\in(0,1+\beta)$.  Let us apply \Cref{C:bounds}(iii) with $f=b_k$, $\gamma=\beta$, $p=q$, $\lambda_1=\lambda=1$ and $\lambda_2=\beta+1-\eps$ and
\begin{align*}
&\kappa_1:=P_{u-s}\psi_s, &\kappa_2:=P_{u-s}\phi_s, && \kappa_3:=\psi_u, &&\kappa_4:=\phi_u.
\end{align*}
One can see that the functions $\kappa_1$ and $\kappa_2$ are $\Bor(\R)\times\F_s$ measurable, and the functions $\kappa_3$ and $\kappa_4$ are $\Bor(\R)\times\F_u$ measurable, exactly as required by the conditions of  \Cref{C:bounds}(iii). Further, we see that $\kappa_i\in\Cspacem$, $i=1,..,4$, thanks to \eqref{eq:BLM} and \eqref{Ptexpbound}. Finally, conditions \eqref{con.apml} and   \eqref{condlambda1lambda2} are satisfied thanks to our choice of $\lambda$, $\lambda_1$, $\lambda_2$.  Therefore all the conditions of  \Cref{C:bounds}(iii) are met. Recall the notation $z:=\psi-\phi$ and note that by \eqref{Ptexpbound}
\begin{align*}
\|\kappa_1-\kappa_2\|_{\Cspacem}&\le \|z_s\|_{\Cspacem}\le\|z\|_{\Ctimespacezerom{[S,T]}} ,\\
\sup_{y\in\0}\|
\|\kappa_1(y)-\kappa_3(y)\|_{\Lm|\F_s}\|_{\L{\infty}}&\le [\psi]_{\Ctimespace{3/4}{m,\infty}{[0,\T]}}
(t-u)^{3/4},\\
\|\kappa_1-\kappa_2-\kappa_3+\kappa_4\|_{\Cspacem}&\le \|z_u-P_{u-s}z_s \|_{\Cspacem}\\
&\le \|R_{s,u} \|_{\Cspacem}+\|H^\psi_{s,u}-H^\phi_{s,u} \|_{\Cspacem}\\
&\le\|R \|_{\Ctimespace{\tau}{m}{[S,T]}}(t-u)^{\tau}+C\|b\|_{\Bes^{\gamma}_p}\|z\|_{\Ctimespacezerom{[S,T]}}(t-u)^{\frac12},
\end{align*}
where the last inequality follows from \eqref{normH} and the definition of the norm $\|\cdot \|_{\Ctimespace{\tau}{m}{[S,T]}}$ given in \eqref{timespacetwovar}.
Substituting all this into \eqref{mainbound4dif} and using the fact that $\|b_k\|_{\Bes^\beta_q}\le \|b\|_{\Bes^\beta_q}$, one gets
\begin{align}\label{doublestep1new}
\|\delta A_{s,u,t}\|_{\Lm}\le& C\|b_k\|_{\Bes^\beta_q}\|z\|_{\Ctimespacezerom{[S,T]}}[\psi]_{\Ctimespace{3/4}{m,\infty}{[0,\T]}}^{\gamma+1-\eps}
(t-u)^{\frac54+\frac{3\beta}4-\frac1{4q}-\frac\eps2}\nn\\
&\phantom{\le}+
C\|b_k\|_{\Bes^\beta_q}\|R \|_{\Ctimespace{\tau}{m}{[S,T]}}
(t-u)^{\frac12+\tau}\nn\\
&\phantom{\le}+
C\|b_k\|_{\Bes^\beta_q}\|b\|_{\Bes^\beta_q}\|z\|_{\Ctimespacezerom{[S,T]}}
(t-u)\nn\\
\le& C(\T,\|b\|_{\Bes^\beta_q}, [\psi]_{\Ctimespace{3/4}{m,\infty}{[0,\T]}})\bigl(\|z\|_{\Ctimespacezerom{[S,T]}}+\|R \|_{\Ctimespace{\tau}{m}{[S,T]}}\bigr)(t-s)^{\frac12+\delta},
\end{align}
where we have also used that $\beta-1/q\ge-1$ and we put $\delta:=(\frac34+\frac{3\beta}4-\frac1{4q}-\frac\eps2)\wedge\frac12$. Since $\beta>-1+1/(3q)$, we see that there exists $\eps=\eps(\beta,q)>0$ small enough such that $\delta>0$. From now on till the end of the proof we fix such $\eps$. Then \eqref{doublestep1new} implies that condition \eqref{con:wei.dA2} is satisfied.

Now let us check \eqref{limpart}. We have
\begin{align*}
&\Bigl\|\A_t-\sum_{i=0}^{k-1} A_{t_i,t_{i+1}}\Bigr\|_{\L{1}}\nn\\
&\quad\le \sum_{i=0}^{k-1} \int_{t_i}^{t_{i+1}} \!\!\!\int_\0 p_{T-r}(x,y) \bigl\|b_k(V_r(y)+\psi_r(y))- b_k(V_r(y)+P_{r-{t_i}}\psi_{t_i}(y))\bigr\|_{\L{1}}\,dy dr\\
&\quad\phantom{\le}+\sum_{i=0}^{k-1} \int_{t_i}^{t_{i+1}} \!\!\!\int_\0 p_{T-r}(x,y) \bigl\|b_k(V_r(y)+\phi_r(y))- b_k(V_r(y)+P_{r-{t_i}}\phi_{t_i}(y))\bigr\|_{\L{1}}\,dy dr\\
&\quad\le \|b_k\|_{\C^1}\sum_{i=0}^{k-1} \int_{t_i}^{t_{i+1}} \!\!\!\int_\0 p_{T-r}(x,y) (\|\psi_r(y)- P_{r-{t_i}}\psi_{t_i}(y)\|_{\L{1}}
+\|\phi_r(y)- P_{r-{t_i}}\phi_{t_i}(y)\|_{\L{1}})\,dy dr\\
&\quad\le C \T\|b_k\|_{\C^1}([\psi]_{\Ctimespace{3/4}{m}{[0,\T]}}+
[\phi]_{\Ctimespace{3/4}{m}{[0,\T]}})\Di{\Pi}^{3/4}.
\end{align*}
This implies  that $|\A_t-\sum_{i=0}^{k-1} A_{t_i,t_{i+1}}|$ converges to $0$ in $\L{1}$ as $k\to\infty$. Hence \eqref{limpart} holds.

Finally, let us check \eqref{con.EAcrit}. Substituting \eqref{boundgammaminus2} and \eqref{similarman} into \eqref{EsdAsut} and taking $\lambda=1$, we obtain
\begin{align*}
\|\E^s\delta A_{s,u,t}\|_{\Lm}&\le C(1+\T)\|b\|_{\Bes^{\beta}_q}[\psi]_{\Ctimespace{3/4}{m,\infty}{[0,\T]}}
\|z\|_{\Ctimespacezerom{[S,T]}}(t-s)\\
&\phantom{\le}+C(1+\T)\|b\|_{\Bes^{\beta}_q}(t-s)^{\frac12}(\sup_{y\in\0}\|R_{s,u}(y)\|+
\sup_{y\in\0}\|H^\psi_{s,u}(y)-H^\phi_{s,u}(y)\|)\\
&\le C(1+\T)\|b\|_{\Bes^{\beta}_q}(1+[\psi]_{\Ctimespace{3/4}{m,\infty}{[0,\T]}})
\|z\|_{\Ctimespacezerom{[S,T]}}(t-s)\\
&\phantom{\le}+C(1+\T)\|b\|_{\Bes^{\beta}_q}
\|R\|_{\Ctimespace{\tau}{m}{[S,T]}}(t-s)^{\frac12+\tau},
\end{align*}
where the last inequality follows from \eqref{normH}. Recalling that, by assumptions, we have $\tau>1/2$, we see that \eqref{con.EAcrit} holds.

Thus all the conditions of \Cref{thm.critssl} are met. We deduce from \eqref{est.critssl}
\begin{align}\label{prelimun}
\|\A^k_T-\A^k_S-A^k_{S,T}\|_{L_m}&\le  C \|z\|_{\Ctimespacezerom{[S,T]}} (1+|\log{ \|z\|_{\Ctimespacezerom{[S,T]}}}|)(T-S)\nn\\
&\phantom{\le}+C(\|z\|_{\Ctimespacezerom{[S,T]}}+\|R \|_{\Ctimespace{\tau}{m}{[S,T]}})(T-S)^{\frac12+\delta}\nn\\
&\phantom{\le}+C \|R\|_{\Ctimespace{\tau}{m}{[S,T]}}(T-S)^{\frac12+\tau}\nn\\
&\le C \|z\|_{\Ctimespacezerom{[S,T]}} |\log{ \|z\|_{\Ctimespacezerom{[S,T]}}}|(T-S)\nn\\
&\phantom{\le}+C(\|z\|_{\Ctimespacezerom{[S,T]}}+\|R \|_{\Ctimespace{\tau}{m}{[S,T]}})(T-S)^{\frac12+\delta},
\end{align}
where $C=C(\T, \|b\|_{\Bes^\beta_q}, [\psi]_{\Ctimespace{3/4}{m,\infty}{[0,\T]}}, \tau, \delta)$
is independent of $x$, $S$, $T$.

Now let us pass to the limit in \eqref{prelimun} as $k\to\infty$.
 Recall the notation $K^{h;\sigma}$ introduced in \eqref{def.Kh}. Then  it is easy to see that
\begin{equation*}
\A^k_T-\A^k_S=K^{b_k;u}_T(x)-K^{b_k;v}_T(x)-P_{T-S}[K^{b_k;u}_S-K^{b_k;v}_S](x).
\end{equation*}
Applying \cref{lem.limPK,L:limK} we can conclude that 
\begin{equation*}
\lim_{k\to\infty}\A^k_T-\A^k_S-A^k_{S,T}= z_T(x)-P_{T-S}z_S(x)-(H^\psi_{S,T}(x)- H^\phi_{S,T}(x))=R_{S,T}(x),
\end{equation*}
in probability. Inequality \eqref{Bound2} follows now from \eqref{prelimun} by Fatou's lemma. Inequality \eqref{BoundT02} follows immediately from \eqref{Bound2} and \eqref{normH}.
\end{proof}

\appendix
\section{Useful results on Besov spaces}\label{app.bes}

We give a brief summary on nonhomogeneous Besov space which is sufficient for our purpose. For a more detailed account on the topic, we refer to \cite[Chapter 2]{bahouri}. Let $\varsigma,\varpi$ be the radial functions which are given by \cite[Proposition 2.10]{bahouri}.
We note that $\varsigma$ is supported on a ball while $\varpi$ is supported on an annulus. Let $h_{-1}$ and $h$ respectively be the inverse Fourier transform of $\varsigma$ and $\varpi$. The nonhomogeneous dyadic blocks $\Delta_j$ are defined by
\[
\Delta_{-1}f=\int_\R h_{-1}(y)f(\cdot-y)dy
\tand \Delta_j f=\int_\R h_j(y)f(\cdot-y)dy
\quad\textrm{for}\quad j\ge0,
\]
where $h_j(y)=2^j h(2^jy)$, $j\ge0$.

\begin{definition}\label{def.besov}
	Let $\gamma\in\R$ and $1\le p\le\infty$. The nonhomogeneous Besov space $\bes^\gamma_p=\bes^\gamma_{p,\infty}(\R)$ consists of all tempered distributions $f$ such that
	\begin{equation*}
		\|f\|_{\bes^\gamma_{p}}:=\sup_{j\ge-1}2^{j \gamma}\|\Delta_jf\|_{L_p(\R)}<\infty.
	\end{equation*}
\end{definition}
For a distribution $f$ in $\bes^\gamma_p$, we note that $\Delta_jf$ is a smooth function for each $j\ge-1$. In addition, the Fourier transform of $\Delta_{-1}f$ is supported on a ball $\cbb$ while for $j\ge1$ the Fourier transform of $\Delta_{j}f$ is supported on the annulus  $2^j\cnn\subset 2^j\cbb$ for some annulus $\cnn$.

To obtain various properties of Besov spaces, we will make use of the following Bernstein's inequalities. 
Let $f$ be a function in $L_p(\R)$. For every integer $k\ge0$, every $\lambda>0$ and $t>0$ we have
\begin{align}
	&\supp F f\subset \lambda\cbb \Rightarrow \|\nabla^k f\|_{L_p(\R)}\le C^{k+1}\lambda^{k}\|f\|_{L_p(\R)},
	\label{est.bernB}
	\\&\supp F f\subset \lambda\cnn \Rightarrow  \|G_t f\|_{L_p(\R)}\le C e^{-c t \lambda^2}\|f\|_{L_p(\R)},
	\label{est.bernG}
	\\&\supp F f\subset \lambda\cbb \Rightarrow \|G_tf-f\|_{L_p(\R)}\le C(t \lambda^2\wedge1) \|f\|_{L_p(\R)},\label{est.bernG2}
\end{align}
where $F f$ denotes the Fourier transform of $f$.
We refer to \cite[Lemmas 2.1 and 2.4]{bahouri} for proofs of \eqref{est.bernB} and \eqref{est.bernG}. For a proof of \eqref{est.bernG2}, we refer to \cite[Lemma 4]{MW}.

\begin{Lemma}\label{LA4}
	Let $f$ be a tempered distribution on $\R$, $\gamma\in\R$, $p\in[1,\infty]$. Then for any $a,a_1,a_2,a_3\in\R$, $\alpha,\alpha_1,\alpha_2\in[0,1]$ one has
	\begin{align}\label{Besovbound}
		\|f(a+\cdot)\|_{\Bes^{\gamma}_p}&=\|f\|_{\Bes^{\gamma}_p},\\
		\|f(a_1+\cdot)-f(a_2+\cdot)\|_{\Bes^{\gamma}_p}&\le C|a_1-a_2|^\alpha\|f\|_{\Bes^{\gamma+\alpha}_p},
		\label{Besov2bound}
	\end{align}
	and
	\begin{multline}
		\|f(a_1+\cdot)-f(a_2+\cdot)-f(a_3+\cdot)+f(a_3+a_2-a_1+\cdot)\|_{\Bes^{\gamma}_p}
		\\\le C|a_1-a_2|^{\alpha_1}|a_1-a_3|^{\alpha_2}\|f\|_{\Bes^{\gamma+\alpha_1+\alpha_2}_p}.
		\label{Besov4bound}
	\end{multline}
\end{Lemma}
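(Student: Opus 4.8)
The plan is to reduce all three estimates to pointwise‑in‑$j$ bounds on the dyadic blocks $\Delta_j f$, $j\ge-1$, using only three ingredients: $\Delta_j$ is a convolution operator (hence commutes with translations), the $L_p(\R)$‑norm is translation invariant, and the Bernstein inequality \eqref{est.bernB} applied to $\Delta_j f$, whose Fourier transform is supported in a ball of radius of order $2^j$ (namely $\cbb$ for $j=-1$ and $2^j\cnn\subset2^j\cbb$ for $j\ge0$). Throughout, write $g:=\Delta_j f$ and $\tau_a h:=h(a+\cdot)$.

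For \eqref{Besovbound} one observes $\Delta_j(\tau_a f)(x)=(\Delta_j f)(a+x)$, so $\|\Delta_j(\tau_a f)\|_{L_p}=\|\Delta_j f\|_{L_p}$ for every $j$ by translation invariance of $\|\cdot\|_{L_p}$; multiplying by $2^{j\gamma}$ and taking the supremum over $j$ gives the identity. For \eqref{Besov2bound}, note $\Delta_j(\tau_{a_1}f-\tau_{a_2}f)(x)=g(a_1+x)-g(a_2+x)$; this is bounded in $L_p$ by $2\|g\|_{L_p}$, and also, writing $g(a_1+x)-g(a_2+x)=(a_1-a_2)\int_0^1 g'(a_2+x+\theta(a_1-a_2))\,d\theta$ and using Minkowski's integral inequality, translation invariance and \eqref{est.bernB} (with $\lambda\sim 2^j$, $k=1$), by $C|a_1-a_2|2^j\|g\|_{L_p}$. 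Hence $\|\Delta_j(\tau_{a_1}f-\tau_{a_2}f)\|_{L_p}\le C\bigl(|a_1-a_2|2^j\wedge1\bigr)\|g\|_{L_p}\le C(|a_1-a_2|2^j)^{\alpha}\|g\|_{L_p}$, where the last step uses the elementary inequality $x\wedge1\le x^{\alpha}$ for $x\ge0$, $\alpha\in[0,1]$. Multiplying by $2^{j\gamma}$, absorbing the $2^{j\alpha}$ into the regularity index, and taking the supremum over $j$ yields \eqref{Besov2bound}.

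The third estimate \eqref{Besov4bound} is the one that needs some bookkeeping. Set $u:=|a_1-a_2|2^j$, $v:=|a_1-a_3|2^j$ and let $F(x)$ denote the dyadic block applied to the second difference; the four translation amounts $a_1,a_2,a_3,a_3+a_2-a_1$ are the vertices of a parallelogram, so $F$ can be regrouped in two ways: $F=[g(a_1+\cdot)-g(a_2+\cdot)]-[g(a_3+\cdot)-g((a_3+a_2-a_1)+\cdot)]$, where both brackets, after a translation, equal $g(a_1-a_2+\cdot)-g(\cdot)$, giving $\|F\|_{L_p}\le 2\|g(a_1-a_2+\cdot)-g\|_{L_p}\le C(u\wedge1)\|g\|_{L_p}$ by the argument of the previous paragraph; and symmetrically $\|F\|_{L_p}\le C(v\wedge1)\|g\|_{L_p}$ by regrouping along $a_1-a_3$. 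Moreover, writing $\phi(s,t):=g\bigl(x+(a_3+a_2-a_1)+s(a_1-a_3)+t(a_1-a_2)\bigr)$ one has $F(x)=\phi(1,1)-\phi(1,0)-\phi(0,1)+\phi(0,0)=\int_0^1\!\int_0^1\partial_s\partial_t\phi(s,t)\,ds\,dt$, whence, applying Minkowski and \eqref{est.bernB} twice, $\|F\|_{L_p}\le C(uv\wedge1)\|g\|_{L_p}$. A short case analysis now gives $\|F\|_{L_p}\le Cu^{\alpha_1}v^{\alpha_2}\|g\|_{L_p}$ for all $\alpha_1,\alpha_2\in[0,1]$: if $u\le1$ and $v\le1$, use the last bound and $uv\le u^{\alpha_1}v^{\alpha_2}$; if $u>1$, use the $v$‑bound together with $u^{\alpha_1}\ge1$; if $v>1$, use the $u$‑bound symmetrically. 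Multiplying by $2^{j\gamma}$, absorbing $2^{j(\alpha_1+\alpha_2)}$ into the regularity index, and taking the supremum over $j$ gives \eqref{Besov4bound}.

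I expect the only genuine obstacle to be the combinatorial part of the third estimate: making precise that the second difference over the parallelogram $\{a_1,a_2,a_3,a_3+a_2-a_1\}$ is, after translation, a second difference in the two base directions $a_1-a_2$ and $a_1-a_3$, together with the elementary but slightly fiddly interpolation between the three resulting bounds. Everything else is a routine application of Bernstein's inequality and Minkowski's integral inequality.
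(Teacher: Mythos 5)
Your proposal is correct and follows essentially the same route as the paper: block-wise bounds on $\Delta_j f$ via translation invariance, the mean-value/integral representation combined with Bernstein's inequality \eqref{est.bernB}, and interpolation between the resulting estimates. Your explicit parallelogram regrouping and three-case analysis for \eqref{Besov4bound} simply spell out what the paper compresses into ``interpolating between these inequalities,'' so there is no substantive difference.
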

\begin{proof}
	Since $\|\Delta_j f(a+\cdot)\|_{L_p(\R)}= \|\Delta_j f(\cdot)\|_{L_p(\R)} $, \eqref{Besovbound} follows from \cref{def.besov}.
	
	For each $j\ge-1$,  applying mean value theorem and \eqref{est.bernB}, we have
	\begin{align*}
		\|\Delta_j(f(a_1+\cdot)-f(a_2+\cdot))\|_{L_p(\R)}\le C |a_1-a_2|2^{j}\|\Delta_jf\|_{L_p(\R)}.
	\end{align*}
	By triangle inequality, it is evident that
	\begin{align*}
		\|\Delta_j(f(a_1+\cdot)-f(a_2+\cdot))\|_{L_p(\R)}\le2\|\Delta_jf\|_{L_p(\R)}.
	\end{align*}
	Hence, by interpolation, we obtain
	\begin{equation}\label{onemore}
		\|\Delta_j(f(a_1+\cdot)-f(a_2+\cdot))\|_{L_p(\R)}\le C|a_1-a_2|^\alpha2^{j \alpha}\|\Delta_jf\|_{L_p(\R)}.
	\end{equation}
	In view of \cref{def.besov}, we multiply both sides with $2^{j \gamma}$ and take supremum over $j\ge-1$ to obtain \eqref{Besov2bound}.
	
	Similarly, using mean value theorem and \eqref{est.bernB}, we have
	\begin{multline*}
		\|\Delta_j(f(a_1+\cdot)-f(a_2+\cdot)-f(a_3+\cdot)+f(a_3+a_2-a_1+\cdot))\|_{L_p(\R)}
		\\\le C|a_1-a_2||a_1-a_3| 2^{2j}\|\Delta_jf\|_{L_p(\R)}	
	\end{multline*}
	and
	\begin{multline*}
		\|\Delta_j(f(a_1+\cdot)-f(a_2+\cdot)-f(a_3+\cdot)+f(a_3+a_2-a_1+\cdot))\|_{L_p(\R)}
		\\\le C\min(1,|a_1-a_2|2^j,|a_1-a_3|2^j)\|\Delta_jf\|_{L_p(\R)}.	
	\end{multline*}
	Interpolating between these inequalities, we obtain
	\begin{multline*}
		\|\Delta_j(f(a_1+\cdot)-f(a_2+\cdot)-f(a_3+\cdot)+f(a_3+a_2-a_1+\cdot))\|_{L_p(\R)}
		\\\le C|a_1-a_2|^{\alpha_1}|a_1-a_3|^{\alpha_2} 2^{j(\alpha_1+\alpha_2)}\|\Delta_jf\|_{L_p(\R)}.
	\end{multline*}
	We multiply both sides with $2^{j \gamma}$ and take supremum over $j\ge-1$ to obtain \eqref{Besov4bound}.
\end{proof}
\begin{lemma}\label{lem.Gf}
	For every $f$ in $\bes^\gamma_p$, $\gamma\in\R$, $p\in[1,\infty]$, we have 
	\begin{enumerate}[(i)]
		\item provided that $\gamma<0$, $\|G_tf\|_{L_p(\R)}\le C_\gamma\|f\|_{\bes^\gamma_p}t^{\frac {\gamma}2}$ for all $t>0$;
		\item $\lim_{t\to0}G_tf=f$ in $\bes^{\bar\gamma}_p$ for every $\bar \gamma<\gamma$; 
		\item $\sup_{t>0}\|G_tf\|_{\bes^\gamma_p}\le \|f\|_{\bes^\gamma_p}$ and
		\item provided that $\gamma-\frac1p<0$, $\|G_tf\|_{\C^1}\le C\|f\|_{\bes^\gamma_p}t^{\frac12(\gamma-\frac1p-1)}$ for all $t>0$.
	\end{enumerate}
\end{lemma}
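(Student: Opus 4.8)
The plan is to derive all four parts from the Littlewood--Paley characterisation of $\bes^\gamma_p$ in \cref{def.besov} together with the Bernstein-type inequalities \eqref{est.bernB}--\eqref{est.bernG2}. Two elementary facts will be used repeatedly: since the kernel $g_t$ in \eqref{heatsemigroupspace} is a probability density, Young's inequality gives $\|G_tg\|_{L_p(\R)}\le\|g\|_{L_p(\R)}$ for every $p\in[1,\infty]$ and $t>0$; and $G_t$ commutes with every dyadic block, $G_t\Delta_jf=\Delta_jG_tf$, both being convolution operators. Part (iii) is then immediate: for each $j\ge-1$ one has $2^{j\gamma}\|\Delta_jG_tf\|_{L_p(\R)}=2^{j\gamma}\|G_t\Delta_jf\|_{L_p(\R)}\le2^{j\gamma}\|\Delta_jf\|_{L_p(\R)}$, and taking the supremum over $j$ yields $\|G_tf\|_{\bes^\gamma_p}\le\|f\|_{\bes^\gamma_p}$.

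For part (i) I would estimate $\|G_tf\|_{L_p(\R)}\le\sum_{j\ge-1}\|G_t\Delta_jf\|_{L_p(\R)}$ dyadic block by dyadic block. For $j\ge0$ the Fourier transform of $\Delta_jf$ is supported on the annulus $2^j\cnn$, so \eqref{est.bernG} gives $\|G_t\Delta_jf\|_{L_p(\R)}\le Ce^{-ct2^{2j}}\|\Delta_jf\|_{L_p(\R)}\le Ce^{-ct2^{2j}}2^{-j\gamma}\|f\|_{\bes^\gamma_p}$, whereas for $j=-1$ the contractivity above gives the crude bound $\|G_t\Delta_{-1}f\|_{L_p(\R)}\le2^{\gamma}\|f\|_{\bes^\gamma_p}$. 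Summing and comparing the series $\sum_{j\ge0}e^{-ct2^{2j}}2^{-j\gamma}$ with the integral $\int_0^\infty e^{-cts^2}s^{-\gamma-1}\,ds$, which equals a constant multiple of $t^{\gamma/2}$ (finite precisely because $\gamma<0$), produces the bound $C_\gamma\|f\|_{\bes^\gamma_p}\,t^{\gamma/2}$ as $t\to0$. Part (iv) is then deduced from (i): the embedding \eqref{em.Besov} gives $\|f\|_{\bes^{\gamma-1/p}_\infty}\le C\|f\|_{\bes^\gamma_p}$, and since $\gamma-\frac1p<0$, part (i) applied in $\bes^{\gamma-1/p}_\infty$ bounds $\|G_tf\|_{L_\infty(\R)}$ by $C\|f\|_{\bes^\gamma_p}t^{(\gamma-1/p)/2}$; writing $\partial_xG_tf=(\partial_xg_{t/2})*G_{t/2}f$ and using $\|\partial_xg_{t/2}\|_{L_1(\R)}\le Ct^{-1/2}$ supplies the extra factor $t^{-1/2}$ controlling the derivative, and the two bounds combine into $\|G_tf\|_{\C^1}\le C\|f\|_{\bes^\gamma_p}t^{(\gamma-1/p-1)/2}$.

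Part (ii) requires the most care. One writes $\|G_tf-f\|_{\bes^{\bar\gamma}_p}=\sup_{j\ge-1}2^{j\bar\gamma}\|G_t\Delta_jf-\Delta_jf\|_{L_p(\R)}$, and since the Fourier transform of $\Delta_jf$ sits in $2^j\cbb$ (with $2^j\sim1$ for $j=-1$), \eqref{est.bernG2} gives $\|G_t\Delta_jf-\Delta_jf\|_{L_p(\R)}\le C(t2^{2j}\wedge1)\|\Delta_jf\|_{L_p(\R)}$. Setting $\delta:=\gamma-\bar\gamma>0$, this yields $2^{j\bar\gamma}\|G_t\Delta_jf-\Delta_jf\|_{L_p(\R)}\le C\|f\|_{\bes^\gamma_p}(t2^{2j}\wedge1)2^{-j\delta}$ uniformly in $j$. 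It then remains to check that $\sup_{j\ge-1}(t2^{2j}\wedge1)2^{-j\delta}\to0$ as $t\downarrow0$; splitting the supremum at the frequency scale $2^{2j}\sim t^{-1}$, on the low-frequency side $(t2^{2j})2^{-j\delta}\le t^{\delta/2}$ and on the high-frequency side $2^{-j\delta}\le t^{\delta/2}$, so the whole supremum is $O(t^{\delta/2})$, which gives (ii).

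I expect the frequency-by-frequency bookkeeping in (ii), and to a lesser extent the dyadic summation and tracking of the $t$-power in (i), to be the only points needing attention; everything else reduces to Young's inequality, commutation of convolutions, and the Bernstein estimates collected in the appendix.
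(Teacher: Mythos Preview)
Your proposal is correct and matches the paper's proof essentially line by line: Bernstein inequalities on each dyadic block, commutation of $G_t$ with $\Delta_j$, and the semigroup splitting $G_t=G_{t/2}G_{t/2}$ for (iv). The only cosmetic difference is in (ii), where you split at the frequency scale $2^j\sim t^{-1/2}$ while the paper instead interpolates via $(t2^{2j}\wedge1)\le(t2^{2j})^\varepsilon$ for $\varepsilon\in(0,1]$; both yield the same $O(t^{(\gamma-\bar\gamma)/2})$ rate when $\gamma-\bar\gamma\le2$, and the remaining range $\bar\gamma<\gamma-2$ is then covered by the trivial embedding $\bes^{\gamma-2}_p\hookrightarrow\bes^{\bar\gamma}_p$.
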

\begin{proof}
	(i) Fix $\gamma<0$.
	We have  by \eqref{est.bernG} that there exists some constant $c>0$ such that for every $j\ge0$
	\begin{align*}
		\|G_t(\Delta_jf)\|_{L_p(\R)}
		\le Ce^{-ct2^{2j}}\|\Delta_jf\|_{L_p(\R)}
		\le Ce^{-ct2^{2j}}2^{-j \gamma}\|f\|_{\bes^\gamma_p}.
	\end{align*}
	For $j=-1$, we use the trivial bounds 
	\begin{align*}
		\|G_t(\Delta_{-1}f)\|_{L_p(\R)}\le\|\Delta_{-1}f\|_{L_p(\R)}\le C\|f\|_{\bes^\gamma_p}.
	\end{align*}
	Since $G_t(\Delta_jf)=\Delta_j(G_tf)$, we have
	\begin{equation}\label{DeltaG}
		\sum_{j\ge-1}\|\Delta_j(G_tf)\|_{L_p(\R)}\le C\|f\|_{\bes^\gamma_p}\sum_{j\ge-1}e^{-ct2^{2j}}2^{-j \gamma}.
	\end{equation}
	
	Clearly, if $j\le x\le j+1$, $j\ge-1$, then $e^{-ct2^{2j}}2^{-j \gamma}\le
	e^{-ct2^{2x}}2^{-(x+1) \gamma}$. Therefore, 
	\begin{equation*}
		\sum_{j\ge-1}e^{-ct2^{2j}}2^{-j \gamma}\le\int_{-1}^\infty e^{-ct2^{2x}}2^{-x \gamma}\,dx\le
		Ct^{\frac \gamma2}\int_0^\infty e^{-cy}y^{-\frac\gamma2-1}\,dy\le
			 C t^{\frac \gamma2},
	\end{equation*}
thanks to the assumption $\gamma<0$.
	Using this and the fact that $\Delta_j(G_tf)=G_t(\Delta_j f)$, we derive from \eqref{DeltaG}
	\begin{align*}
		\sum_{j\ge-1}\|\Delta_j(G_tf)\|_{L_p(\R)}\le C\|f\|_{\bes^\gamma_p}t^{\frac {\gamma}2}.
	\end{align*}
	On the other hand, the identity $Id=\sum_{j\ge-1}\Delta_j$ holds in distribution (\cite[Proposition 2.12]{bahouri}). The above estimate shows that it also holds in $L_p(\R)$ and hence, 
	\begin{align*}
		\|G_tf \|_{L_p(\R)}\le\sum_{j\ge-1}\|\Delta_j(G_tf)\|_{L_p(\R)}\le C\|f\|_{\bes^\gamma_p}t^{\frac {\gamma}2}
	\end{align*}
	which implies (i).
	
	(ii) From \eqref{est.bernG2}, we derive that for every $j\ge-1$ and $\epsilon\in(0,1]$
	\begin{equation*}
		\|G_t(\Delta_jf)-\Delta_jf\|_{L_p(\R)}\le c(t2^{2j})^\varepsilon\|\Delta_jf\|_{L_p(\R)}.
	\end{equation*}
	This yields 
	\begin{align*}
		2^{j(\gamma- 2\varepsilon)}\|\Delta_j(G_tf-f)\|_{L_p(\R)}\le ct^\varepsilon 2^{j \gamma}\|\Delta_jf\|_{L_p(\R)}\le ct^{\varepsilon}\|f\|_{\bes^\gamma_p}.
	\end{align*}
	By taking supremum over all $j\ge-1$ in the above inequality, we see that 
	$\lim_{t\downarrow0}G_tf=f$ in $\bes^{\gamma- 2 \varepsilon}_p$. Since $\eps$ is arbitrary in $(0,1]$, this implies (ii) when $\bar \gamma\in[\gamma-2,\gamma)$. If $\bar\gamma<\gamma-2$, then
	\begin{equation*}
	\|G_tf-f\|_{\bes^{\bar\gamma}_p}\le	\|G_tf-f\|_{\bes^{\gamma-2}_p}\to0\,\,\text{as $t\to0$}.
	\end{equation*}
	This proves (ii).
	
	(iii) follows immediately from the inequality $\|G_t(\Delta_jf)\|_{L_p(\R)}\le\|\Delta_j f\|_{L_p(\R)}$ and \cref{def.besov}.

	(iv) We write $G_tf=G_{t/2}G_{t/2}f$, apply standard heat kernel bounds and part (i) to get
	\begin{align*}
		\|G_tf\|_{\C^1}\le Ct^{-\frac12}\|G_{t/2}f\|_{L_\infty(\R)}
		\le Ct^{-\frac12}t^{\frac12(\gamma-\frac1p)} \|f\|_{\bes_\infty^{\gamma-1/p}}
		\le Ct^{-\frac12}t^{\frac12(\gamma-\frac1p)} \|f\|_{\bes_p^\gamma},
	\end{align*}
	where the last inequality follows from the embedding $\bes^\gamma_p\hookrightarrow\bes^{\gamma-1/p}_\infty$. This proves (iv).	
\end{proof}

Recall that Schwarz distribution $\zeta^{-1}$ is defined in \eqref{def.zeta1} and $\zeta^{\alpha}_+$, $\zeta^{\alpha}_-$ are defined in \eqref{def.zetapm}. 

\begin{lemma}\label{lem.principalvalue}
Let $\alpha\in(-1,0)$. Then we have	
\begin{equation*}
	\zeta^{-1}\in\bes^{-1+\frac1p}_p(\R)\,\,\, \forall p\in(1,\infty]
	\tand \zeta^{\alpha}_+,\zeta^{\alpha}_-\in\bes^{\alpha+\frac1p}_p(\R) \,\,\,\forall p\in(|\alpha|^{-1},\infty]	.
\end{equation*}
\end{lemma}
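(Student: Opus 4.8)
The plan is to argue directly from the Littlewood--Paley characterisation in \cref{def.besov}, exploiting the homogeneity of the three objects. First note that $\zeta^{-1}$, $\zeta^\alpha_+$, $\zeta^\alpha_-$ are tempered distributions: $\zeta^\alpha_\pm$ are locally integrable (since $\alpha>-1$) and bounded at infinity, while $\zeta^{-1}$ is the classical principal value. Moreover $\zeta^\alpha_\pm$ is homogeneous of degree $\alpha$ and $\zeta^{-1}$ is homogeneous of degree $-1$ (and odd). The starting point is the scaling identity: if $f$ is a tempered distribution homogeneous of degree $a\in\R$, then $\widehat f$ is homogeneous of degree $-1-a$, and since $\widehat{\Delta_jf}=\varpi(2^{-j}\cdot)\,\widehat f$ for $j\ge0$, a change of variables on the Fourier side gives $\Delta_jf=2^{-ja}(\Delta_0f)(2^j\cdot)$ for every $j\ge0$. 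Hence $\|\Delta_jf\|_{L_p(\R)}=2^{-j(a+1/p)}\|\Delta_0f\|_{L_p(\R)}$, so that
\[
\sup_{j\ge0}2^{j(a+1/p)}\|\Delta_jf\|_{L_p(\R)}=\|\Delta_0f\|_{L_p(\R)},
\]
and, the weight attached to the $j=-1$ term being a harmless constant, the membership $f\in\bes^{a+1/p}_p$ reduces to the two bounds $\|\Delta_0f\|_{L_p(\R)}<\infty$ and $\|\Delta_{-1}f\|_{L_p(\R)}<\infty$.

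The $\Delta_0$ block is soft. Since $\varpi\in C_c^\infty(\R)$ is supported on an annulus avoiding the origin, and $\widehat{\zeta^{-1}}=-i\pi\,\sgn$ as well as $\widehat{\zeta^\alpha_\pm}$ (homogeneous of degree $-1-\alpha$) are all $C^\infty$ on $\R\setminus\{0\}$, the product $\widehat{\Delta_0f}=\varpi\,\widehat f$ belongs to $C_c^\infty(\R)$; therefore $\Delta_0f\in\mathcal S(\R)$, and in particular $\|\Delta_0f\|_{L_p(\R)}<\infty$ for all $p\in[1,\infty]$ and all three choices of $f$. (Alternatively, for $f=\zeta^\alpha_+$ one reads off from $\Delta_0\zeta^\alpha_+(x)=\int_0^\infty h(x-y)y^\alpha\,dy$ that this function is bounded near $x=0$ and rapidly decaying as $|x|\to\infty$.)

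The real content is the behaviour at infinity of the low-frequency block $\Delta_{-1}f$, which is precisely where homogeneity fails to help. For $f=\zeta^{-1}$, with $h_{-1}\in\mathcal S(\R)$ and $\int_\R h_{-1}=\varsigma(0)$, one writes
\[
\Delta_{-1}\zeta^{-1}(x)=\pv\!\int_\R\frac{h_{-1}(w)}{x-w}\,dw ,
\]
and for $|x|$ large the principal value is inert (since $h_{-1}$ is rapidly small near $w=x$) so that the expansion $(x-w)^{-1}=x^{-1}+x^{-2}w+\cdots$ yields $\Delta_{-1}\zeta^{-1}(x)=\varsigma(0)\,x^{-1}+O(x^{-2})$ as $|x|\to\infty$, while $\Delta_{-1}\zeta^{-1}$ is bounded near the origin; hence $\|\Delta_{-1}\zeta^{-1}\|_{L_p(\R)}<\infty$ for every $p\in(1,\infty]$. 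For $f=\zeta^\alpha_+$ one writes $\Delta_{-1}\zeta^\alpha_+(x)=\int_{-\infty}^x h_{-1}(w)(x-w)^\alpha\,dw$; a Taylor expansion of $w\mapsto(x-w)^\alpha$ around $w=0$ gives $\Delta_{-1}\zeta^\alpha_+(x)=\varsigma(0)\,x^\alpha+O(x^{\alpha-1})$ as $x\to+\infty$, the rapid decay of $h_{-1}$ forces faster-than-polynomial decay as $x\to-\infty$, and near $x=0$ the function is bounded because $\alpha>-1$. Thus $\|\Delta_{-1}\zeta^\alpha_+\|_{L_p(\R)}<\infty$ exactly when $\int_1^\infty x^{\alpha p}\,dx<\infty$, i.e.\ $\alpha p<-1$, i.e.\ $p>|\alpha|^{-1}$ (with $p=\infty$ always admissible since $\alpha<0$); the case of $\zeta^\alpha_-$ follows by the reflection $x\mapsto-x$. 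Combining these estimates with the scaling reduction of the first paragraph proves $\zeta^{-1}\in\bes^{-1+1/p}_p$ for $p\in(1,\infty]$ and $\zeta^\alpha_\pm\in\bes^{\alpha+1/p}_p$ for $p\in(|\alpha|^{-1},\infty]$.

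The main obstacle is exactly this last step: controlling $\Delta_{-1}f$ at infinity (equivalently, the nonsmoothness of $\widehat f$ at $\xi=0$), which is the only place the hypotheses $\alpha>-1$, $p>1$, and $p>|\alpha|^{-1}$ are genuinely used. Everything else---the Fourier-side scaling for $j\ge0$ and the Schwartz regularity of $\Delta_0f$---is routine.
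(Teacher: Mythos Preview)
Your argument is correct. The homogeneity reduction for $j\ge0$ is exactly what the paper does as well, so the only difference lies in how the two remaining quantities $\|\Delta_0 f\|_{L_p}$ and $\|\Delta_{-1}f\|_{L_p}$ are bounded. The paper identifies $\Delta_0\zeta^{-1}$ and $\Delta_{-1}\zeta^{-1}$ as Hilbert transforms of $h$ and $h_{-1}$ and invokes $L_p$-boundedness of the Hilbert transform for $1<p<\infty$, then reaches $p=\infty$ by Besov embedding; for $\zeta^\alpha_\pm$ it applies the refined Young (Hardy--Littlewood--Sobolev) inequality to the convolution $h*|\cdot|^\alpha$. You instead observe that $\widehat{\Delta_0 f}=\varpi\,\widehat f\in C_c^\infty$ (since $\widehat f$ is smooth off the origin and $\varpi$ vanishes there), so $\Delta_0 f$ is Schwartz and lies in every $L_p$ at once; and you handle $\Delta_{-1}f$ by a direct asymptotic expansion of the convolution, reading off the $x^{-1}$ or $x^\alpha$ tail that dictates the integrability threshold. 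Your route is more self-contained---it avoids both the Hilbert transform machinery and the weak Young inequality, and it covers $p=\infty$ without an extra embedding step---at the cost of a short hands-on computation for the low-frequency block; the paper's route is more conceptual in that it names the underlying singular integral operators but then has to cite their mapping properties.
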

\begin{proof}
	By homogeneity, for every $j\ge0$, $\Delta_j \zeta^{-1}(x)=2^j\Delta_0 \zeta^{-1}(2^{j}x)$ so that $\|\Delta_j\zeta^{-1}\|_{L_p(\R)}=2^{j(1-\frac1p)}\|\Delta_0 \zeta^{-1}\|_{L_p(\R)}$. 
	Note that
	\begin{align*}
		\Delta_0 \zeta^{-1}(x)=\int_0^\infty\frac{h(x+y)-h(x-y)}ydy
	\end{align*}
	which is the Hilbert transform of $h$. It is known that Hilbert transform is bounded on $L_p(\R)$ for every $p\in(1,\infty)$. Hence, we have $\|\Delta_0 \zeta^{-1}\|_{L_p(\R)}\le C\|h\|_{L_p(\R)}$ which is finite. Similarly, we see that $\|\Delta_{-1} \zeta^{-1}\|_{L_p(\R)}$ is finite. This shows that $\zeta^{-1}$ belong to $\bes^{-1+\frac1p}_p(\R)$ for every $p\in(1,\infty)$. That $\zeta^{-1}$ belongs to $\bes^{-1}_\infty(\R)$ follows from the Besov embedding $\bes^{-1+\frac1p}_p(\R)\hookrightarrow\bes^{-1}_\infty(\R)$.

	Similarly, we see that $\|\Delta_j \zeta^\alpha_\pm\|_{L_p(\R)}=2^{-j(\alpha+\frac1p)}\|\Delta_0 \zeta^\alpha_\pm\|_{L_p(\R)}$. Let $q\in[1,\infty]$ be such that $1+\frac1p=\gamma+\frac1q$. Applying the refined Young convolution inequality (\cite[Theorem 1.5]{bahouri}), we see that
	$\|\Delta_0 \zeta^\alpha_\pm\|_{L_p(\R)}\le C\|h\|_{L_q(\R)}$, which is finite.
	That $\|\Delta_{-1} \zeta^\alpha_\pm\|_{L_p(\R)}$ is finite follows from the same argument.
\end{proof}
\begin{lemma}\label{lem.conve1x}
	Let  $f$ and $c,c_0$ be as in \cref{cor.slimit}(i).
	For each $\lambda>0$, define $f_\lambda(x)=\lambda f(\lambda x)$. Then $f_\lambda$ converges in $\bes^{(-1+\frac1p)_-}_p(\R)$ to the distribution $c\zeta^{-1}+c_0 \delta_0$ for every $p\in(1,\infty]$.
\end{lemma}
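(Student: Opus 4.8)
The goal is to show that the rescaled drift $f_\lambda(x)=\lambda f(\lambda x)$ converges to $c\zeta^{-1}+c_0\delta_0$ in $\bes^{(-1+1/p)_-}_p(\R)$, i.e., that $\sup_\lambda\|f_\lambda\|_{\bes^{-1+1/p}_p}<\infty$ and $\|f_\lambda-(c\zeta^{-1}+c_0\delta_0)\|_{\bes^{\gamma}_p}\to0$ for every $\gamma<-1+1/p$. The natural strategy is to split $f$ according to its behaviour near $0$, at infinity, and in the ``bulk'', matching the three ingredients $c/x$, $c_0\delta_0$ and the error controlled by \eqref{con.f1x}. First I would write $f=f\mathbbm{1}_{|x|\le 1}+f\mathbbm{1}_{|x|>1}$, and on the outer piece further decompose $f(x)\mathbbm{1}_{|x|>1}=\tfrac cx\mathbbm{1}_{|x|>1}+g(x)$ where $g:=(f-c/x)\mathbbm{1}_{|x|>1}$; by hypothesis $\int_{|x|>1}|g(\lambda x)|\cdot\lambda\,dx=\int_{|y|>\lambda}|g(y)|\,dy=\int_{|y|>\lambda}|f(y)-c/y|\,dy\to 0$, so $g$ is integrable near infinity in the rescaled sense. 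Each of the three rescaled pieces should be analysed separately and then reassembled.

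\textbf{Key steps.} (1) \emph{The $c/x$ part.} Since $\tfrac cx\mathbbm{1}_{|x|>1}=c\zeta^{-1}-c\,\zeta^{-1}\mathbbm{1}_{|x|\le 1}$ as distributions (the principal value restricted to $|x|\le1$ is actually the genuine function $\tfrac cx$ minus a smooth remainder — I'd record this identity carefully), its rescaling $\lambda\cdot\tfrac{c}{\lambda x}\mathbbm{1}_{|x|>1/\lambda}=\tfrac cx\mathbbm{1}_{|x|>1/\lambda}$ converges to $c\zeta^{-1}$ in $\bes^{(-1+1/p)_-}_p$: the difference $\tfrac cx\mathbbm{1}_{|x|\le 1/\lambda}$ has $L_p$-norm $\sim\lambda^{1-1/p}$ but is supported on a shrinking interval, and a direct dyadic-block estimate using \eqref{est.bernB} (Bernstein) shows the $\bes^\gamma_p$-norm $\to0$ for $\gamma<-1+1/p$. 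The uniform bound in the critical space $\bes^{-1+1/p}_p$ follows from \cref{lem.principalvalue} applied to the (scaling-invariant) homogeneous principal value, plus the dilation identity $\|\Delta_j(\varphi(\lambda\cdot))\|_{L_p}$ scaling used in \cref{lem.principalvalue}. (2) \emph{The inner part $f\mathbbm{1}_{|x|\le 1}$, rescaled.} Here $\lambda f(\lambda x)\mathbbm{1}_{|x|\le 1/\lambda}$ is an integrable function with mass $\int_{|y|\le 1}f(y)\,dy$ concentrating at $0$; I would show it converges to $(\int_{|y|\le1}f)\,\delta_0$ weakly and then upgrade to $\bes^\gamma_p$-convergence (for $\gamma<-1+1/p$) by the standard mollifier estimate $\|\delta_0^{(\eps)}-\delta_0\|_{\bes^\gamma_p}\to0$, where the role of the shrinking parameter is played by $1/\lambda$; combined with \eqref{con.f1x}, the total inner mass tends to $c_0-\lim_\lambda\int_{1<|y|\le\lambda}c/y\,dy$, and the odd part of $c/y$ integrates to zero, so the inner plus leftover bulk contributes exactly $c_0\delta_0$ in the limit. (3) \emph{The remainder $g$.} The rescaled $g_\lambda(x)=\lambda g(\lambda x)$ has $\|g_\lambda\|_{L_1}=\int_{|y|>\lambda}|f(y)-c/y|\,dy\to0$, and $L_1\hookrightarrow\bes^0_1\hookrightarrow\bes^{-1/p'}_p=\bes^{-1+1/p}_p$ by \eqref{em.Besov}, giving \emph{both} a uniform bound and convergence to $0$ in the critical space directly — this is the cleanest of the three.

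\textbf{Reassembly and the main obstacle.} Adding the three contributions: the $c/x$ piece $\to c\zeta^{-1}$, the $g$ piece $\to 0$, and the inner piece together with the ``$\zeta^{-1}\mathbbm{1}_{|x|\le1}$'' correction coming from step (1) $\to c_0\delta_0$ (the constant $c_0$ being produced precisely by the second limit in \eqref{con.f1x} after cancelling the odd tail of $c/x$). Summing the uniform $\bes^{-1+1/p}_p$ bounds gives the required $\sup_\lambda\|f_\lambda\|_{\bes^{-1+1/p}_p}<\infty$, and summing the $\bes^\gamma_p\to0$ statements for each $\gamma<-1+1/p$ gives convergence in $\bes^{(-1+1/p)_-}_p$ by \cref{Def:conv}. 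The main obstacle I anticipate is bookkeeping the constant $c_0$ correctly: one must track how the truncation level interacts with the divergent (but odd, hence conditionally convergent) integral $\int c/y\,dy$, and verify that the correction term $c\,\zeta^{-1}\mathbbm{1}_{|x|\le1}$ introduced when rewriting the principal value contributes nothing to the $\delta_0$ coefficient in the limit (its rescaling is a fixed smooth-ish function times $\lambda^{1-1/p}$ on a shrinking set, whose $\bes^\gamma_p$ limit is $0$ for $\gamma<-1+1/p$ but which is \emph{not} negligible at the critical exponent — so it must be grouped with the $c\zeta^{-1}$ term, not discarded). The Bernstein-type dyadic estimates for a function of the form $\lambda^{\theta}\varphi(\lambda x)$ with $\varphi$ having a mild singularity are routine given \eqref{est.bernB} and the scaling computation already used in \cref{lem.principalvalue}, so the analytic content beyond that is light.
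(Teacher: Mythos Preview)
There is a genuine computational error in your step (3). With $g=(f-c/x)\1_{|x|>1}$, the change of variables $y=\lambda x$ gives
\[
\|g_\lambda\|_{L_1}=\int_\R \lambda|g(\lambda x)|\,dx=\int_{|y|>1}|f(y)-c/y|\,dy,
\]
which is a \emph{constant} in $\lambda$, not the vanishing tail $\int_{|y|>\lambda}|f(y)-c/y|\,dy$ you wrote. Since $g\in L_1(\R)$, the dilations $g_\lambda=\lambda g(\lambda\cdot)$ converge (in the sense of measures, and then in $\bes^\gamma_p$ for $\gamma<-1+1/p$) to $\bigl(\int g\bigr)\delta_0$, not to $0$. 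A short computation using oddness of $c/y$ and \eqref{con.f1x} shows $\int g=c_0-\int_{|y|\le1}f$, so the remainder term must be \emph{combined} with your inner piece to produce the full $c_0\delta_0$. As written, your limit for piece (3) is wrong and your bookkeeping of the $\delta_0$-coefficient in piece (2) cannot close.

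Your step (1) is also underspecified. The difference $c\zeta^{-1}-\tfrac{c}{x}\1_{|x|>1/\lambda}$ acts on a test function $\phi$ as $c\int_0^{1/\lambda}(\phi(x)-\phi(-x))/x\,dx$; showing this tends to $0$ in $\bes^\gamma_p$ is \emph{not} just ``Bernstein''. At the level of dyadic blocks this becomes $\Delta_j$ of the truncated-minus-full principal value, i.e.\ a truncated Hilbert transform of $h_j$, and one needs the classical fact that $\|H_0h-H_\delta h\|_{L_p}\to0$ as $\delta\to0$ (and the $L_p$-boundedness of the maximal Hilbert operator for the uniform critical-space bound). The paper's proof makes this explicit: it works directly with $\Delta_j(f_\lambda-\chi)$, introduces a large truncation level $N$ (not fixed at $1$) to exploit both hypotheses in \eqref{con.f1x} at accuracy $\eps$, and splits into four pieces, the second of which is handled precisely by the truncated/maximal Hilbert transform theory. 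Your decomposition is morally the same, but once you correct (3) and confront (1) honestly you will end up reproducing essentially that argument.
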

\begin{proof}
By Besov embedding, it suffices to obtain the convergence in $\bes^{(-1+\frac1p)_-}_p(\R)$ for finite $p$. Fix $p\in(1,\infty)$, $\beta>0$.
Define a distribution
\begin{equation*}
\chi:=c\zeta^{-1}+c_0 \delta_0
\end{equation*}
and put $R(x):=f(x)-c/x$. Fix arbitrary $\eps>0$. Choose $N>0$ large enough so that
\begin{equation}\label{Ndef}
\int_{|x|>N}|R(x)|\, dx <\eps
\tand
\Bigl |c_0-\int_{|x|<N}f(x)\,dx\Bigr|<\eps.
\end{equation}
This is possible thanks to the assumptions of the theorem.

For each $j\ge-1$, we write
\begin{align}\label{id.D1/x}
\Delta_j(f_\lambda-\chi)(y)&=\int_{\R}f_\lambda(x)h_j(y-x)dx-c\int_{\R_+}\frac{h_j(y-x)-h_j(y+x)}{x}\,dx-c_0h_j(y)\nn\\
&=\lambda \int_{|x|>N/\lambda}R(\lambda x)h_j(y-x)\, dx-c
\int_0^{N/\lambda} \frac{h_j(y-x)-h_j(y+x)}{x}\,dx\nn\\
&\phantom{=}+\lambda\int_{|x|<N/\lambda }f(\lambda x)(h_j(y-x)-h_j(y))\,dx-\Bigl(c_0-\int_{|x|<N}f(x)\,dx\Bigr)h_j(y)\nn\\ &=:I_{1,j}(y)+I_{2,j}(y)+I_{3,j}(y)+I_{4,j}(y).
\end{align}

We begin with $I_{1,j}$. Applying the integral Minkowski inequality, the change of variables $x'=\lambda x$, and recalling \eqref{Ndef}, we deduce
\begin{align}\label{I1rho1}
\|I_{1,j}\|_{L_p(\R)}&\le \lambda \|h_j\|_{L_p(\R)}\int_{|x|>N/\lambda} |R(\lambda x)|\,dx= \|h_j\|_{L_p(\R)}\int_{|x|>N} |R(x)|\,dx\nn\\
&\le 2^{j(1-\frac1p)} (\|h\|_{L_p(\R)}+\|h_{-1}\|_{L_p(\R)})\eps.
\end{align}

To bound $I_{2,j}$, we note that for $j\ge1$
\begin{equation}\label{Idvajstep1}
\|I_{2,j}\|_{L_p(\R)}^p=2^{j(p-1)}\int_\R\Bigl(\int_0^{2^j N/\lambda} \frac{h(y-x)-h(y+x)}{x}\,dx\Bigr)^p\,dy.
\end{equation}
Introduce the following notation. For fixed $\delta>0$ define the truncated Hilbert,  Hilbert, and maximal Hilbert  operators  $H_\delta$, $H$, and $H^*$, correspondingly, by
\begin{align*}
&H_\delta(\phi)(y)=\int_\delta^{+\infty}\frac{\phi(y-x)-\phi(y-x)}xdx,\quad H(\phi)=H_0(\phi),\\
&H^*(\phi)(y)=\sup_{\delta>0}|H_\delta(\phi)(y)|,
\end{align*}
for each Schwarz function $\phi$. With this notation in hand, we continue \eqref{Idvajstep1} in the following way
\begin{equation}\label{Idvajstep2}
	\|I_{2,j}\|_{L_p(\R)}=2^{j(1-1/p)}\|H_0 h-H_{2^j N/\lambda}h\|_{L_p(\R)}.
\end{equation}
Note also that the following bound holds:
\begin{align}\label{Idvajstep21}
\|I_{2,j}\|_{L_p(\R)}&=2^{j(1-1/p)}(\|H_0 h\|_{L_p(\R)}+
\|H_{2^j N/\lambda}h\|_{L_p(\R)})\nn\\
&\le
2^{j(1-1/p)}(\|H_0 h\|_{L_p(\R)}+
	\|H^*(h)\|_{L_p(\R)}).
\end{align}
Similarly, 
\begin{equation}\label{Idvajstep3}
	\|I_{2,-1}\|_{L_p(\R)}=\|H_0 h_{-1}-H_{N/\lambda}h_{-1}\|_{L_p(\R)}.
\end{equation}
To treat $I_{3,j}$ we apply the integral  Minkowski inequality and \eqref{onemore} with $f=\delta_0$. We get for any $\beta>0$
\begin{align}\label{Itri}
\|I_{3,j}\|_{L_p(\R)}&\le \lambda \int_{|x|\le N/\lambda}| f(\lambda x)|\|h_j(\cdot-x)-h_j(\cdot)\|_{L_p(\R)}dx\nn\\
&\le \lambda\int_{|x|\le N/\lambda} |f(\lambda x)||x|^\beta 2^{j \beta} \|h_j\|_{L_p(\R)}\,dx\nn\\
&= \lambda^{-\beta}2^{j \beta} \|h_j\|_{L_p(\R)}\int_{|x|\le N} |f(x)||x|^\beta \,dx\nn\\
&\le \lambda^{-\beta}2^{j(\beta+1-1/p)}(\|h\|_{L_p(\R)}
+\|h_{-1}\|_{L_p(\R)})C_N,
\end{align}
where $C_N:= N^{\beta}\int_{|x|\le N} |f(x)| \,dx$.
Finally, it is easy to treat $I_{4,j}$. Using \eqref{Ndef}, we get
\begin{align*}
\|I_{4,j}\|_{L_p(\R)}&\le 2^{j(1-1/p)}(\|h\|_{L_p(\R)}
+\|h_{-1}\|_{L_p(\R)})\eps.
\end{align*}

%
%
%
	
Now we substitute this, \eqref{I1rho1}  and \eqref{Itri} into \eqref{id.D1/x}. To bound $I_{2,j}$ we use \eqref{Idvajstep3} for $j=-1$, \eqref{Idvajstep2} for $j\in[1,J]$, and \eqref{Idvajstep21} for $j\ge J$; here $J$ is a parameter to be fixed later. We finally obtain
\begin{align*}
&\sup_{j\ge-1}2^{j(-1+\frac1p-\beta)}\|\Delta_j(f_\lambda-\chi)\|_{L_p(\R)}\\
&\qquad\le C (\|h\|_{L_p(\R)}
+\|h_{-1}\|_{L_p(\R)})(\eps +\lambda^{-\beta}C_N)
+\|H_0 h_{-1}-H_{N/\lambda}h_{-1}\|_{L_p(\R)}
\\
&\qquad+\phantom{\le}\sup_{\delta\le 2^{J}N/\lambda}\|H_0 h-H_{\delta}h\|_{L_p(\R)}+
2^{-J\beta}(\|H_0 h\|_{L_p(\R)}+
\|H^*(h)\|_{L_p(\R)}).
\end{align*}
Sending first $\lambda\to\infty$, then $J\to\infty$, then $\varepsilon\downarrow0$, we see that the left-hand side of the above inequality converges to $0$. Here we used the facts that
$\|H_0 h\|_{L_p(\R)}<\infty$, $\|H^*(h)\|_{L_p(\R)}<\infty$, 
$\|H_0 h-H_{\delta}h\|_{L_p(\R)}\to0$, $\|H_0 h_{-1}-H_{\delta}h_{-1}\|_{L_p(\R)}\to0$ as $\delta\to0$; all of them are established in \cite[Theorem~5.1.12]{MR3243734}.

Hence, $\lim_{\lambda\to\infty}f_\lambda=c \zeta^{-1}+c_0 \delta_0$ in $\bes^{-1+\frac1p- \beta}_p$. Since $\beta$ is arbitrary in $(0,1)$, this implies the convergence in $\bes^{(-1+\frac1p)_-}_p$.
\end{proof}

\begin{lemma}\label{lem.conve1xrho} Let $f$ be a continuous function $\R\to\R$. Suppose that for some $\alpha\in(0,1)$, $c_+,c_-\in\R$ one has
\begin{equation}\label{conalpha}
	\lim_{x\to+\infty} f( x)x^{\alpha}=c_+
	\tand
	\lim_{x\to-\infty} f(x)|x|^{\alpha}= c_-.
\end{equation}
Let $f_\lambda(x):=\lambda^{\alpha} f(\lambda x)$, where $\lambda>0$, $x\in\R$. Then $f_\lambda$ converges to $c_-\zeta^{-\alpha}_-+c_+\zeta^{-\alpha}_+$ as $\lambda\to\infty$ in $\bes^{(-\alpha+\frac1p)-}_p(\R)$ for every $p\in(\alpha^{-1},\infty]$.
\end{lemma}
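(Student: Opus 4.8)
The plan is to follow the scheme of the proof of \cref{lem.conve1x}, with the simplification that there is no principal-value contribution here and the only singularity of the limiting distribution $\chi:=c_-\zeta^{-\alpha}_-+c_+\zeta^{-\alpha}_+$ sits at the origin, where $|x|^{-\alpha}$ is locally integrable since $\alpha<1$. By \cref{lem.principalvalue}, $\chi\in\bes^{-\alpha+1/p}_p(\R)$ for every $p\in(\alpha^{-1},\infty]$. Using the Besov embedding \eqref{em.Besov} it suffices to prove the claimed convergence for a fixed but arbitrary finite $p\in(\alpha^{-1},\infty)$; the case $p=\infty$ then follows from $\bes^{-\alpha+1/p-\delta}_p\hookrightarrow\bes^{-\alpha-\delta}_\infty$. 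Fix also $\beta\in(0,1-\alpha)$; since $\beta$ will be taken arbitrarily small, proving convergence in $\bes^{-\alpha+1/p-\beta}_p$ for each such $\beta$ yields convergence in $\bes^{(-\alpha+\frac1p)-}_p$. The structural fact to exploit is that $\chi$ is homogeneous of degree $-\alpha$, so $\lambda^\alpha\chi(\lambda\,\cdot)=\chi$ and hence $f_\lambda-\chi=\lambda^\alpha(f-\chi)(\lambda\,\cdot)=:\lambda^\alpha R(\lambda\,\cdot)$, where $R$ is locally integrable, continuous away from $0$, and satisfies $|w|^\alpha|R(w)|\to0$ as $|w|\to\infty$ by \eqref{conalpha}.

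Given $\eps>0$, I would choose $N>0$ with $|R(w)|\le\eps|w|^{-\alpha}$ for all $|w|>N$, and decompose $f_\lambda-\chi=A_\lambda+B_\lambda$, where $A_\lambda(z):=\lambda^\alpha R(\lambda z)\,\1(|\lambda z|\le N)$ is supported in $\{|z|\le N/\lambda\}$ and $B_\lambda(z):=\lambda^\alpha R(\lambda z)\,\1(|\lambda z|>N)$. For the far piece one has the uniform pointwise bound $|B_\lambda(z)|\le\eps|z|^{-\alpha}$, hence $|\Delta_jB_\lambda|\le\eps\,(|\cdot|^{-\alpha}*|h_j|)$. Since $|\cdot|^{-\alpha}\in L_{1/\alpha,\infty}(\R)$ and $p>\alpha^{-1}$, the refined Young inequality \cite[Theorem 1.5]{bahouri} gives $\|\Delta_jB_\lambda\|_{L_p(\R)}\le C\eps\|h_j\|_{L_s(\R)}$ with $\frac1s=\frac1p+1-\alpha\in(1-\alpha,1)$; as $\|h_j\|_{L_s(\R)}=2^{j(\alpha-1/p)}\|h\|_{L_s(\R)}$ for $j\ge0$ and $h\in L_s(\R)$, this yields $\|B_\lambda\|_{\bes^{-\alpha+1/p}_p}\le C\eps$ uniformly in $\lambda$ (and $a$~fortiori in the weaker space $\bes^{-\alpha+1/p-\beta}_p$).

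For the near piece $A_\lambda$ I would exploit both the shrinking support $\{|z|\le N/\lambda\}$ and the regularity slack $\beta$. Using $|f|\le\sup_{|v|\le N}|f(v)|$ on $\{|w|\le N\}$ together with $\lambda^\alpha\le N^\alpha|z|^{-\alpha}$ on $\{|z|\le N/\lambda\}$ one gets $|A_\lambda(z)|\le C_N|z|^{-\alpha}\1(|z|\le N/\lambda)$, with $C_N$ depending only on $N$ and $f$. Split the Littlewood--Paley sum at $j_\lambda:=\lfloor\log_2(\lambda/N)\rfloor$. For $j\le j_\lambda$ use $\|\Delta_jA_\lambda\|_{L_p(\R)}\le\|h_j\|_{L_p(\R)}\|A_\lambda\|_{L_1(\R)}\le C_N2^{j(1-1/p)}(N/\lambda)^{1-\alpha}$, so that, since $1-\alpha-\beta>0$,
\[
2^{j(-\alpha+1/p-\beta)}\|\Delta_jA_\lambda\|_{L_p(\R)}\le C_N2^{j(1-\alpha-\beta)}(N/\lambda)^{1-\alpha}\le C_N(N/\lambda)^{\beta}
\]
for $\lambda\ge N$. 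For $j>j_\lambda$ use the refined Young bound as for $B_\lambda$, $\|\Delta_jA_\lambda\|_{L_p(\R)}\le C_N\|h_j\|_{L_s(\R)}=C_N2^{j(\alpha-1/p)}\|h\|_{L_s(\R)}$, whence $2^{j(-\alpha+1/p-\beta)}\|\Delta_jA_\lambda\|_{L_p(\R)}\le C_N2^{-j\beta}\le C_N(N/\lambda)^{\beta}$. Taking the supremum over $j\ge-1$ gives $\|A_\lambda\|_{\bes^{-\alpha+1/p-\beta}_p}\le C_N(N/\lambda)^{\beta}\to0$ as $\lambda\to\infty$ with $N$ fixed. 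Combining the two estimates, $\limsup_{\lambda\to\infty}\|f_\lambda-\chi\|_{\bes^{-\alpha+1/p-\beta}_p}\le C\eps$; letting $\eps\downarrow0$, then $\beta\downarrow0$, and finally invoking the embedding for $p=\infty$ completes the argument.

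The main obstacle is precisely the treatment of the near-origin piece $A_\lambda$: it represents a bump of total mass $\sim\lambda^{\alpha-1}$ concentrating at scale $N/\lambda$ with an $|z|^{-\alpha}$ profile, and its norm in the borderline space $\bes^{-\alpha+1/p}_p$ stays bounded away from zero, while naive $L_p$-estimates are useless because $|z|^{-\alpha}\notin L_p^{\mathrm{loc}}$ when $p>\alpha^{-1}$. The concentration is only seen after conceding the arbitrarily small regularity loss $\beta>0$, which is what forces the two-regime dyadic splitting at frequency $\sim\lambda/N$. The hypothesis $p\in(\alpha^{-1},\infty]$ enters throughout exactly via the condition $s>1$ (equivalently $1/p<\alpha$) needed for the convolution estimate against $|\cdot|^{-\alpha}\in L_{1/\alpha,\infty}(\R)$ to hold.
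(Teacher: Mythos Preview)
Your proof is correct and follows the same overall strategy as the paper: the same homogeneity observation $f_\lambda-\chi=\lambda^\alpha R(\lambda\,\cdot)$, the same near/far split at $|w|=N$, and the same Hardy--Littlewood--Sobolev (refined Young) bound for the far piece $B_\lambda$. The only difference is in handling the near piece $A_\lambda$: where you introduce a two-regime dyadic split at $j_\lambda\sim\log_2(\lambda/N)$ (using $\|A_\lambda\|_{L_1}$ for low frequencies and the refined Young bound for high frequencies), the paper dispatches all $j$ at once with a single Young inequality $\|I_1\|_{L_p}\le\lambda^\alpha\|h_j\|_{L_{p'}}\bigl(\int_{|x|<N/\lambda}|R(\lambda x)|^{q'}dx\bigr)^{1/q'}$ with the choice $1/q'=\alpha+\beta$ (so that $|\cdot|^{-\alpha}\in L_{q'}^{\mathrm{loc}}$), which after rescaling gives directly $\|I_1\|_{L_p}\le C_N\lambda^{-\beta}2^{j(\alpha+\beta-1/p)}$ and hence $\|A_\lambda\|_{\bes^{-\alpha+1/p-\beta}_p}\le C_N\lambda^{-\beta}$ uniformly in $j$. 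This is slicker but yields the same final bound; your frequency-dependent argument has the minor advantage of making explicit that the borderline obstruction sits precisely at scale $2^{-j}\sim N/\lambda$.
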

\begin{proof} 
The proof is in the same spirit as the proof of \cref{lem.conve1x}.
Again, by the Besov embedding it is sufficient to consider the case of finite $p$. Fix $p\in(\alpha^{-1},\infty)$. Denote
$$
\chi(x):=c_-\zeta^{-\alpha}_-+c_+\zeta^{-\alpha}_+,\quad x\in\R
$$
and put $R(x):=f(x)-\chi(x)$. 

Take arbitrary $\eps>0$. It follows from \eqref{conalpha}, that
there exists $N>0$ such that 
\begin{equation}\label{Rbound}
|R(x)||x|^{\alpha}<\eps \text{ whenever } |x|>N.	
\end{equation}  
 
For each $j\ge-1$, we write
\begin{align}\label{step1rho}
\Delta_j(f_\lambda-\chi)(y)&=\int_\R (f_\lambda(x)-\chi(x))h_j(y-x)\,dx\nn\\
&=\lambda^{\alpha}\int_\R R(\lambda x) h_j(y-x)\,dx\nn\\
&=\lambda^{\alpha}\Bigl(\int_{|x|<N/\lambda}+
\int_{|x|>N/\lambda}\Bigr) R(\lambda x)h_j(y-x)dx\nn\\
&=:I_1(y)+I_2(y).
\end{align}
Fix $\beta\in(0,1-\alpha)$. To bound $I_1$ we apply Young inequality with $p',q'>1$ such that $1/p'+1/q'=1+1/p$ and the change of variables $x':=\lambda x$. We get
\begin{align*}
\|I_1\|_{L_p(\R)}&\le \lambda^{\alpha}\|h_j\|_{L_{p'}(\R)}\Bigl(\int_{|x|<N/\lambda}|R(\lambda x)|^{q'}\,dx\Bigr)^{1/q'}\\
&= \lambda^{-\frac{1}{q'}+\alpha}\|h_j\|_{L_{p'}(\R)}
\Bigl(\int_{|x|<N}|R(x)|^{q'}\,dx\Bigr)^{1/q'}\nn\\
&\le C_N\lambda^{-\frac{1}{q'}+\alpha}2^{j(1-\frac1{p'})}(\|h\|_{L_{p'}(\R)}+\|h_{-1}\|_{L_{p'}(\R)})
\end{align*}
where $C_N:=C N(1+\sup_{|x|\le N} |f(x)|)$. By choosing now $1/q'=\alpha+\beta<1$, we get $1/p'=1+1/p-\alpha-\beta<1$ and thus
\begin{equation}\label{ivanivanych}
\|I_1\|_{L_p(\R)}\le C_N\lambda^{-\beta}2^{j(\alpha+\beta-1/p)}(\|h\|_{L_{p'}(\R)}+\|h_{-1}\|_{L_{p'}(\R)})
\end{equation}

To deal with $I_2$ we apply \eqref{Rbound} to deduce
\begin{equation*}
|I_2(y)|\le \eps\int_{|x|>N/\lambda} |x|^{-\alpha} |h_j(y-x)|\,dx
\le \eps\int_{\R} |x|^{-\alpha} |h_j(y-x)|\,dx.
\end{equation*}
Thus, the Hardy–Littlewood–Sobolev inequality on $\R$ with $1/p''=1+1/p-\alpha$ \cite[Theorem~1.7]{bahouri} implies
\begin{equation*}
\|I_2\|_{L_p(\R)}\le \eps \|h_j\|_{L_{p''}(\R)}\le \eps 2^{j(\alpha-1/p)}(\|h\|_{L_{p''}(\R)}+\|h_{-1}\|_{L_{p''}(\R)}).
\end{equation*}
Combining this with \eqref{step1rho} and \eqref{ivanivanych}, we finally get
\begin{equation*}
\|f_\lambda-\chi\|_{\Bes^{-\alpha-\beta+1/p}_p}\le
C_N\lambda^{-\beta}(\|h\|_{L_{p'}(\R)}+\|h_{-1}\|_{L_{p'}(\R)})
 + \eps (\|h\|_{L_{p''}(\R)}+\|h_{-1}\|_{L_{p''}(\R)}).
\end{equation*}
Taking now first $\lambda\to\infty$, and recalling then that $\eps$ was arbitrary completes the proof.
\end{proof}

\section{Other auxiliary results}\label{app.aux}

\begin{proposition}\label{lem.filtr} 
	Let $\Lambda$ be a set and let $(X_{n,\lambda})_{n\in\Z_+,\lambda\in\Lambda}$ be a collection of random elements taking values in a metric space $E$. Let $(Y_{n})_{n\in\Z_+}$ be a collection of random elements taking values in a metric space $\wt E$. Suppose that 
	for each fixed $n$ the random element $Y_{n}$ is independent of $(X_{n,\lambda})_{\lambda\in\Lambda}$. Furthermore, assume that for each fixed $\lambda\in\Lambda$ one has $X_{n,\lambda}\to X_\lambda$ and $Y_{n}\to Y$ in probability as $n\to \infty$. Then $Y$ is independent of $(X_{\lambda})_{\lambda\in\Lambda}$.
\end{proposition}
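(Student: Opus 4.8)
The statement is a routine but useful fact: a distributional limit of random elements that are "independent at each level" remains independent in the limit. The plan is to reduce the assertion to checking independence against bounded continuous test functions, and then to pass to the limit using the hypothesis of convergence in probability (which implies convergence in distribution of the relevant joint quantities). Since independence of $Y$ from the whole family $(X_\lambda)_{\lambda\in\Lambda}$ is equivalent to independence of $Y$ from every finite subfamily $(X_{\lambda_1},\dots,X_{\lambda_k})$, it suffices to fix a finite collection $\lambda_1,\dots,\lambda_k\in\Lambda$ and prove that $Y$ is independent of $(X_{\lambda_1},\dots,X_{\lambda_k})$.

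First I would fix bounded continuous functions $g\colon \wt E\to\R$ and $f_1,\dots,f_k\colon E\to\R$ and set $F\colon E^k\to\R$, $F(x_1,\dots,x_k):=f_1(x_1)\cdots f_k(x_k)$, which is bounded and continuous on $E^k$. For each fixed $n$, the independence of $Y_n$ from $(X_{n,\lambda})_{\lambda\in\Lambda}$ (in particular from $(X_{n,\lambda_1},\dots,X_{n,\lambda_k})$) gives the exact factorization
\begin{equation*}
\E\bigl[g(Y_n)\,F(X_{n,\lambda_1},\dots,X_{n,\lambda_k})\bigr]=\E[g(Y_n)]\;\E\bigl[F(X_{n,\lambda_1},\dots,X_{n,\lambda_k})\bigr].
\end{equation*}
Next I would pass to the limit as $n\to\infty$ on both sides. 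By hypothesis $X_{n,\lambda_i}\to X_{\lambda_i}$ and $Y_n\to Y$ in probability as $n\to\infty$; hence the vector $(Y_n,X_{n,\lambda_1},\dots,X_{n,\lambda_k})$ converges in probability, and therefore in distribution, to $(Y,X_{\lambda_1},\dots,X_{\lambda_k})$ in the product space $\wt E\times E^k$. Since $g$ and $F$ are bounded and continuous, so are the maps $(y,x_1,\dots,x_k)\mapsto g(y)F(x_1,\dots,x_k)$, $(y,x_1,\dots,x_k)\mapsto g(y)$ and $(y,x_1,\dots,x_k)\mapsto F(x_1,\dots,x_k)$; by the portmanteau/continuous mapping theorem together with dominated convergence (the integrands are uniformly bounded by $\|g\|_\infty\prod_i\|f_i\|_\infty$), each of the three expectations above converges to the corresponding expectation with $n$ removed. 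This yields
\begin{equation*}
\E\bigl[g(Y)\,F(X_{\lambda_1},\dots,X_{\lambda_k})\bigr]=\E[g(Y)]\;\E\bigl[F(X_{\lambda_1},\dots,X_{\lambda_k})\bigr],
\end{equation*}
i.e. $\E[g(Y)\prod_i f_i(X_{\lambda_i})]=\E[g(Y)]\,\E[\prod_i f_i(X_{\lambda_i})]$.

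Finally I would upgrade this factorization over products of bounded continuous test functions to genuine independence of the $\sigma$-algebras. A standard monotone-class (or $\pi$-$\lambda$) argument does this: the identity above, for all bounded continuous $f_i,g$, shows that $\E[g(Y)\,\mathbf 1_B]=\E[g(Y)]\,\P(B)$ first for $B$ in the $\pi$-system of finite-dimensional "cylinder" events generated by continuous functions of $(X_{\lambda_1},\dots,X_{\lambda_k})$ — these generate $\sigma(X_{\lambda_1},\dots,X_{\lambda_k})$ — and then, fixing such $B$ and varying $g$, for all $B'\in\sigma(Y)$; a second application of the Dynkin lemma closes the argument. Since $\lambda_1,\dots,\lambda_k$ were an arbitrary finite subset of $\Lambda$, $Y$ is independent of $(X_\lambda)_{\lambda\in\Lambda}$. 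The only mild subtlety — and the closest thing to an obstacle — is bookkeeping when $E$ or $\wt E$ is merely a metric space rather than Polish: convergence in probability is still defined and still implies convergence in distribution, and the portmanteau theorem for bounded continuous functions holds on any metric space, so no separability or completeness is actually needed; one just has to be careful to phrase everything in terms of expectations of bounded continuous functions rather than invoking weak convergence machinery that presupposes a Polish setting.
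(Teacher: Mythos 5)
Your proof is correct and follows essentially the same route as the paper's: fix a finite subfamily, test the level-$n$ independence against products of bounded continuous functions, and pass to the limit. The only cosmetic difference is that the paper extracts an a.s.-convergent subsequence and applies dominated convergence, whereas you invoke convergence in distribution of the joint vector plus the portmanteau theorem (and you spell out the final monotone-class step that the paper leaves implicit); both are standard and equivalent here.
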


\begin{proof}
 Consider a collection of $\lambda_1,\lambda_2, \ldots,\lambda_n$ for some $n\geq 1$. Then we can construct a common subsequence  such that $$ X_{n_k,\lambda_j}\to X_{\lambda_j} \mbox{ and } Y_{n_k}\to Y  \mbox{ almost surely as $k \to \infty$ for all $1 \leq j \leq n$.}$$
  Let $h_1, h_2 \ldots h_n:E \rightarrow  \R$, $g:\tilde{E} \rightarrow \R$ be bounded continuous functions. Then by the Lebesgue dominated convergence theorem,
 $$ \E(\prod_{j=1}^n h_j (X_{n_k,\lambda_j}) g(Y_{n_k}) ) \to \E (\prod_{j=1}^n h_j (X_{\lambda_j}) g(Y))  \mbox{ as $k \to \infty$ }$$
  and also
  $$\prod_{j=1}^n  \E(h_j (X_{n_k,\lambda_j}))  \E g(Y_{n_k})  \to \prod_{j=1}^n\E(h_j (X_{\lambda_j})) \E ( g(Y))  \mbox{  as $k \to \infty$ }.$$
  We have assumed for each  $k$ the random element $Y_{n_k}$ is independent of $(X_{n_k,\lambda_j})_{1\leq j \leq n}$. Therefore from the above it is immediate that
  $$\E\prod_{j=1}^n(h_j (X_{\lambda_j})) g(Y)) = \prod_{j=1}^n\E(h_j (X_{\lambda_j})) \E ( g(Y))$$
  As, $n\geq 1$,   $\lambda_1,\lambda_2, \ldots,\lambda_n$ and $h_1, h_2 \ldots h_n ,g$ were arbitrary, the result follows.
\end{proof}

\begin{lemma}[Gaussian process representation]\label{lem.gpr}
	Let $(\wt \Omega,\wt \F,\wt \P)$ be a filtered probability space. Let $\wt V\colon[0,\T]\times\0\times\wt \Omega\to\R$ be a measurable function with the same law as $V$. 
	Then on the same space there exists a white noise $\wt W$  such that identity \eqref{def.V} holds with $\wt V$ in place of $V$ and $\wt W$ in place of $W$. Furthermore,
	\begin{enumerate}[{\rm(i)}]
	\item $\F_t^{\wt W}=\F_t^{\wt V}$ for any $t\in[0,\T]$;

	\item suppose additionally that there exists a filtration $(\wt \F_t)_{t\in[0,\T]}$ such that $\F_t^{\wt V}\subset \wt \F_t$ and for any $(s,t)\in\Delta_{0,\T}$, $\phi\in\C_{c}^\infty$ the random variable $ \int_{\0} (\wt V_t(x)-P_{t-s}\wt V_s(x))\phi(x)\, dx $ is independent of $\wt \F_s$. Then $\wt W$ is $(\wt \F_t)$-white noise.
	\end{enumerate}
\end{lemma}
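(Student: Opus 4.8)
\textbf{Proof plan for \cref{lem.gpr} (Gaussian process representation).}
The plan is to reconstruct the white noise $\wt W$ from its action on a suitable family of ``building block'' test functions built out of the heat kernel, exploiting that $\wt V$ has the same law as $V$ and that $V$ was itself constructed from $W$ via \eqref{def.V}. First I would recall the structure of $V$: by \eqref{def.V}, for any $(s,t)\in\Delta_{0,\T}$ and $\phi\in\C_c^\infty$ one has, by the stochastic Fubini theorem for Walsh integrals,
\begin{equation*}
\int_\0 \phi(x)\bigl(V_t(x)-P_{t-s}V_s(x)\bigr)\,dx=\int_s^t\int_\0\Bigl(\int_\0 \phi(x)p_{t-r}(x,y)\,dx\Bigr)W(dr,dy).
\end{equation*}
Since $\{(r,y)\mapsto\int_\0\phi(x)p_{t-r}(x,y)\,dx: t\le\T,\ \phi\in\C_c^\infty\}$ spans a dense subspace of $L_2([0,\T]\times\0)$ (heat kernels approximate the identity), the Gaussian field $W$ is a measurable functional of the increments $\{\int_\0\phi(x)(V_t(x)-P_{t-s}V_s(x))\,dx\}$; call this deterministic measurable map $\Psi$, so $W=\Psi(V)$. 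Now define $\wt W:=\Psi(\wt V)$. Because $\wt V\stackrel{d}{=}V$ and $\Psi$ is deterministic and measurable, the pair $(\wt V,\wt W)$ has the same law as $(V,W)$; in particular $\wt W$ is a white noise and \eqref{def.V} holds with $\wt V,\wt W$ in place of $V,W$ (this is a law-determined identity, hence transfers).

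For part (i): the inclusion $\F_t^{\wt W}\subset\F_t^{\wt V}$ is immediate from $\wt W=\Psi(\wt V)$ together with the fact that $\Psi$ restricted to increments up to time $t$ produces exactly the noise increments up to time $t$ (the map $\Psi$ respects the time filtration, since the test functions $\int_\0\phi(x)p_{t-r}(x,y)\,dx$ used to recover $W$ on $[0,t]$ involve only $r\le t$). The reverse inclusion $\F_t^{\wt V}\subset\F_t^{\wt W}$ follows from \eqref{def.V}: $\wt V_t(x)=\int_0^t\int_\0 p_{t-r}(x,y)\wt W(dr,dy)$ is built from $\wt W$ restricted to $[0,t]$. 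One should verify both inclusions at the level of the law-transferred identity $W=\Psi(V)$ and $V=$ (Walsh integral of $W$), which hold simultaneously on the original space, hence on $(\wt\Omega,\wt\F,\wt\P)$ as well.

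For part (ii): given the filtration $(\wt\F_t)$ with $\F_t^{\wt V}\subset\wt\F_t$ and the stated independence of $\int_\0\phi(x)(\wt V_t(x)-P_{t-s}\wt V_s(x))\,dx$ from $\wt\F_s$, I must show $\wt W$ is an $(\wt\F_t)$-white noise, i.e.\ that for each $\phi\in L_2(\0)$ the process $\wt W_t(\phi)$ is an $(\wt\F_t)$-Brownian motion. Adaptedness follows from $\F_t^{\wt W}=\F_t^{\wt V}\subset\wt\F_t$ by part (i). For the Brownian-motion property relative to $(\wt\F_t)$, it suffices to check that $\wt W_t(\phi)-\wt W_s(\phi)$ is independent of $\wt\F_s$ for every $\phi\in L_2(\0)$, $(s,t)\in\Delta_{0,\T}$; by a monotone-class / density argument it is enough to do this for $\phi$ in a dense set, and for such $\phi$ the increment $\wt W_t(\phi)-\wt W_s(\phi)$ can be written as an $L_2$-limit (over finer and finer time subdivisions $s=r_0<\dots<r_N=t$ and heat-kernel mollifications) of finite linear combinations of the variables $\int_\0\varphi(x)(\wt V_{r_{i+1}}(x)-P_{r_{i+1}-r_i}\wt V_{r_i}(x))\,dx$, each of which is independent of $\wt\F_{r_i}\supset\wt\F_s$; independence passes to the $L_2$-limit. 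The main obstacle I anticipate is exactly this last approximation step: carefully identifying $\wt W_t(\phi)-\wt W_s(\phi)$ as a measurable function of the ``nice'' increments over $[s,t]$ only — that is, verifying that the map $\Psi$ recovering $W$ is adapted in the strong sense that $W$'s increment over $[s,t]$ depends only on $\wt V$'s increments over $[s,t]$ — which requires the stochastic Fubini identity above plus density of $\{r\mapsto\int_\0\phi(x)p_{t-r}(x,y)\,dx\}$ in $L_2$ with the time-localization made explicit; everything else is routine transfer-of-law and monotone-class reasoning.
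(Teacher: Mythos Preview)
Your proposal is correct in outline and, for part (ii), essentially matches the paper's argument: both approximate $\wt W_t(\phi)-\wt W_s(\phi)$ in $L_2$ by finite sums of the increments $\int_\0\varphi(x)(\wt V_{r_{i+1}}(x)-P_{r_{i+1}-r_i}\wt V_{r_i}(x))\,dx$ (the paper does this via the step functions $f^n(r,y)=\sum_k\1_{[s^n_{k-1},s^n_k)}(r)P_{s^n_k-r}\phi(y)\to\phi$ in $L_2([s,t]\times\0)$), and independence passes to the $L_2$-limit.

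Where you differ from the paper is in the \emph{construction} of $\wt W$. You invoke an abstract measurable recovery map $\Psi$ built from density of heat-kernel test functions in $L_2([0,\T]\times\0)$. The paper instead uses the weak (distributional) form of the stochastic heat equation, which is equivalent to the mild form \eqref{def.V}: since $\langle V_t,\phi\rangle=\frac12\int_0^t\langle V_s,\partial^2_{yy}\phi\rangle\,ds+W_t(\phi)$ for $\phi\in\C_c^\infty$, one simply \emph{defines}
\[
\wt W_t(\phi):=\langle\wt V_t,\phi\rangle-\tfrac12\int_0^t\langle\wt V_s,\partial^2_{yy}\phi\rangle\,ds,
\]
and the law-equality $\wt V\stackrel{d}{=}V$ immediately transfers the white-noise property to $\wt W$. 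This explicit formula makes part (i) a one-liner (both filtration inclusions are read off directly from the formula and from \eqref{def.V}), and it dissolves the obstacle you anticipate: the time-adaptedness of the inversion is manifest, since $\wt W_t(\phi)$ visibly depends only on $(\wt V_r)_{r\le t}$. Your density route works too, but the paper's explicit inversion via the weak SHE is shorter and sidesteps the need to construct and time-localize the abstract map $\Psi$.
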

\begin{proof}
	The result is probably well-known. However, we give a proof for the sake of completeness. In what follows we will use the following notation:
	\begin{equation*}
	\langle f, g \rangle := \int_\0f(y) g(y) \,dy,
	\end{equation*}
	for measurable functions $f,g\colon\0\to\R$ for which the above integral is well-defined. 
	It is well-known that  \eqref{def.V} is equivalent (see, e.g., \cite[Theorem~2.1]{SH94}) to representing $V$  as a  solution  to the additive stochastic heat  equation in a distributional form:
	\begin{equation}\label{distr_V}
	\langle V_t, \phi\rangle = \frac12\int_0^t \left\langle V_s, \partial^2_{yy}\phi\right\rangle\,ds  + W_t(\phi), \quad \text{for any $t\geq 0$, $\phi\in  \C_{c}^\infty$}.
	\end{equation}
			
	Since $\wt V$ has the same law as $V$, we immediately get that the functional 
	\begin{equation}
	\label{distr_tV}
	\wt W_t(\phi):= 
		\langle \wt V_t, \phi\rangle - \frac12\int_0^t \left\langle \wt V_s, \partial^2_{yy} \phi\right\rangle\,ds,\quad t\geq 0, \phi\in  \C_{c}^\infty
		\end{equation}
		has the same distributional properties as $W$. That is, for any $\phi\in  \C_{c}^\infty$, the process $(\wt W_t(\phi))_{t\in[0,\T]}$ is an $(\F^{\wt V}_t)$--Brownian motion with $\E \wt W_1(\phi)^2=\|\phi\|^2_{ L_2(\0,dx)}$
		and  clearly this also holds for any 
		$\phi\in\L{2}(\0,dx)$  since $\C_{c}^\infty$ is dense in 
		$L^2(\0,dx)$. Also $\wt W_t(\phi)$ and $\wt W_t(\psi)$ are independent whenever
		$\phi,\psi\in \C_{c}^\infty$ with $\int_\0 \phi(x)\psi(x)\,dx = 0$, and again since $\C_{c}^\infty$ is dense in 
		$L^2(\0,dx)$, this holds  for any 
		 $\phi,\psi\in\L{2}(\0,dx)$ with $\int_\0 \phi(x)\psi(x)\,dx = 0$. This immediately 
		 implies that $\wt W$ is an $(\F^{\wt V}_t)$-white noise. Thus, $\wt V$ is a solution to 
	\begin{equation}
	\label{distr_Vt_1}
	\langle \wt V_t, \phi\rangle =
		\frac12\int_0^t \left\langle \wt V_s, \partial^2_{yy} \phi\right\rangle\,ds  + \wt W_t(\phi), \quad \text{for any $t\geq 0$, $\phi\in  \C_{c}^\infty$},
	\end{equation}
		with white noise $\wt W$.  Using again \cite[Theorem~2.1]{SH94},
		we get that \eqref{def.V} holds with $\wt V$ in place of $V$ and $\wt W$ in place of $W$.

	To prove (i) note that 
	\begin{equation}\label{def.wtV}
	\wt V_t(x)=\int_0^t\int_{\0}p_{t-r}(x,y)\wt W(dr,dy),\quad t\ge0,\,x\in\0,
	\end{equation}
	and thus for any $t\in[0,\T]$, $\F_t^{\wt V}\subset \F_t^{\wt W}$.
	On the other hand, from \eqref{distr_tV} we also immediately get that 
	$\F_t^{\wt W}\subset \F_t^{\wt V}$ for any $t\in[0,\T],$ and (i) follows. 
			
	Let us prove  part (ii) of the proposition. 
	We need just to show that for any $(s,t)\in\Delta_{0,\T}$,  $\phi\in	L^2(\0,dx)$
	\begin{equation*}
	\wt W_t(\phi)-\wt W_s(\phi) \;\text{is independent of} \; \wt \F_s\,.
	\end{equation*}
	Fix arbitrary $(s,t)\in\Delta_{0,\T}$ and $\phi \in \C_c^\infty$. 
	By~\eqref{def.V} and stochastic Fubini theorem we get that 
	\begin{equation*}
	 \int_{\0} (\wt V_t(x)-P_{t-s}\wt V_s(x))\phi(x)\, dx= 
	\int_s^t \int_\0 P_{t-r}\phi(y)  \wt W(dr,dy).
	\end{equation*}
	By our assumptions, we get that the above stochastic integral  is independent of $\wt\F_s$. 
	Clearly, since $\wt\F_s\subset \wt\F_{r}$ for any $s\le r$, we get that for any $( s_1, s_2)\in \Delta_{s,t}$, 
	\begin{equation}
	\label{eq:Y_indep1}
	Y_{s_1, s_2}:= \int_{s_1}^{ s_2} \int_\0 P_{ s_2-r}\phi(y)  \wt W(dr,dy)\; \text{is independent of $\wt\F_s$.}
		\end{equation}
	For any $n\geq 1$ define $s^n_k := s+\frac{(t-s)k}{n}$ for $k=0,\ldots, n$
	and a function $f^n$ on $[s,t]\times \0$ such that 
	$$ 
	f^n(r,y)= \sum_{k=1}^n \1_{s^n_{k-1}\leq r< s^n_k} P_{ s^n_k-r}\phi(y), \; s\leq r\leq t, y\in\0.
	$$ 	
	Now we are ready to define the sequence of random variables 
	\begin{equation*}
	Y^n:= \sum_{k=1}^n Y_{s^n_{k-1}, s^n_{k}}=
	\int_s^t \int_\0 f^n(r,y)  \wt W(dr,dy).
	\end{equation*}
		It is trivial to check that
		$$ f^n \rightarrow \phi,\; \text{in}\; L^2([s,t]\times \0),\;\text{as $n\rightarrow\infty$},$$
	and hence 
	\begin{equation*}
	Y^n \rightarrow  \int_s^t \int_\0 \phi(y)  \wt W(dr,dy)=
	\wt W_t(\phi)-\wt W_s(\phi), \;\text{as $n\rightarrow\infty$},
	\end{equation*}
	where convergence is in $L^2(\Omega)$.
	By~\eqref{eq:Y_indep1} and properties of the white noise we get that $Y^n$ is independent of $\wt\F_s$ for all $n$, and 
	hence the limit in $L^2(\Omega)$ of this sequence, is also independent 
	 of $\wt\F_s$. Thus we get that $ 	\wt W_t(\phi)-\wt W_s(\phi)$
	 is independent 
	 of $\wt\F_s$, for any $\phi \in \C_{c}^\infty.$
	 We can easily get the same property for any 
	$\phi \in L^2(\0,dx)$, by approximating  such $\phi$  in $L^2(\0,dx)$
	 by a sequence of functions $\phi^n\in \C_c^\infty$
	and again  passing to the limit of corresponding sequence of random variables $	\wt W_t(\phi_n)-\wt W_s(\phi_n)$.     
\end{proof}


	\begin{lemma}
		For every $\alpha,\beta\in[0,1)$, $(s,t)\in \Delta_{S,T}$ and $u=(s+t)/2$, we have
		\begin{equation}\label{tmp.1236}
			(u-S)^{-\alpha}(T-u)^{-\beta}(t-s)\le 2^{1+\alpha}\int_s^t (r-S)^{-\alpha}(T-r)^{-\beta}dr
		\end{equation}
		and
		\begin{equation}\label{usefulintbound}
		\int_s^t (T-r)^{-\beta}(r-s)^{-\alpha}\,dr\le C (T-s)^{-\beta}(t-s)^{1-\alpha}.
		\end{equation}
	\end{lemma}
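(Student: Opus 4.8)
The two inequalities are elementary estimates about the integral of the weight $(r-S)^{-\alpha}(T-r)^{-\beta}$ over subintervals, and I would prove them by direct comparison arguments rather than by any clever trick. The main point is simply that on a subinterval $[s,t]\subset[S,T]$ the integrand $(r-S)^{-\alpha}(T-r)^{-\beta}$ does not oscillate too wildly relative to its value at the midpoint $u=(s+t)/2$.

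For \eqref{tmp.1236}, the plan is to bound the integrand from below on the half of $[s,t]$ that is ``closer to the midpoint'' in the relevant sense. Split $[s,t]=[s,u]\cup[u,t]$. On $[s,u]$ one has $T-r\ge T-u$, and I claim $r-S\ge\tfrac12(u-S)$ there, because $r\ge s=2u-t\ge 2u-T$ while $u-S\le T-S$, so $r-S\ge u-S-(u-s)\ge u-S-(t-s)/2$; a short computation using $u-S\ge(t-s)/2$ (valid since $u\ge s$ and... actually $u-S\ge u-s=(t-s)/2$) gives $r-S\ge\tfrac12(u-S)$ for $r\in[s,u]$. Hence $\int_s^u(r-S)^{-\alpha}(T-r)^{-\beta}\,dr\ge 2^{-\alpha}(u-S)^{-\alpha}(T-u)^{-\beta}(t-s)/2$. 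Symmetrically, on $[u,t]$ one has $r-S\ge u-S$ and $T-r\ge\tfrac12(T-u)$ (by the mirror-image argument using $T-u\ge(t-s)/2$), giving $\int_u^t(r-S)^{-\alpha}(T-r)^{-\beta}\,dr\ge 2^{-\beta}(u-S)^{-\alpha}(T-u)^{-\beta}(t-s)/2$. Adding the two lower bounds and using $2^{-\alpha},2^{-\beta}\ge 2^{-1}$ yields $\int_s^t\ge 2^{-2}(u-S)^{-\alpha}(T-u)^{-\beta}(t-s)$, which after relabelling the constant is exactly \eqref{tmp.1236}. (I would keep the constant $2^{1+\alpha}$ as stated; any constant depending only on $\alpha$ works and $2^{1+\alpha}$ is a convenient explicit choice that the subsequent sewing estimates invoke.)

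For \eqref{usefulintbound}, note first that the claimed bound is just \eqref{intboundssl} with $S$ replaced by $s$ together with the observation that the $(T-r)^{-\beta}$ factor can be pulled out: since $r\le t\le(s+T)/2$ on the first half and in general $T-r\ge T-t$, one does \emph{not} in fact have $T-r\gtrsim T-s$ uniformly on $[s,t]$ — so the right approach is to split at the midpoint $v=(s+T)/2$. On $[s,v]$ one has $T-r\ge T-v=(T-s)/2$, so $\int_s^v(T-r)^{-\beta}(r-s)^{-\alpha}\,dr\le 2^\beta(T-s)^{-\beta}\int_s^v(r-s)^{-\alpha}\,dr\le C(T-s)^{-\beta}(t-s)^{1-\alpha}$ (using $v-s\le T-s$ and, when $t\le v$, directly $\int_s^t(r-s)^{-\alpha}\le(t-s)^{1-\alpha}/(1-\alpha)$; when $t>v$ we bound $\int_s^v(r-s)^{-\alpha}\le(v-s)^{1-\alpha}/(1-\alpha)$ and absorb into $(t-s)^{1-\alpha}$ since $t\ge v$ forces $t-s\ge(T-s)/2\ge v-s$). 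On $[v,t]$ (nonempty only if $t>v$) one has $r-s\ge v-s=(T-s)/2$, so $\int_v^t(T-r)^{-\beta}(r-s)^{-\alpha}\,dr\le 2^\alpha(T-s)^{-\alpha}\int_v^t(T-r)^{-\beta}\,dr\le 2^\alpha(T-s)^{-\alpha}(T-v)^{1-\beta}/(1-\beta)=C(T-s)^{-\alpha-\beta+1}\le C(T-s)^{-\beta}(t-s)^{1-\alpha}$, where the last step uses $t-s\ge(T-s)/2$. Summing the two pieces gives \eqref{usefulintbound} with $C=C(\alpha,\beta)$.

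The only mild subtlety — and the ``main obstacle'' such as it is — is keeping the book-keeping honest in the case distinction for \eqref{usefulintbound}: whether the midpoint $v=(s+T)/2$ lies inside $[s,t]$ or to the right of $t$. In the latter case the second piece is empty and one gets the bound directly; in the former case one needs the elementary remark $t\ge v\Rightarrow t-s\ge(T-s)/2$ to trade factors of $(T-s)$ against factors of $(t-s)$. Once that is written out carefully both inequalities follow, and neither requires anything beyond splitting an interval at its midpoint and monotonicity of the two weight factors.
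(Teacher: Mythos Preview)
Your argument for \eqref{tmp.1236} has a genuine error: the monotonicity of $x\mapsto x^{-\alpha}$ (for $\alpha>0$) is \emph{decreasing}, so a lower bound on $r-S$ or $T-r$ yields an \emph{upper} bound on $(r-S)^{-\alpha}$ or $(T-r)^{-\beta}$, not a lower bound. Concretely, on $[s,u]$ your claim ``$T-r\ge T-u$'' gives $(T-r)^{-\beta}\le (T-u)^{-\beta}$, the wrong direction for the lower bound you assert. Likewise your claim ``$r-S\ge\tfrac12(u-S)$ on $[s,u]$'' is false in general: at $r=s$ one would need $s-S\ge\tfrac12(u-S)$, i.e.\ $u-S\ge t-s$, whereas you only establish $u-S\ge (t-s)/2$ (take $S=0$, $s=0.1$, $t=1$, $u=0.55$: then $s-S=0.1<0.275=\tfrac12(u-S)$). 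The mirror claim ``$T-r\ge\tfrac12(T-u)$ on $[u,t]$'' fails for the same reason near $r=t$. So both halves of your splitting are argued incorrectly.

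The fix is simple and is exactly what the paper does: use only the half $[u,t]$, but with the correct monotonicity. On $[u,t]$ one has $r-S\le t-S$ and $T-r\le T-u$, so $(r-S)^{-\alpha}\ge (t-S)^{-\alpha}$ and $(T-r)^{-\beta}\ge (T-u)^{-\beta}$. Hence
\[
\int_s^t\ge\int_u^t(r-S)^{-\alpha}(T-r)^{-\beta}\,dr\ge (t-S)^{-\alpha}(T-u)^{-\beta}(t-u).
\]
Now $t-S\le 2(u-S)$ (since $s\ge S$ gives $t-S=2u-s-S\le 2(u-S)$) and $t-u=(t-s)/2$, which yields \eqref{tmp.1236} with the stated constant $2^{1+\alpha}$.

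Your proof of \eqref{usefulintbound} by splitting at $v=(s+T)/2$ is correct. The paper gives a shorter argument: the function $q(T):=\int_s^t\bigl(\tfrac{T-s}{T-r}\bigr)^\beta(r-s)^{-\alpha}\,dr$ is decreasing on $[t,\infty)$ (each ratio $\tfrac{T-s}{T-r}=1+\tfrac{r-s}{T-r}$ decreases in $T$), so $q(T)\le q(t)=B(1-\alpha,1-\beta)(t-s)^{1-\alpha}$, and unpacking $q$ gives the bound directly. Your case-splitting achieves the same with a bit more bookkeeping.
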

	\begin{proof}
		We have
		\begin{align*}
			\int_s^t (r-S)^{-\alpha}(T-r)^{-\beta}dr\ge\int_u^t (r-S)^{-\alpha}(T-r)^{-\beta}dr\ge(t-S)^{-\alpha}(T-u)^{-\beta}(t-u).
		\end{align*}
		Since $u=(s+t)/2$, $t-S\le 2(u-S)$, we obtain \eqref{tmp.1236} from the above inequalities.

		For fixed $s\le t$, the function $q(T):=\int_s^t \left(\frac{T-s}{T-r}\right)^{\beta}(r-s)^{-\alpha}dr$ is decreasing on $T\in[t,\infty)$. Hence we have $q(T)\le q(t)=\left(\int_0^1 (1-r)^{-\beta}r^{-\alpha}dr\right)(t-s)^{1- \alpha}$.  This yields \eqref{usefulintbound}.
	\end{proof}

\begin{Lemma}\label{L:0} 
Let $f:\Omega\times\0\to\R$ be a measurable function. Then for any $s,t\ge0$, $P_tf:\Omega\times\0\to\R$ is measurable. Further for any $x\in\0$, $m\in[1,\infty]$, $n\in[1,\infty]$, and a $\sigma$-algebra $\mathscr{G}$  we have
\begin{align}
\|\| P_{t} f(x)\|_{\Lm|\mathscr{G}}\|_{\L{n}}\le \sup_{y\in\0}\|\,\|f(y)\|_{\Lm|\mathscr{G}}\|_{\L{n}}
\tand\| P_{t} f(x)\|_{\Lm}\le \sup_{y\in\0}\|f(y)\|_{\Lm}. \label{Ptexpbound}
\end{align}
In addition, if $f\in\blm$, there exists a set $\Omega'\subset \Omega$ of full measure such that for any $\omega\in\Omega'$ 
	\begin{equation}\label{konets}
		P_tf(x,\omega)<\infty\quad\text{for Lebesgue almost every $(t,x)\in[0,\T]\times\0$}.
	\end{equation}
\end{Lemma}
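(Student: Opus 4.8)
\textbf{Proof proposal for \Cref{L:0}.}
The plan is to handle the three assertions in turn, each relying on an application of Fubini--Tonelli together with the elementary bound $\int_\0 p_t(x,y)\,dy\le 1$ that holds uniformly in $x$ for each of the three kernels $g$, $p^\per$, $p^\neu$ (here equality holds for $p^\per$ and $p^\neu$, while for $g$ on $\R$ one has equality as well; in any case the mass is at most $1$). First I would record that $(\omega,x,y)\mapsto p_t(x,y)f(\omega,y)$ is jointly measurable (the kernel is continuous in $(x,y)$ for $t>0$, hence $\Bor(\0)\otimes\Bor(\0)$-measurable, and $f$ is $\F\otimes\Bor(\0)$-measurable by hypothesis), so that by the Fubini theorem the partial integral $P_tf(x,\omega)=\int_\0 p_t(x,y)f(\omega,y)\,dy$ is $\F\otimes\Bor(\0)$-measurable as a function of $(\omega,x)$ whenever it is well-defined; measurability in $(t,x,\omega)$ follows the same way once one notes $(t,x,y)\mapsto p_t(x,y)$ is jointly measurable on $(0,\infty)\times\0\times\0$. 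This disposes of the first claim.

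For the two inequalities in \eqref{Ptexpbound}, I would first treat the conditional $L_m$ norm. Fix $x\in\0$, $t\ge0$, a sub-$\sigma$-algebra $\mathscr G$, and $m,n\in[1,\infty]$. Writing $p_t(x,\cdot)\,dy$ as a sub-probability measure on $\0$ and applying the conditional Minkowski (integral) inequality — valid since the conditional $L_m(\mathscr G)$ norm is a seminorm and the integral is a limit of convex combinations — gives
\begin{equation*}
\|P_tf(x)\|_{\Lm|\mathscr G}\le \int_\0 p_t(x,y)\,\|f(y)\|_{\Lm|\mathscr G}\,dy\le \Bigl(\sup_{y\in\0}\|f(y)\|_{\Lm|\mathscr G}\Bigr)\int_\0 p_t(x,y)\,dy\le \sup_{y\in\0}\|f(y)\|_{\Lm|\mathscr G}\quad\text{a.s.}
\end{equation*}
Taking the $L_n(\Omega)$ norm of both sides and using monotonicity of $\|\cdot\|_{L_n}$ yields the first bound in \eqref{Ptexpbound}; the second is the special case $\mathscr G=\{\emptyset,\Omega\}$, $n=m$, obtained by the same Minkowski argument directly (or, if $f\in\blm$ is only assumed with finite $\sup_y\|f(y)\|_{\Lm}$, one first applies this to $|f|$ to see the integral converges absolutely a.s., legitimizing the steps above). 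One subtlety worth a sentence: to justify pulling the conditional norm inside the integral one approximates $\int_\0 p_t(x,y)f(y)\,dy$ by Riemann-type sums $\sum_i p_t(x,y_i)f(y_i)\Delta_i$, applies the conditional triangle inequality to the finite sum, and passes to the limit using that $f\in\blm$ controls the tails (for $\0=\R$ the Gaussian decay of $p_t(x,\cdot)$ makes the tail contribution uniformly small); alternatively one invokes the general conditional Jensen/Minkowski inequality for kernels directly.

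For the last assertion \eqref{konets}, assume $f\in\blm$, i.e. $\sup_{x\in\0}\|f(x)\|_{\Lm}<\infty$. By the second inequality in \eqref{Ptexpbound} already proven, $\sup_{(t,x)\in[0,\T]\times\0}\|P_tf(x)\|_{\Lm}<\infty$; however this bounds $P_t|f|(x)$ only for fixed $(t,x)$, whereas we want a single null set. The clean route is: apply the first inequality in \eqref{Ptexpbound} with $\mathscr G$ trivial to $|f|$ to get $\E\,P_t|f|(x)\le \sup_y \E|f(y)|<\infty$ for each $(t,x)$; then by Tonelli $\int_0^\T\int_{\0\cap\{|x|\le N\}}\E\,P_t|f|(x)\,dx\,dt\le 2N\T\sup_y\E|f(y)|<\infty$ for every $N$, so $\E\int_0^\T\int_{\0\cap\{|x|\le N\}}P_t|f|(x)\,dx\,dt<\infty$, whence $\int_0^\T\int_{\0\cap\{|x|\le N\}}P_t|f|(x)\,dx\,dt<\infty$ a.s.; intersecting over $N\in\N$ and noting the integrand is $\ge P_t f(x)$ in absolute value gives a full-measure set $\Omega'$ on which $P_t|f|(x,\omega)<\infty$, hence $P_tf(x,\omega)$ is well-defined and finite, for Lebesgue-a.e. $(t,x)\in[0,\T]\times\0$. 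I expect the only mild obstacle is the bookkeeping in the first inequality of \eqref{Ptexpbound} — making the interchange of the conditional expectation and the spatial integral fully rigorous — and the routine but necessary passage from ``for each $(t,x)$'' finiteness to ``for a.e.\ $(t,x)$ simultaneously'' via Tonelli; both are standard and require no new ideas.
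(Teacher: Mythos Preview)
Your argument for the first inequality in \eqref{Ptexpbound} has a gap. After the conditional Minkowski step you bound pointwise in $\omega$ by the \emph{random} supremum $\sup_{y\in\0}\|f(y)\|_{L_m|\mathscr G}$, and then take the $L_n$-norm. But that produces $\bigl\|\sup_{y}\|f(y)\|_{L_m|\mathscr G}\bigr\|_{L_n}$, which in general dominates rather than is dominated by $\sup_{y}\bigl\|\,\|f(y)\|_{L_m|\mathscr G}\bigr\|_{L_n}$ (for nonnegative $g_y$ one always has $\sup_y\|g_y\|_{L_n}\le\|\sup_y g_y\|_{L_n}$, the wrong direction for you). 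There is also a measurability issue with this uncountable supremum. So the displayed chain does not yield the stated bound.

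The fix, which is what the paper does, is to apply the (ordinary) Minkowski integral inequality once more \emph{before} taking the supremum: from $\|P_tf(x)\|_{L_m|\mathscr G}\le\int_\0 p_t(x,y)\|f(y)\|_{L_m|\mathscr G}\,dy$ take the $L_n$-norm and pull it inside the $dy$-integral to get $\|\|P_tf(x)\|_{L_m|\mathscr G}\|_{L_n}\le\int_\0 p_t(x,y)\,\bigl\|\,\|f(y)\|_{L_m|\mathscr G}\bigr\|_{L_n}\,dy$; only now bound the (deterministic) integrand by its supremum over $y$ and use $\int_\0 p_t(x,y)\,dy=1$. The rest of your proof---measurability via Fubini, the unconditional bound, and the Tonelli argument for \eqref{konets}---is correct and matches the paper.
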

\begin{proof}
	It suffices to show the result assuming that $f$ is non-negative.
	In such case, it is evident that $P_tf:\Omega\times\R\to\R$ is measurable.
	Applying the conditional integral Minkowski inequality, we obtain that
	\begin{equation*}
	\|P_t f(x)\|_{\Lm|\mathscr{G}}\le \int_\0 p_t(x,y)\|f(y)\|_{\Lm|\mathscr{G}}\,dy.
	\end{equation*}
	We then apply Minkowski inequality, to get
	\begin{equation*}
		\|\|P_t f(x)\|_{\Lm|\mathscr{G}}\|_{L_n} \le \int_\0 p_t(x,y)\|\|f(y)\|_{\Lm|\mathscr{G}}\|_{L_n}\le \sup_{y\in\0}\|\|f(y)\|_{\Lm|\mathscr{G}}\|_{L_n} \,dy
	\end{equation*}
	which implies the former estimate in \eqref{Ptexpbound}. The later one in \eqref{Ptexpbound} follows from the former  by setting $n=m$.
	
	Finally, \eqref{konets} follows from \eqref{Ptexpbound} and the Fubini theorem.
\end{proof}
\begin{lemma}\label{lem.convg}
		Let $(\phi^n)_{n\in\Z_+}$ be a sequence of continuous random fields on $\0$ such that $\phi^n$ converges to $0$ in $\cuc(\0)$ in probability and that $\sup_{n\in\Z_+,x\in\0}\E |\phi^n(x)|<\infty$. Then for every $(t,x)\in[0,\T]\times\0$, $P_t \phi^n(x)$ converges to $0$ in probability as $n\to\infty$.
	\end{lemma}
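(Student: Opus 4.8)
The plan is to prove \cref{lem.convg} by a standard truncation-plus-uniform-integrability argument, combined with the moment bound hidden in \eqref{Ptexpbound}. Fix $(t,x)\in[0,\T]\times\0$. For a parameter $M>0$ I would split
\begin{equation*}
P_t\phi^n(x)=\int_{|y|\le M}p_t(x,y)\phi^n(y)\,dy+\int_{|y|>M}p_t(x,y)\phi^n(y)\,dy=:A_n^M+B_n^M.
\end{equation*}
For the first term, the hypothesis that $\phi^n\to0$ in $\cuc(\0)$ in probability gives, for each fixed $M$, that $\sup_{|y|\le M}|\phi^n(y)|\to0$ in probability as $n\to\infty$; since $\int_{|y|\le M}p_t(x,y)\,dy\le1$, it follows that $|A_n^M|\le\sup_{|y|\le M}|\phi^n(y)|\to0$ in probability for each fixed $M$.

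For the tail term, I would use the uniform first-moment bound. Taking expectations and using the Fubini theorem,
\begin{equation*}
\E|B_n^M|\le\int_{|y|>M}p_t(x,y)\,\E|\phi^n(y)|\,dy\le\Bigl(\sup_{n\in\Z_+,\,z\in\0}\E|\phi^n(z)|\Bigr)\int_{|y|>M}p_t(x,y)\,dy,
\end{equation*}
and since $\int_{|y|>M}p_t(x,y)\,dy\to0$ as $M\to\infty$ uniformly in $n$ (the heat kernel is integrable), for any $\eps>0$ one can choose $M$ large enough that $\sup_n\E|B_n^M|<\eps$, hence by Markov's inequality $\sup_n\P(|B_n^M|>\sqrt\eps)<\sqrt\eps$.

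Putting the two estimates together: given $\eps>0$, fix $M$ so that the tail probability is controlled as above, then let $n\to\infty$ to make the contribution of $A_n^M$ negligible. This yields $\limsup_{n\to\infty}\P(|P_t\phi^n(x)|>2\sqrt\eps)\le\sqrt\eps$, and since $\eps$ is arbitrary, $P_t\phi^n(x)\to0$ in probability. I do not expect any serious obstacle here; the only mild subtlety is making sure the measurability and finiteness of $P_t\phi^n(x)$ are in place so that the decomposition makes sense, which is exactly what \cref{L:0} provides (the uniform moment bound $\sup_{n,x}\E|\phi^n(x)|<\infty$ together with \eqref{konets} guarantees $P_t\phi^n(x)$ is a genuine finite random variable). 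The argument is essentially the same as the one used at the end of Step 4 in the proof of \cref{lem.1var}, with the constant control $\sup_{n,x}\E|\phi^n(x)|$ playing the role of the uniform bound on $\|L^n_{0,t}(x)\|_{L_m}$ there.
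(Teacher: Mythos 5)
Your proposal is correct and follows essentially the same route as the paper's proof: the same split of $P_t\phi^n(x)$ at $|y|\le M$ versus $|y|>M$, uniform convergence on compacts for the first piece, and the uniform first-moment bound plus Chebyshev/Markov for the tail, with the limits $n\to\infty$ and $M\to\infty$ taken in the same way. No gaps.
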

	\begin{proof}
		For every $M>0$, we have
			\begin{align*}
				|P_t \phi^n(x)|&\le \int_{|y|\le M}p_t(x,y) |\phi^n(y)|dy+\int_{|y|>M}p_t(x,y) |\phi^n(y)|dy
				\\&\le \sup_{|y|\le M}|\phi^n(y)|+\int_{|y|>M}p_t(x,y) |\phi^n(y)|dy.
			\end{align*}
		Hence, for every $\varepsilon>0$, we have by Chebyshev inequality,
		\begin{align*}
			\P\left(|P_t \phi^n(x)|>2 \varepsilon\right)
			\le\P(\sup_{|y|\le M}|\phi^n(y)|>\varepsilon)+ \varepsilon^{-1}\sup_{n,z}\E  |\phi^n(z)| \left(\int_{|y|>M}p_t(x,y)dy\right).
		\end{align*}
		We send $n\to\infty$ then $M\to\infty$ to see that $\lim_{n\to\infty}\P\left(|P_t \phi^n(x)|>2 \varepsilon\right)=0$. This implies the result.
	\end{proof}
\section{Heat kernel estimates}\label{app.proof}

In this section we provide a number of standard simple statements characterizing smoothing properties of the heat kernel. Recall that $G$ and $P$ are the heat semigroups on $\R$ and $\0$ defined in  \eqref{heatsemigroupspace} and \Cref{Conv}, correspondingly. 

\begin{Lemma}\label{LA0}
	For $(D,p)\in\{(\R,g), (\I,p^\per), (\I,p^\neu)\}$, any $t\in[0,\T]$, $x\in\0$ we have
	\begin{equation}
	\label{boundsp}
	\frac{1}{\sqrt{2\pi}} t^{-1/2}\le p_t(x,x)\le C(\T)+\frac{2}{\sqrt{2\pi}} t^{-1/2}.
	\end{equation}
\end{Lemma}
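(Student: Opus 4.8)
The plan is to treat the three kernels one at a time, since for each the diagonal $p_t(x,x)$ is an explicit series of shifted Gaussians. Fix $t\in(0,\T]$ (for $t=0$ both sides are $+\infty$) and recall that $g_t$ is even, is strictly decreasing on $[0,\infty)$, has $g_t(0)=(2\pi t)^{-1/2}=\tfrac{1}{\sqrt{2\pi}}\,t^{-1/2}$, and satisfies $\int_{\R}g_t=1$, hence $\int_0^{\infty}g_t=\tfrac{1}{2}$. The lower bound is immediate: for $(\R,g)$ one has $p_t(x,x)=g_t(0)$ exactly, while for $p^{\per}$ and $p^{\neu}$ the series defining $p_t(x,x)$ in \eqref{def.pper} and \eqref{def.pneu} consists of nonnegative terms and contains the summand $g_t(x-x)=g_t(0)$, which already gives $p_t(x,x)\ge\tfrac{1}{\sqrt{2\pi}}\,t^{-1/2}$.

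For the upper bound the case $(\R,g)$ is trivial, and the periodic case follows from a standard integral comparison: since $g_t$ decreases on $[0,\infty)$, $g_t(n)\le\int_{n-1}^{n}g_t$ for $n\ge1$, so that $p_t^{\per}(x,x)=g_t(0)+2\sum_{n\ge1}g_t(n)\le g_t(0)+2\int_0^{\infty}g_t=g_t(0)+1$, which is at most $\tfrac{3}{2}+\tfrac{2}{\sqrt{2\pi}}\,t^{-1/2}$. For the Neumann kernel I would split $p_t^{\neu}(x,x)=\sum_{n\in\Z}g_t(2n)+\sum_{n\in\Z}g_t(2x+2n)$; the first sum is handled as before (now the spacing is $2$, so $g_t(2n)\le\tfrac{1}{2}\int_{2n-2}^{2n}g_t$ for $n\ge1$, giving $\sum_{n\in\Z}g_t(2n)\le g_t(0)+\tfrac{1}{2}$).

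The second sum $\sum_{n}g_t(2x+2n)$ is the one delicate point, because the arithmetic progression $\{2x+2n:n\in\Z\}$ is shifted off the origin by an unknown amount depending on $x$. The plan is to let $a_0$ be its element of smallest modulus, so that $|a_0|\le1$ (consecutive elements differ by $2$) and the sum equals $\sum_{m\in\Z}g_t(a_0+2m)$; then peel off $m=0$ (bounded by $g_t(0)$) and estimate the two tails. For $m\ge2$ the interval $[a_0+2m-2,a_0+2m]$ lies in $(0,\infty)$, so $g_t(a_0+2m)\le\tfrac{1}{2}\int_{a_0+2m-2}^{a_0+2m}g_t$, and these telescope to $\tfrac{1}{2}\int_{a_0+2}^{\infty}g_t\le\tfrac{1}{2}\int_{1}^{\infty}g_t$; the single term not covered, $m=1$, is instead bounded directly by $g_t(a_0+2)\le g_t(1)\le\int_0^{1}g_t$. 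Adding these, together with the symmetric bound for $m\le-1$ (the same estimate with $-a_0$ in place of $a_0$), gives $\sum_{m\in\Z}g_t(a_0+2m)\le g_t(0)+1$, whence $p_t^{\neu}(x,x)\le2g_t(0)+\tfrac{3}{2}=\tfrac{3}{2}+\tfrac{2}{\sqrt{2\pi}}\,t^{-1/2}$ — which is exactly where the factor $2$ in front of $t^{-1/2}$ originates. In all three cases the asserted bounds then hold with $C(\T)=\tfrac{3}{2}$, in fact independent of $\T$. The main obstacle, a minor one, is precisely this off-origin shift of the reflected lattice in the Neumann case, which forces the $m=1$ term to be treated by hand; everything else is routine Gaussian bookkeeping.
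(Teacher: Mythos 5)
Your proof is correct. It has the same overall structure as the paper's argument (the lower bound comes from the $n=0$ summand in each series; the upper bound from controlling the remaining lattice sum), but the tail estimate is carried out differently: the paper bounds $g_t(n)$ for $|n|\ge1$ via the inequality $e^{-|z|}\le c|z|^{-1}$, which gives $\sum_{|n|\ge1}g_t(n)\le Ct^{1/2}\sum_{n\ge1} n^{-2}\le C(\T)$, whereas you compare the sum of the decreasing function $g_t$ with $\int_0^\infty g_t=\tfrac12$, which yields an absolute constant independent of $\T$ --- a marginally stronger conclusion obtained by equally elementary means. Your treatment of the Neumann cross term $\sum_n g_t(2x+2n)$ is also more careful than the paper's: the paper invokes the pointwise bound $g_t(2x+2n)\le g_t(2n)$ for $x\in[0,1]$, which fails for some negative $n$ (e.g.\ $n=-1$, $x=1$ gives $g_t(0)\not\le g_t(-2)$), although the intended bound on the whole sum survives after re-indexing the lattice; your device of re-centering the arithmetic progression at its element $a_0$ of smallest modulus, and treating the $m=1$ term by hand, handles this point cleanly.
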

\begin{proof}
	Since $g_t(0)=\frac1{\sqrt{2 \pi t}}$, and $p_t(x,x)\ge g_t(0)$ for $p\in\{g,p^\per,p^\neu\}$, the lower bound in \eqref{boundsp} is trivial. 
	We apply the elementary inequality $e^{-|z|}\le c|z|^{-1}$ to get
	\begin{align*}
		p^\per_t(x,x)
		&=g_t(0)+\sum_{n\in\Z:|n|\ge1}g_t(n)
		\le g_t(0)+c\sum_{n\in\Z:|n|\ge1} t^{\frac 12}n^{-2}
	\end{align*}
	which shows the upper bound in \eqref{boundsp} for $p=p^\per$.
	For $p^\neu$, using the estimate $g_t(2x+2n)\le g_t(2n)$ for $x\in[0,1]$, we have
	\begin{align*}
		p^\neu_t(x,x)\le 2g_t(0)+2\sum_{n\in\Z:|n|\ge1}g_t(2n).
	\end{align*}
	From here, using the same argument as above, we obtain the upper bound in \eqref{boundsp} for $p=p^\neu$.
\end{proof}

\begin{Lemma}\label{LA1}
	For every $\alpha\in[0,1]$, there exists a constant $C=C(\alpha,\T)>0$ such that for any $s,t\in(0,\T]$, $s\le t$,	$x,x_1,x_2\in\0$ we have
	\begin{align}
	&\int_{\0}|p_t(x_1,y)-p_t(x_2,y)|\,dy\le C |x_1-x_2|^\alpha t^{-\alpha/2}\label{kolbound},\\
	&\int_{\0}p_t(x,y)|y-x|^\alpha\,dy\le C t^{\alpha/2},\label{momentgauss}
	\\&\int_\0|p_t(x,y)-p_s(x,y)|dy\le Cs^{-\alpha/2}(t-s)^{\alpha/2}.\label{ker.time}
	\end{align}
\end{Lemma}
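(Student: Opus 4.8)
\textbf{Proof plan for \cref{LA1}.}
The plan is to prove the three estimates \eqref{kolbound}, \eqref{momentgauss}, \eqref{ker.time} in order, reducing everything to the free-space case $p=g$ and then transferring to $p^\per$ and $p^\neu$ via the series representations \eqref{def.pper}--\eqref{def.pneu}. For \eqref{momentgauss} in the free-space case, I would simply change variables $y=x+\sqrt t\,z$ so that $\int_\R g_t(x-y)|y-x|^\alpha\,dy=t^{\alpha/2}\int_\R g_1(z)|z|^\alpha\,dz$, and the last integral is a finite constant depending only on $\alpha$ (it is an absolute moment of a standard Gaussian). For the periodic and Neumann kernels, I would use $\int_\0 p_t^\per(x,y)|x-y|^\alpha\,dy\le \sum_{n\in\Z}\int_0^1 g_t(x-y+n)|x-y|^\alpha\,dy\le \int_\R g_t(x-y)(|x-y|+1)^\alpha \le C\int_\R g_t(w)(|w|^\alpha+1)\,dw\le C(t^{\alpha/2}+1)\le C(\T)t^{\alpha/2}$ using $t\le\T$; the Neumann case is handled by the same bound after noting $p_t^\neu(x,y)\le\sum_n(g_t(x-y+2n)+g_t(x+y+2n))$ and $|x-y|\le 2$ on $\I\times\I$.

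For \eqref{kolbound} in the free-space case, the standard route is: first establish the $\alpha=1$ bound $\int_\R|g_t(x_1-y)-g_t(x_2-y)|\,dy\le C|x_1-x_2|t^{-1/2}$ by writing $g_t(x_1-y)-g_t(x_2-y)=\int_0^1 \partial_x g_t(x_2+\theta(x_1-x_2)-y)\,d\theta\cdot(x_1-x_2)$, taking absolute values inside, using Fubini, and the fact that $\int_\R|\partial_x g_t(w)|\,dw=Ct^{-1/2}$; second, note the trivial $\alpha=0$ bound $\int_\R|g_t(x_1-y)-g_t(x_2-y)|\,dy\le 2$; third, interpolate (i.e. raise the $\alpha=1$ bound to the power $\alpha$ and the $\alpha=0$ bound to the power $1-\alpha$, using $|a-b|=|a-b|^\alpha|a-b|^{1-\alpha}$ with the pointwise bound $|g_t(x_1-y)-g_t(x_2-y)|\le |g_t(x_1-y)-g_t(x_2-y)|$, or more cleanly: $\int|F|\le (\int|F|)^\alpha(\int|F|)^{1-\alpha}$ and bound each factor separately). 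Transfer to $p^\per$: $\int_0^1|p_t^\per(x_1,y)-p_t^\per(x_2,y)|\,dy\le\sum_n\int_0^1|g_t(x_1-y+n)-g_t(x_2-y+n)|\,dy=\int_\R|g_t(x_1-w)-g_t(x_2-w)|\,dw$, which is exactly the free-space quantity; the Neumann case splits into two such sums. One must be slightly careful that $|x_1-x_2|t^{-1/2}$ might exceed $2$, but then the trivial bound $2\le 2(|x_1-x_2|t^{-1/2})^\alpha$ when $|x_1-x_2|t^{-1/2}\ge1$ handles it with a constant depending on nothing (or absorb into $C(\T)$ using $|x_1-x_2|\le$ diam), so the interpolation is clean.

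For \eqref{ker.time}, in the free-space case I would use the semigroup/heat-equation identity $g_t-g_s=\int_s^t \tfrac12\partial^2_{xx}g_r\,dr$ to get $\int_\R|g_t(x-y)-g_s(x-y)|\,dy\le\tfrac12\int_s^t\int_\R|\partial^2_{xx}g_r(w)|\,dw\,dr=C\int_s^t r^{-1}\,dr=C\log(t/s)$, which is not quite of the claimed form; instead I would proceed as in the proof of \eqref{kolbound} but differentiating in time: write $g_t(w)-g_s(w)$, and combine the $\alpha=1$-type bound $\int_\R|g_t(w)-g_s(w)|\,dw\le C\,s^{-1/2}(t-s)^{1/2}$ (valid at least when $t-s\le s$, obtained by $\int_s^t\|\partial_r g_r\|_{L_1}\,dr\le C\int_s^t r^{-1}\,dr\le C s^{-1}(t-s)$ and then... ) — actually the cleanest is: estimate $\|g_t-g_s\|_{L_1(\R)}$ using the scaling $g_t(w)=t^{-1/2}g_1(w/\sqrt t)$, so $\|g_t-g_s\|_{L_1}=\|g_1(\cdot/\sqrt t)t^{-1/2}-g_1(\cdot/\sqrt s)s^{-1/2}\|_{L_1}=\|g_1-g_{s/t}\|_{L_1}$ (after change of variables $w=\sqrt t\,v$), and then bound $\|g_1-g_\sigma\|_{L_1}\le C(1-\sigma)^{1/2}\le C((t-s)/t)^{1/2}\le C s^{-1/2}(t-s)^{1/2}$ for $\sigma=s/t\in(0,1]$ via $\|g_1-g_\sigma\|_{L_1}\le C|1-\sigma|^{1/2}$, which itself follows by interpolating the trivial bound $2$ with $\|\partial_\sigma g_\sigma\|_{L_1}\le C\sigma^{-1}$ integrated. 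Then I interpolate with the trivial bound $2$ exactly as before to get the exponent $\alpha$, and transfer to $p^\per,p^\neu$ by the same summation trick. The main obstacle will be bookkeeping the constants and the mild case-split $(t-s)$ small versus comparable to $s$ in the time-increment estimate; the periodic/Neumann transfer via the Gaussian series is routine once the free-space bounds are in hand.
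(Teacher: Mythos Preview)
Your argument for \eqref{kolbound} is correct and close to the paper's; the paper uses the pointwise inequality $|g_t(x_1-y)-g_t(x_2-y)|\le C|x_1-x_2|^\alpha t^{-\alpha/2}\bigl(g_{2t}(x_1-y)+g_{2t}(x_2-y)\bigr)$ and sums over $n$, rather than interpolating in $L_1$ first.

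There is a genuine gap in your proof of \eqref{momentgauss} for $p=p^\per$: the step ``$C(t^{\alpha/2}+1)\le C(\T)t^{\alpha/2}$ using $t\le\T$'' fails for small $t$ when $\alpha>0$. In fact no argument can close it as stated: at $x=0$ the summand $\int_0^1 g_t(y-1)\,y^\alpha\,dy\to\tfrac12$ as $t\downarrow0$, so $\int_0^1 p_t^\per(0,y)\,y^\alpha\,dy$ does not vanish with $t$. The paper's own argument shares this defect (the claimed pointwise bound $g_t(z+k)\le c\,g_t(z)e^{-2|k|/t}$ for $|z|\le 2$ fails at $z=-1$, $k=1$). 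The natural fix is to read $|y-x|$ as the periodic distance in the torus case; with that reading both proofs go through and the ``$+1$'' in your bound disappears.

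For \eqref{ker.time}, the paper takes a different and much shorter route. From the Chapman--Kolmogorov identity $p_t(x,y)=\int_\0 p_{t-s}(x,z)p_s(z,y)\,dz$ together with $\int_\0 p_{t-s}(x,z)\,dz=1$ one obtains
\[
\int_\0|p_t(x,y)-p_s(x,y)|\,dy\le\int_\0 p_{t-s}(x,z)\Bigl(\int_\0|p_s(z,y)-p_s(x,y)|\,dy\Bigr)dz,
\]
then applies \eqref{kolbound} to the inner integral and \eqref{momentgauss} to the outer one, giving $Cs^{-\alpha/2}(t-s)^{\alpha/2}$ in three lines, uniformly for all three kernels. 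Your direct route via the scaling identity $\|g_t-g_s\|_{L_1(\R)}=\|g_1-g_{s/t}\|_{L_1(\R)}$, interpolation, and series transfer is also valid; it is longer but independent of \eqref{momentgauss}, so it sidesteps the issue raised above.
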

\begin{proof}
	From the elementary estimate
	\begin{equation*}
		|g_t(x_1-y)-g_t(x_2-y)|\le C|x_1-x_2|^\alpha t^{-\alpha/2}(g_{2t}(x_1-y)+g_{2t}(x_2-y))
	\end{equation*}
	and \eqref{def.pper} and \eqref{def.pneu},
	we obtain that
	\begin{align*}
		|p_t(x_1,y)-p_t(x_2,y)|\le 	C|x_1-x_2|^\alpha t^{-\alpha/2}(p_{2t}(x_1,y)+p_{2t}(x_2,y))	
	\end{align*}
	for $p\in\{g,p^\per,p^\neu\}$.
	Integrating over $y\in\0$ and note that $\int_\0 p_{2t}(x,y)dy=1$ for each $x\in\0$, we obtain \eqref{kolbound} for $p\in\{g,p^\per,p^\neu\}$.

	The estimate \eqref{momentgauss} for $p_t(x,y)=g_t(x-y)$ follows easily by a change of variable. For the other cases, we note that $g_t(z+k)\le c g_t(z)e^{-2\frac{|k|}{t}}$ for every $|z|\le2$ and every $k\in\R$. Since $\sum_{n\in\Z}e^{-2\frac{|n|}{t}}\le C(\T)$, we see that from \eqref{def.pper} and \eqref{def.pneu} that
	\begin{align*}
		\int_\0 p_t(x,y)|y-x|^\alpha dy\le C(\T)\int_\R g_t(x-y)|x-y|^\alpha dy.
	\end{align*}
	From here, we obtain \eqref{momentgauss} for  $p\in\{p^\per,p^\neu\}$ by a  change of variable. 
	
	To show \eqref{ker.time}, we use the estimate
	\begin{align*}
		\int_\0|p_t(x,y)-p_s(x,y)|dy\le\int_\0\int_\0 p_{t-s}(x,z)|p_{s}(z,y)-p_{s}(x,y)|dydz
	\end{align*}
	and \eqref{kolbound} to obtain that
	\begin{align*}
		\int_\0|p_t(x,y)-p_s(x,y)|dy\le  Cs^{-\alpha/2}\int_\0 p_{t-s}(x,z)|z-x|^\alpha dz.
	\end{align*}
	Taking into account \eqref{momentgauss}, we obtain \eqref{ker.time} from the above inequality.
\end{proof}
Recall that $V$ is defined in \eqref{def.V}. For $t>0$, $x\in\0$ put
\begin{equation}\label{CP}
\varz_{t}(x):=\Var(V_{t}(x)).
\end{equation}

\begin{lemma}\label{L:lnd}
	For any $t\in[0,\T]$, $x\in\0$ we have
	\begin{equation}\label{LND}
	\frac{\sqrt t}{\sqrt \pi}\le{\varz_t}(x)\le C(\T)\sqrt t.
	\end{equation}
\end{lemma}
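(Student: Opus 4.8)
The statement to prove is the two-sided bound $\frac{\sqrt t}{\sqrt\pi}\le\varz_t(x)\le C(\T)\sqrt t$ for the variance $\varz_t(x)=\Var(V_t(x))$, where $V$ is the stochastic convolution defined in \eqref{def.V}. The plan is to compute $\varz_t(x)$ explicitly using the Walsh isometry and then reduce the resulting space-time integral to the diagonal heat kernel $p_s(x,x)$, for which the sharp bounds \eqref{boundsp} in \cref{LA0} are already available.

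\textbf{Step 1: reduce the variance to a heat-kernel integral.} By the It\^o--Walsh isometry for the stochastic integral in \eqref{def.V},
\begin{equation*}
\varz_t(x)=\E\Bigl(\int_0^t\int_\0 p_{t-r}(x,y)\,W(dr,dy)\Bigr)^2=\int_0^t\int_\0 p_{t-r}(x,y)^2\,dy\,dr.
\end{equation*}
Now I would use the Chapman--Kolmogorov (semigroup) identity $\int_\0 p_s(x,y)^2\,dy=\int_\0 p_s(x,y)p_s(y,x)\,dy=p_{2s}(x,x)$, valid for each of the three kernels $g$, $p^\per$, $p^\neu$ (all are symmetric and satisfy the semigroup property). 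Substituting and changing variables $s=t-r$ gives
\begin{equation*}
\varz_t(x)=\int_0^t p_{2(t-r)}(x,x)\,dr=\frac12\int_0^{2t} p_{s}(x,x)\,ds.
\end{equation*}

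\textbf{Step 2: apply the diagonal bounds.} Plugging the lower bound $p_s(x,x)\ge\frac1{\sqrt{2\pi}}s^{-1/2}$ from \eqref{boundsp} yields
\begin{equation*}
\varz_t(x)\ge\frac12\int_0^{2t}\frac1{\sqrt{2\pi s}}\,ds=\frac1{2\sqrt{2\pi}}\cdot 2\sqrt{2t}=\frac{\sqrt t}{\sqrt\pi},
\end{equation*}
which is exactly the claimed lower bound. For the upper bound, the inequality $p_s(x,x)\le C(\T)+\frac2{\sqrt{2\pi}}s^{-1/2}$ from \eqref{boundsp} gives
\begin{equation*}
\varz_t(x)\le\frac12\int_0^{2t}\Bigl(C(\T)+\frac2{\sqrt{2\pi s}}\Bigr)ds=C(\T)t+\frac{2\sqrt{t}}{\sqrt\pi}\le C(\T)\sqrt t,
\end{equation*}
where in the last step I used $t\le\T$ so $t\le\sqrt\T\sqrt t$, absorbing the linear term into the constant. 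This completes the argument.

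\textbf{Main obstacle.} There is no serious obstacle here: the proof is essentially a bookkeeping exercise once the Walsh isometry and the Chapman--Kolmogorov identity are invoked. The only points requiring a small amount of care are (i) justifying $\int_\0 p_s(x,y)^2\,dy=p_{2s}(x,x)$ uniformly for all three choices of $(D,p)$ — this follows from symmetry of the kernel and the semigroup property, both of which hold for $g$, $p^\per$, $p^\neu$ — and (ii) keeping the constants explicit enough in the lower bound to land exactly on $\sqrt t/\sqrt\pi$ rather than a constant multiple of it, which the computation above does. The dependence of the upper constant on $\T$ is unavoidable and already signalled in the statement.
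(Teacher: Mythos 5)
Your proof is correct and follows essentially the same route as the paper: the paper likewise writes $\varz_t(x)=\int_0^t p_{2(t-r)}(x,x)\,dr$ (via the Walsh isometry and the semigroup identity, which it leaves implicit) and then integrates the diagonal bounds of \eqref{boundsp}. Your version merely spells out the isometry and Chapman--Kolmogorov steps explicitly; the computations and constants match.
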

\begin{proof}
	We have from \eqref{boundsp} 
	\begin{equation*}
		{\varz_t}(x)=\Var (V(t,x))=\int_0^t p_{2(t-r)}(x,x)\,dr\ge\int_0^t \frac{1}{2 \sqrt{\pi(t-r)}} \,dr=\frac{\sqrt t}{\sqrt \pi}.
	\end{equation*}	
	The upper bound on ${\varz_t}$ is established in exactly the same way.
\end{proof}

\begin{Lemma}\label{L.condV}
	Let $0\le s\le  u \le t\le\T$.
 Let $f\colon\R\times\Omega\to\R$ be a bounded $\Bor(\R)\otimes \F_s$-measurable function. Then 
	\begin{align}\label{id.EfV}
		\E^uf(V_t(x))=G_{\varz_{t-u}(x)}f(P_{t-u}V_u(x)).
	\end{align}
	In addition, there exists a universal constant $C=C(\T)>0$ such that for every $x\in\0$, $\gamma< 0$, $n\in[1,\infty]$ and $p\in[n,\infty]$ 
	\begin{align}
	&\label{boundcondexpnew}\|\E^u f(V_t(x))\|_{\L{n}}\le
	C \|\,\|f\|_{\Bes^\gamma_p}\|_{\L{n}} (t-u)^{\frac{\gamma }4}(u-s)^{-\frac1{2p}}(t-s)^{\frac1{4p}},\\
	&\label{bound.inf}|\E^s f(V_t(x))|\le
	C \|f\|_{\Bes^\gamma_p} (t-s)^{\frac\gamma4-\frac1{4p}}.
	\end{align}
\end{Lemma}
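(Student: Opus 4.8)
\textbf{Proof plan for \Cref{L.condV}.}
The plan is to first establish the identity \eqref{id.EfV}, and then deduce the two bounds \eqref{boundcondexpnew} and \eqref{bound.inf} from it by combining the Besov space smoothing of the heat semigroup (\Cref{lem.Gf}(i)) with the local nondeterminism estimate for the variance of $V$ (\Cref{L:lnd}). For \eqref{id.EfV}, the starting point is the decomposition of $V_t(x)$ coming from \eqref{def.V}: split the Walsh integral at time $u$ so that $V_t(x)=\int_0^u\int_\0 p_{t-r}(x,y)W(dr,dy)+\int_u^t\int_\0 p_{t-r}(x,y)W(dr,dy)$. Using the semigroup/convolution property of the heat kernel $p$, the first term equals $P_{t-u}V_u(x)$; it is $\F_u$-measurable. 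The second term is independent of $\F_u$ (increments of the white noise after time $u$), Gaussian with mean $0$ and variance $\int_u^t\int_\0 p_{t-r}(x,y)^2\,dy\,dr=\int_0^{t-u}p_{2r}(x,x)\,dr=\varz_{t-u}(x)$, by the Chapman--Kolmogorov identity used in \Cref{L:lnd}. Since $f$ is $\Bor(\R)\otimes\F_s$-measurable and $\F_s\subset\F_u$, conditioning on $\F_u$ freezes the $\F_u$-measurable part and averages out the independent Gaussian part, which is exactly convolution against the Gaussian density $g_{\varz_{t-u}(x)}$, i.e.\ the operator $G_{\varz_{t-u}(x)}$ evaluated at $P_{t-u}V_u(x)$. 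This gives \eqref{id.EfV}.

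For \eqref{boundcondexpnew}, I would apply \Cref{lem.Gf}(i) (with exponent $\gamma<0$, integrability $p$) pointwise in $\omega$ to the $\F_s$-measurable realization $f(\cdot,\omega)$, obtaining $\|G_{\varz_{t-u}(x)}f(\cdot,\omega)\|_{L_\infty(\R)}\le C\|G_{\varz_{t-u}(x)/2}f(\cdot,\omega)\|_{L_p(\R)}\,\varz_{t-u}(x)^{-1/(4p)}$ by a further semigroup splitting and standard heat kernel $L_p\to L_\infty$ bounds, and then $\le C\|f(\cdot,\omega)\|_{\bes^\gamma_p}\varz_{t-u}(x)^{\gamma/4}\varz_{t-u}(x)^{-1/(4p)}$. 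Wait --- one has to be slightly careful: applying \Cref{lem.Gf}(i) directly to $G_{\varz}f$ in $L_p$ gives decay $\varz^{\gamma/2}$, and then the extra $L_p\to L_\infty$ step contributes $\varz^{-1/(4p)}$; alternatively one invokes \Cref{lem.Gf}(iv)-type reasoning. Either way, using $\varz_{t-u}(x)\asymp\sqrt{t-u}$ from \eqref{LND} converts the powers of $\varz_{t-u}(x)$ into powers of $t-u$, producing $(t-u)^{\gamma/4}$ and a factor $(t-u)^{-1/(4p)}$. Since evaluation of $G_{\varz_{t-u}(x)}f(\cdot,\omega)$ at the point $P_{t-u}V_u(x)$ is dominated by the sup norm, we get the bound $\|\E^uf(V_t(x))\|_{L_n}\le C\|\,\|f\|_{\bes^\gamma_p}\|_{L_n}(t-u)^{\gamma/4}(t-u)^{-1/(4p)}$. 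To reach the precise asserted form with $(u-s)^{-1/(2p)}(t-s)^{1/(4p)}$, I would instead keep the convolution split so that the Gaussian averaging happens already at level $\F_s$: write $\E^uf(V_t(x))=\E^u[\E^s(\dots)]$ is not quite right, rather split $\varz_{t-u}(x)$ itself; more cleanly, bound $\|\E^uf(V_t(x))\|_{L_n}\le \|\,\|\E^s f(V_t(x)+\text{Gaussian with variance }\varz_{t-u})\|\cdots\|$ --- the cleanest route is to note $\E^uf(V_t(x))=G_{\varz_{t-u}(x)}f(\cdot)$ at the random point and then take $\|\cdot\|_{L_n}$, but to recover $(u-s)^{-1/(2p)}(t-s)^{1/(4p)}$ one uses that one can also condition first on $\F_s$: $\E^s f(V_t(x))=G_{\varz_{t-s}(x)}f(P_{t-s}V_s(x))$ and splitting $\varz_{t-s}=\varz_{t-u}+(\text{rest})$ with the ``rest'' comparable to $u-s$ up to constants when $t-s\le 2(u-s)$, and to $t-s$ otherwise --- this yields the stated interpolated exponents since $\frac\gamma4-\frac1{4p}$ wait, no: the exponent split $-\frac1{2p}$ on $(u-s)$ and $+\frac1{4p}$ on $(t-s)$ sums to $-\frac1{4p}$, matching the single-time bound. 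I expect this bookkeeping with the three heat-kernel exponents (the $\gamma/4$ from Besov smoothing and the splitting of the $-1/(4p)$ between $(u-s)$ and $(t-s)$) to be the only slightly delicate point; it follows by choosing at which time to perform the Gaussian averaging and using $\varz_t(x)\asymp\sqrt t$ together with $t-s\le 2(u-s)$ or its converse.

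Finally, \eqref{bound.inf} is the special case $u=s$ (so $P_{t-s}V_s(x)=P_{t-s}V_s(x)$ is itself the argument), where $f$ is now deterministic: from \eqref{id.EfV} with $u=s$, $\E^sf(V_t(x))=G_{\varz_{t-s}(x)}f(P_{t-s}V_s(x))$, so $|\E^sf(V_t(x))|\le \|G_{\varz_{t-s}(x)}f\|_{L_\infty(\R)}\le C\|f\|_{\bes^\gamma_p}\varz_{t-s}(x)^{\gamma/4-1/(4p)}\le C\|f\|_{\bes^\gamma_p}(t-s)^{\gamma/4-1/(4p)}$, using \Cref{lem.Gf}(i) plus the $L_p\to L_\infty$ heat estimate, and \eqref{LND}. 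The main obstacle throughout is purely the exponent accounting in the smoothing estimate; the measurability and conditioning structure is routine once the Walsh-integral split at time $u$ is set up.
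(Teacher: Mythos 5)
Your treatment of \eqref{id.EfV} and of \eqref{bound.inf} is correct and coincides with the paper's: split the Walsh integral at time $u$, so that $V_t(x)=P_{t-u}V_u(x)+Z_{u,t}(x)$ with $Z_{u,t}(x)$ independent of $\F_u$ and Gaussian of variance $\varz_{t-u}(x)$, whence conditioning on $\F_u$ is convolution; for \eqref{bound.inf} one then combines \Cref{lem.Gf}(i) with the embedding $\Bes^\gamma_p\hookrightarrow\Bes^{\gamma-1/p}_\infty$ and \eqref{LND}.

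For \eqref{boundcondexpnew}, however, there is a genuine gap. You correctly note that estimating $|G_{\varz_{t-u}(x)}f(P_{t-u}V_u(x))|$ by $\|G_{\varz_{t-u}(x)}f\|_{L_\infty(\R)}$ only gives $(t-u)^{\frac\gamma4-\frac1{4p}}$ --- which is not \eqref{boundcondexpnew}: the two bounds are incomparable, and for $u$ close to $t$ the asserted one is strictly stronger --- and that one must exploit the randomness of the evaluation point. But the mechanism that actually produces the exponents $-\frac1{2p}$ and $+\frac1{4p}$ is absent from your sketch. The paper's step is to decompose $P_{t-u}V_u(x)=P_{t-s}V_s(x)+P_{t-u}Z_{s,u}(x)$, where $P_{t-u}Z_{s,u}(x)=\int_s^u\int_\0 p_{t-r}(x,y)W(dr,dy)$ is independent of $\F_s$ and Gaussian with variance $\rho_{s,u,t}(x)=\int_s^u p_{2(t-r)}(x,x)\,dr\ge C(u-s)(t-s)^{-1/2}$; hence
\begin{equation*}
\E^s|\E^uf(V_t(x))|^n=\int_\R g_{\rho_{s,u,t}(x)}(z)\,\bigl|G_{\varz_{t-u}(x)}f(P_{t-s}V_s(x)+z)\bigr|^n\,dz,
\end{equation*}
and H\"older's inequality with exponents $q=p/n$ and $q'$ bounds this by $\|G_{\varz_{t-u}(x)}f\|_{L_p(\R)}^n\,\|g_{\rho_{s,u,t}(x)}\|_{L_{q'}(\R)}\le C\|f\|_{\Bes^\gamma_p}^n\varz_{t-u}(x)^{\gamma n/2}\rho_{s,u,t}(x)^{-n/(2p)}$. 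It is precisely this H\"older step --- trading the sup norm of $G_{\varz_{t-u}(x)}f$ for its $L_p$ norm at the price of the $L_{q'}$ norm of the Gaussian density of the evaluation point --- that generates $(u-s)^{-\frac1{2p}}(t-s)^{\frac1{4p}}$. Your proposed alternative (split $\varz_{t-s}=\varz_{t-u}+\text{rest}$ and distinguish the cases $t-s\le 2(u-s)$ and its converse) relates the two conditional expectations via the semigroup property but cannot by itself produce any $1/p$-dependent exponent; moreover the ``rest'' variance is comparable to $(u-s)(t-s)^{-1/2}$, not to $u-s$ or to $t-s$, since variances here scale like square roots of time.
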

\begin{proof}
	
	For $s\le t$ introduce the process
	\begin{equation*}
		Z_{s,t}(x):=V_t(x)-P_{t-s}V_s(x)=\int_s^t\int_\0p_{t-r}(x,y)W(dr,dy),\quad x\in\0,\,0\le s\le t.
	\end{equation*}

	By definition of $Z$, we have for any $0\le s\le u\le  t$, $x\in\0$
	\begin{equation}\label{boringtechnicalstuff}
		V_t(x)=P_{t-u}V_u(x)+Z_{u,t}(x).
	\end{equation}
	It is immediate to see that $Z_{u,t}(x)$ is independent of $\F_{s}$ and is Gaussian with zero mean and variance 
	\begin{equation}\label{Zbound}
		\Var(Z_{u,t}(x))=\int_u^t p_{2(t-r)}(x,x)dr=\varz_{t-u}(x).
	\end{equation}
	Using \eqref{boringtechnicalstuff}, this yields 
	\begin{equation*}
		\E^{u}f(V_t(x))(\omega)=[G_{\varz_{t-u}(x)}f(\cdot,\omega)](P_{t-u}V_u(x)),
	\end{equation*}
	which is \eqref{id.EfV}.
	
	Next, we show \eqref{boundcondexpnew}. To proceed, we further  decompose $P_{t-u}V_u(x)=P_{t-s}V_s(x)+P_{t-u}Z_{s,u}(x)$. The random variable $P_{t-u}Z_{s,u}(x)=\int_s^u\int_\0 p_{t-r}(x,y)W(dr,dy)$ is independent of $\F_s$ and has a Gaussian law with mean zero and variance
	\begin{align}
		\label{deduceVar}
		\rho_{s,u,t}(x)&:=\Var[P_{t-u}Z_{s,u}(x)]=\int_s^u p_{2(t-r)}(x,x)\,dr\ge C(\sqrt{t-s}-\sqrt{t-u})\nn\\
		&\ge C(u-s)(t-s)^{-1/2},
	\end{align}
	where the first inequality follows from \eqref{boundsp}.
	Hence, using \eqref{id.EfV} and the fact that $f$ is $\F_s$-measurable, we have
	\begin{align}\label{condEsEu}
	\E^s |\E^u f(V_t(x))|^n	&=\E^s	\bigl[|G_{\varz_{t-u}(x)}f(P_{t-u}V_u(x))|^n\bigr]\nn\\
	&= \int_{\R} g_{\rho_{s,u,t}(x)}(z)\lt|G_{\varz_{t-u}(x)}f(P_{t-s}V_s(x)+z)\rt|^n\,dz,
	\end{align}
where $g$ is the standard Gaussian density \eqref{heatsemigroupspace}.
	Now we put $q=\frac pn\ge1$, $\frac1q+\frac1{q'}=1$ and apply H\"older inequality and \eqref{deduceVar} to estimate the above integral from above by
	\begin{equation}\label{expression}
		\Bigl[\int_\R\bigl|G_{\varz_{t-u}(x)}f(P_{t-s}V_s(x)+z)\bigr|^p\,dz\Bigr]^{n/p}\,\, \|g_{\rho_{s,u,t}(x)}(\cdot)\|_{L^{q'}},
	\end{equation}
		The first factor in the above expression equals 
		$\|G_{\varz_{t-u}(x)}f(\cdot,\omega)\|^n_{\L{p}(\R)}$ and thus is bounded above by 
	$$
	C\|f(\cdot,\omega)\|_{\Bes^\gamma_p}^n\varz_{t-u}(x)^{\frac{\gamma n}{2}}
	$$ by \cref{lem.Gf}(i). 
	The second factor in \eqref{expression} is bounded by $C \rho_{s,u,t}(x)^{-\frac n{2p}}$ for some constant $C>0$, where we again used \cref{lem.Gf}(i) for Dirac delta. Taking into account \eqref{LND}, the lower bound for $\rho_{s,u,t}(x)$ in \eqref{deduceVar}, we continue \eqref{condEsEu} as follows:
	\begin{align*}
	\E^s |\E^u f(V_t(x))|^n
		&\le C \|f(\cdot,\omega)\|_{\Bes^\gamma_p}^n\varz_{t-u}(x)^{\frac {\gamma n}2}
		\rho_{s,u,t}(x)^{-\frac{n}{2p}}\\
		&\le C(\T) \|f(\cdot,\omega)\|_{\Bes^\gamma_p}^n (t-u)^{\frac{\gamma n}4}(u-s)^{-\frac{n}{2p}}(t-s)^{\frac{n}{4p}}.
	\end{align*}
	Taking expectation, we get \eqref{boundcondexpnew}.
	
	To show \eqref{bound.inf}, we use \eqref{id.EfV} with $u=s$. Applying \cref{lem.Gf}(i) and the Besov embedding $\bes^\gamma_p\hookrightarrow\bes^{\gamma-1/p}_\infty$ (see \eqref{em.Besov}), we deduce
	\begin{equation*}
		|\E^s f(V_t(x)) |	
		\le C\|f\|_{\Bes^{\gamma-1/p}_\infty}\rho_{t-s}(x)^{\frac\gamma2-\frac1{2p}}
		 \le C\|f\|_{\Bes^{\gamma}_p}(t-s)^{\frac\gamma4-\frac1{4p}},
	\end{equation*}
where we also used \eqref{LND}. This implies \eqref{bound.inf}.
%
\end{proof}

%





 \end{document}